\newaliascnt{eqfloat}{equation}
\newcommand*{\ORGeqfloat}{}
\let\ORGeqfloat\eqfloat
\def\eqfloat{%
	\let\ORIGINALcaption\caption
	\def\caption{%
		\addtocounter{equation}{-1}%
		\ORIGINALcaption
	}%
	\ORGeqfloat
}
\newtheorem{thm}{Theorem}[section]
\newtheorem{theorem}{Theorem}
\newtheorem*{theorem*}{Theorem}
\newtheorem{mainTheorem}{Theorem}
\newtheorem{cor}[thm]{Corollary}
\newtheorem{lemma}[thm]{Lemma}
\newtheorem{prop}[thm]{Proposition}
\theoremstyle{rmk}
\newtheorem{remark_basic}[thm]{Remark}
\newtheorem{observation_basic}[thm]{Observation}
\newtheorem{notation_basic}[thm]{Notation}
\newtheorem{terminology_basic}[thm]{Terminology}
\theoremstyle{definition}
\newtheorem{definition_basic}[thm]{Definition}	
\newtheorem{example_basic}[thm]{Example}
\newenvironment{rmk}
{\pushQED{\qed}
	
	\begin{remark_basic}}
	{\popQED
\end{remark_basic}}
\newtheorem{asspt}{\textbf{Assumption}}
\newtheorem*{setting}{Setting}
\newtheorem{quests}{Questions}
\newtheorem{quest}{Question}
\newtheorem{axiom}{\textbf{Axiom}}
\newenvironment{definition}
{\pushQED{\qed}
	
	\begin{definition_basic}}
	{\popQED
\end{definition_basic}}
\newenvironment{example}
{\pushQED{\qed}
	
	\begin{example_basic}}
	{\popQED
\end{example_basic}}
\DeclareMathAlphabet{\mathcal}{OMS}{cmsy}{m}{n}
\newcommand{\R}{\mathbb{R}}
\newcommand{\PP}{\mathbb{P}}
\newcommand{\C}{\mathbb{C}}
\newcommand{\HH}{\mathbb{H}}
\newcommand{\A}{\mathbb{A}}
\newcommand{\Z}{\mathbb{Z}}
\newcommand{\N}{\mathbb{N}}
\newcommand{\Zmod}{\mathbb{Z}/2\mathbb{Z}}
\newcommand{\CP}{\mathbb{C}\mathbb{P}^n}
\newcommand{\RP}{\mathbb{R}\mathbb{P}^n}
\newcommand{\HP}{\mathbb{H}\mathbb{P}^n}
\newcommand{\APn}{\mathbb{A}\mathbb{P}^n}
\newcommand{\CPone}{\mathbb{C}\mathbb{P}^1}
\newcommand{\M}{\mathcal{M}}
\renewcommand{\L}{\mathcal{L}}
\newcommand{\CC}{\mathcal{C}}
\newcommand{\J}{\mathcal{J}}
\newcommand{\F}{\mathcal{F}}
\newcommand{\eps}{\varepsilon}
\newcommand{\Soo}{\mathrm{SO(2)}}
\newcommand{\rk}{\mathrm{rank}}
\newcommand{\Aut}{\mathrm{Aut}}
\newcommand{\Auteq}{\mathrm{Auteq}}
\newcommand{\Ham}{\mathrm{Ham}}
\newcommand{\Symp}{\mathrm{Symp}}
\newcommand{\Diff}{\mathrm{Diff}}
\newcommand{\Crit}{\mathrm{Crit}}
\newcommand{\HF}{\operatorname{HF}}
\newcommand{\CF}{\operatorname{CF}}
\newcommand{\Fuk}{\operatorname{\mathcal{F}uk}}
\newcommand{\rank}{\operatorname{rank}}
\newcommand{\Ext}{\mathrm{Ext}}
\renewcommand{\leq}{\leqslant}
\renewcommand{\geq}{\geqslant}
\newcommand{\lra}{\longrightarrow}
\newcommand{\re}{\mathrm{Re}}
\newcommand{\im}{\mathrm{Im}}
\renewcommand{\ker}{\mathrm{ker}}
\newcommand{\coker}{\mathrm{coker}}
\renewcommand{\Im}{\operatorname{\mathrm{Im}}}
\newcommand{\Rmnum}[1]{\expandafter\@slowromancap\romannumeral #1@}
\newcommand{\del}{\partial}
\newcommand{\sheafHom}{\operatorname{\mathscr{H}\text{\kern -3pt {\calligra\large om}}\,}}
\newcommand{\sheafEnd}{\operatorname{\mathscr{E}\text{\kern -3pt {\calligra\large nd}}\,\,\,}}
\newcommand{\sheafEndmon}{\sheafEnd_{\text{\kern -3pt {\calligra mon}}\,}}
\renewcommand{\Im}{\operatorname{Im}} %imaginary part
\newcommand*{\Jhat}[1]{#1\kern-0.37em\hat{\phantom{#1}}}
\title{Projective twists and the Hopf correspondence}
\author{Brunella Charlotte Torricelli}
\address{Brunella Torricelli, Centre for Mathematical Sciences, University of Cambridge, 
	CB3 0WB, UK}
\email{bct27@cam.ac.uk}
\begin{document}
	\maketitle

	\begin{abstract}
Given Lagrangian (real, complex) projective spaces $K_1, \dots , K_m$ in a Liouville manifold $(X, \omega)$ satisfying a certain cohomological condition, we show there is a Lagrangian correspondence (in the sense of \cite{wwcorr}) that assigns a Lagrangian sphere $L_i \subset K$ of another Liouville manifold $(Y, \Omega)$ to any given projective Lagrangian $K_i \subset X$, $i=1, \dots m$.

	We use the Hopf correspondence to study \emph{projective twists}, a class of symplectomorphisms akin to Dehn twists, but defined starting from Lagrangian projective spaces. When this correspondence can be established, we show that it intertwines the autoequivalences of the compact Fukaya category $\Fuk(X)$ induced by the projective twists $\tau_{K_i} \in \pi_0(\Symp_{ct}(X))$ with the autoequivalences of $\Fuk(Y)$ induced by the Dehn twists $\tau_{L_i} \in \pi_{0}(\Symp_{ct}(Y))$, for $i=1, \dots m$. 

	Using the Hopf correspondence, we obtain a free generation result for projective twists in a \emph{clean plumbing} of projective spaces and various results about products of positive powers of Dehn/projective twists in Liouville manifolds. 

	The same techniques are also used to show that the Hamiltonian isotopy class of the projective twist (along the zero section in $T^*\CP$) in $\Symp_{ct}(T^*\CP)$ does depend on a choice of framing, for $n\geq19$. Another application of the Hopf correspondence delivers smooth homotopy complex projective spaces $K\simeq \CP$, that do not admit Lagrangian embeddings into 
	$(T^*\CP, d\lambda_{\CP})$, $n=4,7$.
	\end{abstract}

\section{Introduction}

\subsection{Questions}

Given a symplectic manifold $(M, \omega)$ with contact boundary, a natural object of study is the group $\Symp_{ct}(M)$ of compactly supported symplectomorphisms that are the identity in a neighbourhood of the boundary. Its quotient $\pi_0(\Symp_{ct}(M))$ by the relation of symplectic isotopy is the \emph{symplectic mapping class group}, and is already a highly non-trivial object. When $H^1(M;\R)=0$, a symplectic isotopy is automatically Hamiltonian, and $\pi_0(\Symp_{ct}(M))$ coincides with the quotient $\Symp_{ct}(M)/\Ham_{ct}(M)$ by the subgroup $\Ham_{ct}(M)\subset \Symp_{ct}(M)$ of (compactly supported) Hamiltonian symplectomorphisms (namely time-1 maps of compactly supported Hamiltonian flows).

The symplectic mapping class group carries a (forgetful) comparison map \begin{align}\label{comparisonmap}
c:\pi_0(\Symp_{ct}(M)) \longrightarrow \pi_0(\Diff_{ct}^+(M))
\end{align} to the (compactly supported and orientation-preserving) smooth mapping class group of $M$. In general, the map is neither injective nor surjective. Its kernel is of particular interest as it captures phenomena which are exclusively symplectic and not visible in the smooth mapping class group.
The question of whether a symplectomorphism $\varphi \in \Symp_{ct}(M)$ is a non-trivial element of the kernel of $c$ (i.e is smoothly isotopic to the identity but not symplectically so) is called the \emph{symplectic isotopy problem}.

In dimension two, the kernel of $c$ is always trivial, and the symplectic mapping class group is isomorphic to the smooth mapping class group $\pi_0(\Diff_{ct}^+(M))$; this is a consequence of \emph{Moser's argument} (\cite{moser}).

\emph{Dehn twists} often provide examples of nontrivial symplectomorphisms that lie in the kernel of $\eqref{comparisonmap}$. Given a sphere $L$ (and a choice of parametrisation, called a \emph{framing}, see Definition \ref{definitionframing}), the periodicity of the (co)geodesic flow can be used to construct a compactly supported symplectomorphism of the cotangent bundle $\tau_L \in \Symp_{ct}(T^*L)$ (see Definition \ref{weinsteinextend}), called a standard Dehn twist.

The standard Dehn twist has infinite symplectic order, i.e infinite order in $\pi_0(\Symp_{ct}(T^*S^n))$ (\cite{seidelgraded}), and for $n=2$, it generates the entire mapping class group $\pi_0(\Symp_{ct}(T^*S^2))$ (\cite{seidelgeneration}).

Given a general symplectic manifold $(M, \omega)$ and a Lagrangian sphere $L\subset M$, the local construction of the standard Dehn twist can be implanted in a neighbourhood of $L$ via Weinstein's neighbourhood theorem, to yield a compactly supported symplectomorphism $\tau_L\in \Symp_{ct}(M)$. When $\dim(L)$ is even, the Dehn twist has finite order in $\Diff_{ct}^+(M)$ but often has infinite order in $\Symp_{ct}(M)$.
Seidel's early investigations provided the first global examples of (symplectically) non-trivial Dehn twists, in particular non-trivial elements of the kernel of the comparison map \eqref{comparisonmap}. For example, for a $K3$-surface $(M,\omega)$ containing two disjoint Lagrangian spheres $L_1,L_2 \subset M$, the class of $\tau_{L_1}$ has infinite order in $\pi_0(\Symp_{ct}(M))$, and hence in that case $c$ has infinite kernel (\cite{seidelgraded}).
Other important examples in which the kernel of $c$ is large comprise Dehn twists in Milnor fibres of any isolated hypersurface singularity (\cite{keatingfree}) and Dehn twists in projective hypersurfaces of degree $d>2$ (and more general divisors, \cite{tonkonog}).

One of the widely employed methodologies used in these investigations is symplectic Picard--Lefschetz theory. In this context, Dehn twists are regarded as the class of symplectomorphisms that encodes symplectic monodromy maps associated to nodal degenerations, i.e monodromies of \emph{Lefschetz fibrations} (see Section \ref{sectiontwist}).

For an exact symplectic manifold $(M,\omega)$, any Dehn twist $\tau_L$ along a Lagrangian sphere $L\subset (M, \omega)$ can be realised as the local monodromy of an exact Lefschetz fibration (with exact smooth fibre $(M, \omega)$ and exact base). One important result that has been proved recently in \cite{bgzfilling} (an alternative proof of which can be found in this paper) is that the global monodromy of such Lefschetz fibrations can never be isotopic to the identity in the symplectic mapping class group.

\begin{restatable}[{{\cite[Theorem 1.4]{bgzfilling}}}]{theorem}{thmbgz}
	\label{bgz}
	Let $(M, \omega)$ be a Liouville manifold, and let $L_1, \dots , L_m \subset M$ be Lagrangian spheres. Let $\phi= \prod_{i=1}^{k} \tau_{L_{j_i}} \in \Symp_{ct}(M)$, $j_i \in \{ 1, \dots , m \}$ be a positive word of Dehn twists. Then $\phi$ is not compactly supported isotopic to the identity in $\Symp_{ct}(M)$.
\end{restatable}

As a result, Dehn twists represent an extremely important source of (symplectically) non-trivial symplectic automorphisms of exact symplectic manifolds. 

In a more general setting, we can consider both positive as well as negative powers of Dehn twists. In this case, it becomes necessary to have a way of measuring the intersections of the Lagrangians generating the twists.
For example, if $L,L'$ are two Lagrangian spheres of a Liouville manifold $(M, \omega)$ which intersect in a single point, then the corresponding twists $\tau_L, \tau_{L'} \in \Symp_{ct}(M)$ satify the \emph{braid relation}, an isotopy $\tau_L\tau_{L'}\tau_L \simeq \tau_{L'}\tau_L\tau_{L'}$ in $\Symp_{ct}(M)$ (\cite{seidelknotted, stbraid}).
In a general situation, Keating showed that the suitable quantifier that obstructs the possibility of a non-trivial relation between the twists $\tau_L, \tau_{L'}$ is the rank of the Floer cohomology group $\HF(L,L')$, as follows.

\begin{thm}[{{\cite[Theorem 1.1 and 1.2]{keatingfree}}}]
	\label{keatingfree}
	Let $(M, \omega)$ be a Liouville manifold of dimension greater than $2$, and $L, L' \subset M$ be two Lagrangian spheres satisfying $\rank \HF(L,L')\geq 2$, and such that $L, L'$ are not quasi-isomorphic in the Fukaya category. Then the Dehn twists $\tau_L, \tau_{L'}$ generate a free subgroup of $\pi_0(\Symp_{ct}(M))$, and the associated functors $T_L, T_L'$ generate a free subgroup of $\Auteq(\Fuk(M))$.
\end{thm}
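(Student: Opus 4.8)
The plan is to pass from symplectomorphisms to autoequivalences and then run a Floer-theoretic ping-pong argument.

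\emph{Step 1: Reduction to functors.} Since $\dim M > 2$ we have $n = \tfrac12\dim M \geq 2$, so $L$ and $L'$ are genuine spherical objects ($\HF^*(L,L)\cong H^*(S^n)$, no disc bubbling in the exact setting) and Seidel's exact triangle applies. The twists act on the (derived, split-closed) Fukaya category $\Fuk(M)$ by spherical twist functors, giving a homomorphism $\rho\colon \pi_0(\Symp_{ct}(M))\to\Auteq(\Fuk(M))$ with $\rho(\tau_L)=T_L$, $\rho(\tau_{L'})=T_{L'}$. I would first prove the \emph{functor} statement: that $\langle T_L, T_{L'}\rangle$ is free of rank $2$. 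The symplectomorphism statement then follows formally: the surjection $F_2\to\langle\tau_L,\tau_{L'}\rangle$, $s\mapsto\tau_L$, $t\mapsto\tau_{L'}$, becomes the identity of $F_2$ after composing with $\rho$ (which carries $\langle\tau_L,\tau_{L'}\rangle$ onto $\langle T_L,T_{L'}\rangle\cong F_2$), hence it is injective, hence an isomorphism; so $\tau_L,\tau_{L'}$ are free generators too.

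\emph{Step 2: The invariant.} Put $a:=\rank\HF(L,L')\geq 2$; Poincaré duality in the compact Fukaya category gives $\rank\HF(L',L)=a$ as well. For any object $X$, Seidel's exact triangle reads
\[ \HF^*(L,X)\otimes L \xrightarrow{\ \mathrm{ev}\ } X \longrightarrow T_L X \xrightarrow{[1]} \]
and symmetrically for $L'$. To an object $X$ attach the pair $v(X)=\bigl(\rank\HF(L,X),\ \rank\HF(L',X)\bigr)\in\Z_{\geq 0}^2$. Applying $\HF^*(L,-)$ and $\HF^*(L',-)$ to the triangle, and using $T_L L\simeq L[1-n]$ (so $\rank\HF(L,T_L^{\pm1}X)=\rank\HF(L,X)$), one checks: $T_L^{\pm1}$ fixes the first coordinate of $v$ and sends the second coordinate $q$ into the interval $[\,|ap-q|,\ ap+q\,]$, where $p$ is the (unchanged) first coordinate; symmetrically $T_{L'}^{\pm1}$ fixes the second coordinate and moves the first into $[\,|aq-p|,\ aq+p\,]$.

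\emph{Step 3: Ping-pong.} The generic behaviour — the upper endpoints above — is exactly the linear action on $\Z_{\geq0}^2$ by $A=\left(\begin{smallmatrix}1&0\\a&1\end{smallmatrix}\right)^{\pm1}$ and $B=\left(\begin{smallmatrix}1&a\\0&1\end{smallmatrix}\right)^{\pm1}$, which for $a\geq2$ generate a free group of rank $2$ (the Sanov subgroup for $a=2$; classical ping-pong on $\mathbb{RP}^1$ for $a\geq2$). I would therefore set up ping-pong directly on Fukaya objects: fix the test object $L$, split objects into an ``$L$-region'' and an ``$L'$-region'' according to whether the slope $\rank\HF(L',X)/\rank\HF(L,X)$ is large or small, show that each $T_L^{\pm1}$ carries everything outside the $L$-region strictly into it — and $T_{L'}^{\pm1}$ dually — and conclude that no nontrivial reduced word in $T_L,T_{L'}$ fixes $L$; in particular no nontrivial reduced word is the identity functor. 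Here the hypothesis that $L,L'$ are not quasi-isomorphic (even up to shift) is used to make the two regions disjoint and to start the test object outside the attracting region of both generators, so that the ping-pong is non-degenerate (if $L\simeq L'[k]$ then $T_L=T_{L'}$ and the claim is false).

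\emph{Step 4: The main obstacle.} The gap between the linear prediction and reality is Floer-theoretic \emph{cancellation}: the connecting maps in the long exact sequences of Step 2 can push $\rank\HF(L',T_LX)$ down to the lower endpoint $|ap-q|$ instead of the additive $ap+q$; and since for $n$ odd the Euler pairing can vanish while $a\geq2$, there is no $K$-theoretic shortcut — the entire argument must run on \emph{unreduced} ranks. The heart of the proof is to show that along any reduced word applied to the test object the pair $v(\cdot)$ stays inside an ``expanding cone'' in $\Z_{\geq0}^2$ on which cancellation is necessarily incomplete, so that the ranks genuinely grow at a definite rate. I expect this to be the technically demanding point, handled either (i) by induction on the syllable decomposition of the word, bounding the rank of the connecting map at each step in terms of the previous pair and using $a\geq2$ crucially — for $a=1$ one gets the braid relation, not freeness — or (ii) by reducing to the $A_\infty$-subcategory of $\Fuk(M)$ generated by $L$ and $L'$ (two spherical objects with $a$-dimensional morphism space, a Calabi--Yau model akin to the $a$-Kronecker quiver), computing the twist action there, and transporting the conclusion back to $\Fuk(M)$ via the inclusion of Fukaya categories. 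Making the cancellation bound robust, with no grading hypothesis on $\HF(L,L')$, is the step I expect to cost the most work.
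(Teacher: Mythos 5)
First, note that the paper does not prove this statement: it is quoted verbatim from Keating (\cite[Theorems 1.1 and 1.2]{keatingfree}), and the paper's own engagement with the argument consists of restating Keating's two key rank inequalities (\cite[Lemma 8.1]{keatingfree} and \cite[Claim 8.2]{keatingfree}, reproduced as Lemmas \ref{keatinglemma} and \ref{keatingclaim}) and then replaying the final combinatorial induction on reduced words in Section \ref{singletw}. Your proposal matches the architecture of that proof exactly: the categorical detour through $\Auteq(\Fuk(M))$ so that freeness of $\langle T_L, T_{L'}\rangle$ implies freeness of $\langle \tau_L,\tau_{L'}\rangle$; the use of Seidel's exact triangle to track the pair $\bigl(\rank\HF(L,X),\rank\HF(L',X)\bigr)$; and a ping-pong scheme in which each generator reverses which of the two ranks dominates. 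Your observation that $a\geq 2$ is essential (with $a=1$ giving the braid relation instead) and that non-quasi-isomorphism of $L,L'$ is needed to make the ping-pong non-degenerate are both the right places to use the hypotheses.

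The genuine gap is exactly where you flag it, in Step 4, and it is not a small one: the statement that along any reduced word the rank vector ``stays inside an expanding cone on which cancellation is necessarily incomplete'' \emph{is} the theorem. Everything before it is formal. The precise form this takes in the actual proof is Keating's Lemma 8.1: if $\tilde L$ is a sphere not quasi-isomorphic to $L$, $\rank\HF(\tilde L,L')\geq 2$, and $\rank\HF(\tilde L,L')>\rank\HF(L,L')$, then for every $n\neq 0$ the inequality strictly reverses, $\rank\HF(\tilde L,\tau_{\tilde L}^n L')<\rank\HF(L,\tau_{\tilde L}^n L')$ --- together with the base case Claim 8.2. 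Your proposal does not prove this reversal; it only lists two plausible routes (a syllable-by-syllable bound on the connecting map, or a reduction to the subcategory generated by two spherical objects). Neither route is carried out, and the second one in particular hides a real difficulty: the $A_\infty$-structure on that two-object subcategory is not determined by $a$ alone, and transporting a freeness statement back along a non-full embedding of categories requires care. Without a proof of the reversal inequality (uniform in the exponent $n$ and with no grading hypothesis on $\HF(L,L')$, as you correctly note), the ping-pong cannot be closed, so as written the proposal is an accurate roadmap of the cited proof rather than a proof.
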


In \cite{seidelgraded}, Seidel introduced a class of symplectomorphisms defined from Lagrangian submanifolds with periodic geodesic flow. This type of Lagrangian includes spheres---in which case the symplectomorphisms are squared Dehn twists---and projective spaces. This paper focuses on the latter class of symplectomorphisms, that we call \emph{projective twists} (the appellation \emph{Dehn} will be associated exclusively to Dehn twists along spheres). The projective analogues of Dehn twists are \emph{always} contained in the kernel of the comparison map \eqref{comparisonmap}, which means that they are a class of symplectomorphisms which are never detectable by the smooth structure. Similarly to Dehn twists, projective twists arise as local monodromies of fibration-like structures (\cite{perutzmbl}); these fibrations are called \emph{Morse--Bott--Lefschetz} fibrations and their singularities are Morse--Bott degenerations. 

Unlike their spherical counterparts, projective twists have not been in the spotlight of research in symplectic toplogy, and this is for a number of reasons. The definition of projective twist requires the existence of a Lagrangian embedding of a projective space in the ambient symplectic manifold, which can result in strong topological restrictions. Moreover, the symplectic Picard--Lefschetz theory of \cite{seidelbook} does not have such immediate applications as for Dehn twists.

Nevertheless, a series of recent results indicates that projective twists do have interesting properties of the caliber of Dehn twists: Evans \cite{evans}, Harris \cite{harristwist}, Mak--Wu \cite{makwu1}.
%In particular, \cite{makwu1} brought the necessary understanding of the autoequivalences of the Fukaya category induced by projective twists, by showing that they also fit in an exact triangle. In this case, the induced functor has the shape of a double cone (see \cite[Theorem 6.10]{makwu1}).

The results of the present research are driven by the following questions, which in the existing literature have been considered for Dehn twists exclusively.

\begin{quests}\label{mainq} \text{   }\vspace{0.01mm}
	Let $(M, \omega)$ be a Liouville manifold.
	\begin{enumerate}[label=(\alph*)]
		\item Can a reduced word of projective twists be symplectically isotopic to the identity (i.e are there twists satisfying any non-trivial relations) in $\Symp_{ct}(M)$?\label{questiona}
		\item Can a reduced \emph{positive} word (i.e a product of positive powers) of projective twists be symplectically isotopic to the identity in $\Symp_{ct}(M)$?\label{questionb}
	\end{enumerate}	
\end{quests}

\subsection{Methods: the Hopf correspondence}

How can we study projective twists? Because much of the scholarship that emerged from the study of Dehn twists is the result of successful applications of symplectic Picard--Lefschetz theory, a first intuitive move is to approach the study of projective twists by means of their presentation as monodromies of Morse--Bott--Lefschetz fibrations. One strategy could be to adapt some of the arguments originally tailored for Dehn twists to a more general Picard--Morse--Bott--Lefschetz theory as developed in \cite{wwfibred}. However, this setting presents serious complications related to a potential loss of compactness of the moduli spaces of pseudoholomorphic curves of these fibrations (the total space of Morse--Bott--Lefschetz fibations is in general not exact, and the singular locus---a smooth manifold of the singular fibre---often admits rational curves). 

To examine the properties of these symplectomorphisms, in this paper we adopt a strategy that allows to reduce the study of projective twists to that of Dehn twists in an auxiliary Liouville manifold; this is made possible via the theoretical device of \emph{Lagrangian correspondences}.

A Lagrangian correspondence between two symplectic manifolds $(W, \omega)$ and $(Y, \Omega)$ is a Lagrangian submanifold of the product $W^-\times Y:=(W \times Y, -\omega \oplus \Omega)$. By the work of \cite{wwcorr, wwfunctor, wwquilt, gaowrapped2,gaowrapped1}, under suitable conditions, a Lagrangian correspondence induces a functor which associates a Lagrangian in $Y$ to a Lagrangian in $W$. 

In a first stage, we define an appropriate Lagrangian correspondence that relates a set of Lagrangian projective spaces in a given Liouville manifold $(W, \omega)$ to a set of Lagrangian spheres in an auxiliary manifold $(Y, \Omega)$ expressly built under some cohomological conditions.
%This strategy is subject to the existence of a cohomology class restricting to each projective Lagrangian as a generator of its cohomology as we explain below.
Fix a tuple $(\mathbb{A},k, *, R)\in \{ (\R, 0, 1, \Zmod), (\C, 1, 2, \Z)\}$. Assume there are Lagrangian projective spaces $\mathbb{A}\PP^n \cong K_1, \dots K_m \subset W$ and a non-trivial class $\alpha \in H^*(W;R)$ such that $\alpha|_{K_i}$ generates $H^*(\mathbb{A}\PP^n;R)$. Then there is a Liouville manifold $(Y, \Omega)$, realised as a $T^*S^k$-bundle $q\colon Y \to W$, which contains an $S^k$-fibred coisotropic submanifold $V \to W$, defining a Lagrangian correspondence $\Gamma:= \{  (q(y), y) , \ y \in V \} \subset W^- \times Y$ in the sense of \cite{perutzgysin}. Then, over each projective Lagrangian $K_i \subset W$, the correspondence yields a Lagrangian sphere $L_i \subset Y$, for $i=1, \dots , m.$
We name $\Gamma$ the \emph{Hopf correspondence}.

Once the Hopf correspondence constructed, we use Ma'u--Wehrheim--Woodward theory, and Gao's extension for non-closed correspondences, to show that it induces a functor $\Gamma\colon \Fuk(W) \to \Fuk(Y)$ between the compact Fukaya categories (see Section \ref{inducedfunctor}). We then prove the existence of a commuting diagram (Section \ref{commutingfunctor})

\begin{equation}\label{commutingofunctor}
\begin{split}
\xymatrixcolsep{5pc} \xymatrix{
	\Fuk(Y)  \ar[r]^{T_{L_i}}&
	\Fuk(Y) &\\
	\Fuk(W) \ar[u]^{\Theta_{\Gamma}}  \ar[r]^{T_{K_i}}	& \Fuk(W)  \ar[u]^{\Theta_{\Gamma}} }
\end{split}	\end{equation}

where $T_{K_i} \in Auteq(\Fuk(W))$ and $T_{L_i} \in Auteq(\Fuk(Y))$ are the twists-functors induced by the graphs of the respective twists $\tau_{K_i}\in \Symp_{ct}(W), \tau_{L_i}\in \Symp_{ct}(Y)$.

\subsection{Results}
\subsubsection{Free groups generated by projective twists}
In Section \ref{freegenplumb} we consider Question \ref{mainq}\ref{questionb} and prove new criteria for projective twists to generate a free subgroup of the kernel of the comparison map \eqref{comparisonmap}. We consider a \emph{clean plumbing} (see Definition \ref{defidentify}) of Lagrangian projective spaces; a symplectic construction in which two copies of cotangent bundles $T^*\mathbb{A}\PP^n$ are glued along a common submanifold of the zero sections, and prove the following. \\

\begin{restatable}{theorem}{thmmainthm}
	\label{mainthm}
	Let $W= T^*\mathbb{A}\PP^n \#_{\mathbb{A}\PP^{\ell}} T^*\mathbb{A}\PP^n$ be a clean plumbing of (real, complex) projective spaces along a linearly embedded sub-projective space $\mathbb{A}\PP^{\ell}\subset W$, $\mathbb{A}\in \{  \R, \C \}$. Let $K_1,K_2\cong \APn \subset W$ denote the Lagrangian core components of the plumbing. Then the projective twists $\tau_{K_1}$ and $\tau_{K_2}$ generate a free group inside $\pi_0(\Symp_{ct}(W))$, and the associated functors $T_{K_1}, T_{K_2}$ generate a free subgroup of $Auteq(\Fuk(W))$.
\end{restatable}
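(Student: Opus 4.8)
The plan is to transport the statement, through the Hopf correspondence, into an auxiliary clean plumbing of Lagrangian \emph{spheres} where Keating's criterion (Theorem~\ref{keatingfree}) applies verbatim, and then to pull the resulting free-generation statement back to $W$ along the induced functor. The first point is to check the cohomological hypothesis needed to run the Hopf correspondence over $W$: since $W$ deformation retracts onto the union of zero sections $\APn\cup_{\mathbb{A}\PP^{\ell}}\APn$ and the inclusion of a linearly embedded $\mathbb{A}\PP^{\ell}\subset\APn$ induces an isomorphism on $H^{*}(-;R)$ in the degree $*$ attached to $\mathbb{A}$, a Mayer--Vietoris computation gives $H^{*}(W;R)\cong R$ with a generator $\alpha$ whose restriction to each core $K_i$ generates $H^{*}(\APn;R)$. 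This is precisely the input required, so the constructions of Sections~\ref{inducedfunctor} and~\ref{commutingfunctor} produce a Liouville manifold $(Y,\Omega)$, Lagrangian spheres $L_1,L_2\subset Y$ with $\Theta_{\Gamma}(K_i)\cong L_i$, the induced functor $\Theta_{\Gamma}\colon\Fuk(W)\to\Fuk(Y)$, and the commuting square \eqref{commutingofunctor} intertwining $T_{K_i}$ with $T_{L_i}$.

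Next I would identify $(Y,L_1,L_2)$ explicitly. Over each $K_i\cong\APn$ the $S^k$-fibred coisotropic $V$ restricts, since $\alpha|_{K_i}$ generates $H^{*}(\APn;R)$, to the Hopf fibration $S^k\to S^d\to\APn$, and over the plumbing locus to $S^k\to S^{d'}\to\mathbb{A}\PP^{\ell}$, where $(d,d')=(n,\ell)$ for $\mathbb{A}=\R$ and $(d,d')=(2n+1,2\ell+1)$ for $\mathbb{A}=\C$. Gluing the two local models $T^{*}\APn\rightsquigarrow T^{*}S^{d}$ over the plumbing region identifies $Y$ with the clean plumbing $T^{*}S^{d}\#_{S^{d'}}T^{*}S^{d}$, whose core components are $L_1,L_2\cong S^{d}$, meeting cleanly along $S^{d'}$. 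Since $\mathbb{A}\PP^{\ell}$ is a proper subspace we have $0\le d'<d$; the clean-intersection spectral sequence degenerates for degree reasons, so $\HF(L_1,L_2)\cong H^{*}(S^{d'};R)$ up to shift, giving $\rank\HF(L_1,L_2)=2$, whereas $\HF(L_i,L_i)\cong H^{*}(S^{d};R)$ sits in degrees $0$ and $d$. As $Y$ is a plumbing of cotangent bundles of spheres it is $\Z$-graded, and comparing the two computations (using $d'<d$) shows $L_1$ and $L_2$ are not quasi-isomorphic in $\Fuk(Y)$. Theorem~\ref{keatingfree} (which requires $\dim Y>2$) then gives that $T_{L_1},T_{L_2}$ generate a free subgroup of $\Auteq(\Fuk(Y))$ and $\tau_{L_1},\tau_{L_2}$ a free subgroup of $\pi_0(\Symp_{ct}(Y))$.

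Finally I would transfer this back to $W$. Iterating \eqref{commutingofunctor} yields $\Theta_{\Gamma}\circ w(T_{K_1},T_{K_2})\cong w(T_{L_1},T_{L_2})\circ\Theta_{\Gamma}$ for any word $w$ in two symbols; evaluating on $K_j$ and using $\Theta_{\Gamma}(K_j)\cong L_j$ gives $\Theta_{\Gamma}\bigl(w(T_{K_1},T_{K_2})(K_j)\bigr)\cong w(T_{L_1},T_{L_2})(L_j)$. If $w$ is a non-trivial reduced word, the proof of Theorem~\ref{keatingfree} in $Y$ produces $j\in\{1,2\}$, determined by the outer letter of $w$, with $w(T_{L_1},T_{L_2})(L_j)\not\cong L_j$; hence $w(T_{K_1},T_{K_2})(K_j)\not\cong K_j$ and $w(T_{K_1},T_{K_2})\not\cong\Id$ in $\Auteq(\Fuk(W))$, so $T_{K_1},T_{K_2}$ generate a free subgroup there. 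Since the natural homomorphism $\pi_0(\Symp_{ct}(W))\to\Auteq(\Fuk(W))$ carries $\tau_{K_i}$ to $T_{K_i}$, the two-generated subgroup $\langle\tau_{K_1},\tau_{K_2}\rangle$ surjects onto a free group of rank two, hence is itself free of rank two, which is the assertion.

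I expect the main obstacle to be the explicit identification in the second step: passing from the abstract output $(Y,\Gamma)$ of the general construction to the assertion that $Y$ is exactly the clean plumbing $T^{*}S^{d}\#_{S^{d'}}T^{*}S^{d}$ with $L_1,L_2$ as its core components --- that is, matching the bundle data over the plumbing of $W$ with the standard plumbing of sphere cotangent bundles --- together with the attendant control of $\HF(L_1,L_2)$ (degeneration of the clean-intersection spectral sequence). Once $Y$ is pinned down, the appeal to Theorem~\ref{keatingfree} and the transfer along $\Theta_{\Gamma}$ are essentially formal.
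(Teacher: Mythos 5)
Your proposal is correct and follows the paper's overall strategy: verify the cohomological hypothesis on the plumbing, run the Hopf correspondence to produce $(Y,\Omega)$ with spheres $L_i=\Theta_{\Gamma}(K_i)$ meeting cleanly along a sphere, invoke Keating's criterion there, and transfer back through the commuting square of twist functors. Two remarks on where you deviate. First, your transfer back to $W$ is a pure functoriality argument ($\Theta_{\Gamma}$ preserves quasi-isomorphism of objects, so $w(T_{K_1},T_{K_2})(K_j)\cong K_j$ would force $w(T_{L_1},T_{L_2})(L_j)\cong L_j$); the paper instead routes this step through the symplectic Gysin triangle for $\Gamma$ (Lemma \ref{gysinper}) combined with a mapping-cone comparison (Lemma \ref{lagisotopy}). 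Your shortcut suffices for the theorem as stated, whereas the Gysin route additionally yields the quantitative growth $\rank\HF(K_1,\varphi^{s}(K_1))\to\infty$ that the paper needs for the knottedness corollary. Second, the step you flag as the main obstacle --- identifying $Y$ globally as a clean plumbing $T^{*}S^{d}\#_{S^{d'}}T^{*}S^{d}$ --- is not actually required: the only input for Keating's lemmas is that $L_1\cap L_2=V|_{K_1\cap K_2}\cong S^{d'}$ is a clean intersection, which is immediate from the local structure of the coisotropic $V$ over the plumbing locus, whence $\rank\HF(L_1,L_2)=2$ by \cite{pozniak}; no global identification of $Y$ is needed. (Your grading argument that $L_1\not\cong L_2$ in $\Fuk(Y)$, required by Keating's hypotheses, is a point the paper leaves implicit, and is a welcome addition.)
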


\sloppy This theorem is proved using the Hopf correspondence to relate the functors $T_{K_1}, T_{K_2} \in Auteq(\Fuk(W))$ to functors $T_{L_1}, T_{L_2} \in Auteq(\Fuk(Y))$ induced by Dehn twists in a Liouville manifold $(Y, \Omega)$ constructed as a $T^*S^k$-bundle over $W$, $k\in \{ 0,1,3 \}$. This is made possible via the commuting diagram \eqref{commutingofunctor}.

We can then apply Keating's result (Theorem \ref{keatingfree}) to our setting to obtain a free generation result for $T_{L_1}, T_{L_2}$, that we translate into a free generation result for $T_{K_1}, T_{K_2}$ via the Hopf correspondence.

\begin{rmk}\label{rmkbraid}
	The case $W:=T^*\C\PP^1_1\#_{pt}T^*\C\PP^1_2$ can be obtained with the current literature (\cite{seidelknotted}, \cite{stbraid}), see Remark \ref{rmklowdimension}. \end{rmk}

\subsubsection{Positive products of twists in Liouville manifolds}

In Section \ref{generalisationprojective}, we restrict our attention to products of positive powers of twists, i.e Question \ref{mainq}\ref{questionb}.
 In a first instance, we analyse this question for Dehn twists, and we provide an alternative proof of Theorem \ref{bgz}, which was originally proved (by Barthes--Geiges--Zehmisch in \cite{bgzfilling}) via techniques involving open book decompositions. Our proof is implemented using Picard--Lefschetz theory. The idea is to build a Lefschetz fibration $\pi\colon E\to \C$ with smooth fibre the Liouville manifold $(M, \omega)$ and vanishing cycles the given Lagrangian spheres involved in the product $\phi \in \Symp_{ct}(M)$. In that way, the monodromy of $\pi$ is given by $\phi$. Assuming that there exists an isotopy $\phi \simeq Id$ as in the statement, we extend $\pi$ to a fibration over $\CPone$, and, by analysing the moduli space of pseudoholomorphic sections (following \cite{seideles}), obtain a contradictory statement. 

It can happen that a product of Dehn twist, despite being necessarily not (compactly supported symplectically) isotopic to the identity, preserves some Lagrangian submanifolds of $M$. The question arises, whether one can find a Lagrangian $T\subset M$ such that there can be no compactly supported symplectic isotopy $\phi(T)\simeq T$. The existence of such a Lagrangian would result in a stronger version of Theorem \ref{bgz}. In Section \ref{relativegeo}, we address this question. We find one possible candidate Lagrangian $T\subset M$ with the above properties, but unfortunately cannot prove that such a Lagrangian always exist.

A Lagrangian $T\subset M$ is called conical if it is an exact, properly embedded Lagrangian that is preserved by the Liouville flow over the cylindrical ends of $M$.

\begin{restatable}{theorem}{thmrelativeversion}
	\label{relativeversion1}
	\sloppy	Let $(M^{2n}, \omega)$ be a Liouville manifold containing embedded Lagrangian spheres $L_1, \dots , L_m$ and a conical Lagrangian disc $T$ intersecting one of the spheres $L_j$ transversely in a point.
	Let $\phi:= \prod_{i=1}^{k} \tau_{L_{j_i}}\in \Symp_{ct}(M)$, $j_i \in \{ 1, \dots , m \}$ be a positive word of Dehn twists involving $\tau_{L_j}$. Then the Lagrangians $T$ and $\phi(T)$ are not isotopic via a compactly supported Lagrangian isotopy.
	
\end{restatable}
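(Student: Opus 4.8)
The plan is to run the Picard--Lefschetz argument behind Theorem~\ref{bgz}, now relativised to the Lagrangian disc $T$. Suppose, towards a contradiction, that there is a compactly supported Lagrangian isotopy from $\phi(T)$ to $T$. Since $T$ is exact and conical and $\phi$ is compactly supported, $\phi(T)$ is again exact with the same conical end, so this isotopy is a compactly supported \emph{Hamiltonian} isotopy $\{T_t\}$ -- the feature of the hypothesis I will exploit. First I would realise $\phi$ as a monodromy: build a Lefschetz fibration $\pi\colon E\to\C$ with smooth fibre $(M,\omega)$ and, along a distinguished basis of vanishing paths issuing from a base point $\ast$ on a large circle $\partial D_R\subset\C$, ordered vanishing cycles $L_{j_1},\dots,L_{j_k}$, so that the monodromy around $\partial D_R$ is Hamiltonian isotopic to $\phi$; in particular the letter $\tau_{L_j}$ contributes the vanishing cycle $L_j$.

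The disc $T\subset M_\ast$ then supplies Lagrangian data in $E$. Parallel transporting $T$ along a ray $\rho$ from $\ast$ to infinity (disjoint from the vanishing paths except at $\ast$) yields a conical Lagrangian disc $\hat T\subset E$, which, because $|T\cap L_j|=1$, meets the thimble $\Delta_j$ over the vanishing path of $L_j$ transversely in exactly one point. Using the isotopy $\{T_t\}$ to close up, over a short terminal arc, the parallel transport of $T$ once around $\partial D_R$ produces an embedded Lagrangian $\mathcal T\subset\pi^{-1}(\partial D_R)$ diffeomorphic to $T\times S^1$; it is precisely here that the hypothesis $\phi(T)\simeq T$ is used, since otherwise the transport returns $\phi(T)\neq T$ and no such closed-up Lagrangian exists.

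I would then study the moduli space of pseudoholomorphic sections of $\pi|_{D_R}$ with boundary on $\mathcal T$, in the homotopy class of the section running through the point $\hat T\cap\Delta_j$, following the section-counting analysis of \cite{seideles}. On the one hand, degenerating the base along the vanishing paths expresses this count in terms of the thimbles and the single intersection point $T\cap L_j$; because $\phi$ is a \emph{positive} word -- every twist acting ``in the same direction'' -- the critical-point contributions cannot cancel, and the contribution of $\tau_{L_j}$, nonzero because $T$ meets $L_j$, survives, so the count is nonzero. On the other hand, exactness of $(M,\omega)$, of the total space $E$, and of $\mathcal T$, together with the a priori relative-area bound, a maximum principle confining sections to a compact part of $E$, and Gromov compactness, forbids such sections -- exactly the dichotomy exploited in the proof of Theorem~\ref{bgz}. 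These two conclusions contradict each other, so the assumed isotopy cannot exist.

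The main obstacle is the second half of this dichotomy together with the no-cancellation step. On the geometric side one must control the moduli of sections with the \emph{non-compact} boundary condition $\mathcal T$: a cylindrical-at-infinity almost complex structure and the conical structure of $\mathcal T$ and of the fibres should yield a maximum principle keeping sections in a compact region, exactness should exclude disc bubbles off $\mathcal T$ and (after a generic perturbation of $J$) sphere bubbles in the fibres, and the compatibility of all this with the Picard--Lefschetz degeneration must be verified -- the technical heart, running parallel to and building on the section-count analysis behind Theorem~\ref{bgz}. On the combinatorial side one must make the no-cancellation claim precise, namely that the $\tau_{L_j}$-contribution genuinely survives the degeneration; this is the relative incarnation of the mechanism that forces a positive word of Dehn twists to be non-trivial, and it is where the hypotheses are used essentially -- positivity of the word, the appearance of $\tau_{L_j}$, and $T$ meeting $L_j$ transversely in a single point (with $T$ a disc keeping the relevant intersection numbers, hence the count, as clean as possible). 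I expect it to reduce either to a grading/filtration bookkeeping on the broken configurations or to a positivity-of-intersections argument for the sections involved.
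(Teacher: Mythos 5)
Your setup is the right one --- realising $\phi$ as the monodromy of an exact Lefschetz fibration and closing up the parallel transport of $T$ around the monodromy loop into a Lagrangian $\mathcal T\cong T\times S^1$ (the paper's $P_{\sigma}$) is exactly how the paper begins, and you correctly identify the technical issues coming from the non-compact boundary condition (conical structure, maximum principle, reverse isoperimetric inequality, exclusion of bubbles). But the dichotomy you propose to close the argument does not work, and it hides the one genuinely new idea in the paper's proof. First, the claim that ``exactness \dots forbids such sections'' is false: constant sections through points of $P_{\sigma}$ near the horizontal boundary $\partial^h E$ are honest $\hat J$-holomorphic sections, and the paper's Lemma \ref{lemmasurj} in fact proves the opposite of what you want --- through \emph{every} point of $P_{\sigma}$ there is algebraically one section, so the evaluation map on the boundary is surjective of degree one. (The same is true in the proof of Theorem \ref{bgz}: the contradiction there is not non-existence of sections but surjectivity of the evaluation map onto a critical point.) Second, your ``no-cancellation'' claim --- that degenerating along the vanishing paths shows the count in the class through $\hat T\cap\Delta_j$ is nonzero because the word is positive --- is exactly the hard step, and the intersection point with a thimble cannot certify it: $\Delta_j$ is a disc whose boundary lies in an interior fibre, so it defines no class in $H_{n+1}(E;\Z)$ and its intersection number with $\mathcal T$ is not invariant.

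The missing idea is the paper's auxiliary construction: one builds the fibration with an \emph{extra} critical value carrying the same vanishing cycle $L_j$, so that the two copies of $L_j$ close up into a matching sphere $S_j\cong S^{n+1}\subset E$, a closed cycle in $H_{n+1}(E;\Z)$ meeting $P_{\sigma}$ transversely in one point. This gives $[P_{\sigma}]\neq 0$ in $H_{n+1}(E,\partial E;\Z)$ (Lemma \ref{lemmanontrivial}), and this is where the hypotheses that the word involves $\tau_{L_j}$ and that $|T\cap L_j|=1$ are actually used. The contradiction is then homological rather than a vanishing-versus-nonvanishing count: the degree-one evaluation map exhibits $[P_{\sigma}]$ as the boundary of the chain $ev_*(\M(\hat J,P_{\sigma})\times D)$, forcing $[P_{\sigma}]=0$. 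Without the matching sphere (or some substitute invariant detecting the nontriviality of $\mathcal T$), your two computations of the section count cannot both be carried out, and the argument does not close.
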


\begin{example}
	For $m>0$, consider an iterated transverse plumbing $M:=T^*S^m\#_{pt} T^*S^m \#_{pt} T^*S^m \cdots \#_{pt} T^*S^m$ (see Section \ref{plumbing} for the definition of plumbing). Let $\phi \in \Symp_{ct}(M)$ be a product of Dehn twists along the Lagrangian spheres of $M$, such that $\phi$ contains the Dehn twist along the $j$-th sphere. In this case, the conical Lagrangian disc $T\subset M$ of Theorem \ref{relativeversion1} is given by any cotangent fibre of the $j$-th summand.
\end{example}

The arguments we use in the proof of Theorem \ref{relativeversion1} are centred around the same principles of the method used for Theorem \ref{bgz}, with some necessary adjustments due to the non-compactness of the Lagrangian $T\subset M$.

At last, in Section \ref{projectiveproduct}, we turn to applications related to projective twists. Using the Hopf correspondence, we prove a result that can be considered the (real) projective counterpart to Theorem \ref{bgz}.

\begin{restatable}{theorem}{thmrealgeneral}
	\label{realgeneral}
	Let $(W^{2n}, \omega)$ be a Liouville manifold containing Lagrangian real projective spaces $K_1, \dots K_m$, $K_i \cong \RP$. Suppose that there is a class $\alpha \in H^1(W; \Zmod)$ such that for every $i=1,\dots , m$, $\alpha|_{K_i}$ generates $H^*(\RP; \Zmod)$.
	Let $\varphi \in \Symp_{ct}(W)$ be a positive word in the subset of projective twists $\{ \tau_{K_i}\}_{i \in \{ 1, \dots , m \}}$. Then $\varphi$ is not isotopic to the identity in $\pi_0(\Symp_{ct}(M)).$
\end{restatable}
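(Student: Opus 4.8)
The plan is to reduce Theorem \ref{realgeneral} to Theorem \ref{bgz} via the Hopf correspondence, using exactly the mechanism advertised in the introduction: the commuting diagram \eqref{commutingofunctor} together with the fact that the correspondence sends projective twists to Dehn twists on the auxiliary manifold. Concretely, under the cohomological hypothesis ($\mathbb{A}=\R$, $k=0$, $*=1$, $R=\Zmod$, with $\alpha\in H^1(W;\Zmod)$ restricting to a generator on each $K_i$), the construction recalled before Theorem \ref{mainthm} produces a Liouville manifold $(Y,\Omega)$ realised as an $S^0$-bundle (i.e.\ a double cover) $q\colon Y\to W$, a coisotropic $V\subset Y$ fibred over $W$, and the Hopf correspondence $\Gamma\subset W^-\times Y$, which assigns to each $K_i\subset W$ a Lagrangian sphere $L_i\subset Y$ (here literally $q^{-1}(K_i)$, a sphere since $S^n\to\RP$ is the orientation double cover). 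So the first step is simply to invoke this construction for the given tuple $(K_1,\dots,K_m,\alpha)$ and record the resulting spheres $L_1,\dots,L_m\subset Y$.

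Next I would pass to Fukaya categories. By the Ma'u--Wehrheim--Woodward machinery and Gao's extension to non-compact correspondences (Section \ref{inducedfunctor}), $\Gamma$ induces a functor $\Theta_\Gamma\colon\Fuk(W)\to\Fuk(Y)$, and the key structural input (Section \ref{commutingfunctor}) is that for each $i$ the square \eqref{commutingofunctor} commutes, i.e.\ $T_{L_i}\circ\Theta_\Gamma\simeq\Theta_\Gamma\circ T_{K_i}$ as functors, where $T_{K_i}$ is the autoequivalence of $\Fuk(W)$ induced by $\tau_{K_i}$ and $T_{L_i}$ that of $\Fuk(Y)$ induced by $\tau_{L_i}$. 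Given a positive word $\varphi=\prod_{s=1}^{k}\tau_{K_{j_s}}\in\Symp_{ct}(W)$, composing the diagram with itself yields $T_\varphi\circ\Theta_\Gamma\simeq\Theta_\Gamma\circ T_\varphi$ where on the $Y$-side $T_\varphi$ is realised by the corresponding positive word $\psi:=\prod_{s=1}^{k}\tau_{L_{j_s}}\in\Symp_{ct}(Y)$ in the Dehn twists along the spheres $L_{j_s}$. Thus if $\varphi$ were isotopic to the identity in $\pi_0(\Symp_{ct}(W))$, then $T_\varphi\simeq\Id$ on $\Fuk(W)$, hence $\Theta_\Gamma\circ\Id\simeq T_\psi\circ\Theta_\Gamma$, i.e.\ $T_\psi$ acts as the identity on the essential image of $\Theta_\Gamma$.

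The last step is to derive a contradiction on the $Y$-side from Theorem \ref{bgz}. The cleanest route is \emph{not} to try to deduce $\psi\simeq\Id$ directly (the image of $\Theta_\Gamma$ need not split-generate $\Fuk(Y)$), but rather to argue that $\psi\simeq\Id$ in $\Symp_{ct}(Y)$ is already forced and contradicts Theorem \ref{bgz} applied to the spheres $L_1,\dots,L_m\subset Y$ directly --- the honest statement is that the Hopf correspondence (being geometric, coming from the double cover $q$) intertwines $\varphi$ itself with $\psi$: a compactly supported symplectic isotopy $\varphi\simeq\Id$ on $W$ lifts along $q$ to a compactly supported symplectic isotopy $\psi\simeq\Id$ on $Y$, since $\tau_{K_i}$ lifts to $\tau_{L_i}$ under the cover (this is the geometric content underlying the commuting diagram, and is the reason $Y$ was built as an $S^0$-bundle in this case). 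Then Theorem \ref{bgz}, applied to the Liouville manifold $Y$ with Lagrangian spheres $L_1,\dots,L_m$ and the positive word $\psi$, says $\psi$ is not compactly supported isotopic to the identity, a contradiction. Therefore $\varphi\not\simeq\Id$ in $\pi_0(\Symp_{ct}(W))$.

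The main obstacle I anticipate is the last geometric lifting claim --- making precise that a \emph{compactly supported} symplectic isotopy of $W$ lifts to a compactly supported one of $Y$, and that $\tau_{K_i}$ genuinely lifts to (a symplectomorphism isotopic to) $\tau_{L_i}$. The projective twist $\tau_{K}$ is built from the periodic (co)geodesic flow on $\RP$, which pulls back under $S^n\to\RP$ to the antipodal-related geodesic flow on $S^n$ whose time-$\pi$ map is the antipode; tracking this through the normalisation of framings to identify the lift with the standard Dehn twist $\tau_L$ (rather than some other framed twist) requires care, and is presumably where the hypotheses on $\alpha$ and the precise model of the Hopf correspondence from Section \ref{inducedfunctor}--\ref{commutingfunctor} get used. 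If the direct geometric lift is awkward to set up compactly supported, the fallback is the purely categorical argument: $T_\psi$ restricts to the identity on $\mathrm{Im}(\Theta_\Gamma)$, which contains (the images of) objects whose Floer-theoretic interaction with the $L_{j_s}$ is nontrivial, and one runs the Picard--Lefschetz/section-counting argument of Theorem \ref{bgz} relative to those objects --- but this is heavier, so I would first try to push the clean geometric reduction through.
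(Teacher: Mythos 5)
Your ``cleanest route'' is exactly the paper's proof: it constructs the symplectic double cover $q\colon\widetilde{W}\to W$ from $\alpha$, lifts $\varphi$ to the positive word of Dehn twists $\phi$ along the spheres $L_i=q^{-1}(K_i)$ via Lemma \ref{cotangentcommuting2}, lifts the hypothetical isotopy $\varphi\simeq\Id$ to a compactly supported family upstairs, and contradicts Theorem \ref{bgz}. The obstacle you flag at the end is resolved in the paper precisely as you suspect: the endpoint of the lifted isotopy covers the identity, so it is either the identity or a deck transformation, and the latter is excluded because it is not compactly supported (the categorical detour via $\Theta_\Gamma$ in your first two paragraphs is not needed).
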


Using the cohomological assumption of the theorem, we establish the Hopf correspondence and prove the theorem by contradiction. The idea is that in these circumstances, there exists a product of Dehn twists $\phi \in \Symp_{ct}(\widetilde{W})$ in the symplectic double cover $q\colon (\widetilde{W}, \widetilde{\omega})\lra (W,\omega)$, such that $q\circ \phi=\varphi\circ q$. Then, an isotopy $\varphi\simeq Id$ in $\Symp_{ct}(W)$ can be lifted to an isotopy $\phi \simeq Id $ in $\Symp_{ct}(\widetilde{W})$, contradicting Theorem \ref{bgz}.

Unfortunately, the same techniques do not yield a result for complex projective twists; in that case, the auxiliary manifold $(Y,\Omega)$ defines a $\C^*$-bundle $q\colon Y\to W$ and a compactly supported isotopy on $W$ does not lift to a compactly supported isotopy on $Y$, so that the arguments used for Theorem \ref{realgeneral} do not apply here.

\subsubsection{Framing of projective twists and Lagrangian embeddings of homotopy projective spaces}

The last section uses the Hopf correspondence to examine the ways in which the symplectic structure interpheres with the underlying topological structures, such as diffeomorphism and homeomorphism class, of Lagrangian homotopy projective spaces. 
In this paper, a manifold that is homeomorphic but not diffeomorphic to a standard (real or complex) projective space is called an AD projective space. A manifold that is homotopy equivalent but not homeomorphic to a standard (real or complex) projective space is called an AT projective space. Similarly, an AD sphere is a sphere that is homeomorphic but not diffeomorphic to the standard sphere (we decide to drop the usual epithet \emph{exotic}, see Definition \ref{abolishexotic}).

A notorious conjecture, known under the name of \emph{nearby Lagrangian conjecture}, states that given a closed smooth manifold $Q$, any closed exact Lagrangian submanifold of $(T^*Q,d\lambda_Q)$ is Hamiltonian isotopic to the zero section. 
This conjecture has generated a lot of interest in the symplectic community, but its statement is currently confirmed only up to homotopy equivalence (\cite{abouzaidnearby}, \cite{kraghnearby}, \cite{aknearby}; in Section \ref{stateart} we summarise the state of the art of this conjecture).
For a homotopy sphere $L$, it is known that the choice of smooth structure can be an obstruction to the existence of a Lagrangian embedding $L\hookrightarrow T^*S^n$. Namely, for $n>4$ odd, AD spheres which do not bound parallelisable manifolds admit no Lagrangian embedding into $T^*S^n$ (\cite{abouzaidframed, ekspheres}).

Using the existing literature about $S^1$-actions on AD spheres (\cite{bredonpairing}, \cite{jamesfree}, \cite{kasilingamprojective}), we find, in Section \ref{nonemb}, examples of non-standard homotopy complex projective spaces which do not admit Lagrangian embeddings into $T^*\CP$. These results are compatible with the predictions derived from the nearby Lagrangian conjecture.

\begin{theorem}[{{Theorems \ref{cor1}, \ref{cor2}}}]
	\begin{enumerate}[label=(\roman*)]
		\item 	There is a manifold $P$ homotopy equivalent to $\C \PP^4$ and with the same first Pontryagin class such that neither $P$ nor $P \# \Sigma^8$ admit an exact Lagrangian embedding into $T^*\C\PP^4$.
		\item 	There is an element $\Sigma^{14}$ in the group of homotopy $14$-spheres $\Theta_{14}$ such that $\C\PP^7\#\Sigma^{14}$ does not admit an exact Lagrangian embedding into $T^*\C\PP^7$.	
	\end{enumerate}	
\end{theorem}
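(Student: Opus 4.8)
The plan is to argue by contradiction. If a manifold $K$ homotopy equivalent to $\CP$ admitted an exact Lagrangian embedding into $(T^*\CP, d\lambda_{\CP})$, I would push it through the complex Hopf correspondence to produce an exact Lagrangian \emph{homotopy sphere} $L_K$ inside $T^*S^{2n+1}$; for the cases at hand, $n\in\{4,7\}$ so $2n+1\in\{9,15\}$, and this is obstructed by the non-existence results for Lagrangian embeddings of exotic spheres that do not bound parallelisable manifolds (\cite{abouzaidframed, ekspheres}, applicable since $2n+1$ is odd and $>4$). So the point is to (a) produce $L_K$, (b) pin down its smooth structure, and (c) choose $K$ — namely $P$, $P\#\Sigma^8$, and $\C\PP^7\#\Sigma^{14}$ — so that $L_K$ lands outside $bP_{2n+2}$.

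First I would check the cohomological hypothesis of the Hopf construction: by the homotopy-theoretic form of the nearby Lagrangian conjecture (\cite{abouzaidnearby, kraghnearby, aknearby}), the projection $K \hookrightarrow T^*\CP \to \CP$ is a homotopy equivalence, so the pullback $\alpha \in H^2(T^*\CP;\Z)$ of the hyperplane class restricts to a generator of $H^2(K;\Z)$ and hence $\alpha|_K$ generates $H^*(K;\Z)$. The Hopf construction then applies with $W = T^*\CP$, $\mathbb{A}=\C$, $k=1$: the auxiliary Liouville manifold $Y$ is (the Liouville completion of) the standard $T^*S^{2n+1}$, the $S^1$-fibred coisotropic $V \subset T^*S^{2n+1}$ is the zero level set of the moment map for the cotangent lift of the Hopf circle action, and $\Gamma = \{(q(y), y) : y \in V\}$ exhibits $T^*\CP$ as the symplectic reduction of $T^*S^{2n+1}$ along $V$ (cf.\ \cite{perutzgysin}). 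Unwinding the geometric composition, the Lagrangian $L_K := \Gamma \circ K = q^{-1}(K)\cap V$ is cut out cleanly, hence is an embedded exact Lagrangian submanifold of $T^*S^{2n+1}$: it is the total space of the circle bundle over $K$ with Euler class $\alpha|_K$. Since $\alpha|_K$ generates $H^2(K;\Z)\cong\Z$, the Gysin sequence gives $L_K$ the integral homology of $S^{2n+1}$, the homotopy sequence of the $S^1$-fibration gives $\pi_1(L_K)=1$, and as $\dim L_K = 2n+1 \geq 5$ the $h$-cobordism theorem makes $L_K$ a homotopy $(2n+1)$-sphere. Thus the existence of the exact Lagrangian $K\subset T^*\CP$ forces the homotopy sphere $L_K$ to embed as an exact Lagrangian in $T^*S^{2n+1}$.

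It then remains to identify the class $[L_K] \in \Theta_{2n+1}$. By construction $L_K$ is the homotopy $(2n+1)$-sphere carrying the free $S^1$-action with quotient $K$, so this is exactly the data studied in \cite{bredonpairing, jamesfree, kasilingamprojective}: for $K = \CP$ standard one recovers $L_K = S^{2n+1}$ (the Hopf action), and connect-summing a homotopy sphere $\Sigma$ into the base reglues the circle bundle along a Hopf torus $S^1\times S^{2n-1}\subset S^{2n+1}$, an operation evaluated in $\Theta_{2n+1}$ by the cited work (it factors through the natural $\Theta_{2n} \to \Theta_{2n+1}$ induced by multiplication by $\eta\in\pi_1^s$, composed with the surgery data of $K$). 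In part (i) the constraint $p_1(P) = p_1(\C\PP^4)$ pins down the homeomorphism type of $P$ and lines up with the structure-set computation; one then extracts a homotopy $\C\PP^4$ $P$ for which both $L_P$ and $L_{P\#\Sigma^8}$ lie outside $bP_{10}\subset\Theta_9$ (with $\Sigma^8$ the generator of $\Theta_8$), and in part (ii) an element $\Sigma^{14}\in\Theta_{14}$ for which $L_{\C\PP^7\#\Sigma^{14}}$ lies outside $bP_{16}\subset\Theta_{15}$. For each such $K$, $L_K$ is an exotic sphere not bounding a parallelisable manifold, so by \cite{abouzaidframed, ekspheres} it admits no Lagrangian embedding into $T^*S^{2n+1}$ — contradicting the previous paragraph. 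Hence none of these $K$ embeds, which is the assertion.

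I expect the main obstacle to be this last step: computing $[L_K]\in\Theta_{2n+1}$ and, above all, controlling how connect-summing an exotic sphere into $\CP$ displaces it, so as to be sure the lift escapes $bP_{2n+2}$ — this is where the $\pi_*^s$-module structure on $\Theta_*$, the Kervaire--Milnor sequences, and the surgery classification of homotopy projective spaces must all be brought to bear. A secondary point — routine given the apparatus developed earlier in the paper, but still to be verified in this case — is that the Ma'u--Wehrheim--Woodward/Gao composition of $K$ with $\Gamma$ yields an \emph{embedded} exact Lagrangian and that $Y$ really is the standard $T^*S^{2n+1}$, so that the obstruction of \cite{abouzaidframed, ekspheres} applies verbatim.
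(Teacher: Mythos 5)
Your reduction is the same as the paper's: assume $K\hookrightarrow T^*\CP$ is an exact Lagrangian, use the nearby Lagrangian theorem to verify the cohomological hypothesis, lift through the Hopf correspondence to an exact Lagrangian homotopy sphere in $T^*S^{2n+1}$, and contradict the non-embedding results of \cite{abouzaidframed, ekspheres} via Corollary \ref{lagrembedding}. That part is fine.

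The gap is in the step you yourself flag as ``the main obstacle'': you never actually produce the manifolds $P$, $P\#\Sigma^8$ and $\C\PP^7\#\Sigma^{14}$ whose circle-bundle lifts escape $bP_{2n+2}$ --- and the mechanism you sketch for producing them cannot work in case (i). You propose to identify $[L_K]\in\Theta_{2n+1}$ by connect-summing an exotic sphere into the base and tracking the effect through the pairing with $\eta\in\pi_1(S)$, i.e.\ through the map $stab\colon\Theta_{2n}\to\Theta_{2n+1}$. But for $n=4$ the relevant class is $\eta^4=0$, so the circle bundle over $\C\PP^4\#\Sigma^8$ is the \emph{standard} $S^9$ and the obstruction vanishes; this is precisely why the paper remarks that its techniques cannot decide whether $\C\PP^4\#\Sigma^8$ embeds. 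The example $P$ of part (i) is not of connect-sum type at all: it is an AT projective space, \emph{not homeomorphic} to $\C\PP^4$ (so your assertion that the constraint $p_1(P)=p_1(\C\PP^4)$ pins down the homeomorphism type is false for the very example needed). The paper closes the argument by running the construction in the opposite direction: it starts from James's homotopy $9$-sphere $\widetilde{S}\in\Theta_9\setminus bP_{10}$, which is known to carry two free $S^1$-actions with quotients $P$ and $P\#\Sigma^8$ respectively, and \emph{defines} the candidates as those quotients; similarly for part (ii) it quotes Bredon's Theorem 4.6 giving $\Sigma^{15}\in\Theta_{15}\setminus bP_{16}$ with a free action whose quotient is $\C\PP^7\#\Sigma^{14}$. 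Going ``top down'' from the sphere to the quotient sidesteps entirely the smooth-structure computation of the lift, which is where your write-up stalls. To repair your argument you would need to replace the connect-sum/$\eta$-multiplication heuristic by these explicit citations (or an equivalent existence proof).
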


On the other hand, in Section \ref{aptwist}, we present new results which prove that in general, the Hamiltonian isotopy class of projective twists does depend on a choice of framing, i.e a choice of smooth parametrisation $f \colon \CP \to L$ (see Definition \ref{definitionframing}). It was proved by Dimitroglou Rizell and Evans (\cite{dretwist}) that a non-standard parametrisation $S^n\to L$ of a Lagrangian sphere can give rise to a Dehn twist that is not isotopic to the standard Dehn twist $\tau_{S^n}$. 

We use classical homotopy theory and the Hopf corrsepondence to transpose the existence of non-standard parametrisations of Dehn twists of \cite{dretwist} into instances of projective twists depending on their framing.

\begin{restatable}[Corollary \ref{embcor1}]{theorem}{thmframing}
	\label{framingthm}
	The $\CP$-twist depends on the framing when $n=19,23,25,29$.
\end{restatable}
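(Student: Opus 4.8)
The plan is to deduce the statement from the Hopf correspondence together with the known dependence of Dehn twists on their framing, as established by Dimitroglou Rizell and Evans in \cite{dretwist}. First I would set up the relevant instance of the Hopf correspondence: take $W := T^*\CP$ with the canonical class $\alpha \in H^2(\CP;\Z) = H^2(W;\Z)$ restricting to a generator on the zero section $K = \CP$. By the construction recalled in the introduction (with the tuple $(\C,1,2,\Z)$), this produces an auxiliary Liouville manifold $(Y,\Omega)$, realised as a $T^*S^1$-bundle $q\colon Y \to W$, containing an $S^1$-fibred coisotropic $V$ and a Lagrangian correspondence $\Gamma \subset W^- \times Y$ such that $\Gamma$ carries $K = \CP$ to a Lagrangian sphere $L \subset Y$. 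Concretely, because the Hopf fibration $S^{2n+1} \to \CP$ is being promoted to the total space, $L$ is diffeomorphic to $S^{2n+1}$ and the $T^*S^{2n+1}$-neighbourhood it bounds is the relevant local model.

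Next I would track how framings transport across the correspondence. A framing of $K \cong \CP$ is a choice of parametrisation $f\colon \CP \to K$, and I claim the Hopf correspondence functorially assigns to it a framing of $L \cong S^{2n+1}$, roughly by pulling back along the Hopf map $S^{2n+1} \to \CP$; the key point is that the commuting diagram \eqref{commutingofunctor} refines to one that remembers the framing, so that the functor $T_L$ attached to the Dehn twist $\tau_L$ built from the transported framing is intertwined with the functor $T_K$ attached to the projective twist $\tau_K$ built from $f$. Then, if two framings $f_0, f_1$ of $\CP$ produced projective twists $\tau_{K}^{(0)}, \tau_{K}^{(1)}$ that were Hamiltonian isotopic, the diagram would force the associated Dehn twist functors $T_L^{(0)}, T_L^{(1)}$ to agree in $\Auteq(\Fuk(Y))$; by the quasi-faithfulness of the Fukaya category in this local setting, this would force $\tau_L^{(0)} \simeq \tau_L^{(1)}$ in $\Symp_{ct}(T^*S^{2n+1})$, contradicting \cite{dretwist} whenever the two transported framings differ there.

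The actual input from homotopy theory is then: for which $n$ do there exist two framings of $\CP$ whose images under the Hopf transfer $S^{2n+1} \to \CP$ are framings of $S^{2n+1}$ distinguished by the Dimitroglou Rizell--Evans invariant? Framings of $L \cong S^n$ are measured, up to the relevant equivalence, by $\pi_n(SO)$ or a related group, and \cite{dretwist} detects a difference when a certain class is nonzero; the set of framings of $\CP$ is correspondingly governed by $\pi_{2n+1}(\mathrm{PU}(n+1))$ or $[\CP,SO]$. I would compute the transfer map on these groups and identify the dimensions $2n+1 \in \{39,47,51,59\}$, i.e. $n \in \{19,23,25,29\}$, as exactly those where a nonzero class in the source maps to a class detected by the \cite{dretwist} invariant on the sphere side — using the tables of stable homotopy groups of spheres and of orthogonal/unitary groups in these ranges. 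The hard part will be this last homotopy-theoretic bookkeeping: making the transfer of framings along the Hopf map precise, checking it is compatible with the equivalence relation under which the twist depends on the framing, and pinning down the four dimensions by an honest (if routine-in-principle) computation with $\pi_*(SO)$, $\pi_*(\mathrm{U})$, and the image of the $J$-homomorphism. A secondary subtlety is ensuring the Hopf correspondence's induced functor is defined and well-behaved for $W = T^*\CP$ (unobstructedness, the cohomological hypothesis, convergence of the quilted Floer theory), but this is handled by the general results of the earlier sections.
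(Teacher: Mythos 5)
Your proposal has the right overall shape---use the Hopf correspondence to push the question from $T^*\CP$ to $T^*S^{2n+1}$ and then invoke the Dimitroglou Rizell--Evans circle of ideas---but the central mechanism you propose does not work. You want to pass through the commuting diagram of functors \eqref{commutingofunctor} and argue that if two framed projective twists were Hamiltonian isotopic, the associated Dehn twist functors $T_L^{(0)}, T_L^{(1)}$ would agree in $\Auteq(\Fuk(Y))$, and then recover $\tau_L^{(0)}\simeq\tau_L^{(1)}$ in $\Symp_{ct}$ by ``quasi-faithfulness''. This fails for two reasons. First, the twist functor induced by a Dehn twist (and likewise by a projective twist) is \emph{independent} of the framing --- this is exactly why the framing question is subtle and why \cite{dretwist} could not work at the categorical level; so $T_L^{(0)}$ and $T_L^{(1)}$ agree for \emph{any} two framings and no contradiction can be extracted there. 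Second, there is no quasi-faithfulness statement letting you deduce symplectic isotopy of symplectomorphisms from isomorphism of the induced functors; the map $\pi_0(\Symp_{ct})\to\Auteq(\Fuk)$ is not injective in general. The categorical route is therefore blind to precisely the phenomenon you are trying to detect.

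The actual argument is geometric and involves a dimension shift you have missed. One assumes $\tau_{\CP}^{-1}\circ\tau_f\simeq\mathrm{Id}$, forms the Lagrangian suspension of the isotopy in $T^*\CP\times T^*[0,1]$, lifts it fibrewise via the Hopf correspondence to $T^*S^{2n+1}\times T^*[0,1]$, and then caps it off inside $T^*S^{2n+2}$ (using the Lefschetz fibration presentation of $T^*S^{2n+2}$ with fibre $T^*S^{2n+1}$) to produce a Lagrangian homotopy sphere $\Sigma_F\in\Theta_{2n+2}$ meeting a cotangent fibre transversely once. The inclusion $\L_{2n+2}\subset bP_{2n+3}=0$ then forces $\Sigma_F$ to be standard. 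The contradiction comes from identifying $\Sigma_F$ with $stab(\Sigma_f)=\langle\Sigma_f,\eta^n\rangle\in\Theta_{2n+2}$ via Bredon's pairing, where $\Sigma_f\in\Theta_{2n+1}$ is the homotopy sphere of the framing $f$: one needs $n$ odd (so $\eta^n=\eta\neq 0$) and an element of $\coker(J_{2n+1})$ whose $\eta$-multiple survives in $\coker(J_{2n+2})\cong\Theta_{2n+2}$; inspecting the two-primary parts of the stable stems yields exactly $2n+1\in\{39,47,51,59\}$. Your proposed bookkeeping with $\pi_*(SO)$, $\pi_*(\mathrm{U})$ and $[\CP,SO]$ is aimed at the wrong groups: the relevant objects are $\pi_0\Diff^+_{ct}(D^{2n})\cong\Theta_{2n+1}$, the lift of $f$ to a diffeomorphism of $S^{2n+1}$ supported near a Hopf circle, and the multiplication-by-$\eta$ map on cokernels of $J$ one dimension up.
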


This shows that in general, $\Symp_{ct}(T^*\CP)$ is not generated by the standard projective twist along the zero section $\tau_{\CP}$ (see Corollary \ref{notgenerated}). Moreover, we also note that the use of advanced topological technology (\emph{topological modular forms}) can prove the existence of infinitely many non-standardly framed (complex) projective twists (Remark \ref{infinitemany}).

\subsection{Organisation of the paper}

The rest of the paper is organised as follows. 

Sections \ref{hopfbundle} and \ref{hopfcorr} are the two theoretical cores that support the arguments throughout the whole paper. After recalling the principal properties of twists in Section \ref{sectiontwist}, in Section \ref{hopfbundle} we prove commutative diagrams involving Dehn twists, Hopf map and projective twists in the geometric setting. In Section \ref{hopfcorr} we define the Hopf correspondence and its applications for diagrams of functors of the Fukaya category induced by Dehn/projective twists. 

The central body of the paper is is divided in three parts in which we apply the methods developed.
We prove a free group generation criterion for projective twists in plumbings in Section \ref{freegenplumb}, we study positive products of twists in general Liouville manifolds in Section \ref{dehngeneral} and framings of projective twists as well as Lagrangian embedding of homotopy projective spaces in Section \ref{epilogue}.

\subsection{Acknowledgements}

I would like to express my deepest gratitude to my PhD supervisor Ivan Smith, who provided support at every stage of this project. It was he who first suggested that I study products of projective twists, and helped me reconfigure and redefine the project as it unfolded. This work would not have seen the light of day without his meticulous explanations and knowledgeable advice; many results in this paper matured from original ideas he generously shared with me. I am also very grateful to him for providing helpful comments on numerous versions of this paper.

The ideas behind the proof of Theorem \ref{bgz} were first suggested by Paul Seidel, and I would like to thank Jack Smith for helping me implement it, and for his help and feedback throughout the writing process.

I am extremely grateful to Cheuk Yu Mak for sharing his expertise on projective twists, which helped me improve Section \ref{hopfbundle} considerably. I also want to thank him for his precious feedback on earlier versions of this work.

Many thanks also to Manuel Krannich and Oscar Randall-Williams, for sharing valuable insights on the action of the stable stem on homotopy spheres, which contributed to the non-triviality result of Lemma \ref{nontrivialcriterion}. Moreover, it was M.K who suggested I upgrade the result to infinitely many dimensions using topological modular forms (Remark \ref{infinitemany}).

I would like to extend my thanks to Ailsa Keating for guidance in the early phase of this work, to Gabriel Paternain for taking the time to hear about my work and give me valuable feedback, and to Jonny Evans, Momchil Konstantinov, Emily Maw and Agustin Moreno for helpful discussions.

I am most thankful to Vivien Gruar, whose generous help and support (both administrative and personal) played an essential role in fostering the optimal work environment to pursue my research.

Last but not least, I wish to express my sincere gratitude to my family and friends, who accompanied and guided me in these years of meandering explorations towards intellectual autonomy. 

This work was supported by the UK Engineering and Physical Sciences Research Council (EPSRC).

\section{Twists}\label{sectiontwist}

This section provides the contextualisation necessary for studying Dehn twists and projective twists in symplectic topology; it can be skipped by the expert reader.
We summarise the constructions of twists from a geodesic flow perspective (Section \ref{deftwist}), and as local monodromies of fibrations (Section \ref{dtandlf}).

\subsection{Twists from geodesic flow}\label{deftwist}
In this section we recall the definitions of Dehn and projective twists that employs the periodicity of the geodesic flow of spheres and projective spaces (the main references are \cite[1.2]{seideles}, \cite[4.b]{seidelgraded}, \cite[2.1]{makwu1}).
Let $(K,g)$ be a closed connected Riemannian manifold admitting a periodic cogeodesic flow $\Phi_K^t \colon T^*K \rightarrow T^*K$ on its cotangent bundle $(T^*K,d\lambda_{T^*K})$, such that each geodesic of length $2\pi$ is closed (so that the shortest period of a unit-speed geodesic is $2\pi$).

Let $\| \cdot \|_K$ be the norm associated to the given Riemannian metric $g$. The normalised cogeodesic flow satisfies $\Phi_K^{2\pi}=Id$ and can be extended to a Hamiltonian $S^1-$action $\sigma^H_t$ on $T^*K \setminus K$, with moment map $H: T^*K \setminus K \rightarrow \mathbb{R}, \ H(v)=\| v \|_K$.
\begin{definition}\label{twistlocal}
Let $K$ be diffeomorphic to $S^n$. For $\eps>0$, define an auxiliary smooth cut-off function $r_{\eps}\colon \R^+\to \R$ such that $0<r_{\eps}(t)< \pi$ for all $t<\eps$ and 

 \begin{align}\label{sphereconv}
	r_{\eps}(t)= \left\{
	\begin{array}{ll}
	\pi- t& t\ll \eps \\
	0 & t \geq \eps
	\end{array}
	\right.
	\end{align}

The model Dehn twist $\tau_K^{loc} \colon T^*K \rightarrow T^*K$  is defined as

	\begin{align}
	\tau_K^{loc}(\xi)= \left\{
	\begin{array}{ll}
	\sigma^H_{r_{\eps}(\|\xi \|_K)}(\xi) & \xi \notin K  \\
	-\xi & \xi \in K.
	\end{array}
	\right.
	\end{align}	
	
\end{definition}
	
	\begin{definition}
Let $K$ be diffeomorphic to $\APn$ for $\A\in \{\R, \C, \HH \}$. For $\eps>0$, let $r_{\eps}\colon \R^+\to \R$ be a smooth cut-off function such that $0<r_{\eps}(t)<2 \pi$ for all $t <\eps$ and

\begin{align}\label{projconv}
r_{\eps}(t)= \left\{
\begin{array}{ll}
2\pi-t& t \ll \eps \\
0 & t \geq \eps
\end{array}
\right.
\end{align}

The model projective twist $\tau_K^{loc} \colon T^*K \rightarrow T^*K$  is defined as
	
	\begin{align}\label{projtwistdef}
	\tau_K^{loc}(\xi)= \left\{
	\begin{array}{ll}
	\sigma^H_{r_{\eps}(\|\xi \|_K)}(\xi) & \xi \notin K  \\
	\xi & \xi \in K.
	\end{array}
	\right.
	\end{align}
	\end{definition}

\begin{rmk}
	Our choice of cut-off functions $r_{\eps}$ follows \cite[2.1]{makwu1}, but the construction is independent of such choices (\cite{seidelgraded}).
\end{rmk}

\begin{thm}[{{\cite[Corollary 4.5]{seidelgraded}}}]
	Let $(K,g)$ be a Riemannian manifold admitting a periodic (co-)geodesic flow and satisfying $H^1(K; \R)=0$. Then the symplectomorphisms $\tau_K^{loc}$ have infinite order in $\pi_0(\Symp_{ct}(T^*K))$.
\end{thm}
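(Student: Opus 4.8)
The plan is to reduce the statement to an invariant of the symplectomorphism $\tau_K^{loc}$ that is insensitive to symplectic isotopy but detects infinite order. The standard tool here is Floer theory: one shows that the sequence of powers $(\tau_K^{loc})^N$ acts nontrivially on Floer-theoretic invariants in a way that grows without bound, so no power can be isotopic to the identity. Concretely, I would work with the fixed-point Floer homology $\HF_*(\tau^N)$ of the $N$-th iterate, or equivalently with the Lagrangian Floer homology of the graph of $\tau^N$ inside $T^*K \times (T^*K)^-$, or with the action of the induced autoequivalence on the (wrapped or compact) Fukaya category of $T^*K$. Since $H^1(K;\R)=0$ guarantees that symplectic isotopy equals Hamiltonian isotopy in the compactly supported setting, the Floer invariants of $\tau^N$ depend only on the class in $\pi_0(\Symp_{ct}(T^*K))$, which is exactly what we need.

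The key geometric input is the relation between the twist and the periodic geodesic flow. Because $\tau_K^{loc}$ is built from the normalised cogeodesic flow $\sigma^H_t$ via a cutoff $r_\eps$ sweeping through a full period (length $2\pi$ for spheres, $4\pi$-worth in the projective case via $2\pi$ in the normalisation), the iterate $(\tau_K^{loc})^N$ is, away from a small neighbourhood of $K$, conjugate to wrapping by the flow for a time proportional to $N$. The plan is to feed a cotangent fibre $T^*_x K$ (or the conormal to a point / a linear subprojective space) through the twist and observe that $(\tau_K^{loc})^N$ applied to such a fibre produces a Lagrangian that winds $N$ times around $K$, hence intersects a fixed reference Lagrangian in a number of points growing linearly in $N$ — and crucially, after passing to Floer homology these intersection points cannot all cancel, because one can arrange a grading or action filtration obstructing the differential. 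This is essentially the mechanism in \cite{seidelgraded} (the dilatation/graded symplectomorphism argument) and in Seidel's long exact sequence for Dehn twists; for the projective case one uses the analogous exact triangle of \cite{harristwist} or the Morse--Bott degeneration picture.

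An alternative, cleaner route that I would run in parallel: use the action of $\tau_K^{loc}$ on the grading structure. A graded symplectomorphism of $T^*K$ acts on gradings of Lagrangian branes; the twist shifts the grading of a fibre (or of $K$ itself, in the projective case) by a fixed nonzero amount determined by the Maslov data of the periodic orbit, so $(\tau_K^{loc})^N$ shifts by $N$ times that amount. A shift by a nonzero degree can never be realised by a compactly supported Hamiltonian isotopy (which acts trivially on gradings up to the graded lift), so $(\tau_K^{loc})^N \neq \Id$ in $\pi_0(\Symp_{ct})$ for all $N \neq 0$. This is really the content of \cite[Corollary 4.5]{seidelgraded}, specialised here; one must check the bookkeeping that the grading shift per application of the twist is indeed nonzero, which is where $H^1(K;\R)=0$ and the orientability/parametrisation hypotheses enter.

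The main obstacle is the Floer-theoretic/compactness bookkeeping rather than any conceptual difficulty: one must verify that the relevant Floer homology is well-defined for the iterates (the fibre and the twisted fibre are exact Lagrangians in $T^*K$, so this is standard, but the gradings and the non-cancellation of generators need care), and in the projective case one must ensure the Morse--Bott nature of the construction does not introduce spurious contributions — this is handled by the exact triangles for projective twists but requires citing the appropriate structural results. Once the grading shift (equivalently, the linear growth of Floer rank) is established, the conclusion that $\tau_K^{loc}$ has infinite order in $\pi_0(\Symp_{ct}(T^*K))$ is immediate. Since this statement is quoted from \cite[Corollary 4.5]{seidelgraded}, in the paper itself I would simply record it as a cited theorem and defer the argument above to that reference.
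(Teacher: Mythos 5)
The paper does not prove this statement: it is recorded verbatim as a cited result, \cite[Corollary 4.5]{seidelgraded}, exactly as you suggest doing in your final sentence. Your ``alternative, cleaner route'' is precisely Seidel's argument: $\tau_K^{loc}$ admits a preferred graded lift (trivial near infinity), this lift shifts the grading of the zero section (equivalently, of a cotangent fibre) by a fixed nonzero integer determined by the Maslov/Morse index of the closed geodesics, the shift is additive under iteration, and since $H^1(T^*K;\R)\cong H^1(K;\R)=0$ forces compactly supported symplectic isotopies to be Hamiltonian --- hence to act trivially on gradings --- no iterate can be isotopic to the identity. Your first route (linear growth of Floer rank of twisted fibres) would also work but is harder to make precise, and is not needed; the grading argument alone is the content of the cited corollary.
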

\begin{thm}{{\cite[Proposition 4.6]{seidelgraded}}}]\label{cpntrivial}
	The symplectomorphism $\tau_{\CP}^{loc}$ of Definition \ref{twistlocal} is isotopic to the identity in $\Diff_{ct}(T^*\CP)$.
\end{thm}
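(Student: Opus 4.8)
The plan is to recognise $\tau:=\tau^{loc}_{\CP}$ as a fibred Dehn twist around the unit cosphere bundle of $\CP$ and then to unscrew it smoothly using a circle action that, over $\CP$, extends across the zero section.

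Unpacking the definition: by the convention \eqref{projconv}, after an initial smooth isotopy of the profile $r_\eps$ (homotoping it within smooth functions valued in $[2\pi-\delta,2\pi]$ near $0$ and $\equiv 0$ on $[\eps,\infty)$, which moves $\tau$ only by a smooth isotopy) we may assume $r_\eps\equiv 2\pi$ on $[0,\eps/4]$ and $r_\eps\equiv 0$ on $[\eps,\infty)$. Then $\tau$ is supported in the shell $\Sigma:=\{\,\eps/4<\|\xi\|_K<\eps\,\}$ and is the identity on $D_-:=\{\|\xi\|_K\leqslant\eps/4\}$ (there $r_\eps\equiv 2\pi$ and $\sigma^H_{2\pi}=\mathrm{id}$). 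Since all geodesics of the Fubini--Study metric are closed of the same minimal length, the normalised geodesic flow restricts on each cosphere bundle to a \emph{free} circle action $\sigma$; identifying $\Sigma\cong(\eps/4,\eps)\times ST^*\CP$ via $\xi\mapsto(\|\xi\|_K,\xi/\|\xi\|_K)$, the twist becomes $(r,y)\mapsto(r,\sigma_{r_\eps(r)}(y))$, i.e.\ the fibred Dehn twist around the hypersurface $\partial D_-=ST^*\CP$ determined by the free $S^1$-action $\sigma$ and the loop $r\mapsto r_\eps(r)\bmod 2\pi$ in $\R/2\pi\Z$ of winding number $-1$.

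The key input — and the main obstacle — is: \textbf{the free circle action $\sigma$ on $ST^*\CP$ extends to a smooth circle action $\widehat\sigma$ on the disk cotangent bundle $D_-\cong D_{\eps/4}T^*\CP$ it bounds.} This is where the geometry of $\CP$ is used, through the Hopf fibration $S^1\hookrightarrow S^{2n+1}\to\CP$: under the identification $(p,v)\mapsto p+\mathbf{i}v$ the round geodesic flow of $S^{2n+1}$ on $ST^*S^{2n+1}$ is the restriction of the linear circle action $u\mapsto e^{\mathbf{i}t}u$ of $\C^{n+1}$, which plainly extends across the relevant fillings; the Fubini--Study geodesic flow is the horizontal reduction of this, and one must check that the induced action on $ST^*\CP$ is smoothly conjugate to the boundary restriction of a circle action on a smooth model of $D_{\eps/4}T^*\CP$. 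Granting this, one concludes by the standard unscrewing argument for fibred Dehn twists: the action $\widehat\sigma$ allows one to "rotate $D_-$" so as to sweep the twisting loop down across $D_-$ and absorb it. Concretely, using a cutoff one propagates the $\widehat\sigma$-twisting from $D_-$ into the shell $\Sigma$ to produce a smooth path $\tau_s$, $s\in[0,1]$, of diffeomorphisms of $D_-\cup\Sigma$, each equal to the identity near the outer boundary, with $\tau_0=\mathrm{id}$ and $\tau_1=\tau$; since $\tau$ is the identity outside $D_-\cup\Sigma$, extending by the identity gives the required compactly supported smooth isotopy $\tau\simeq\mathrm{id}$ in $\Diff_{ct}(T^*\CP)$.

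I expect essentially all of the work to lie in the extension statement — producing a concrete smooth model of $D_{\eps/4}T^*\CP$ carrying a circle action whose boundary restriction is the geodesic flow — together with the attendant smoothness bookkeeping near the zero section, where $\|\xi\|_K$ is not differentiable; once the extension is in hand, the unscrewing step is formal. (It is worth noting that the analogous extension fails for $S^n$ with $n$ even, which is consistent with $\tau^{loc}_{S^n}$ being smoothly nontrivial there, so the statement genuinely depends on the special structure of $\CP$.)
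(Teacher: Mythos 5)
A remark on provenance first: the paper does not prove this statement at all --- it is quoted verbatim from Seidel's \emph{Graded Lagrangian submanifolds} (Proposition 4.6) --- so there is no in-paper argument to compare yours against, and I am assessing your proposal on its own merits. Your first and third steps are fine and standard: after the preliminary isotopy of the profile (with the caveat that every intermediate profile must keep the value $2\pi$ at $t=0$ and satisfy the usual smoothness condition across the zero section, not merely take values in $[2\pi-\delta,2\pi]$), $\tau^{loc}_{\CP}$ is the fibred Dehn twist in a shell $ST^*\CP\times(\eps/4,\eps)$ associated to the geodesic circle action $\sigma$, and the unscrewing step is correct \emph{granted} the extension claim.

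The gap is precisely the boxed extension claim, which --- as you say yourself --- is where all the work lies, and neither your heuristic nor, I believe, the claim as literally stated survives scrutiny. The identification $(p,v)\mapsto p+\ii v$ makes the geodesic flow on $ST^*S^{2n+1}$ the restriction of the linear action $u\mapsto e^{-\ii t}u$ on $\C^{2n+2}$, but that action does not preserve $\{|p|=1,\ |v|\leq 1,\ \langle p,v\rangle=0\}$, so it does not ``plainly extend across the relevant fillings''; and since the identical observation is available for $ST^*S^m\subset T^*S^m$ for every $m$, where the full-period extension over the disc bundle is known to fail in general (otherwise every $\tau_{S^m}^2$ would be smoothly trivial), the heuristic cannot carry the argument. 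Moreover the extension of $\sigma$ that nature actually provides (via the compactification of $T^*\CP$ by the incidence variety) lives on the \emph{complementary} filling of $ST^*\CP$, namely the normal disc bundle of the space of geodesics, not on $DT^*\CP$; for $n=1$ one checks easily that $DT^*S^2$ admits no free circle action at all, so any extension would have to acquire interior fixed points, and I see no reason it exists. What \emph{is} true, and what rescues your strategy, is a weaker statement that suffices for the unscrewing: the based loop $t\mapsto\sigma_t$ in $\Diff(ST^*\CP)$ is based-homotopic to a loop that extends over $DT^*\CP$ (homotopic based loops give isotopic fibred twists via an isotopy supported in the shell). Concretely, writing $ST^*\CP=\{(p,w)\in S^{2n+1}\times S^{2n+1}:\langle p,w\rangle_{\C}=0\}/S^1_{\mathrm{diag}}$, the geodesic flow is right multiplication of the pair $(p,w)$ by the rotation $R_t\in SO(2)\subset U(2)$; diagonalising, $R_t=e^{\ii t}\,P\,\mathrm{diag}(1,e^{-2\ii t})\,P^{-1}$ for a fixed $P\in U(2)$, the central factor $e^{\ii t}I_2$ acts trivially on the quotient, and conjugating by the isotopy $(\cdot)P_s$ ($P_s$ a path from $I$ to $P$ in $U(2)$) gives a based homotopy from $\sigma$ to the loop $(p,w)\mapsto(p,e^{-2\ii t}w)$, i.e.\ fibrewise complex rotation, which manifestly extends over the disc bundle. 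The availability of the central circle of $U(2)$ here is exactly what distinguishes $\CP$ from $S^n$; without this (or an equivalent $\CP$-specific input) your proposal reduces the theorem to an unproved claim of essentially the same difficulty as the theorem itself.
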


\begin{rmk}\label{sympsmooth}
	We will often denote the standard twists by $\tau_{S^n}:=\tau_{S^n}^{loc}$ or, for $\A\in \{ \R, \C, \HH\}$, $\tau_{\APn}:=\tau_{\APn}^{loc}$. With the conventions \eqref{sphereconv} and \eqref{projconv}, the isomorphisms $S^1\cong \R\PP^1$, $S^2 \cong \C \PP^1$ and $S^4 \cong \HH \PP^1$ induce isotopies $\tau_{S^1}^2\simeq \tau_{\R\PP^1}$, $\tau_{S^2}^2 \simeq \tau_{\C \PP^1}$ and $\tau_{S^4}^2 \simeq \tau_{\HH\PP^1}$ respectively (see \cite{seidelgraded}, \cite{harristwist}).
\end{rmk}

Now suppose $(L,g)$ is a Riemannian manifold admitting a Lagrangian embedding $L \subset M$ into a general symplectic manifold $(M,\omega)$.

\begin{definition}\label{definitionframing}
	Let $K \in \{  S^n, \RP, \CP , \HP  \}$. A \emph{framed Lagrangian} sphere/projective space is a Lagrangian submanifold $L \subset M$ together with an equivalence class $[f]$ of diffeomorphisms $f \colon K \rightarrow L$; $f_1 \sim f_2$ iff $f_2^{-1}f_1$ is isotopic, in $\Diff(K)$, to an element of the isometry group Iso$(K,g)$. An equivalence class $[f]$ as above is called a \emph{framing}.
\end{definition}

\begin{definition}\label{weinsteinextend} Let $(L, [f])$ be a framed Lagrangian sphere/projective space in $(M,\omega)$. Using Weinstein's neighbourhood theorem, extend a framing representative $f\colon K \to L$ to a symplectic embedding $\iota\colon D_{s}T^*K \to M$, where $D_sT^*K:=\{v \in T^*K, \|v \|_K <s \}$, $s>0$. There is a model twist $\tau^{loc}_{K}$, supported in the interior of $D_{s}T^*K$, and define
	
	\begin{align*}
	\tau_{L} \cong 
	\left\{
	\begin{array}{ll}
	\iota \circ \tau^{loc}_{K}\circ \iota^{-1} & \text{ on } \im(\iota) \\
	\text{Id } & \text{ elsewhere  }  
	\end{array}
	\right.
	\end{align*}
	In the case where $L$ is a sphere, the map $\tau_L$ is the well-known \emph{Dehn} twist. When $L$ is a projective space, the resulting map is called a \emph{projective twist}. In this paper, the term \emph{Dehn} twist is exclusively reserved for twists that are constructed from a Lagrangian sphere.
\end{definition}

\begin{rmk}
	\begin{enumerate}
		\item A Dehn twist along an exact Lagrangian sphere, or a projective twist along an exact projective Lagrangian in an exact symplectic manifold are exact symplectomorphisms in the sense of Definition \ref{exactsymplecto}. The same holds for products of such twists. This follows by construction (for direct computations see for example \cite[Lemma 4.4]{bgzfilling}, \cite[Lemma 2.1]{chiangtwist}). 
		%In particular this means that for such symplectomorphisms, a symplectic isotopy is Hamiltonian.
		\item Theorem \ref{cpntrivial} implies that given a symplectic manifold $(M, \omega)$, any Lagrangian $L\cong \CP \subset M$ will define an element $\tau_{L}\in \Symp_{ct}(M)$ that is isotopic to the identity in $\Diff_{ct}(M)$.
	\end{enumerate}
	
\end{rmk}
As shown by Dimitroglou Rizell and Evans in \cite{dretwist}, the choice of framing does play a role in determining the symplectic isotopy class of a spherical Dehn twist. In Section \ref{epilogue}, we prove that this is also the case for projective twists. Before then, any given Lagrangian submanifold involved in the construction of a twist is assumed to be endowed with a choice of framing and we omit mentioning this datum as the results of this paper, up to the last section, are independent of such choices. This is because the autoequivalence of the Fukaya category induced by a Dehn twist (see Section \ref{functortwists}) is independent of a choice of framing (as a consequence of the shape of the functor, see \cite[Corollary 17.17]{seidelbook}). The same is true for the functor induced by the projective twist (\cite[Theorem 6.10]{makwu1}).

\subsection{Twists as monodromies}\label{twistsasmonodromies}\label{dtandlf}
This section approaches twists from a different perspective, one that presents these symplectomorphisms as monodromy maps of fibration-like structures. Dehn twists occur as (local) monodromies of Lefschetz fibrations, and this is one of the features that has made the study of Dehn twists particularly productive. On the other hand (it is a lesser known fact that) projective twists can be modelled as local monodromies of \emph{Morse--Bott--Lefschetz} fibrations, another class of fibrations admitting more degenerate singularities, which we will not discuss here. 

Below, we give a brief review of Lefschetz fibrations (mainly following \cite{seidelbook,maysei}) on Liouville manifolds, aimed at setting the notation for future sections, and recalling the well-known \emph{Picard--Lefschetz theorem}.

\begin{definition}\label{defliouville}
	A Liouville manifold of finite type is an exact symplectic manifold $(W, \omega=d\lambda_W)$, where $\lambda_W \in \Omega^1(W)$ is called the Liouville form, such that there exists a proper function $h_W\colon W \rightarrow \lbrack 0, \infty )$ and $c_0>0$ with the following property. For all $ x \in ( c_0, \infty )$, the vector field $Z_W$ dual to $\lambda_W$, called the Liouville vector field, satisfies $dh_W(Z_W)(x)>0$. 
	
	For a regular value $c$ of $h_W$, a closed sublevel set $M:=h_W^{-1}([0,c])$ of a Liouville manifold $(W, d\lambda_W)$ is a compact symplectic manifold with contact type boundary $(\Sigma:=h_W^{-1}(c), \lambda_W|_{\Sigma})$, and it is called a Liouville domain. 
\end{definition}

\begin{definition}\label{exactsymplecto}
	An exact symplectomorphism between two Liouville manifolds $(W_1, d\lambda_1), (W_2, d\lambda_2)$ is a diffeomorphism $\psi\colon W_1 \to W_2$ satisfying $\psi^*\lambda_2-\lambda_1=df$, for a compactly supported function $f\colon W_1\to \R$. 
\end{definition}

\begin{definition}\label{defacs}
	Let now $(M,d\lambda)$ be a Liouville domain with contact boundary $(\Sigma=\partial M,\alpha=\lambda|_{\Sigma})$. The negative Liouville flow identifies a collar neighbourhood $C(\Sigma)$ of the boundary with $(-\eps, 0]\times \partial M$, such that $\lambda|_{C(\Sigma)}=e^t\alpha$. An almost complex structure $J$ of \emph{contact type near the boundary} is one that satisfies $de^t\circ J= -\lambda$.
\end{definition}

\begin{definition}
	Given a Liouville domain $(M,d\lambda)$ as above, we can use the identification of the collar neighbourhood $C(\Sigma)$ to glue an infinite cone and define the symplectic completion of $M$: \begin{align}
	(W, \omega_W):=(M \cup [ 0, \infty) \times \partial M, d(e^t \alpha)), 
	\end{align}
	where $t$ is the coordinate on $(0, \infty)$, such that the Liouville flow extends to $Z_W$ with $Z_W|_{[ 0, \infty) \times \partial M}=\partial_t$.
	
	An almost complex structure $J$ of contact type extends to an almost complex structure $J_W$ on the completion satisfying \begin{itemize}
		\item $J_W(\frac{\partial }{\partial t})=R_{\alpha}$, where $R_{\alpha}$ is the Reeb vector field associated to $\alpha$,
		\item $J_W$ is invariant under translations in the $t$-direction,
		\item $J_W|_{M}=J$.
	\end{itemize}
	This kind of almost complex structure will be called cylindrical. 
\end{definition}

We will only consider Liouville manifolds that are complete (i.e with complete Liouville vector field) and of finite type, which we can identify as the union of a Liouville domain with a cylindrical non-compact end, equipped with an almost complex structure cylindrical at infinity.

Let $(E^{2n+2}, \Omega_E,\lambda_E)$ be a Liouville manifold, with a compatible almost complex structure $J_E$, and consider the complex plane with its standard symplectic form and complex structure $j_{\C}$. Let $\pi\colon E \to \C$ be a map with finitely many critical points, which are all non-degenerate, and contained in a compact set of $E$.
Denote by $\Crit(\pi):=\{ x \in E, \ D_x\pi=0 \}$ the set of critical points, and by $\Crit v(\pi):=\pi(\Crit(\pi))$ the set of critical values.

%Unless punctually specified in exceptional situations, by a Lefschetz fibration we will always mean the following.

\begin{definition}\label{deflf}
	A Lefschetz fibration on (the Liouville manifold) $E$ is a $(J_E,j_{\C})$-holomorphic map $\pi$, i.e $D\pi \circ J_E=j_{\C} \circ D\pi$, with the above properties and the following additional features.
	\begin{enumerate}
		\item For all $x\in E\setminus \Crit(\pi)$, $\ker(D_x\pi)\subset T_xE$ is symplectic and its symplectic complement with respect to $\Omega_E$, denoted by $T_xE^h$ is transverse to it, so that there is a symplectic splitting
		\begin{align}\label{splitting} 
		T_xE=\ker(D_x \pi)\oplus T_xE^h.
		\end{align}
		
		\item Every smooth fibre is symplectomorphic to the completion of a Liouville domain $(M, d\lambda_M)$.
		
		\item\label{trivialitycondition} There is an open neighbourhood $U^h \subset E$ such that $\pi\colon E \setminus U^h \lra \C$ is proper and $\pi|_{U^h}$ can be trivialised via an isomorphism $f\colon U^h \cong \C \times ([0, \infty) \times \partial M)$ such that
		\begin{align}\label{productlike}
		f^*(\lambda_E)=\lambda_{\C}+e^t\lambda_M.
		\end{align}\end{enumerate}
\end{definition}
For more details about how this fibration is modeled outside of a neighbourhood of the critical points, see \cite[(2.1)]{maysei}.

The decomposition in \eqref{splitting} defines a canonical connection over $\C\setminus \Crit v(\pi)$. By the triviality condition \ref{trivialitycondition}., for every path $\gamma\colon [0,1]\to D\setminus \Crit(\pi)$, there are well-defined parallel transport maps $h_{\gamma}\colon E_{\gamma(0)}\to E_{\gamma(1)}$ which yield symplectomorphisms between smooth fibres.

\begin{definition}\label{compatiblej}
	A pair $(J_E,j_{\C})$ is said to be \emph{compatible} with $\pi$ if the following holds.
	\begin{itemize}
		\item $D\pi\circ J_E=j_{\C}\circ D\pi$.
		\item There is a Kähler structure $J_0$ such that $J_E=J_0$ in a neighbourhood of $\Crit(\pi)$.
		\item On the neighbourhood $U^h$, $J_E$ is a product, $f^*(J_E)=(j_{\C},J^{vv})$, where $J^{vv}$ is a cylindrical almost complex structure compatible with $de^t\lambda_M$.
		\item $\Omega_E(\cdot, J_E \cdot)$ is symmetric and positive definite.
	\end{itemize}
\end{definition}

\begin{rmk}\label{nongenericrmk}
	This choice of almost complex structure is not generic. However, the space of compatible almost complex structures on the total space of an exact Lefschetz fibration is contractible (\cite[Section 2.1]{seideles}), and the moduli spaces we will consider still meet the usual regularity requirements (\cite[Section 2.2]{seideles}).
\end{rmk}

For a Lefschetz fibration on a Liouville manifold $(E,\Omega_E)$, the proper fibration obatined as $E\setminus U^h \lra \C$, for an open neighbourhood $U^h\subset E$ as above, carries the same symplectic information as $\pi$ with the difference that its fibres are Liouville domains, and as result the total space admits a non-trivial \emph{horizontal boundary}, given by the union of the boundaries of all fibres. 

In most of the paper we will employ this latter type of Lefschetz fibration (for notational simplicity), and unless specified, an \emph{exact} Lefschetz fibration will denote a fibration obtained in this way. 

Let now $\pi\colon E\to \C$ be an exact Lefschetz fibration, with smooth fibre given by the Liouville domain $(M,d\lambda)$. By the triviality assumption of Definition \ref{deflf}, there is a neighbourhood of $U^{\partial} \subset E$ of the horizontal boundary $\partial^h E$ that is isomorphic to an open neighbourhood of the trivial bundle $\C \times \partial M$: \begin{align}\label{horizontalboundary}
U^{\partial}\cong\C \times M^{out} \subset \C \times M
\end{align}
where $M^{out}\subset M$ is an open neighbourhood of $\partial M$. The isomorphism is compatible with the Liouville forms and the almost complex structures.

Let $\pi\colon E \to \C$ be a Lefschetz fibration with exact compact fibre $(M, \omega)$ and distinct critical values $\Crit v(\pi)=\{ w_0, \dots , w_m \} \subset D_R$, where $D_R\subset \C$ is a disc of radius $R$. Fix a base point $z_{*} \in \R$, such that $z_*\gg R$, and an identification $\pi^{-1}(z_*)\cong M$. In what follows we will frequently use the fact that via parallel transport, any fibre $\pi^{-1}(z)$ for $z\in \C$ with $\re(z)>R$ can be symplectically identified with the smooth fixed fibre $M$ via parallel transport.

\begin{definition}
	\begin{enumerate}
		
		\item A vanishing path associated to a critical value $w_i\in \Crit v(\pi)$ is a properly embedded path $\gamma_i \colon \R^+\to \C$ with $\gamma_i^{-1}(\Crit v(\pi))=\{ 0 \}$ and $\gamma_i(0)=w_i$, such that
		outside of a compact set containing the critical values, the image of $\gamma_i$ is a horizontal half ray at height $a_i \in \R$: \begin{align}\label{height0}
		\exists T>0 \text{ such that } \forall t>T, \  \re(\gamma_i(t))>R, \ \Im(\gamma_i(t))=a_i.
		\end{align}
		\item A distinguished basis of vanishing paths for $\pi$ is a collection of $m+1$ disjoint paths $(\gamma_0, \dots \gamma_m) \subset \C$ defined as above, with pairwise distinct heights satisfying $a_0<a_1 < \cdots < a_m$.

		\item The corresponding basis of Lefschetz thimbles is the unique set of Lagrangian discs $(\Delta_{\gamma_0}, \dots , \Delta_{\gamma_m}) \subset E$ where $\Delta_{\gamma_i}$ is defined as the set of points which under the limit $t \to 0$ of the parallel transport maps over $\gamma_i$ are mapped to the critical point in $\pi^{1}(w_i)$ (the proof of uniqueness can be found in \cite[(16b)]{seidelbook}).
		Given a general Lefschetz thimble $\L$, define its height $a(\L)$ as the value defined in \eqref{height0}. For a pair of thimbles $(\L_0, \L_1)$ set $\L_0 > \L_1$ if $a(\L_0)>a(\L_1)$.
		
		\item There is an associated basis of vanishing cycles $(V_0, \dots , V_m)$ where for all $i=0,\dots , m$, $$V_i=\partial \Delta_{\gamma_i}=\Delta_{\gamma_i}\cap M\subset M$$ (using the above identification for smooth fibres).
		Every vanishing cycle $V_i\subset M$ is an exact Lagrangian sphere which comes with an equivalence class in of diffeomorphisms $S^{n} \to V_i$ defined up to the action of O$(n+1)$ (called a \emph{framing}). This is induced by the restriction of a diffeomorphism $D^{n+1} \lra \Delta_i$ (which is canonical, see \cite[Lemma 1.14]{seideles}).
	\end{enumerate}
\end{definition}

%\begin{figure}[htb]
%	\centering
%	\subfloat{\def\svgwidth{530pt}\input{newfibrationch2.pdf_tex}}
%\end{figure}	

\begin{figure}[h]
	\centering
\includegraphics[width=12cm]{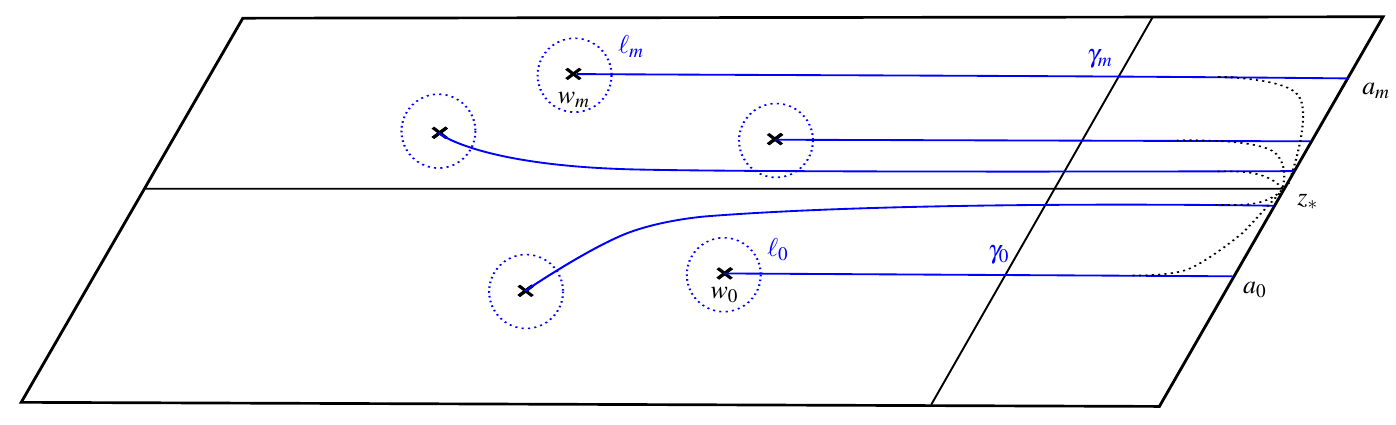}
	\caption{A distinguished basis of vanishing paths $(\gamma_0, \dots, \gamma_m)$.}\label{figurebasis}
\end{figure}

\begin{definition}
	The global monodromy is the symplectomorphism $\phi\in \Symp_{ct}(M)$ whose Hamiltonian isotopy class is defined by the anticlockwise parallel transport map around a loop through the base point $z_*$ encircling all the critical values of the fibration. (Typically, this loop is defined as the smoothing of the concatenation of the loops centered at $z_*$ going around a single critical value as in Figure \ref{figurebasis}.).
\end{definition}
The symplectic Picard-Lefschetz theorem (\cite{arnold}) states that the global monodromy $\phi$ is isotopic to the product of the Dehn twists along the vanishing cycles $(V_0, \dots , V_m)$, \begin{align}
\phi \simeq \tau_{V_0}\cdots \tau_{V_m} \in \Symp_{ct}(M),
\end{align}
and the Hamiltonian isotopy class is independent of the choice of basis of vanishing paths.

On the other hand, given the data $( M, (V_0, \dots, V_m ) )$, there is an exact Lefschetz fibration $\pi\colon E\to \C$ with fibre $(M,\omega)$, and vanishing cycles $(V_0, \dots , V_m)\subset M$, unique up to exact symplectomorphism (\cite[(16e)]{seidelbook}).
% Hurwitz moves (the action of the braid group $Br_{m+1}$ as in \cite[(16d)]{seidelbook}) on the set of vanishing cycles also preserves the exact symplectomorphism class of the total space.

\begin{rmk}
Lefschetz fibrations can be viewed as a special case of \emph{Morse--Bott--Lefschetz} (abbreviated MBL) fibrations, a class of fibrations which allows non-isolated singularities. The monodromies of such fibrations are symplectomorphisms called \emph{fibred twists} (\cite{perutzmbl}), which naturally generalise Dehn twists.
Projective twists are a special type of fibred twists, sand therefore also admit a presentation as local monodromies. However, in this paper we won't study projective twists from this perspective.

\end{rmk}

\subsection{Functor twists}\label{functortwists}\label{geomtwistfunctor}
This section only contains the notation (and the general notions involved) that we will use to denote the functors of the Fukaya category that are induced by twists. 

Let $(M,\omega)$ be a Liouville manifold and let $k$ be a field of characteristic $2$. Given two closed exact Lagrangian submanifolds $L_0,L_1 \subset M$ the Floer complex is freely generated as a vector space by the intersection points of the (perturbed) Lagrangians $\CF(L_0,L_1;k):=\bigoplus_{x \in L_0 \cap L_1} k \langle x \rangle$. The boundary operator $\partial \colon \CF(L_0,L_1;k) \to \CF(L_0,L_1;k)$ counts $J_M$-holomorphic strips with boundary conditions on $(L_0, L_1)$ and asymptotic conditions on intersection points. For a compatible cylindrical almost complex structure $J_M$, the moduli spaces of such curves are compact oriented manifolds (\cite[Sections 8,9]{seidelbook}) and the operator $\partial$ squares to zero (\cite[(9e)]{seidelbook}), so that $(\CF(L_0,L_1;k), \partial)$ is a well-defined co-chain complex whose cohomology is the Floer cohomology ring $\HF(L_0,L_1;k)$. Floer cohomology is designed to be invariant under Hamiltonian isotopies; if $\phi$ is the flow of a Hamiltonian vector field, then $\HF(L_0,\phi(L_1)) \cong \HF(L_0,L_1)$. 

Very simply put, the compact Fukaya category, $\Fuk(M)$, is an $A_{\infty}$-category whose objects are closed exact Lagrangian \emph{branes}, which are Lagrangian submanifolds with additional algebraic data, and morphisms the Floer cochain groups between transversely intersecting Lagrangians (\cite[(9j),(12g)]{seidelbook}). This category encodes intersection data associated to all its objects, including the Floer differential $\partial=\mu^1$, the Floer cup product $\mu^2$ and higher order products $\mu^k$ (see e.g \cite[(9j), (12g))]{seidelbook}). It is well-defined for any Liouville manifold (see \cite{seidelbook}).

Two Lagrangians that are Hamiltonian isotopic are quasi-isomorphic objects in the Fukaya category, which means they are isomorphic objects of the associated cohomological category, that we denote by $H(\Fuk(M))$. We denote the automorphisms of $H(\Fuk(M))$ (i.e the automorphisms of the Fukaya category up to quasi-isomorphism) by $\Auteq(\Fuk(M))$.

Let $Tw(\Fuk(M))$ be the category of twisted complexes in $\Fuk(M)$ (see \cite[(3k)]{seidelbook}), and $D^{\flat}\Fuk(M):= H(Tw \Fuk(M))$ the cohomology category of $Tw(\Fuk(M))$.

There is a map \begin{align}\label{mapinducedfunctor}
\Phi: \Symp_{ct}(M) \rightarrow \Auteq(D^{\flat}\Fuk(M))
\end{align}
to the group of auto-equivalences of the Fukaya category (modulo quasi-isomorphism), such that given $\phi \in \Symp_{ct}(M)$, $\Phi(\phi)$ sends a Lagrangian $L\subset M$ to another Lagrangian $\phi(L)\subset M$. The map factors through the quotient by the subgroup $\Ham_{ct}(M)\subset \Symp_{ct}(M)$ of compactly supported Hamiltonian diffeomorphisms, so given an
% framed
exact Lagrangian sphere/projective space $L$ and its associated twist $\tau_L$, $\Phi(\tau_L)$ defines a well-defined element of $\Aut(D^{\flat}\Fuk(M))$ that we denote by $T_L$. 

In \cite{seideles}, Seidel showed that for a Dehn twist $\tau_L$, the induced functor $T_L \in \Aut (D^{\flat}\Fuk(M))$ fits into an exact triangle (see \cite[(17j)]{seidelbook}).

%\begin{equation}\label{trianglefunctor}	\xymatrix{ 
%	\HF^*(L,\cdot)\otimes L 	\ar[rr] && \HF(L_0, \tau_L(L_1))  \ar[dl]  \\
%	& \HF(L,L_1)\otimes \HF(L_0,L)\ar[ul] 
%}  \end{equation}

Recently, there have been generalisations of Seidel's triangle for a wider class of symplectomorphisms, achieved through a range of different techniques. Wehrheim--Woodward (\cite{wwfibred}) proved the existence of an exact triangle for fibred twists using quilt theory adapted to Morse--Bott Lefschetz fibrations.

Mak and Wu (\cite{makwu1}) treated the case of projective twists, using Lagrangian cobordism theory as developed in \cite{bcobordism1} and \cite{bcobordism2}.
They proved that the autoequivalence induced by a (real, complex, quaternionic) projective twist is isomorphic to a double cone of functors in $\Aut(Tw\Fuk(M))$ (\cite[Theorem 6.10]{makwu1}).

%\begin{rmk}\label{characteristic}
%	The functor on $\Aut(Tw(\Fuk(M)))$ induced by the real projective twist behaves differently if the coefficient field has characteristic zero
%	(\cite[Corollary 1.3]{mwtwist2}).
%\end{rmk}

Under the appropriate circumstances, the mirror symmetry conjecture gives conjectural descriptions of such functors. If a symplectic manifold $(M, \omega)$ has a mirror complex manifold $(X, J)$, it is technically possible to introduce autoequivalences of the Fukaya category originally defined as autoequivalences of the derived category of coherent sheaves of $X$ (we call such autoequivalences \emph{algebraic twist functors}, and will only refer to them in Remark \ref{mirrordiagramrmk}).

\section{Commuting diagrams of twists}\label{hopfbundle}

In this section we introduce the geometric ideas underpinning the philosophy of the Hopf correspondence. We prove a criterion for relating projective twists in a Liouville manifold $(W,\omega)$ to Dehn twists in another Liouville manifold $(Y, \Omega)$.

\subsection{Complex projective Lagrangians}\label{complexhopf}

We begin by considering Lagrangian complex projective spaces.

Fix the round metric on $S^{2n+1}$, with norm $\| \cdot \|_S$ and consider the free $S^1$-action on $S^{2n+1}$ by complex multiplication. The orbits of the action are great circles (``Hopf circles''), hence geodesics, and the action is isometric.

Consider the quotient map $h\colon S^{2n+1}\to \CP$, which is the (generalised) Hopf fibration. It is a Riemannian submersion that uniquely defines the Fubini-Study metric $g_P$ on $\CP$. Identify the tangent bundles with their corresponding cotangent bundles $TS^{2n+1} \cong T^*S^{2n+1}$, $T\CP\cong T^*\CP$ via the the canonical isomorphism induced by the metrics. 

The Hopf action on $S^{2n+1}$ lifts to a Hamiltonian $S^1$-action on the cotangent bundle $(T^*S^{2n+1},\omega_{T^*S^n})$ (\cite{guillemin}). Let $\mu\colon T^*S^{2n+1}\to \R$ be the moment map of this action. Assume $0$ is a regular value of $\mu$ and consider the level set $\mu^{-1}(0) \subset T^*S^{2n+1}$, which has the structure of a principal $S^1$-bundle $p\colon\mu^{-1}(0)\to T^*\CP$ over the symplectic quotient $T^*S^{2n+1}\sslash S^1:=\mu^{-1}(0)/S^1 \cong T^*\CP$.

\begin{lemma}\label{localmodelcpn}
	Let $\tau_{S^{2n+1}} \in \Symp_{ct}(T^*S^{2n+1})$, $\tau_{\CP}\in \Symp_{ct}(T^*\CP)$ be the model Dehn and projective twists respectively. Let $p\colon V:=\mu^{-1}(0)\to T^*\CP$ be the symplectic quotient map as above.
	There is a commuting diagram
	\begin{equation}\label{localcpn}
	\begin{split}
	\xymatrixcolsep{5pc} \xymatrix{
		V  \ar[r]^{\tau_{S^{2n+1}}|_{V}} \ar[d]^{p} &
		V\ar[d]^{p} &\\
		T^*\CP \ar[r]^{\tau_{\CP}}	& T^*\CP.}.
	\end{split}	\end{equation} 
\end{lemma}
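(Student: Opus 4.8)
The plan is to produce the commuting square by exhibiting both $\tau_{S^{2n+1}}|_V$ and the lift of $\tau_{\CP}$ through $p$ as manifestations of \emph{the same} normalised (co)geodesic $S^1$-action, restricted (resp. descended) in the two evident ways. First I would recall that the normalised cogeodesic flow on $T^*S^{2n+1}$ (for the round metric, so that closed geodesics have period $2\pi$) generates the Hamiltonian $S^1$-action $\sigma^{H_S}_t$ on $T^*S^{2n+1}\setminus S^{2n+1}$ with moment map $H_S(v)=\|v\|_S$, and likewise the normalised cogeodesic flow on $(T^*\CP,g_P)$ generates $\sigma^{H_P}_t$ on $T^*\CP\setminus\CP$ with $H_P(v)=\|v\|_P$; because $h\colon S^{2n+1}\to\CP$ is a Riemannian submersion, horizontal geodesics of $S^{2n+1}$ project to geodesics of $\CP$, and the reparametrisation $S^1\cong\R\PP^1$, $S^2\cong\C\PP^1$ convention (Remark \ref{sympsmooth}) is exactly the reason the \emph{full} $2\pi$-period on the base corresponds to a $2\pi$-period upstairs — geodesics on $\CP$ close up after length $2\pi$ (half that of the great circles on $S^{2n+1}$, which close after $2\pi$ in the round metric, but the Fubini--Study normalisation built from $h$ rescales correctly). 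The key geometric input is therefore: under the symplectic reduction $p\colon V=\mu^{-1}(0)\to T^*\CP$, the Hopf $S^1$-action used to form the quotient commutes with the cogeodesic action $\sigma^{H_S}$ on $V$, and the latter descends to $\sigma^{H_P}$ on $T^*\CP$, with $H_S$ descending to (a constant multiple of) $H_P$; i.e. $p^*H_P = H_S|_V$ up to the normalisation already fixed by our metric conventions.

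Granting that, the argument is essentially formal. Recall from Definition \ref{twistlocal} and \eqref{projtwistdef} that
\[
\tau^{loc}_{S^{2n+1}}(\xi)=\sigma^{H_S}_{r_\eps(\|\xi\|_S)}(\xi)\ \text{off}\ S^{2n+1},\qquad
\tau^{loc}_{\CP}(\eta)=\sigma^{H_P}_{\rho_\eps(\|\eta\|_P)}(\eta)\ \text{off}\ \CP,
\]
where $r_\eps$ follows the sphere convention \eqref{sphereconv} (values in $(0,\pi)$, equal to $\pi-t$ near $0$) and $\rho_\eps$ the projective convention \eqref{projconv} (values in $(0,2\pi)$, equal to $2\pi-t$ near $0$); on the zero sections $\tau^{loc}_{S^{2n+1}}$ is the antipodal map $\xi\mapsto-\xi$ and $\tau^{loc}_{\CP}$ is the identity. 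Since the construction of these model twists is independent of the choice of cut-off (the Remark after Definition \ref{twistlocal}, citing \cite{seidelgraded}), I may choose $r_\eps$ and $\rho_\eps$ compatibly, namely so that $\rho_\eps\circ(p^*\text{-rescaling of }\|\cdot\|) = 2\, r_\eps$ in the region where both are linear — this is exactly the $S^1\cong\C\PP^1$ doubling of Remark \ref{sympsmooth}. Then for $y\in V$ with $p(y)=\eta$: by $S^1$-equivariance of $\sigma^{H_S}$ with respect to the Hopf action and the fact that $\sigma^{H_S}_t$ preserves $V=\mu^{-1}(0)$ (both actions are Hamiltonian and commute, so $\mu$ is $\sigma^{H_S}$-invariant), we get $p\big(\sigma^{H_S}_{t}(y)\big)=\sigma^{H_P}_{t'}(\eta)$ where $t'$ is the induced time; chasing through the cut-off normalisation gives $t = r_\eps(\|y\|_S)$ mapping to $t' = \rho_\eps(\|\eta\|_P)$, hence $p(\tau_{S^{2n+1}}(y)) = \tau_{\CP}(p(y))$ off the zero section. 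On the zero section $S^{2n+1}\subset V$: the antipodal map $\xi\mapsto-\xi$ on $S^{2n+1}$ is precisely the time-$\pi$ Hopf rotation composed trivially — more to the point, $h\colon S^{2n+1}\to\CP$ satisfies $h(-\xi)=h(\xi)$ only for the real Hopf map; for the complex one $h(-\xi)=h(\xi)$ fails, so instead I note that the diffeomorphism $\tau^{loc}_{S^{2n+1}}$ on the zero section, when we use the $2\pi$-periodicity convention appropriate to the \emph{squared} Dehn twist behaviour, extends by continuity from the off-zero-section formula: $\lim_{t\to 0}\sigma^{H_S}_{r_\eps(t)}$ acting near $S^{2n+1}$ covers $\lim\sigma^{H_P}_{\rho_\eps}$ which limits to the identity on $\CP$ since $\rho_\eps(t)\to 2\pi$ and $\sigma^{H_P}_{2\pi}=\mathrm{Id}$. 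Thus both routes around the square agree on all of $V$ by continuity, establishing commutativity.

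The step I expect to be the genuine obstacle is the middle one: verifying cleanly that the Hopf $S^1$-action on $\mu^{-1}(0)\subset T^*S^{2n+1}$ and the cogeodesic $S^1$-action commute, and that the latter descends to the Fubini--Study cogeodesic action with the right time-normalisation. This requires knowing that the lift of the Hopf action to $T^*S^{2n+1}$ (via \cite{guillemin}) is the cotangent lift, which automatically commutes with \emph{isometries} of the metric; since the Hopf action is isometric for the round metric, its cotangent lift preserves $\|\cdot\|_S$ and commutes with the cogeodesic flow, so $\mu$ is invariant under $\sigma^{H_S}$ and $\sigma^{H_S}$ preserves $\mu^{-1}(0)$. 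That the quotient action is the Fubini--Study cogeodesic flow is precisely the statement that $h$ is a Riemannian submersion, so that horizontal cogeodesics upstairs project to cogeodesics downstairs; the factor-of-two in periods is tracked by the convention $\tau^2_{S^2}\simeq\tau_{\C\PP^1}$ of Remark \ref{sympsmooth}, which is exactly where the cut-off conventions \eqref{sphereconv} versus \eqref{projconv} were designed to match. I would write this middle step carefully, citing \cite{guillemin} for the lifted action and \cite[2.1]{makwu1}, \cite[4.b]{seidelgraded} for the cogeodesic/twist formalism, and present the rest as the short equivariance-and-continuity chase above.
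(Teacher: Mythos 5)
Your proof follows essentially the same route as the paper's: the cotangent lift of the isometric Hopf action commutes with the normalised cogeodesic flow and preserves $\mu^{-1}(0)$, the Riemannian submersion makes the two cogeodesic $S^1$-actions $p$-related, and the antipode-versus-full-period bookkeeping (together with independence of the twist of the choice of cut-off) reconciles the sphere and projective conventions. One correction: your assertion that $h(-\xi)=h(\xi)$ fails for the complex Hopf map is wrong --- since $-1=e^{i\pi}\in S^1$, the antipodal map lies in the Hopf $S^1$-orbit and descends to the identity on $\CP$, which is precisely the paper's observation that a geodesic arc joining a point to its antipode upstairs projects to a full closed geodesic of minimal period downstairs; this lets you conclude directly on the zero section without the continuity detour you fall back on.
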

\proof The Hopf action is isometric, i.e for any $g\in S^1$, the induced map $\psi_g \in \Diff(S^{2n+1})$ is an isometry. This implies that the differential maps on the tangent bundles $D_p\psi_g\colon T_pS^{2n+1}\to T_{\psi_g(p)}S^{2n+1}$ commute with the geodesic flow. 

The co-geodesic flow $\Phi_{H}^t$ on $T^*S^{2n+1}$ is induced by the Hamiltonian function \begin{align}\label{geodham}
\widetilde{H} \colon  T^*S^{2n+1}& \longrightarrow \R \\
(p,\xi)& \longmapsto\|\xi \|_{S}. \nonumber
\end{align}

This is $S^1$-invariant, so that there is a Hamiltonian function $H\colon T^*\CP \to \R$ defined on the quotient, with respect to the submersion metric $g_P$, which induces the (co-)geodesic flow on $T^*\CP$.
Since $p$ is induced by a Riemannian submersion, we have the relation $p \circ \Phi_{\widetilde{H}}^t|_V = \Phi_{H}^t \circ p|_V$, 

and for any choice of cut-off function $r_{\eps}$ as in Section \ref{deftwist},
\begin{align}
p \circ \sigma^{\widetilde{H}}_{r_{\eps}(\| \xi \|_S)} (\xi)= \sigma^{H}_{r_{\eps}(\| p(\xi)\|_{P})} \circ  p(\xi), \ \xi \in V\subset T^*S^{2n+1}
\end{align}
where $\sigma_t^H, \sigma_t^{\widetilde{H}}$ are the Hamiltonian $S^1$-actions induced by $H$ and $\widetilde{H}$ respectively, as in Section \ref{deftwist}.

Any geodesic connecting a point on $S^{2n+1}$ to its antipode projects to a closed geodesic of minimal period on $\CP$, so the definitions of the twists in Section \ref{deftwist} imply that $p \circ \tau_{S^{2n+1}}|_V = \tau_{\CP}\circ p|_V$.

\endproof

We now extend the above discussion to a more global situation; in order to do that it is necessary to set the following assumption.

\begin{asspt}\label{assptcx}
	Let $(W, \omega)$ be a $4n$-dimensional Liouville manifold with a homology class $\alpha \in H^2(W ;\Z)$ and Lagrangian complex projective spaces $K_1, \dots ,K_m \subset W$ such that \begin{align*}
	\forall i: \ \alpha|_{K_i}=x \in H^2(\CP; \Z),
	\end{align*}
	where $x=c_1(\mathcal{O}_{\CP}(-1))$ is the generator of the cohomology ring $H^*(\CP;\Z)\cong \Z \lbrack x \rbrack /x^{n+1}$. 
\end{asspt}

\begin{prop}\label{cxlocalhopf}
	Let $(W, \omega)$ be a $4n$-dimensional Liouville manifold containing embedded Lagrangian complex projective spaces $K_1, \dots ,K_m \subset W$. Assume there exists a class $\alpha\in H^2(W;\Z)$ satisfying Assumption \ref{assptcx}. Then there is a $(4n+2)$-dimensional Liouville manifold $(Y, \Omega)$ with Lagrangian spheres $L_1, \dots ,L_m \subset Y$, a coisotropic submanifold $V\subset Y$ with the structure of an $S^1$-fibre bundle $p\colon V\to W$ such that for each $i \in \{  1, \dots , m\}$, $L_i\subset V$ and there is a commuting diagram 	\begin{equation}\label{localhopf1}
	\begin{split}
	\xymatrixcolsep{5pc} \xymatrix{
		V  \ar[r]^{\tau_{L_i}|_V} \ar[d]^{p} &
		V\ar[d]^{p} &\\
		W \ar[r]^{\tau_{K_i}}	& W.}.
	\end{split}	\end{equation}\end{prop}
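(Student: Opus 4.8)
The plan is to globalize Lemma \ref{localmodelcpn} by building the auxiliary Liouville manifold $(Y,\Omega)$ as a fibrewise twisted version of the local model over each cotangent neighbourhood of the $K_i$, glued into $W$. First I would use Assumption \ref{assptcx} to produce a complex line bundle $\mathcal{L}\to W$ with $c_1(\mathcal{L})=\alpha$, so that the restriction $\mathcal{L}|_{K_i}$ is the tautological bundle $\mathcal{O}_{\CP}(-1)$ whose associated circle bundle is exactly the Hopf fibration $h\colon S^{2n+1}\to\CP$. Taking $V\subset Y$ to be the unit circle bundle $S(\mathcal{L})\to W$ (a coisotropic submanifold once we equip $Y$ with the right symplectic form) gives $p\colon V\to W$ with the property that $p^{-1}(K_i)\cong S^{2n+1}$. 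The manifold $Y$ itself should be constructed as a $T^*S^1$-bundle (equivalently a symplectic completion of the normal model $\mu^{-1}(0)\times(-\eps,\eps)$ of the coisotropic $V$), using the symplectic neighbourhood theorem for coisotropic submanifolds (Gotay's theorem): a neighbourhood of $V$ in $Y$ is modelled on a neighbourhood of the zero section of $(T^*S^1)_{\mathcal{L}}$, the associated $T^*S^1$-bundle. Over $K_i$ this local model is precisely the manifold $\mu^{-1}(0)\subset T^*S^{2n+1}$ of the Lemma, with its symplectic quotient $T^*\CP$; globally it is a $T^*S^1$-bundle over $W$ restricting to that model over each $K_i$. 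One then checks $Y$ admits a Liouville structure: the cotangent-fibre directions contribute the exact form, and completing at infinity (away from the compact region containing $\bigcup K_i$) makes $(Y,\Omega)$ a Liouville manifold of finite type.

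Next I would define the Lagrangian spheres $L_i\subset V\subset Y$. Over $K_i$, the preimage $p^{-1}(K_i)\subset V$ is a copy of $S^{2n+1}$ sitting inside the local model $\mu^{-1}(0)$; by Lemma \ref{localmodelcpn} this $S^{2n+1}$ is the zero-section sphere $S^{2n+1}\subset T^*S^{2n+1}$ whose image under $p$ is the zero-section $\CP\subset T^*\CP$, i.e. $K_i$. So we set $L_i := p^{-1}(K_i)$, which is diffeomorphic to $S^{2n+1}$; one verifies it is Lagrangian in $(Y,\Omega)$ because the coisotropic reduction $V\to W$ sends it to the Lagrangian $K_i\subset W$ and the fibre circle direction is isotropic in $Y$ (this is the general fact that the preimage in a coisotropic of a Lagrangian in the reduction is isotropic of the right dimension, hence Lagrangian). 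I would choose the Weinstein neighbourhood of $L_i$ compatibly with the local model so that the Dehn twist $\tau_{L_i}\in\Symp_{ct}(Y)$, constructed via the round metric on $S^{2n+1}$, restricts on $p^{-1}(\text{nbhd of }K_i)$ to exactly the fibrewise model Dehn twist $\tau_{S^{2n+1}}$ of Definition \ref{twistlocal}; away from this region $\tau_{L_i}$ is the identity, and the fibrewise Hamiltonian $S^1$-action preserves $V$, so $\tau_{L_i}$ preserves $V$.

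Finally, the commuting diagram \eqref{localhopf1}. Over the neighbourhood of each $K_i$ this is precisely the content of Lemma \ref{localmodelcpn}: with $V$ locally identified with $\mu^{-1}(0)$ and $p$ the symplectic quotient map, one has $p\circ\tau_{S^{2n+1}}|_V=\tau_{\CP}\circ p|_V$, and these local identities patch because both $\tau_{L_i}|_V$ and $\tau_{K_i}$ are the identity outside the relevant compact sets and the patching data (the line bundle clutching functions) are $S^1$-equivariant, hence commute with the fibrewise geodesic flow and cut-off constructions as in the Lemma's proof. So $p\circ(\tau_{L_i}|_V)=\tau_{K_i}\circ p$ on all of $V$.

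The main obstacle I expect is not the diagram-chase — that reduces cleanly to the Lemma — but rather the construction of $(Y,\Omega)$ as a genuine Liouville manifold in which $V$ is coisotropic and the $L_i$ are \emph{exact} Lagrangians: one must choose the symplectic form on the $T^*S^1$-bundle (a fibrewise $T^*S^1$ form twisted by a connection $2$-form on $\mathcal{L}$ representing $\alpha$, plus the pullback of $\omega$) so that it is exact, closed, non-degenerate, and has the correct cylindrical behaviour at infinity, while simultaneously arranging that the zero-section spheres $L_i$ are Lagrangian and that the Liouville vector field is outward-pointing at the contact boundary. This is a bundle-theoretic minimal-coupling (Weinstein/Sternberg) construction; the delicate points are checking exactness of $\Omega$ globally (using that $\alpha$ comes from an integral class and $[\omega]$ vanishes on the relevant cycles since $W$ is exact) and handling the non-compact ends so that everything is compactly supported where needed. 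The rest, including the independence of the twist functors from framings, is then inherited from the cited results.
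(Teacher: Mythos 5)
Your proposal is correct and follows essentially the same route as the paper: produce the line bundle $\mathcal{L}\to W$ with $c_1(\mathcal{L})=\alpha$ restricting to $\mathcal{O}_{\CP}(-1)$ over each $K_i$, take $V$ to be its unit circle bundle inside the associated $\C^*$-bundle $Y$ equipped with the minimal-coupling form $q^*\omega+d(f(r)\gamma^{\nabla})$, set $L_i=p^{-1}(K_i)$, and reduce the commutativity of \eqref{localhopf1} to Lemma \ref{localmodelcpn}. The one point you flag as delicate --- global exactness of $\Omega$ --- is in fact automatic, since the connection $1$-form $\gamma^{\nabla}$ is globally defined on $\mathcal{L}\setminus 0$, so $\Omega=d(q^*\lambda+f(r)\gamma^{\nabla})$ with a global primitive.
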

The class $\alpha \in H^2(W;\Z)$ restricts to a generator $x \in H^2(\CP; \Z)$ on each Lagrangian $K_i$, so there is a complex line bundle $\mathcal{L} \rightarrow W$ satisfying $c_1(\mathcal{L})=\alpha$ which is modelled on the tautological line bundle $\mathcal{O}_{\CP}(-1)$ over $K_i$, for $i=1, \dots m$. 
Fix a metric $\| \cdot \|_{\mathcal{L}}$ on $\mathcal{L}$, and for $u\in \mathcal{L}$ define a function $r(u):=\|u\|_{\L}$. Set $V:=\{ u\in \mathcal{L}, \  r(u)=1  \}$. Over $K_i$, $V$ defines a sphere $L_i:=V|_{K_i}$.

\begin{lemma}\label{convexlineb}
	The $\C^*$-bundle associated to $\mathcal{L}$ is a Liouville domain where the spheres $L_i$ are embedded as Lagrangian submanifolds.
\end{lemma}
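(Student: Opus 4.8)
The plan is to exhibit an explicit Liouville structure on the $\C^*$-bundle $\mathcal{L}^{\times}\to W$ (where $\mathcal{L}^{\times}:=\mathcal{L}\setminus(\text{zero section})$) whose Liouville form restricts, over each $K_i$, to the standard Liouville form making $L_i$ Lagrangian. First I would recall the local model: over $\CP$, the tautological bundle $\mathcal{O}_{\CP}(-1)$ embeds $\C^{n+1}\setminus\{0\}$ as a $\C^*$-bundle, and the standard radial-times-fibrewise picture identifies a neighbourhood of $\mu^{-1}(0)\subset T^*S^{2n+1}$ — equivalently a neighbourhood of $V=\{r=1\}$ — with the symplectisation of the unit circle bundle, so that $L_i=V|_{K_i}$ is the Lagrangian lift of the Hopf circle; this is exactly the geometry used in Lemma \ref{localmodelcpn}. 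Concretely I would put on $\mathcal{L}^{\times}$ the $2$-form $\Omega:=q^*\omega + d(\tfrac12 r^2\theta)$, where $q\colon\mathcal{L}^{\times}\to W$ is the projection, $r$ is the chosen fibrewise norm, and $\theta$ is a connection $1$-form on the unit circle bundle $V$ pulled back to $\mathcal{L}^{\times}$, chosen so that $d\theta = -q^*(\text{curvature representing }\alpha)$. One checks $\Omega=d\Lambda$ with $\Lambda:=q^*\lambda_W+\tfrac12 r^2\theta$, and that $\Omega$ is symplectic for $r$ in a suitable range (the $q^*\omega$ term dominates transverse to the fibre, the $d(\tfrac12 r^2\theta)$ term is symplectic along the fibre), yielding a Liouville domain after truncating $r\in[\,\delta,1\,]$ and completing; the Liouville vector field is $r\partial_r$ plus the horizontal lift of $Z_W$.

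Next I would verify the Lagrangian claim. Over $K_i$, by Assumption \ref{assptcx} the restriction $\mathcal{L}|_{K_i}$ is $\mathcal{O}_{\CP}(-1)$, and $K_i$ being Lagrangian means $\omega|_{K_i}=0$, so $\Lambda|_{q^{-1}(K_i)}$ reduces to the form $\tfrac12 r^2\theta$ on the standard $\C^*$-bundle over $\CP$. Restricting further to $L_i=\{r=1\}\subset q^{-1}(K_i)$, both $dr$ and the $r^2$-weight are constant, so $\Omega|_{L_i}=d(\tfrac12\theta)|_{L_i}$; but $L_i\cong S^{2n+1}$ sits inside this $\C^*$-bundle exactly as the unit sphere, on which $\theta$ pulls back to the standard contact form whose differential is the symplectic form of $\C^{n+1}$, which vanishes on the (real, middle-dimensional) Lagrangian $S^{2n+1}$ — equivalently one uses that $L_i$ is the boundary of the Lefschetz-thimble-type disc bundle and is isotropic for dimension reasons, being $(2n+1)$-dimensional in the $(4n+2)$-dimensional $Y$ only after we note $L_i\subset V$ and $V$ is coisotropic with isotropic fibres, so $L_i$ is Lagrangian. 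I would also confirm exactness of $L_i$: $\Lambda|_{L_i}=\tfrac12\theta|_{S^{2n+1}}$ which is the primitive of a form that is exact on $S^{2n+1}$ since $H^1(S^{2n+1};\R)=0$, hence $\Lambda|_{L_i}$ is exact (indeed a globally defined primitive exists because the restriction is a closed $1$-form on a simply connected manifold). Finally I would record that this $\mathcal{L}^{\times}$, completed, is the manifold called $Y$ in Proposition \ref{cxlocalhopf}, and that $V=\{r=1\}$ is coisotropic with the circle-bundle structure $p\colon V\to W$ induced by $q$.

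\textbf{Main obstacle.}
The delicate point is checking that $\Omega=d\Lambda$ is genuinely symplectic (nondegenerate) on the relevant region and that the Liouville vector field is outward-pointing along $\{r=1\}$ and along the horizontal boundary inherited from $W$ — i.e. that the gluing of the fibrewise radial Liouville structure with the pulled-back Liouville structure on $W$ actually produces a Liouville \emph{domain} of finite type, not merely an exact symplectic manifold. This requires choosing the connection $1$-form $\theta$ and the metric $\|\cdot\|_{\mathcal{L}}$ compatibly (so that the curvature term does not destroy nondegeneracy for the range of $r$ used) and a careful bookkeeping of the two boundary strata; the fibre directions and base directions must be shown to be $\Omega$-orthogonal up to controlled error, which is where a partition-of-unity / interpolation argument between the standard local model over each $K_i$ and the global bundle is needed. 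Away from that, the identification of the restricted structure over $K_i$ with the standard one, and hence the Lagrangian and exactness statements, are essentially formal given Assumption \ref{assptcx}.
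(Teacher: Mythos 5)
There is a genuine gap, and it sits exactly where you chose your interpolating function. You take $\Omega = q^*\omega + d\bigl(\tfrac12 r^2\theta\bigr)$ with $d\theta=-q^*F$, $[F]=\alpha$. On $L_i=\{r=1\}\cap q^{-1}(K_i)$ the term $r\,dr\wedge\theta$ dies, but the curvature term does not: $\Omega|_{L_i}=q^*\omega|_{L_i}+\tfrac12 d\theta|_{L_i}=-\tfrac12 p^*\bigl(F|_{K_i}\bigr)$, and $F|_{K_i}$ represents the generator $x\in H^2(\CP;\R)$, so it is not identically zero; since $\theta|_{L_i}$ is a contact form on $S^{2n+1}$, $d\theta|_{L_i}$ is nondegenerate on the contact distribution and certainly nonvanishing pointwise. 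So with your $\Omega$ the spheres $L_i$ are \emph{not} Lagrangian. The justifications you offer for the contrary are each incorrect: the symplectic form of $\C^{n+1}$ does not vanish on $S^{2n+1}$ (that sphere is a contact hypersurface, not a middle-dimensional Lagrangian, in $\C^{n+1}$); ``$V$ coisotropic with isotropic fibres'' does not imply that a union of fibres over a Lagrangian is Lagrangian unless the reduced form vanishes on that Lagrangian, which here it does not (the reduced form on $K_i$ is $-\tfrac12 F|_{K_i}$); and the exactness argument via $H^1(S^{2n+1})=0$ presupposes that $\Lambda|_{L_i}$ is closed, i.e.\ that $L_i$ is already Lagrangian.

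The paper's proof uses the same ansatz $\Omega=q^*\omega+d(f(r)\gamma^{\nabla})$ but with the normalisation $f(1)=0$, $f'>0$, and this normalisation is the whole point: on $\{r=1\}$ the curvature contribution $f(r)\,d\gamma^{\nabla}$ vanishes, so $\Omega|_V=p^*\omega$, $V$ is coisotropic, and $\Omega|_{L_i}=p^*(\omega|_{K_i})=0$ because $K_i$ is Lagrangian; moreover $\lambda_Y=q^*\lambda+f(r)\gamma^{\nabla}$ then restricts on $L_i$ to $p^*(\lambda|_{K_i})$, which is closed and hence exact since $H^1(S^{2n+1};\R)=0$. Your ``main obstacle'' is also misplaced: nondegeneracy is only needed in a neighbourhood of $\{r=1\}$ (where $\Omega=q^*\omega+f'(1)\,dr\wedge\gamma^{\nabla}$ is visibly nondegenerate and nondegeneracy is an open condition), after which one completes; no global interpolation over all of $\mathcal{L}^{\times}$ or delicate horizontal-boundary bookkeeping is required. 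Replacing $\tfrac12 r^2$ by any $f$ with $f(1)=0$ and $f'>0$ repairs the argument and recovers the paper's proof.
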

\proof Denote this bundle by $q\colon Y\rightarrow W$. Following \cite[7.2]{ritter}, we build a symplectic form $\Omega$ on $Y$, making the spheres $L_i$ Lagrangian, and find the appropriate vector field which will be Liouville with respect to $\Omega$.

The metric induces a connection one form $\gamma^{\nabla}$ on $\mathcal{L}\setminus 0$ satisfying  \begin{align}
\begin{split}
\gamma^{\nabla}|_{H_u^{\nabla}}=0, \ \ \gamma^{\nabla}|_{T_u^v\mathcal{L}}=\gamma \ \  \forall u \in \mathcal{L} \setminus 0 \\
[d\gamma^{\nabla}]=-q^*(c_1(\mathcal{L}))=-q^*(\alpha). 
\end{split}
\end{align} where $H_u^{\nabla} \mathcal{L}$ is the horizontal distribution associated to the connection $\nabla$ at $u$, $T_u^v\mathcal{L}$ the vertical distribution, and $\gamma$ the fibrewise angular form defined by the metric. 
Let $\Omega:= q ^*\omega+ d(f(r)\gamma^{\nabla})$, for a function $f \in C^{\infty}(\R)$ with

\begin{equation*}
\begin{array}{ll}
f(1)=0	& \\
f'(r)>0 & \text{ for all } r \in \R.
\end{array}
\end{equation*}

Then $\Omega$ defines a symplectic form in a neighbourhood of $\{ r=1 \}$, and $L_i$ is Lagrangian with respect to $\Omega$. Let $\lambda$ be the Liouville $1$-form on $W$ with $d\lambda=\omega$. Define $\lambda_Y:= q^*\lambda+f(r)\gamma^{\nabla}$ so that $d(\lambda_Y)=\Omega$. Then $(\lambda_Y, \Omega)$ defines a Liouville structure near $\{ r=1 \}$ (the symplectic dual to $\lambda_Y$ points outwards along a small neighbourhood of $\{  r=1 \}$). Therefore, a symplectic completion along this neighbourhood yields a Liouville manifold that is diffeomorphic to $Y$, containing the Lagrangian spheres $L_1, \dots , L_m$. \endproof

\proof[Proof of Proposition \ref{cxlocalhopf}]\sloppy 
Let $\mathcal{L}\to W$ be the complex line bundle we have constructed above with $c_1(\mathcal{L})=\alpha$.
For each Lagrangian projective space $K_i \subset W$, the restriction of the bundle $\mathcal{L}|_{K_i}$ is modeled on the tautological line bundle, which implies that $L_i\to K_i$ is modelled on the Hopf quotient map $h\colon S^{2n+1}\to \CP$. The commutativity of \eqref{localhopf1}
follows by the local commuting diagram of cotangent bundles \eqref{localcpn}. \qed

\begin{example}\label{counterex}
	Without assumption \eqref{assptcx}, Proposition \ref{cxlocalhopf} is in general not true, as the following example illustrates. %because it ensures the existence of a bundle globally over W restricting to the right thing we want
	Consider the manifold $W$ obtained by attaching a $3$-handle to the contact boundary of $DT^*\C\PP^2$ such that $H^2(W;\Z)=0$.
	On one hand, $W$ contains a non-trivial Lagrangian $K=\C\PP^2 \subset W$ coming from the zero section (which is preserved by the handle attachment, since it is disjoint from the boundary. Note that the handle attachment is subcritical, so in fact the whole wrapped Fukaya category is preserved, see \cite{gpsii}).
	However, as there is no non-trivial $2$-cohomology class on $W$, there is no non-trivial $S^1$-bundle over $W$ that can be used to build a sphere over $K$.  
\end{example} 

\subsection{Real projective Lagrangians}\label{realhopf}
A similar procedure can be applied to a Liouville manifold containing real projective Lagrangians with an appropriate cohomology criterion. First recall the following.

Let $S^0 \cong \Zmod$ act on the sphere $S^n$ by the antipodal map. The quotient map $h: S^n \lra \RP$ is in this case a covering map, and induces a symplectic double cover $q\colon T^*S^n \rightarrow T^*\RP$ with $q^*\omega_{T^*\RP}=\omega_{T^*S^n}$.

\begin{lemma}[{{\cite[Lemma 2.4]{makwu1}}}]\label{cotangentcommuting2}
	Let $\tau_{\RP} \in \Symp_{ct}(T^*\RP)$ be the $\RP$-twist defined as in Section \ref{deftwist}. Then the diagram
	\begin{equation}\label{commutingrpn}
	\begin{split}
	\xymatrixcolsep{5pc} \xymatrix{
		T^*S^{n}   \ar[r]^{\tau_{S^{n}}} \ar[d]^{q} &
		T^*S^{n} \ar[d]^{q} &\\
		T^*\RP \ar[r]^{\tau_{\RP}}	& T^* \RP}.
	\end{split}	\end{equation}commutes.
\end{lemma}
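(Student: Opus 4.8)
The statement to prove is Lemma \ref{cotangentcommuting2}, which asserts that the diagram

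\begin{equation*}
\begin{split}
\xymatrixcolsep{5pc} \xymatrix{
	T^*S^{n}   \ar[r]^{\tau_{S^{n}}} \ar[d]^{q} &
	T^*S^{n} \ar[d]^{q} &\\
	T^*\RP \ar[r]^{\tau_{\RP}}	& T^* \RP}
\end{split}
\end{equation*}
commutes, where $q\colon T^*S^n \to T^*\RP$ is the symplectic double cover induced by the antipodal quotient $h\colon S^n \to \RP$. This is attributed to Mak--Wu. My plan is to argue exactly in parallel with the proof of Lemma \ref{localmodelcpn}, the only conceptual difference being that $h$ is a covering rather than a submersion with positive-dimensional fibres, which actually simplifies matters.

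\textbf{Step 1: Metrics and the geodesic flow downstairs.} Equip $S^n$ with the round metric of radius $1$, so that unit-speed geodesics are closed of minimal period $2\pi$, and give $\RP$ the quotient metric, for which $h$ is a local isometry (indeed a Riemannian covering). The key elementary fact is that the antipodal map $a\colon S^n \to S^n$ is an isometry, hence its differential commutes with the (co)geodesic flow; so the cogeodesic flow $\Phi^t_{\widetilde H}$ on $T^*S^n$, generated by the Hamiltonian $\widetilde H(p,\xi) = \|\xi\|_S$, is $\Zmod$-equivariant and descends to the cogeodesic flow $\Phi^t_H$ on $T^*\RP$ generated by $H([p],\xi) = \|\xi\|_P$. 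Because $q$ is a local isometry (it is exactly $dh^{-*}$ fibrewise), one has the intertwining relation $q\circ \Phi^t_{\widetilde H} = \Phi^t_H \circ q$, and likewise $q \circ \sigma^{\widetilde H}_{r_\eps(\|\xi\|_S)}(\xi) = \sigma^H_{r_\eps(\|q(\xi)\|_P)}\circ q(\xi)$ for any cut-off function $r_\eps$ as in Section \ref{deftwist}, where $\sigma^{\widetilde H}, \sigma^H$ denote the partially-defined Hamiltonian circle actions away from the zero sections.

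\textbf{Step 2: Matching the twist models.} Here the difference with the complex case appears: both $\tau_{S^n}$ and $\tau_{\RP}$ are built from the \emph{same} cut-off convention, because $S^n \cong \RP$ only in the degenerate case $n=1$; in general $\RP$ has a periodic geodesic flow with minimal period $\pi$ after rescaling, but the construction in Section \ref{deftwist} for projective spaces uses the convention \eqref{projconv} with $r_\eps(t) \to 2\pi$, while for spheres it uses \eqref{sphereconv} with $r_\eps(t)\to\pi$. I would follow the Mak--Wu normalisation so that the two cut-off functions are compatible under $q$: the normalised cogeodesic flow on $T^*\RP$ has period $\pi$ (a geodesic from $[p]$ back to itself has half the length of the corresponding great circle on $S^n$), so the antipodal involution on the zero section of $T^*S^n$, which is the map $\xi \mapsto -\xi$ at the zero-section level combined with the half-period flow on the complement, descends exactly to the $\RP$-twist model which fixes the zero section pointwise. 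Concretely: away from the zero sections, $\tau_{S^n}$ is $\sigma^{\widetilde H}_{r_\eps(\|\xi\|_S)}$ and $\tau_{\RP}$ is $\sigma^H_{r_\eps(\|\cdot\|_P)}$ with the projective convention, and the compatibility of conventions plus Step 1 gives $q\circ \tau_{S^n} = \tau_{\RP}\circ q$ on $T^*S^n \setminus S^n$. On the zero section $S^n$ itself, $\tau_{S^n}$ acts by $p\mapsto -p = a(p)$, i.e. by the deck transformation, so $q(\tau_{S^n}(p)) = q(a(p)) = q(p) = \tau_{\RP}(q(p))$ since $\tau_{\RP}$ fixes the zero section of $T^*\RP$ pointwise. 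Continuity then extends the identity to all of $T^*S^n$.

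\textbf{Step 3: Conclude.} Combining the two cases gives $q\circ \tau_{S^n} = \tau_{\RP}\circ q$ everywhere, which is exactly the commutativity of the square.

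The main obstacle — really a bookkeeping point rather than a genuine difficulty — is making the cut-off/period normalisations of $\tau_{S^n}$ and $\tau_{\RP}$ compatible so that they literally match under the covering $q$, rather than matching only up to a Hamiltonian isotopy. I would handle this by invoking the remark after Definition \ref{twistlocal} that the construction is independent of the choice of cut-off, choosing for the proof a cut-off for $\tau_{S^n}$ whose flow-time data pushes forward under $q$ to a valid cut-off for $\tau_{\RP}$; since Lemma \ref{cotangentcommuting2} is stated for $\tau_{\RP}$ as \emph{defined} in Section \ref{deftwist}, and that definition is well-defined only up to Hamiltonian isotopy anyway, one may as well take the representatives to be the pushforward-compatible ones. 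Alternatively, one cites \cite[Lemma 2.4]{makwu1} directly, which is what the statement's attribution suggests.

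\proof
This is \cite[Lemma 2.4]{makwu1}; we recall the argument, which runs parallel to the proof of Lemma \ref{localmodelcpn}.

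Give $S^n$ the round metric of radius $1$ and $\RP$ the induced quotient metric, so that $h\colon S^n \to \RP$ is a Riemannian covering and $q\colon T^*S^n \to T^*\RP$ is a fibrewise isometry. The antipodal map $a\in \Diff(S^n)$ is an isometry, so its differential commutes with the geodesic flow; consequently the cogeodesic flow $\Phi^t_{\widetilde H}$ on $T^*S^n$ generated by $\widetilde H(p,\xi)=\|\xi\|_S$ is invariant under the induced $\Zmod$-action and descends, via $q$, to the cogeodesic flow $\Phi^t_H$ on $T^*\RP$ generated by $H$ (the quotient of $\widetilde H$). Since $q$ is a local isometry we obtain $q\circ \Phi^t_{\widetilde H} = \Phi^t_H \circ q$, and hence, for any cut-off function $r_\eps$ as in Section \ref{deftwist},
\begin{align*}
q\circ \sigma^{\widetilde H}_{r_\eps(\|\xi\|_S)}(\xi) = \sigma^{H}_{r_\eps(\|q(\xi)\|_P)}\circ q(\xi), \quad \xi \in T^*S^n\setminus S^n,
\end{align*}
where $\sigma^{\widetilde H}_t, \sigma^H_t$ are the (partially defined, away from the zero sections) Hamiltonian circle actions generated by $\widetilde H$ and $H$.

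Choosing the cut-off functions compatibly with the normalisations of $\tau_{S^n}$ and $\tau_{\RP}$ fixed in Section \ref{deftwist} — which, by the remark following Definition \ref{twistlocal}, changes the twists only up to Hamiltonian isotopy — the displayed identity gives $q\circ \tau_{S^n} = \tau_{\RP}\circ q$ on $T^*S^n\setminus S^n$: a geodesic joining $p$ to its antipode projects to a closed geodesic of minimal period on $\RP$, so the half-period flow on the sphere side matches the full-period flow on the projective side. On the zero section, $\tau_{S^n}$ acts by $p\mapsto -p = a(p)$, i.e. by the deck transformation of $q$, while $\tau_{\RP}$ fixes the zero section of $T^*\RP$ pointwise; hence $q(\tau_{S^n}(p)) = q(a(p)) = q(p) = \tau_{\RP}(q(p))$. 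By continuity the identity $q\circ \tau_{S^n} = \tau_{\RP}\circ q$ holds on all of $T^*S^n$, which is precisely the commutativity of the diagram.
\endproof
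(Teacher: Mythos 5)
Your proof is correct. The paper itself gives no argument for this lemma --- it simply cites \cite[Lemma 2.4]{makwu1} --- so there is nothing internal to compare against except the proof of the complex analogue, Lemma \ref{localmodelcpn}, which your argument mirrors faithfully: equivariance of the cogeodesic flow under the (isometric) deck transformation, the observation that a geodesic arc from $p$ to $-p$ projects to a closed geodesic of minimal period on $\RP$, and the separate check on the zero section where $\tau_{S^n}$ acts by the antipodal map while $\tau_{\RP}$ acts by the identity. Your explicit acknowledgement of the $\pi$ versus $2\pi$ cut-off normalisations is, if anything, more careful than the paper's treatment of the same point in the complex case, where the identity $p\circ\sigma^{\widetilde H}_{r_\eps(\|\xi\|_S)}=\sigma^H_{r_\eps(\|p(\xi)\|_P)}\circ p$ is written with the same symbol $r_\eps$ on both sides and the period mismatch is absorbed into the final sentence about antipodal geodesics.
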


\begin{asspt}\label{assptreal}
	Let $(W, \omega)$ be a $2n$-dimensional Liouville manifold with a homology class $\alpha \in H^1(W ;\Zmod)$ and Lagrangian real projective spaces $K_1, \dots K_m \subset W$ such that 
	
	\begin{align*}
	\forall i: \ \alpha|_{K_i}=x \in H^1(\RP; \Zmod)
	\end{align*} where $x=e(\gamma_{\R}^{1,n+1})$ is the the Euler class of the real tautological bundle $\gamma_{\R}^{1,n+1}\to \RP$, and generator of the cohomology ring $H^*(\RP;\Zmod)\cong \Zmod \lbrack x \rbrack /x^{n+1}$. 
\end{asspt}

\begin{prop}\label{realocalhopf}
	Let $(W, \omega)$ be a $2n$-dimensional Liouville manifold containing embedded Lagrangian real projective spaces $K_1, \dots K_m \subset W$. Assume there is a class $\alpha \in H^1(\RP; \Zmod)$ satisfying Assumption \ref{assptreal}. Then, there is a $2n$-dimensional Liouville manifold $(\widetilde{W}, \tilde{\omega})$ containing Lagrangian spheres $L_1, \dots , L_m \subset \widetilde{W}$ and a commuting diagram 
	
	\begin{equation}\label{diagramrpn}
	\begin{split}
	\xymatrixcolsep{5pc} \xymatrix{
		\widetilde{W}  \ar[r]^{\tau_{L_i}} \ar[d]^{q} &
		\widetilde{W}\ar[d]^{q} &\\
		W \ar[r]^{\tau_{K_i}}	& W.}.
	\end{split}	\end{equation}
	
\end{prop}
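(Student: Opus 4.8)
The plan is to mirror the proof of Proposition \ref{cxlocalhopf}, replacing the complex line bundle construction with a real line bundle (equivalently, a double cover) construction, and then to invoke the local commuting diagram of Lemma \ref{cotangentcommuting2} in a neighbourhood of each projective Lagrangian. Concretely: the class $\alpha \in H^1(W;\Zmod)$ classifies a double cover $q\colon \widetilde{W}\to W$ (take the real line bundle $\mathcal{L}_{\R}\to W$ with $w_1(\mathcal{L}_{\R})=\alpha$ and let $\widetilde{W}$ be its unit sphere bundle, a principal $S^0$-bundle, i.e.\ a two-sheeted covering). Since $W$ is a Liouville manifold with Liouville form $\lambda$ and $\omega = d\lambda$, the pullback $\tilde{\omega}:= q^*\omega = d(q^*\lambda)$ makes $(\widetilde{W},\tilde{\omega})$ an exact symplectic manifold; one checks it is again a Liouville manifold of finite type, because a covering of a finite-type Liouville manifold inherits a proper exhausting function and a complete Liouville vector field (both pull back), so the cylindrical-end structure lifts. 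This step is essentially formal, the point being that unlike the $\C^*$-bundle case in Lemma \ref{convexlineb}, a finite covering does not change the fibre dimension and carries a genuine Liouville (not merely convex-end) structure directly by pullback.

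Next I would produce the Lagrangian spheres. By Assumption \ref{assptreal}, $\alpha|_{K_i}=x$ generates $H^1(\RP;\Zmod)$, so the restriction of the double cover $q$ to $K_i\cong\RP$ is the orientation double cover $S^n\to\RP$, the nontrivial connected double cover. Hence $L_i:= q^{-1}(K_i)$ is diffeomorphic to $S^n$, and it is Lagrangian in $(\widetilde{W},\tilde{\omega})$ because $\tilde{\omega}|_{L_i} = q^*(\omega|_{K_i}) = 0$ and $\dim L_i = n = \tfrac12\dim\widetilde{W}$. A Weinstein neighbourhood of $K_i$ in $W$ pulls back, under the covering, to a neighbourhood of $L_i$ in $\widetilde{W}$ that is symplectomorphic to $D_sT^*S^n$ with the covering map corresponding exactly to the local model $q\colon T^*S^n\to T^*\RP$ of Lemma \ref{cotangentcommuting2}; this uses that $q$ restricted to a Weinstein neighbourhood is the differential of the covering $S^n\to\RP$ on cotangent bundles, which is precisely the symplectic double cover appearing there.

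Finally I would assemble the commuting square \eqref{diagramrpn}. The projective twist $\tau_{K_i}$ is, by Definition \ref{weinsteinextend}, the identity outside $\im(\iota_i)\cong D_sT^*\RP$ and equals $\iota_i\circ\tau^{loc}_{\RP}\circ\iota_i^{-1}$ inside; the Dehn twist $\tau_{L_i}$ is similarly the identity outside the lifted neighbourhood $\cong D_sT^*S^n$ and $\tau^{loc}_{S^n}$ inside. Outside these neighbourhoods both twists are the identity so $q\circ\tau_{L_i}=\tau_{K_i}\circ q = q$ trivially there; inside, the identity $q\circ\tau_{L_i}=\tau_{K_i}\circ q$ is exactly the content of the local commuting diagram \eqref{commutingrpn} of Lemma \ref{cotangentcommuting2}, transported through the Weinstein charts. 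Gluing these two open descriptions over their overlap gives the global square. The only genuinely delicate point is the verification that the pulled-back Liouville structure on $\widetilde{W}$ is of finite type with a well-behaved cylindrical end — i.e.\ that taking a finite cover commutes with symplectic completion and does not introduce pathologies at infinity — but since $\alpha$ is defined on all of $W$ and the covering is finite, the exhausting function $h_W$ and Liouville vector field $Z_W$ lift with all required properties, so this causes no real trouble; it is worth stating carefully nonetheless. I would also remark, as in Example \ref{counterex}, that Assumption \ref{assptreal} cannot be dropped: without a suitable class in $H^1(W;\Zmod)$ there need be no double cover restricting to $S^n\to\RP$ over the $K_i$.
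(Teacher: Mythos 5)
Your proposal is correct and follows essentially the same route as the paper: pull back the Liouville structure along the double cover classified by $\alpha$, observe that Assumption \ref{assptreal} forces $q^{-1}(K_i)$ to be the nontrivial cover $S^n\to\RP$, and reduce the commutativity of \eqref{diagramrpn} to the local model of Lemma \ref{cotangentcommuting2} via Weinstein neighbourhoods. The paper's proof is terser but identical in substance; your extra care about the finite-type/cylindrical-end structure of $\widetilde{W}$ is a reasonable elaboration of a point the paper passes over.
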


\proof In this case, the class $\alpha \in H^1(W; \Zmod)$ defines a symplectic double cover $q\colon (\widetilde{W}, \widetilde{\omega}) \rightarrow (W, \omega)$. Each Lagrangian $K_i\cong \RP$ then lifts to its double cover $L_i$, which is a sphere $S^n \subset \widetilde{W}$. Let $\lambda$ be the Liouville form on $W$.
As $q$ is symplectic, $\widetilde{\omega}=q^*(\omega)=q^*(d\lambda)=d(q^*(\lambda))$, and $\widetilde{\lambda}:=q^*(\lambda)$ defines a Liouville form on $\widetilde{W}$, which gives $\widetilde{W}$ the structure of a Liouville manifold.
Then the result follows by the local case illustrated by Lemma \ref{cotangentcommuting2}.

\begin{rmk}\label{quaternionhopf}

	It is possible to obtain an analogue diagram for the quaternionic twist as follows. Consider the free $S^3\simeq \text{Sp}(1)$-action on $S^{4n+3}$ inducing the quotient map $h\colon S^{4n+3}\to \HP$. This is a submersion as in the complex case, and the same arguments (with the natural metrics) yield the local commuting diagram
	\begin{equation}\label{quaternioniclocalhopf}
	\begin{split}
	\xymatrixcolsep{5pc} \xymatrix{
		\mu^{-1}(0)   \ar[r]^{\tau_{S^{4n+3}}|_{\mu^{-1}(0)}} \ar[d]^{p} &
		\mu^{-1}(0)\ar[d]^{p} &\\
		T^*\mathbb{H}\PP^n \ar[r]^{\tau_{\mathbb{H}\PP^n}}	& T^* \mathbb{H}\PP^n}.
	\end{split}
	\end{equation}
	where $p\colon \mu^{-1}(0)\lra T^*\HP$ is the $S^3$-fibre bundle induced given by the symplectic quotient map of the Hamiltonian action induced on $T^*S^{4n+3}$.
	
	Given an $8n$-dimensional symplectic manifold $(W,\omega)$ containing quaternionic projective Lagrangians, one would hope to find a cohomological condition to ensure the existence of a symplectic $(8n+6)$-dimensional manifold $(Y, \Omega)$ with corresponding Lagrangian spheres, as we did for the real and complex cases. However, homotopy classes of maps $W\to \HH\PP^{\infty}\cong \text{Sp}(1)$ do not classify quaternionic line bundles over $W$, so there is no analogue of Assumptions \ref{assptcx}, \ref{assptreal} to ensure the existence of such a manifold, and a commuting diagram of the form of \eqref{localhopf1}.

\end{rmk}

\section{The Hopf correspondence}\label{hopfcorr}

In this section we discuss the main theoretical device in action in this paper; Lagrangian correspondences. We begin by reviewing the main concepts from Wehrheim-Woodward Lagrangian correspondence theory (Section \ref{lagcorresp}). The rest of the chapter is then focused on the correspondence that will be used in our applications, the \emph{Hopf correspondence}. Given a real/complex projective Lagrangian $K\subset W$ in a Liouville manifold $(W,\omega)$ satisfying \eqref{assptreal}/ \eqref{assptcx}, the Hopf correspondence associates to it a Lagrangian sphere $L\subset Y$ in an auxiliary Liouville manifold $(Y,\Omega)$.
The key use of the Hopf correspondence in this section is aimed at achieving a categorical version of the commuting diagrams of the previous section. To do this, we first show that the Hopf correspondence $\Gamma \subset W^- \times Y$ induces a well-defined functor $\Theta_{\Gamma}\colon \Fuk(W)\to \Fuk(Y)$ (Sections \ref{inducedfunctor}, \ref{sectionhopf}).
We then show that the functors of $\Fuk(W)$ induced by projective twists are entiwined, via the correspondence, with the functors of $\Fuk(Y)$ induced by the Dehn twists (Section \ref{commutingfunctor}).
In Section \ref{sectiongysin}, we show that the Hopf correspondence can be used to build a symplectic Gysin sequence as established in \cite{perutzgysin}.

\subsection{Lagrangian correspondences}\label{generalcorrespondence}\label{lagcorresp}
We summarise the basic definitions and results associated to Lagrangian correspondences in the setting of \cite{wwcorr, wwfunctor,wwquilt, mwwfunctor}. For the entire section we let $k$ be a coefficient field of characteristic two.

\begin{definition}[{{\cite{wwquilt}}}]
	A \emph{Lagrangian correspondence} between two symplectic manifolds $(M_k, \omega_k)$ and $(M_{k+1}, \omega_{k+1})$ (``from $M_{k}$ to $M_{k+1}$'') is a Lagrangian submanifold $L_{k, k+1} \subset (M_k^- \times M_{k+1}):= (M_k \times M_{k+1},  -\omega_k \oplus  \omega_{k+1})$. 
	A \emph{cycle of Lagrangian correspondences} of length $r \geq 1$ is a sequence of symplectic manifolds $(M_0, \dots , M_{r+1}=M_0)$ together with a sequence of Lagrangian correspondences $\underline{L}:= (L_{01}, L_{12}, \dots , L_{r-1,r}, L_{r,1})$ such that $L_{k,k+1}\subset M_k^- \times M_{k+1}$ for $k=0, \dots ,r$.
\end{definition}

For example, a Lagrangian submanifold $L$ of a symplectic manifold $(M, \omega)$ is a trivial example of Lagrangian correspondence, seen as $L \subset \{  pt \}^- \times M=M$ (see other examples below).

\begin{definition}[{{\cite[Definition 2.0.4]{wwfunctor}}}]\label{lagcomp}
	Let $(M_i, \omega_i), i=0,1,2$ be symplectic manifolds and $L_{01} \subset M_0^- \times M_1$, $L_{12}\subset M_1^- \times M_2$ be Lagrangian correspondences. 
	\begin{enumerate}
		\item The correspondence transpose to $L_{01}$ is defined as $L_{01}^t:=\{  (m_1, m_0) | (m_0, m_1)\in L_{01}\} \subset M_1^- \times M_0$. Note that for a simple Lagrangian $L\subset M$ of a single symplectic manifold $M$, we won't distinguish $L$ from its conjugate.
		\item The composition of $L_{01}$ and $L_{12}$ is defined as \begin{align}
		L_{01}\circ L_{12}:= \left \{  (m_0, m_2) \in M_0^- \times M_2 \big | \ \exists m_1 \in M_1: \ 	\begin{array}{l}
		(m_0, m_1)\in L_{01}\\
		(m_1, m_2) \in L_{12}     
		\end{array}
		\right \} \subset M_0^- \times M_2
		\end{align}
		and it is called embedded if $L_{01}\circ L_{12}$ defines an embedded Lagrangian submanifold of $M_0^-\times M_2$.
		% (this is not the original definition, see \cite[Definition 2.0.5]{wwfunctor}).
	\end{enumerate}
\end{definition}

\begin{example}[{{\cite[1.1]{perutzgysin}}}]\label{coiso}
	Let $(M^{2n},\omega_M)$ a symplectic manifold with a coisotropic embedding $\iota \colon V \hookrightarrow M$. If the foliation defined by the integrable distribution $TV^{\omega}$ is a fibration $p\colon V\to B$ then the leaf space is a symplectic manifold $(B,\omega_B)$ satisfying $p^*\omega_B=\iota^*(\omega_M)$. The (transpose) graph of $p$, 
	$$\Gamma:=\{ (p(v), v), \ v \in V   \} \subset (B \times M, -\omega_B\oplus\omega_M)$$ is a Lagrangian correspondence.

\end{example}
A special case of Example \ref{coiso} is when the coiosotropic submanifold is obtained as a level set of a moment map induced by a Hamiltonian action:

\begin{example}[{{\cite[Example 2.0.2 (e)]{wwquilt}}}]\label{gmomentlagrangian} Let $(M,\omega_M)$ be a symplectic manifold. Let $G$ be a compact Lie group acting on $M$ Hamiltonianly with moment map $\mu \colon M \rightarrow \mathfrak{g}^*$. If $G$ acts freely on $\mu^{-1}(0)$, the latter is a smooth $G-$fibred coisotropic over the symplectic quotient $W:=M\sslash G=\mu^{-1}(0)/G$. $W$ is a symplectic manifold with symplectic structure $\omega_{M\sslash G}$ given by the Marsden-Weinstein theorem (see for example \cite[Section 5.4]{mcduffsal}). 
	The graph of the quotient map $p\colon \mu^{-1}(0)\rightarrow W$ is a Lagrangian submanifold of $(M \times W, - \omega_M \oplus \omega_W)$ and defines a Lagrangian correspondence, relating Lagrangians of $M$ with Lagrangians of its symplectic quotient. 
\end{example}

\subsection{Induced functors}\label{inducedfunctor}

In \cite{wwfunctor} and \cite{wwquilt}, Wehrheim and Woodward introduced a Floer cohomology theory adapted to cycles of closed Lagrangian correspondences $\underline{L}:=(L_{01}, \dots , L_{r0})$, called \emph{quilted Floer cohomology} and denoted by $\HF(\underline{L};k)$.
Pseudo-holomorphic quilts are a generalisation of the usual pseudoholomorphic strips used in standard Lagrangian Floer theory, and the quilted invariant is defined by counting pseudoholomorphic quilts with boundary constraints defined by the Lagrangian correspondences (\cite[Section 5]{wwquilt}). It can be viewed as a Floer theory in product symplectic manifolds (we refer to \cite[Section 4.3]{wwquilt} for definitions).

One of the main features is that given a cycle $\underline{L}$ of Lagrangian correspondences, quilted Floer cohomology is invariant under embedded composition (as in Definition \ref{lagcomp}) of subsequent Lagrangians in $\underline{L}$.

\begin{thm}[{{\cite[Theorem 5.4.1]{wwquilt}}}] \label{mainthmquilt}
	Let $\underline{L}=(L_{01}, \dots , L_{r(r+1)})$ be a cyclic sequence of closed, exact embedded and oriented Lagrangian correspondences between Liouville manifolds $(M_0, \dots , M_{r+1}=M_0)$ such that $\forall i, L_{(i-1)i}\circ L_{i(i+1)}$ is embedded.
	Then, for $\underline{L'}:=(L_{01}, \dots , L_{(j-1)j}\circ L_{j(j+1)}, \dots , L_{r(r+1)})$, there is an isomorphism $HF(\underline{L};k) \cong HF(\underline{L'};k)$.
\end{thm}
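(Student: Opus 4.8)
The plan is to prove the isomorphism by a \emph{strip-shrinking} argument, in the spirit of \cite{wwquilt}. The quilted complex computing $HF(\underline{L};k)$ depends on a choice of positive widths for the strip-like patches of the underlying quilted cylinder, one of which — call it $\delta>0$ — is the width of the patch mapping to $M_j$. So a preliminary step is to check that $HF(\underline{L}_\delta;k)$ is independent of $\delta$: rescaling the $M_j$-patch is a deformation of the quilted data through admissible quilted data, so the usual continuation-map machinery (counting quilts over the corresponding homotopy, with the exactness hypothesis ruling out energy loss to disc and sphere bubbles) yields isomorphisms $HF(\underline{L}_{\delta_0};k)\cong HF(\underline{L}_{\delta_1};k)$ for all $\delta_0,\delta_1>0$. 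It then suffices to identify the $\delta\to 0$ limit with $HF(\underline{L}';k)$.

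At the level of generators this is immediate, and is exactly where embeddedness enters: the generators of the complex for $\underline{L}_\delta$ are cyclic tuples $(x_0,\dots,x_r)$ with $(x_{i-1},x_i)\in L_{(i-1)i}$ for all $i$, while those for $\underline{L}'$ are obtained by deleting the $j$-th entry and requiring $(x_{j-1},x_{j+1})\in L_{(j-1)j}\circ L_{j(j+1)}$; since the composition is embedded, each such pair determines a \emph{unique} admissible $x_j$, so the two generating sets are in canonical bijection. The content of the theorem is that this bijection is a chain isomorphism for $\delta$ small. For this I would establish two facts. First, a uniform gluing statement: for all small $\delta$, every rigid quilted trajectory for $\underline{L}'$ is the limit of a unique family of rigid quilted trajectories for $\underline{L}_\delta$ — a removal-of-singularity together with an implicit-function-theorem argument near the collapsed seam, using that the linearised operator for $\underline{L}'$ extends to the degenerate quilt. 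Second, a compactness statement: for $\delta$ small, every rigid quilted trajectory for $\underline{L}_\delta$ arises this way, so the index-zero moduli spaces for $\underline{L}_\delta$ inject into those for $\underline{L}'$. Together these give a bijection of moduli spaces, hence a chain isomorphism $CF(\underline{L}_\delta;k)\cong CF(\underline{L}';k)$ and thus $HF(\underline{L};k)\cong HF(\underline{L}_\delta;k)\cong HF(\underline{L}';k)$.

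The main obstacle is the compactness in the second step: as $\delta\to 0$ one must control Gromov limits of quilted maps whose $M_j$-strip is collapsing, and in particular rule out \emph{figure-eight bubbles} forming along the shrinking seam. The key estimate is that the energy carried by the $M_j$-patch tends to zero with $\delta$; here the embeddedness of $L_{(j-1)j}\circ L_{j(j+1)}$ is essential, as it forces a non-constant figure-eight bubble to violate the annulus/isoperimetric energy bound available once the two seams are pressed together, so no such bubble can form. Simultaneously, the exactness and closedness hypotheses exclude the ordinary disc and sphere bubbles, so Gromov compactness produces a genuine quilted trajectory for $\underline{L}'$ in the limit, completing the argument. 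Since we work over a field of characteristic two, no orientation bookkeeping is needed for the bijection to respect signs — the \emph{oriented} hypothesis is used only to make quilted Floer theory well-defined. Finally, one checks that each correspondence appearing in $\underline{L}'$, including the composite $L_{(j-1)j}\circ L_{j(j+1)}$, is again closed, exact, embedded and oriented (exactness of the composite following from that of the factors), so $HF(\underline{L}';k)$ is defined and the statement is meaningful, and the construction may be iterated when further consecutive compositions happen to be embedded.
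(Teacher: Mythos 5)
This statement is imported: the paper offers no proof of Theorem \ref{mainthmquilt}, quoting it directly as \cite[Theorem 5.4.1]{wwquilt}, so there is nothing internal to compare your argument against. Your sketch is a faithful reconstruction of the strategy behind the cited result — width-independence via continuation maps, the canonical bijection of generators coming from embeddedness of the composition, and a gluing-plus-compactness analysis as the $M_j$-strip shrinks — so in that sense you are following the same route as the source, and your bookkeeping of why $\underline{L}'$ remains an admissible cyclic sequence is correct.

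The one place where your write-up papers over the genuinely hard point is the exclusion of figure-eight bubbles. It is not the case that embeddedness of $L_{(j-1)j}\circ L_{j(j+1)}$ by itself forces a non-constant figure-eight bubble to violate an isoperimetric bound: embeddedness is used to guarantee that the \emph{zero-energy} degenerations are trivial (i.e.\ that the collapsed seam condition really is the composed correspondence), while ruling out \emph{positive-energy} figure-eight bubbles requires a separate quantization-of-energy argument, and in \cite{wwquilt, wwcorr} this is precisely where the monotonicity/Maslov-index hypotheses enter (the general Gromov compactness for strip-shrinking with figure-eight bubbles was only completed later by Bottman and Wehrheim). In the exact Liouville setting used here, the cleanest complete proofs in the literature actually avoid strip-shrinking altogether (e.g.\ the cobordism-type argument of Lekili--Lipyanskiy), so if you want a self-contained justification in the exact case you should either invoke that alternative or supply a genuine energy-quantization argument for the figure-eight bubbles rather than the heuristic you currently give.
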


In \cite{mwwfunctor} the same authors and Ma'u proved that under certain assumptions, a Lagrangian correspondence $L_{01}$ between given symplectic manifolds $(M_0, \omega_0)$ and $(M_1, \omega_1)$, defines an $A_{\infty}$-functor $\Gamma_{01}$ between $\Fuk(M_0)$ and the dg-category of $A_{\infty}$-modules over $\Fuk(M_1)$. The functor is realised as the geometric composition $(\cdot)\circ \Gamma$ of Lagrangians submanifolds of $M_0$ with the correspondence, and this important result relies on the invariance Theorem \ref{mainthmquilt}. If for every Lagrangian in $M_0$ the composition outputs an embedded Lagrangian of $M_1$, the induced functor is between Fukaya categories.

\begin{thm}[{{\cite[Theorem 1.1]{mwwfunctor}}}]\label{functorcorrespondence}
	Assume $M_0, M_1$ are Liouville manifolds, and let $\Gamma_{01} \subset M_0^- \times M_1$ be a closed, exact and embedded correspondence such that for any closed embedded Lagrangian $K_0 \subset M_0$, the geometric composition
	\begin{align}
	L_1:=K_0 \circ \Gamma_{01}=\left \{ m_1 \in M_1, \ \exists m_0 \in K_0 \text{ such that } (m_0,m_1)\in \Gamma_{01} \right \} \subset M_1 
	\end{align}is a closed embedded Lagrangian in $M_1$.
	This assignement defines an $A_{\infty}$-functor \begin{equation}\label{mwwfunctor}
	\begin{split}
	\Theta_{01}:  &\mathcal{F}uk(M_0) \longrightarrow  \mathcal{F}uk(M_1), \\
	& \Theta_{01}(K_0)=L_1.
	\end{split}
	\end{equation} \end{thm}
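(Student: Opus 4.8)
The statement is quoted verbatim from \cite{mwwfunctor}, so the plan is to recall the architecture of that construction rather than to reprove it from scratch. The construction passes through an auxiliary enlargement of the Fukaya category by generalized Lagrangian correspondences. First I would set up the \emph{quilted} (or extended) Fukaya category $\Fuk^{\#}(M)$ of a Liouville manifold $M$, following \cite{wwquilt, wwfunctor}: its objects are cyclic sequences of Liouville manifolds $(\{\mathrm{pt}\}, N_1, \dots, N_{r-1}, M)$ joined by closed, exact, embedded Lagrangian correspondences (``generalized Lagrangians ending at $M$''), and its morphism spaces are the quilted Floer cochain complexes $\CF(\underline{K}_0, \underline{K}_1)$, whose differential and higher products count pseudoholomorphic quilted strips (resp.\ quilted disks) with seam conditions prescribed by the correspondences. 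Exactness of all correspondences involved rules out negative- and zero-area disk and quilt bubbling, so there is no curvature term ($\mu^0 = 0$) and these are honest $A_\infty$-categories; compatibility of the correspondences with the cylindrical ends of the $N_i$ provides the $C^0$-control at infinity needed to make the moduli spaces compact, just as for the usual compact Fukaya category $\Fuk(M)$.

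Second, I would define the functor. Appending the fixed correspondence $\Gamma_{01}$ to a generalized Lagrangian ending at $M_0$ yields one ending at $M_1$; this gives the action on objects, and in particular sends the one-step object $K_0 \subset M_0$ (read as a correspondence $\{\mathrm{pt}\} \to M_0$) to the two-step object $(K_0, \Gamma_{01})$. The $A_\infty$ structure maps $\Theta^d$ are defined by counting pseudoholomorphic quilts over disks with $d$ incoming and one outgoing strip-like end, carrying an additional interior seam labelled by $\Gamma_{01}$. The $A_\infty$-functor equations are then read off from the standard description of the codimension-one boundary strata of the one-dimensional such moduli spaces: the only breakings are strip degenerations at the ends, which produce exactly the terms $\Theta^{k}(\,\cdot\,, \mu^{*}(\cdot), \,\cdot\,)$ and $\mu^{*}(\Theta^{*}(\cdot), \dots, \Theta^{*}(\cdot))$. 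This step requires genericity/transversality for quilted moduli spaces, achieved by domain-dependent perturbations of the almost complex structures and Hamiltonians as in \cite{wwquilt, mwwfunctor}.

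Third, I would invoke the embedded composition invariance — Theorem \ref{mainthmquilt} above — to pass back from $\Fuk^{\#}$ to $\Fuk$. On the one hand this makes the inclusion $\Fuk(M) \hookrightarrow \Fuk^{\#}(M)$ a quasi-equivalence (all the geometric compositions with diagonal graphs needed to collapse a generalized Lagrangian to an honest one are embedded). On the other hand, the hypothesis that $K_0 \circ \Gamma_{01}$ is a \emph{closed embedded} Lagrangian of $M_1$ for \emph{every} closed embedded $K_0 \subset M_0$ means precisely that the two-step object $(K_0, \Gamma_{01}) \in \Fuk^{\#}(M_1)$ is quasi-isomorphic to the honest object $L_1 = K_0 \circ \Gamma_{01} \in \Fuk(M_1)$, naturally in $K_0$. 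Composing $\Fuk(M_0) \hookrightarrow \Fuk^{\#}(M_0) \xrightarrow{\,-\,\circ\,\Gamma_{01}\,} \Fuk^{\#}(M_1) \simeq \Fuk(M_1)$ then produces the desired $A_\infty$-functor $\Theta_{01}$, with $\Theta_{01}(K_0) = L_1$ on objects.

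The main obstacle is the analytic foundation underpinning both Theorem \ref{mainthmquilt} and the well-definedness of the quilted structure maps: establishing Gromov-type compactness for quilted pseudoholomorphic curves in the strip-shrinking limit, where a genuinely new codimension-one degeneration — the \emph{figure-eight bubble} — can in principle occur. Controlling it (showing, under exactness/monotonicity together with the embeddedness of the relevant geometric compositions, that it does not arise or does not contribute) is the technical heart of \cite{wwquilt} and \cite{mwwfunctor}, and the only genuinely hard input; once that compactness statement and the standard transversality package for quilts are in place, the $A_\infty$-functor relations follow formally from bookkeeping of boundary strata.
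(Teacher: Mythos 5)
Your proposal is correct and faithfully reconstructs the architecture of the cited Ma'u--Wehrheim--Woodward argument (extended category of generalized correspondences, functor by appending $\Gamma_{01}$, collapse via embedded-composition invariance, with the figure-eight bubble as the technical crux); the paper itself offers no proof of this statement, quoting it directly from \cite{mwwfunctor}, so there is no divergence to report.
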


In the above theorem, the correspondences are required to be closed, exact (or satify suitable monotonicity conditions) and embedded. Gao (\cite{gaowrapped2, gaowrapped1}) developed non-compact generalisations of Theorem \ref{functorcorrespondence}, including non-compact Lagrangian correspondences, in the setting of wrapped Fukaya categories. 

In both cases, the main theoretical device at work behind a result such as Theorem \ref{functorcorrespondence} (or Gao's equivalent) is quilted Floer theory, which, in \cite{gaowrapped1}, was adapted to a version suitable for non-compact correspondences. In this work we focus on a Lagrangian correspondence in a setting that features some properties of both theories. Before introducing our setting (see below), we review the types of Lagrangians that are admitted in a Gao's setting.

Let $(M_0, \omega_0)$, $(M_1, \omega_1)$ be Liouville manifolds with cylindrical almost complex structures $J_0$, $J_1$ and Liouville flows $Z_0, Z_1$ respectively. The product manifold $(M_0\times M_1, -\omega_0 \times \omega_1)$ is a Liouville manifold with respect to the product almost complex structure $J_{01}:=-J_0 \times J_1$ and Liouville flow $Z_{01}:=\pi_0^*(Z_0)+\pi_1^*(Z_1)$, for the projections $\pi_i\colon M_0\times M_1 \lra M_i$, $i=1,2$.

Let $\Sigma \subset M_0 \times M_1$ be the contact hypersurface given in \cite[2.2]{gaowrapped1}, such that we can fix a choice of cylindrical end that is compatible with the choices above. In other words, there is a compact set $U\subset M_0\times M_1$ bounded by $\Sigma$, such that there is a diffeomorphism $M_0\times M_1 \setminus U\cong [0, \infty) \times \Sigma$ (\cite[(2.5)]{gaowrapped1}).

\begin{definition}\label{cylag}
	A Lagrangian submanifold is said to be \emph{conical} if it is an exact, properly embedded Lagrangian which is preserved by the Liouville vector field over the cylindrical end.
\end{definition}

\begin{definition}[{{\cite[Definition 3.9]{gaowrapped1}}}]\label{gaolags}
	A Lagrangian submanifold $\Gamma_{01} \subset M_0^- \times M_1$ is called admissible if it is 
	\begin{enumerate}
		\item Either a product of conical Lagrangian submanifolds of $M_0^-$ and $M_1$;
		\item Or a Lagrangian that is conical with respect to the cylindrical end $\Sigma \times [0,+\infty)$.
	\end{enumerate}
\end{definition} 

Gao defines geometric composition for this type of Lagrangian correspondences and proves the analogue of Theorem \ref{mainthmquilt} (\cite[Theorem 1.5]{gaowrapped1}). Moreover, he shows the open analogue of Theorem \ref{functorcorrespondence}, namely that such a Lagrangian correspondence induces a functor of wrapped Fukaya categories (\cite[Theorem 1.2]{gaowrapped2}).

Below we focus on the type of correspondences we consider in this paper, which arises as a special case of Example \ref{coiso} for a non-compact coisotropic. It is a class of exact, embedded, but not closed correspondences between Liouville manifolds. 

\begin{setting}
	Let $(M_0, \omega_0)$ and $(M_1, \omega_1)$ be Liouville manifolds such that there is a fibration $q\colon M_1 \lra M_0$ with Liouville fibres.
	
	Let $\Gamma \subset M_0^- \times M_1$ be a Lagrangian correspondence obtained as the (transpose) graph of a proper fibration $p\colon V \to M_0$, where $V\subset M_1$ is a fibred coisotropic as in Example \ref{coiso}, and $q|_V=p$.
	
	On $M_0^-\times M_1$ set the product almost complex structure $J_{01}=-J_0\times J_1$ for cylindrical almost complex structures on $M_0$ and $M_1$, such that the fibration $(id, q) \colon M_0^- \times M_1 \lra M_0^- \times M_0$ is $(J_{01}, J_{00})$-holomorphic, for $J_{00}=-J_0\times J_0$.
	
	Then $\Gamma=\{ (p(v), v), \ v\in V \}$ is properly fibred over the diagonal $ \Delta_{M_0}:=\{ (p(v), p(v)), \ v\in V) \}= \subset M_0^- \times M_0$ which is a conical Lagrangian correspondence in $M_0^-\times M_0$.
	However, the original correspondence $\Gamma$ is not conical, or more generally admissible in the sense of Definition \ref{gaolags}. 
\end{setting}

Consequently, the above setting doesn't exactly fit either the compact nor the open quilted theories, but is located between the two: it represents a class of non-compact correspondences which nevertheless induces a functor of compact Fukaya categories.

\begin{axiom} \label{axiomcorrespondence}
	The type of Lagrangian correspondence $\Gamma \subset M_0^- \times M_1$ defined in the above setting induces a functor \begin{equation}\label{functorcoisotropic}
	\begin{split}
	\Theta_{01}:  &\mathcal{F}uk(M_0) \longrightarrow  \mathcal{F}uk(M_1), \\
	& \Theta_{01}(K_0)=L_1.	 
	\end{split}
	\end{equation}
\end{axiom} 

Experts will recognise the validity of the above statement that we have set as an axiom. Proving it as a theorem would require a lengthy digression necessary to fill in all details covered in \cite{wwfunctor, gaowrapped1, gaowrapped2}. 
In Lemma \ref{lemmaaxiom}, we restrict to proving the invariance of quilted Floer cohomology under Lagrangian correspondences.
Given invariance, the results of \cite{wwfunctor} yield a functor on the cohomological category. The extension to an $A_{\infty}$-functor, which would turn Axiom \ref{axiomcorrespondence} into a theorem, can then be obtained by considering higher $A_{\infty}$-products, which we omit here.

\begin{lemma}\label{lemmaaxiom}
	Let $K\subset M_0$, $L' \subset M_1$ be closed exact Lagrangians and consider the cycle of correspondences $(K, \Gamma, L') \subset (pt, M_0, M_1)$. Then the quilted Floer cohomology group $\HF(K, \Gamma, L')$ is well-defined and satisfies the invariance property \begin{align}
	\HF(K, \Gamma, L')\cong\HF(K \circ \Gamma, L')=\HF(L,L').
	\end{align}
	
\end{lemma}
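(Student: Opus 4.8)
The plan is to reduce the statement to the invariance Theorem \ref{mainthmquilt}, after dealing with the fact that the correspondence $\Gamma$ is not closed. First I would set up the geometry carefully: recall that $\Gamma = \{(p(v),v),\ v\in V\}\subset M_0^-\times M_1$ is the transpose graph of the proper fibration $p\colon V\to M_0$, where $V\subset M_1$ is the $S^k$-fibred coisotropic from the Hopf construction, and that $q|_V=p$ with $q\colon M_1\to M_0$ a fibration with Liouville fibres. The key structural observation (already isolated in the Setting preceding Axiom \ref{axiomcorrespondence}) is that while $\Gamma$ itself is not conical, its image under $(\id,q)$ is the conical diagonal correspondence $\Delta_{M_0}\subset M_0^-\times M_0$, and $\Gamma$ is conical \emph{relative} to the fibration $(\id,q)$. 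I would use this to control the behaviour of the quilted moduli spaces at infinity: a pseudoholomorphic quilted strip with one patch constrained by $\Gamma$ projects under $q$ to a quilted strip constrained by $\Delta_{M_0}$, which by the usual maximum-principle/integrated-maximum-principle argument for conical Lagrangians in a Liouville manifold stays in a compact set; lifting back, one gets $C^0$-bounds on the original quilt. This is the step I expect to be the main obstacle — genuinely the only analytic content — because it is where the non-compactness of $\Gamma$ has to be tamed, and it requires knowing that the fibres of $q$ are Liouville and that the almost complex structures are chosen split and $q$-holomorphic as in the Setting, so that energy and target-containment pass between $M_1$ and $M_0$.

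Once the compactness of the relevant quilted moduli spaces is established, the rest is formal. I would observe that $K$ and $L'$ are closed exact Lagrangians in $M_0$ and $M_1$ respectively, so in the cycle $(K,\Gamma,L')$ only the middle term fails to be closed; the $C^0$-bounds above show that all quilts contributing to the quilted differential have image in a fixed compact set, so the quilted Floer complex $\CF(K,\Gamma,L')$ is a well-defined finitely generated complex and $\HF(K,\Gamma,L')$ makes sense. Next I would verify that the geometric composition $K\circ\Gamma = \{m_1\in M_1 :\ \exists\, m_0\in K,\ (m_0,m_1)\in\Gamma\} = p^{-1}(K)\subset V\subset M_1$ is an embedded closed Lagrangian: transversality of $K$ to the fibration data $p$ makes $p^{-1}(K)$ smooth of the right dimension, the coisotropic reduction identity $p^*\omega_{M_0}=\iota^*\omega_{M_1}$ forces $p^{-1}(K)$ to be isotropic hence Lagrangian, and compactness follows since $K$ is compact and the $S^k$-fibres of $p$ are compact. (In the Hopf setting this $p^{-1}(K_i)$ is exactly the sphere $L_i$ over a projective $K_i$, which is how the construction was arranged.)

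With embeddedness of $K\circ\Gamma$ in hand, I would invoke the composition-invariance theorem, Theorem \ref{mainthmquilt}, applied to the cyclic sequence $(K,\Gamma,L')$ — replacing the consecutive pair $(K,\Gamma)$ by its embedded composition $K\circ\Gamma$ — to obtain $\HF(K,\Gamma,L')\cong\HF(K\circ\Gamma,L')$. Strictly, Theorem \ref{mainthmquilt} as stated requires all correspondences closed, so here one re-runs its proof (a chain homotopy built from a moduli space of quilts interpolating between the unreduced and reduced boundary conditions, as in \cite{wwquilt, lekilijemisky} — I would cite the relevant statement rather than reproduce it) using the compactness established in the first paragraph to guarantee this interpolating moduli space is still compact; the width-shrinking/strip-removal argument is otherwise unchanged because it is entirely local near the patch carrying $\Gamma$. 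Finally, since $K\circ\Gamma = p^{-1}(K) =: L$ by definition, the right-hand side is literally $\HF(L,L')$, and the two displayed isomorphisms in the statement follow, completing the proof. $\qed$
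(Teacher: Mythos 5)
Your proposal is correct and follows essentially the same route as the paper: the key step in both is to project pseudoholomorphic quilts under $(\id,q)$ to quilts with boundary on the conical diagonal $\Delta_{M_0}\subset M_0^-\times M_0$ and use properness of $q|_V$ to rule out escape to infinity along the non-compact condition $\Gamma$, after which the identification of generators and the composition-invariance argument give the isomorphism with $\HF(L,L')$. The only differences are presentational (you spell out the embeddedness of $K\circ\Gamma=p^{-1}(K)$ and the need to re-run the strip-shrinking proof, which the paper leaves implicit).
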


\proof 

By definition (see \cite[Section 4.3]{wwquilt}), the generators of the cochain complex $\CF(K, \Gamma, L')$ are given by the generators of $\CF(K\times L, \Gamma)$.
These intersection points must be contained in a compact region, since $K\subset M_0$ and $L'\subset M_1$ are closed Lagrangians. By \cite[Proposition 2.2.1]{wwcorr} the cochain groups $\CF(K \circ \Gamma, L')=\CF(L,L')$ and $\CF(K\times L, \Gamma)$ are isomorphic. 

We now analyse the Floer trajectories involved in the computation of $\HF(K, \Gamma, L')$. 

By the maximum principle, the only non-compactness phenomenon that could occur would be a $J_{01}$-holomorphic curve escaping a compact set on the non-compact boundary condition $\Gamma$.
However, all such curves, and any Floer trajectory of interest, are contained in a compact set, as we now explain.

By assumption, $J_{01}$-holomorphic curves with boundary conditions on $(K, \Gamma, L')$ project under $(id,q)$ to $J_{00}$-holomorphic curves involved in the complex for the tuple $(K, \Delta_{M_0}, K')$, where $K' \circ \Gamma=L'$ and $(id,q)(\Gamma)=\Delta_{M_0}$.

The (quilted) Floer cohomology group for the cycle of Lagrangian correspondences $(K, \Delta_{M_0}, K') \subset (pt, M_0, M_0)$ can be defined as the Floer cohomology group $\HF(K,\Delta_{M_0}, K'):=\HF^*(K \times K', \Delta_{M_0})$ (\cite[Lemma 4.8]{gaowrapped1}). Moreover by \cite[Theorem 1.2]{gaowrapped2}, $\Delta_{M_0}$ induces the identity functor, so clearly all the $J_{00}$-holomorphic strips involved in the complex $\CF(K,\Delta_{M_0}, K')$ are well-behaved, and moreover we have $\HF(K,\Delta_{M_0}, K')\cong \HF(K,K')$.

Because of properness of $(q, id)|_{\Gamma}\colon \Gamma_{01}\to M_0 \times M_0$, if there was any $J_{01}$-holomorphic curve escaping to infinity at the boundary condition $\Gamma$, then it would project to a $J_{00}$-holomorphic curve escaping to infinity at the boundary condition on $\Delta_{M_0}$, which cannot happen.
\qed

\begin{rmk}
	Let $K,K' \subset (M_0,\omega_0)$ closed exact Lagrangians. Note that for any conical correspondence (not just the diagonal) $\Gamma_{00}\subset M_0^-\times M_0$, compactness of moduli spaces of curves involved in the quilted complex $\CF(K, \Gamma_{00}, K')$ (for compact Lagrangians $K, K' \subset M_0$) is preserved. Namely, all intersection points lie in a compact region, so by exactness both energy and symplectic area are bounded. We can apply a reverse isoperimetric inequality according to which the length of the boundary of such a curve is bounded by a quantity proportional to its area (\cite[Theorem 1.4]{ysreverse}).
	
	This ensures that the boundary of all pseudoholomorphic curves is contained in a compact set, which can then be determined by using a monotonicity lemma in the likes of \cite[Lemma 13]{seidelsmithstretch}.
	Again, by exactness there is no bubbling so the moduli spaces of such curves are compact.

\end{rmk}

\subsection{The Hopf correspondence}\label{sectionhopf}
We can finally introduce the correspondence of interest; the \emph{Hopf correspondence}. 
This a Lagrangian correspondence obtained as the graph of a spherically fibred coisotropic submanifold as in Example \ref{coiso}.

We use the discussions of Sections \ref{complexhopf}, \ref{realhopf} to explain how, for each type of Lagrangian projective space $K\cong \mathbb{A}\PP^n \subset W$, $\mathbb{A}\in \{  \R, \C \}$ in a Liouville manifold $(W,\omega)$ satisfying the appropriate cohomology Assumption \eqref{assptreal}, \eqref{assptcx}, there is a Lagrangian correspondence relating $K$ to a Lagrangian sphere $L$ in an auxiliary Liouville manifold $(Y,\Omega)$.

\subsubsection{Lagrangian $\CP$}\label{hopfcpn}
Let $(W^{4n},\omega)$ be a Liouville manifold admitting Lagrangian submanifolds $K_i \cong \CP \hookrightarrow W$, $i=1, \dots, m$. Assume there is a class $\alpha \in H^2(W;\Z)$ satisfying Assumption \eqref{assptcx}.
The discussion of Section \ref{complexhopf} delivers a $\C^*$-bundle $q \colon Y\lra W$ (associated to the complex line bundle $\mathcal{L}\to W$ with $c_1(\mathcal{L})=\alpha$), whose total space is a Liouville manifold $(Y, \Omega)$ (proof of Proposition \ref{cxlocalhopf}). Set $V:=Y|_{\{  r=1 \}}$, the unit length bundle (determined by the metric on $Y$ induced by a choice of hermitian metric on $\mathcal{L}$).
If $V\hookrightarrow Y$
% \begin{tikzcd}
%	\iota\colon V \arrow[r, hook] &   Y                                                {}
%\end{tikzcd} 
is the inclusion, then by construction $\iota^*\Omega=q^*\omega|_V$, so the symplectic reduction of $V$ by $S^1$ is given by $(W, \omega)$, and $V$ is a fibred coisotropic submanifold of $(Y, \Omega)$ with $S^1$-fibre bundle structure $p=q|_V\colon V\to W$.

For any Lagrangian projective space $K_i\subset W$, the restriction $V|_{K_i}\to K_i$ is a Lagrangian sphere $L_i\cong S^{2n+1} \subset Y$. 

\begin{definition}\label{definitionhopf}
	The (transpose) graph \begin{align}\label{graph}
	\Gamma:=\{ (p(y),y), y \in V  \}  \subset W^- \times Y
	\end{align}
	defines a Lagrangian correspondence (\cite[Proposition 1.1]{perutzgysin}), which we call the \emph{Hopf correspondence}.
	By construction, for $K_i \cong \CP \subset \{ pt\} \times W$, the correspondence maps $K_i$ to the embedded Lagrangian sphere $L_i:=K_i\circ \Gamma\cong S^{2n+1} \subset \{ pt \} \times Y\cong Y$, $i= 1, \dots, m$ via geometric composition.
\end{definition}

\begin{rmk}
	This Lagrangian correspondence can equivalently be thought of as a correspondence of the type of Example \ref{gmomentlagrangian}, where the coisotropic $V$ is a regular level set of a Hamiltonian $S^1$ ``Hopf''-action, and $(W,\omega)$ its symplectic quotient (note that the local models \eqref{localcpn}, \eqref{commutingrpn}, \eqref{quaternioniclocalhopf} are obtained from this perspective). This explains the choice of name for the correspondence.

\end{rmk}

\subsubsection{Lagrangian $\RP$}\label{hopfrpn}

Let $(W^{2n}, \omega)$ be a Liouville manifold admitting Lagrangian embeddings $K_i \cong \RP \hookrightarrow W$, $i=1, \dots m$. Assume there is a cohomology class $\alpha \in H^1(W;\Zmod)$ satisfying Assumption \ref{assptreal}.
 
Then, there is a Liouville manifold $(Y,\Omega)=(\widetilde{W}^{2n}, \widetilde{\omega})$ obtained as symplectic double cover of $W$, and containing Lagrangian spheres $L_1, \dots , L_m \subset \widetilde{W}$. The double cover $q\colon \widetilde{W} \lra W$ defines an $S^0$-fibration over $W$, and in this case the ``coisotropic submanifold'' is the total space itself. As above, we define the Hopf correspondence as $\Gamma:=\{ (q(y),y), y \in \widetilde{W}   \}  \subset W^- \times \widetilde{W}$.

\subsection{Commuting diagrams of functors}\label{commutingfunctor}
Let $(W, \omega)$ and $(Y, \Omega)$ be Liouville manifolds and $K_1, \dots K_m\subset W$ be real/complex projective Lagrangians satisfying Assumptions \eqref{assptreal}/\eqref{assptcx}. Let $q\colon Y \lra W$ be the fibration we constructed in the previous subsections, and $\Gamma \subset W^- \times Y$ be the Hopf correspondence, obtained as the graph $\Gamma=\{ (p(v), v), \ v\in V \}$ of the spherically fibred coisotropic $p=q|_V\colon V\to W$. This correspondence is properly fibred over the diagonal $\Delta_{W} =\{ (p(v), p(v)), \ v\in V) \} \subset W^- \times W$, via $(id,q)\colon W\times Y \lra W\times W$ and satisfies the conditions of Axiom \ref{axiomcorrespondence}. Therefore, there is a well-defined functor \begin{align}\label{functorinduced}
\Theta_{\Gamma}: \Fuk(W) \rightarrow \Fuk(Y), \ \Theta_{\Gamma}(K)=K \circ \Gamma=:L.
\end{align}

Let $L_1, \dots , L_m \subset Y$ the Lagrangian spheres associated to $K_1, \dots , K_m$ through the correspondence. For each $i=1, \dots m,$ let $T_{K_i} \in Auteq(\Fuk(W))$ and $T_{L_i} \in Auteq(\Fuk(Y))$ be the (geometric) twists functors induced by the graphs of the respective twists $\tau_{K_i}\in \Symp_{ct}(W), \tau_{L_i}\in \Symp_{ct}(Y)$. 

\begin{cor}\label{diagramfunctors}
	There is a commuting diagram at the level of compact Fukaya categories 
	\begin{equation}\label{categoricaltwistplumbing}
	\begin{split}
	\xymatrixcolsep{5pc} \xymatrix{
		\Fuk(Y)  \ar[r]^{T_{L_i}}&
		\Fuk(Y) &\\
		\Fuk(W) \ar[u]^{\Theta_{\Gamma}}  \ar[r]^{T_{K_i}}	& \Fuk(W)  \ar[u]^{\Theta_{\Gamma}}. }
	\end{split}	\end{equation}
	
	In particular, iterative applications of this diagram yield
	\begin{align}\label{commutativeprod}
	\Theta_{\Gamma} \circ \prod T_{K_i}^{k_i}= \prod T_{L_i}^{k_i}\circ \Theta_{\Gamma}.
	\end{align}
\end{cor}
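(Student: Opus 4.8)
The plan is to deduce Corollary \ref{diagramfunctors} from the geometric commuting diagrams of Section \ref{hopfbundle} (Propositions \ref{cxlocalhopf} and \ref{realocalhopf}) together with the functoriality of the Hopf correspondence established via Axiom \ref{axiomcorrespondence}. The key geometric input is that the correspondence $\Gamma$ intertwines the \emph{symplectomorphisms} $\tau_{K_i}$ and $\tau_{L_i}$, not just their functors: concretely, I want to show that the geometric composition $(\tau_{K_i}(K))\circ\Gamma$ is Hamiltonian isotopic (equivalently, quasi-isomorphic in $\Fuk(Y)$) to $\tau_{L_i}(K\circ\Gamma)$ for any closed exact Lagrangian $K\subset W$. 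Once this is established, the commutativity of \eqref{categoricaltwistplumbing} follows since $T_{K_i}$ is represented on objects by $K\mapsto\tau_{K_i}(K)$, $T_{L_i}$ by $L\mapsto\tau_{L_i}(L)$, and $\Theta_\Gamma$ by $K\mapsto K\circ\Gamma$; these assignments commute up to quasi-isomorphism, and the higher-order data of the $A_\infty$-functors match because the identification of morphism spaces underlying $\Theta_\Gamma$ (from \cite[Proposition 2.2.1]{wwcorr}, as used in Lemma \ref{lemmaaxiom}) is compatible with the symplectomorphisms acting on both sides.

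First I would make precise the geometric intertwining. The fibration $p=q|_V\colon V\to W$ of Proposition \ref{cxlocalhopf} (resp. $q\colon\widetilde W\to W$ of Proposition \ref{realocalhopf}) fits into the commuting square relating $\tau_{K_i}$ downstairs to $\tau_{L_i}$ upstairs. Given a closed exact Lagrangian $K\subset W$, its image $L=K\circ\Gamma=p^{-1}(K)\subset V\subset Y$ is, by construction, an $S^k$-bundle over $K$ sitting inside $Y$. Applying $\tau_{L_i}$ to $L$: since $\tau_{L_i}$ is supported in a Weinstein neighbourhood of $L_i$ modelled on the local twist $\tau^{loc}_{S^{2n+1}}$ (resp. $\tau^{loc}_{S^n}$), and this neighbourhood is fibred over a Weinstein neighbourhood of $K_i$ in $W$ in the manner of diagram \eqref{localcpn} (resp. \eqref{commutingrpn}), the commuting square gives $\tau_{L_i}(p^{-1}(K)) = p^{-1}(\tau_{K_i}(K))$ as subsets of $V$, at least when $K$ is put in a position transverse to the relevant loci; in general one gets equality up to a Hamiltonian isotopy supported near the twisting region. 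Hence $\tau_{L_i}(K\circ\Gamma) = (\tau_{K_i}(K))\circ\Gamma$ up to Hamiltonian isotopy, which is exactly the object-level commutativity.

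Next I would upgrade this to the statement about functors. The functor $\Theta_\Gamma$ sends $K$ to $K\circ\Gamma$ and, on morphisms, is built from quilted Floer theory whose invariance under embedded composition is Theorem \ref{mainthmquilt} (here in the mildly non-compact but properly-fibred-over-the-diagonal form governed by Axiom \ref{axiomcorrespondence} and Lemma \ref{lemmaaxiom}). The twist functors $T_{K_i}$ and $T_{L_i}$ are the functors $\Phi(\tau_{K_i})$ and $\Phi(\tau_{L_i})$ from \eqref{mapinducedfunctor}, i.e.\ pullback by the symplectomorphisms, which are exact by the first item of the Remark following Definition \ref{weinsteinextend}. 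Because a symplectomorphism acts compatibly with the quilted moduli spaces defining $\Theta_\Gamma$ — one simply transports all Lagrangian boundary conditions by $\tau_{K_i}$ on the $W$-side and by $\tau_{L_i}$ on the $Y$-side, and the geometric intertwining $\tau_{L_i}\circ(p^{-1}) \simeq (p^{-1})\circ\tau_{K_i}$ identifies the two families of quilts — the two composite functors $\Theta_\Gamma\circ T_{K_i}$ and $T_{L_i}\circ\Theta_\Gamma$ agree on objects and are intertwined by a natural family of quasi-isomorphisms, hence are equal in $\mathrm{Fun}(\Fuk(W),\Fuk(Y))$ up to the relevant notion of equivalence. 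Iterating the square \eqref{categoricaltwistplumbing} $k_i$ times for each $i$ and composing gives \eqref{commutativeprod}: $\Theta_\Gamma\circ\prod T_{K_i}^{k_i} = \prod T_{L_i}^{k_i}\circ\Theta_\Gamma$.

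The main obstacle I anticipate is the bookkeeping needed to pass from the set-theoretic equality $\tau_{L_i}(p^{-1}(K))=p^{-1}(\tau_{K_i}(K))$ to a genuine statement about $A_\infty$-functors rather than merely about objects of the cohomological category: one must check that the chain-level identification of Floer complexes underlying $\Theta_\Gamma$ (via \cite{wwcorr} and the quilted invariance of Theorem \ref{mainthmquilt}) carries the action of $\tau_{K_i}$ to that of $\tau_{L_i}$ coherently, including with the higher $A_\infty$-products. Since Axiom \ref{axiomcorrespondence} already packages the construction of $\Theta_\Gamma$ as a black box, and the only new ingredient is equivariance under the pair $(\tau_{K_i},\tau_{L_i})$ of the whole quilted setup, the cleanest route is to invoke naturality: the Hopf correspondence $\Gamma$ is itself taken to $\Gamma$ (up to Hamiltonian isotopy in $W^-\times Y$) by the product symplectomorphism $\tau_{K_i}^{-1}\times\tau_{L_i}$, because this product preserves $V$ and covers the identity on the diagonal after reduction; this observation reduces the corollary to the tautology that pullback functors commute with a functor induced by a correspondence that is preserved by the corresponding pair of symplectomorphisms. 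I would present this naturality argument as the core of the proof and relegate the compatibility with the chain-level model to a remark, consistently with the level of detail at which Axiom \ref{axiomcorrespondence} is stated.
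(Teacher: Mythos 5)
Your proposal is correct and its core step — observing that $\tau_{K_i}^{-1}\times\tau_{L_i}$ preserves $\Gamma$, equivalently that the composed correspondences $\Theta_\Gamma\circ T_{K_i}$ and $T_{L_i}\circ\Theta_\Gamma$ coincide as Lagrangians in $W^-\times Y$, which reduces to the geometric diagram \eqref{localhopf1} — is exactly the paper's proof, which treats the twist functors as graph correspondences and checks the equality of compositions by construction. The additional object-level and chain-level discussion you include is consistent with, but not required by, the paper's (much terser) argument.
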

\proof
Consider the functors $T_{K_i}$ and $T_{L_i}$ as correspondences induced by the graphs of the respective twists $\tau_{K_i}\in \Symp_{ct}(W), \tau_{L_i}\in \Symp_{ct}(Y)$. Then we have to check that the compositions of correspondences $\Theta_{\Gamma} \circ T_{K_i}=T_{L_i} \circ \Theta_{\Gamma}$, as Lagrangians in $W^- \times Y$, coincide. By construction, this equality amounts to the commutativity of diagram \eqref{localhopf1}.
\endproof

\begin{rmk}
	Note that for a coefficient field of characteristic zero, the functor associated to the real projective twist has a different shape which produces a different diagram (\cite[Corollary 1.3]{mwtwist2}).
\end{rmk}

\begin{rmk}\label{mirrordiagramrmk}
	Given a hypothetical mirror pair $(X, M)$ for a symplectic manifold $(M, \omega)$ with $c_1(M)=0$ and complex manifold $(X,J)$ we can make the following observation.
	
	In \cite{htwist}, Huybrechts and Thomas conjectured that the functors induced by projective twists on the derived Fukaya category $D^{\flat}(\Fuk(M))$ should be mirror to a class of autoequivalences of $D^{\flat}(X)$, induced by so called $\PP$-objects (see \cite[Definition 1.1]{htwist}). This is the analogue of the statement proved by Seidel that autoequivalences of $D^{\flat}(\Fuk(M))$ induced by Dehn twists should be mirror to autoequivalences of $D^{\flat}(X)$ induced by ``spherical objects'' (see \cite[Definition 1.1]{stbraid}).
	
	From this perspective, we can view the diagram \eqref{diagramfunctors} as a conjectural mirror to the following situation.

	By \cite[Proposition 1.4]{htwist}, a $\PP$-object $\mathcal{P} \in D^{\flat}(X)$ in the central fibre of an algebraic deformation $j\colon X \hookrightarrow \mathcal{X}$ and satisfying $0\neq A(\mathcal{P}) \cdot \kappa(\mathcal{X})\in \Ext^2(\mathcal{P}, \mathcal{P})$ has an associated spherical object given by $j_*(\mathcal{P}) \in D^{\flat}(\mathcal{X})$. Here, $A(\mathcal{P})\in \Ext^1(\mathcal{P}, \mathcal{P}\otimes\Omega_X^1)$ is the Atiyah class of $\mathcal{P}$ and $\kappa(\mathcal{X}) \in H^1(X, \mathcal{T}_X)$ the Kodaira-Spencer class of the family $\mathcal{X}$. 
	Furthermore, the autoequivalences associated to each object (also called ``twists''), $T_{\mathcal{P}}$ and $T_{j_*\mathcal{P}}$, are related by a commutative diagram (\cite[Proposition 2.7]{htwist})
	\begin{equation}\label{mirrordiagram}
	\begin{split}
	\xymatrixcolsep{5pc} \xymatrix{
		D^{\flat}(X) \ar[r]^{j_*}  \ar[d]^{T_{\mathcal{P}}} &
		D^{\flat}(\mathcal{X})  \ar[d]^{T_{j_*\mathcal{P}}} &\\
		D^{\flat}(X) \ar[r]^{j_*}	&D^{\flat}(\mathcal{X}) }
	\end{split}	\end{equation}
	
\end{rmk}

\subsection{Lagrangian Gysin sequence}\label{sectiongysin}

Let $\Gamma \subset W^- \times Y$ be the Hopf correspondence. Given real/complex projective Lagrangian submanifolds $K, K' \subset W$ and their corresponding spherical lifts $L, L' \subset Y$ through the functor $\Theta_{\Gamma}$, a version of Perutz's Gysin sequence of \cite{perutzgysin} can be used to establish a relationship between the ranks of the Floer cohomology groups $\HF(K,K')$ and $\HF(L,L')$. We will need this relation in the next section, for the proof of Theorem \ref{mainthm}.

Let $V\to W$ be the $S^k$-fibred coisotropic defining the correspondence, $k\in \{0,1\}$, with Euler class $\alpha \in H^{k+1}(W;R)$, $R\in \{ \Zmod, \Z \}$ and Lagrangian projective spaces $K, K' \subset W$ satisfying Assumptions \eqref{assptreal}/\eqref{assptcx} respectively.

Let $L=\Theta_{\Gamma}(K)=K\circ \Gamma\subset Y$, $L'=\Theta_{\Gamma}(K')=K'\circ \Gamma\subset Y$ be the associated Lagrangian sphere given by the correspondence.

\begin{lemma}\label{gysinper}
	There is an exact triangle of the shape
	\begin{equation}\label{exactriangle}
	\begin{split}
	\xymatrix{ 
		\HF^*(K,K') 	\ar[rr]^{ \alpha\  \cup \cdot} && \HF^{*+k+1}(K, K')  \ar[dl]^{\Gamma_*}  \\
		& \HF^{*+k+1}(L,L')\ar[ul] 
	}	\end{split}  \end{equation}
	
\end{lemma}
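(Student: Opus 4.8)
The plan is to deduce the exact triangle \eqref{exactriangle} by transporting Perutz's symplectic Gysin sequence \cite{perutzgysin} through the quilted-Floer invariance established in Lemma \ref{lemmaaxiom}. Recall that $V \to W$ is an $S^k$-fibred coisotropic submanifold of $Y$ with Euler class $\alpha \in H^{k+1}(W;R)$, and $\Gamma = \{(p(v),v) : v \in V\} \subset W^- \times Y$ is its transpose graph. First I would set up the pull-back coisotropic: given closed exact Lagrangians $K, K' \subset W$, the preimages $p^{-1}(K)$ and $p^{-1}(K')$ are $S^k$-fibred coisotropic submanifolds of $Y$ whose reductions are $K$ and $K'$, and the associated reduced Lagrangians are precisely $L = K \circ \Gamma$ and $L' = K' \circ \Gamma$. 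Perutz's construction produces, for a fibred coisotropic $V \to W$ with Euler class $\alpha$, a long exact sequence relating the Floer cohomology of the reduced Lagrangians downstairs to the Floer cohomology of the lifts upstairs, in which the connecting map is cup product with $\alpha$; one packages this into a triangle with vertices $\HF^*(K,K')$, $\HF^{*+k+1}(K,K')$ and $\HF^{*+k+1}(L,L')$ and maps $\alpha \cup {\cdot}$, $\Gamma_*$, and the third (unlabeled) map.

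The key technical point is to identify the Floer cohomology that appears \textquotedblleft upstairs\textquotedblright\ in Perutz's sequence with $\HF^*(L,L')$, and to identify the \textquotedblleft downstairs\textquotedblright\ term with $\HF^*(K,K')$. For the latter this is immediate. For the former, this is exactly the content of the quilted invariance: by Lemma \ref{lemmaaxiom}, $\HF(K,\Gamma,K' \circ \Gamma^t) \cong \HF(K \circ \Gamma, K' \circ \Gamma^t \circ \Gamma^t)$ — more usefully, the composition formula $\HF(K,\Gamma,L') \cong \HF(K \circ \Gamma, L') = \HF(L,L')$ lets us replace any occurrence of the quilted group $\HF(K \times L', \Gamma)$ with the honest Floer group $\HF(L,L')$ in $Y$. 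Concretely, the moduli problem that computes Perutz's sequence for the pair $(p^{-1}(K), p^{-1}(K'))$ with the $S^k$-bundle structure is the one whose reduction along the correspondence gives Floer strips for $(K,K')$ in $W$; the fibre direction contributes the extra $S^k$-worth of generators and the $\alpha$-twisted connecting differential, exactly as in \cite{perutzgysin}. One then reads off the triangle, with $\Gamma_*$ the map induced by counting quilted strips with seam on $\Gamma$ (equivalently the geometric composition map sending a generator of $\HF(K,K')$ to the corresponding family of generators of $\HF(L,L')$ over the $S^k$-fibre).

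The main obstacle I anticipate is the bookkeeping needed to match conventions: Perutz works with a general fibred coisotropic and the Gysin sequence he obtains is naturally stated for a single Lagrangian or for the diagonal-type setup, so one must check that the pair-version (Floer cohomology of two lifts) is the correct specialization, that the degree shift is $k+1$ (the rank of $H^*(S^k)$ is concentrated in degrees $0$ and $k$, giving the $\alpha \cup {\cdot}$ map a degree $k+1$ shift after the mapping-cone rearrangement), and that the orientation/coefficient conventions ($R \in \{\Zmod,\Z\}$ according to $k \in \{1,0\}$, matching the characteristic-two hypothesis only when $k=1$) are consistent. Compactness of the relevant moduli spaces is not an issue here: since $K,K',L,L'$ are all closed exact Lagrangians, all intersection points lie in a compact region, and the analysis of Lemma \ref{lemmaaxiom} (projecting $J_{01}$-holomorphic curves to $J_{00}$-holomorphic curves over $\Delta_W$, together with properness of $(p,\mathrm{id})|_\Gamma$) already rules out escape to infinity along the seam $\Gamma$. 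Once these identifications are in place, the triangle \eqref{exactriangle} is just Perutz's Gysin triangle \cite[Theorem A or its Lagrangian analogue]{perutzgysin} read through the dictionary $K \leftrightarrow L$, $K' \leftrightarrow L'$, and I would conclude by citing it directly.
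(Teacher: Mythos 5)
Your proposal is correct and follows essentially the same route as the paper: cite Perutz's Gysin triangle for the $S^k$-fibred coisotropic $V\to W$ with Euler class $\alpha$, then use the quilted-Floer invariance of Lemma \ref{lemmaaxiom} to identify the quilted term with $\HF(L,L')$ via embedded geometric composition. The paper phrases the third vertex as the four-term quilted group $\HF(K,\Gamma^t,\Gamma,K')$ and additionally remarks that $e(V)=\alpha$ because exactness forces $QH^*(W)\cong H^*(W)$, but these are cosmetic differences from your account.
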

\proof
This exact sequence follows from the Gysin triangle proved by Perutz in \cite[Theorem 1]{perutzgysin}, which has the more general form 
\begin{align}
... \longrightarrow	\HF^*(K,K') 	\xlongrightarrow{e(V) \cup \ \cdot} \HF^{*+k+1}(K, K') \xlongrightarrow{\Gamma_*}\HF^{*+k+1}(K, \Gamma^t, \Gamma, K') \longrightarrow ... 
\end{align}
where the last group is the quilted Floer cohomology group of the cycle of Lagrangian correspondences $\underline{L}:=(K, \Gamma, \Gamma^t, K')\subset (pt, W,Y,W)$, satisfying $\HF(K, \Gamma, \Gamma^t, K')  \cong \HF^*(K \circ \Gamma, K' \circ \Gamma) \cong \HF(L,L')$.

The isomorphism follows from Axiom \ref{axiomcorrespondence} (in particular Lemma (\ref{lemmaaxiom} applied to a sequence of four Lagrangian correspondences). The compositions $L=K \circ \Gamma$, $L'=\Gamma^t \circ K'=K' \circ \Gamma$ are embedded, and coincide with the spheres (which are Lagrangian in $W$) in the sphere bundle $V$ over $K$, respectively $K'$.

The first map in the original exact sequence \eqref{exactriangle} is the quantum cup product with the Euler class $e(V) \in QH^*(W)$. In this case the exactness assumptions on the ambient symplectic manifold $W$ ensure the well-definedness of the operation and as $QH^*(W) \cong H^*(W)$ is a ring isomorphism, there is no quantum deformation involved and we obviously have $e(V)=\alpha \in H^*(W)$. The second map, $\Gamma_*$, is induced by the Lagrangian correspondence, and needs to be understood in the context of quilted Floer theory. We refer the reader to \cite[Section 4.1]{perutzgysin} for a more refined description of the maps (in the setting of Hamiltonian Floer theory).
\endproof

\begin{cor}\label{inequalityrank}
	The Gysin sequence produces the rank inequality \begin{align}
	hf(L,L'):=\rank\HF(L,L') \leq 2 \rank\HF(K, K').
	\end{align}
\end{cor}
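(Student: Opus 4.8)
The plan is to read the rank inequality directly off the exact triangle in Lemma~\ref{gysinper}. Recall that an exact triangle of finite-dimensional vector spaces
\[
A \xrightarrow{\ \alpha\,\cup\,\cdot\ } B \xrightarrow{\ \Gamma_*\ } C \xrightarrow{\ \partial\ } A[1]
\]
(here with $A = \HF^*(K,K')$, $B = \HF^{*+k+1}(K,K')$ — the same graded vector space as $A$ up to a shift — and $C = \HF^{*+k+1}(L,L')$) unrolls into a long exact sequence, and in particular at each degree one has $\rank C \leq \rank A + \rank B$. Since $A$ and $B$ are (shifted copies of) $\HF(K,K')$, this gives $\rank \HF(L,L') \leq 2\,\rank\HF(K,K')$ on total ranks. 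So the first step is simply to invoke Lemma~\ref{gysinper} and spell out this elementary linear-algebra bound.

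Concretely, I would argue as follows. From the triangle we get the long exact sequence
\begin{align*}
\cdots \longrightarrow \HF^{*}(K,K') \xrightarrow{\ \alpha\,\cup\,\cdot\ } \HF^{*+k+1}(K,K') \xrightarrow{\ \Gamma_*\ } \HF^{*+k+1}(L,L') \longrightarrow \HF^{*+k+1}(K,K') \longrightarrow \cdots
\end{align*}
Exactness at $\HF^{*+k+1}(L,L')$ means $\Gamma_*$ surjects onto the kernel of the connecting map, so $\HF^{*+k+1}(L,L')$ is an extension of a subspace of $\HF^{*+k+1}(K,K')$ (the image of the connecting map into the next term) by the image of $\Gamma_*$, which is a quotient of $\HF^{*+k+1}(K,K')$ (since its kernel contains the image of $\alpha\,\cup\,\cdot$). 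Summing dimensions over all degrees — and using that both $\HF^*(K,K')$ and $\HF^{*+k+1}(K,K')$ have total rank equal to $hf(K,K'):=\rank\HF(K,K')$ — yields
\[
hf(L,L') \;\leq\; \dim(\image \Gamma_*) + \dim(\coker \Gamma_*) \;\leq\; hf(K,K') + hf(K,K') \;=\; 2\,hf(K,K'),
\]
which is exactly the claimed inequality. (Over the coefficient field $k$ of characteristic two everything is a vector space, so there are no torsion subtleties and the dimension count is clean.)

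There is no real obstacle here: the only content is Lemma~\ref{gysinper}, which has already been established, plus the standard fact that in a long exact sequence of finite-dimensional vector spaces the dimension of any term is at most the sum of the dimensions of its two neighbours. The one point worth a sentence of care is finite-dimensionality of the Floer groups involved — this holds because $K,K',L,L'$ are closed exact Lagrangians in Liouville manifolds, so their Floer cohomologies are finite-dimensional — and the observation that $\Gamma_*$ and $\alpha\,\cup\,\cdot$ being consecutive maps in the triangle forces $\image(\alpha\,\cup\,\cdot) \subseteq \ker \Gamma_*$, so that $\image\Gamma_*$ is a quotient of $\HF(K,K')$ and hence has rank $\leq hf(K,K')$. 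Beyond that, the proof is a two-line dimension count.
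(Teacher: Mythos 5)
Your proof is correct and is exactly the argument the paper intends: the paper states Corollary \ref{inequalityrank} without proof as an immediate consequence of the exact triangle of Lemma \ref{gysinper}, and your dimension count (each term of a long exact sequence of finite-dimensional vector spaces has rank at most the sum of the ranks of its two neighbours, both of which here are shifted copies of $\HF(K,K')$) is the standard way to extract it.
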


In Section \ref{freegenplumb}, we will need to compare functors induced by (projective and Dehn) twists to the identity functor. In particular, it will be necessary to distinguish objects of $\Fuk(W)$ with their image under the twist functors. The following lemma gives a helpful criterion.

\begin{lemma}\label{lagisotopy}
	Let $K', \overline{K}'$ be quasi-isomorphic objects in $\Fuk(W)$. Then the maps $f_1\colon\CF^*(K, K') \xlongrightarrow{\alpha\cup \cdot} \CF^{*+k+1} (K, K')$ and $f_2\colon\CF^*(K, \overline{K}') \xlongrightarrow{\alpha \cup \cdot} \CF^{*+k+1} (K, \overline{K}')$ have quasi-isomorphic mapping cones.
\end{lemma}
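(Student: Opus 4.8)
The plan is to identify $f_1$ and $f_2$ as the two components, at the objects $K'$ and $\overline K'$, of one natural chain-level operation on the complexes $\CF^*(K,-)$, and then to invoke the standard homological algebra of mapping cones under quasi-isomorphisms.

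First I would fix, once and for all, a cocycle $c\in\CF^{k+1}(K,K)$ representing the class $\CO(\alpha)\in\HF^{k+1}(K,K)$; since $W$ is exact one has $\QH^*(W)\cong H^*(W)$ as in Lemma \ref{gysinper}, so the operation $\alpha\cup\,\cdot$ on $\HF^*(K,K'')$ is precisely the module action of $\CO(\alpha)$. For any object $K''$ of $\Fuk(W)$ the degree-$(k+1)$ map $x\mapsto \mu^2(x,c)$ on $\CF^*(K,K'')$ is a genuine chain map: the $A_{\infty}$-relation gives $\mu^1\mu^2(x,c)=\mu^2(\mu^1x,c)$ because $\mu^1c=0$ and we work in characteristic $2$, and on cohomology it induces $\alpha\cup\,\cdot$. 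Specialising $K''=K'$ and $K''=\overline K'$ recovers $f_1$ and $f_2$ respectively, so in particular $\Cone(f_1)$ and $\Cone(f_2)$ are honest complexes.

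Next, the quasi-isomorphism $K'\simeq\overline K'$ in $\Fuk(W)$ is witnessed by a cocycle $g\in\CF^0(K',\overline K')$ whose class is invertible in $\HF^0(K',\overline K')$, and left composition $G\colon x\mapsto\mu^2(g,x)$ is then a quasi-isomorphism $\CF^*(K,K')\to\CF^*(K,\overline K')$. The square with horizontal arrows $f_1,f_2$ and both vertical arrows equal to $G$ commutes up to the homotopy $h\colon x\mapsto\mu^3(g,x,c)$: expanding the $A_{\infty}$-relation on the input $(g,x,c)$ and using $\mu^1g=\mu^1c=0$ gives $f_2\circ G+G\circ f_1=\mu^1\circ h+h\circ\mu^1$ (again all signs disappear in characteristic $2$). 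Then the block map $\bigl(\begin{smallmatrix} G&0\\ h&G\end{smallmatrix}\bigr)\colon\Cone(f_1)\to\Cone(f_2)$ is a chain map, and it is a quasi-isomorphism by the five lemma applied to the long exact sequences of the two mapping cone triangles. This yields $\Cone(f_1)\simeq\Cone(f_2)$.

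The only point requiring any care is the first step, namely pinning down a strictly chain-level representative of $\alpha\cup\,\cdot$ that is natural in the second variable; once one records that this is realised by $\mu^2(-,\CO(\alpha))$ the rest is formal. I expect this to be the main (mild) obstacle, and it can alternatively be bypassed by the more conceptual observation that $\alpha$ determines a degree-$(k+1)$ endomorphism of the identity functor of $D^{\flat}\Fuk(W)$ (an element of Hochschild cohomology), so that $f_1$ and $f_2$ are the values of the induced natural transformation of $\hom_{D^{\flat}\Fuk(W)}(K,-)$ at the isomorphic objects $K',\overline K'$; the square then commutes in the derived category, and a morphism of triangles with invertible outer terms forces the cones to be isomorphic there, i.e.\ quasi-isomorphic on the nose.
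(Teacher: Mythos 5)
Your proof takes essentially the same route as the paper's: compare the two mapping-cone triangles via the map induced by the quasi-isomorphism $K'\simeq\overline{K}'$ and conclude with the five lemma. You are in fact more careful than the paper, which writes down the comparison diagram and invokes the five lemma without verifying that the squares commute; your chain-level representative $\mu^2(-,c)$ of $\alpha\cup\,\cdot$, the homotopy $h=\mu^3(g,-,c)$, and the resulting block map on cones supply exactly the step the paper leaves implicit.
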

\proof

Consider the long exact sequences associated to the mapping cones of the cup product maps
$f_1\colon \CF^{*}(K,K')\lra \CF^{*+k+1}(K,K')$, $f_2\colon \CF^{*}(K, \overline{K}')\lra \CF^{*+k+1}(K, \overline{K}')$.

These sequences fit in a diagram of the shape

\begin{tikzcd}
{\CF^*(K,K')} \arrow[d] \arrow[rr, "f_1=\alpha \cup \cdot "] &  & {\CF^{*+k+1}(K,K')} \arrow[d] \arrow[rr] &  & Cone(f_1) \arrow[rr] \arrow[d] & & {\CF^{*+k+1}(K,K')} \arrow[d] \\
{\CF^*(K,\overline{K}')} \arrow[rr, "f_2=\alpha \cup \cdot "]                    &  & {\CF^{*+k+1}(K,\overline{K}')} \arrow[rr]           &  & Cone(f_2) \arrow[rr]           & & {\CF^{*+k+1}(K,\overline{K}')} 
\end{tikzcd}

Since $K', \overline{K}'$ are quasi-isomorphic objects in $\Fuk(W)$, there is a characteristic element $\eta \in \HF(K', \overline{K}')$ which induces isomorphism $\HF(K,K')\to \HF(K,\overline{K}')$ (the Floer product with $\eta$, see \cite[(8k)]{seidelbook}). Therefore, the vertical maps $\CF^*(K, K')\lra \CF^*(K, \overline{K}')$ are well-defined, and they are quasi-isomorphism. By the five lemma, it follows that the mapping cones $Cone(f_1)$ and $Cone(f_2)$ are also quasi-isomorphic.

\qed

\section{Free groups generated by projective twists}\label{freegenplumb}
In this section we apply the Hopf correspondence to prove our first result about products of projective twists.

Consider a transverse plumbing $W:=T^*\APn\#_{pt}T^*\APn$ of cotangent bundles of projective spaces, for $\A \in \{ \R, \C \}$. Then, the main result of this section (Theorem \ref{mainthm}) shows that the Lagrangian cores of the plumbing define two projective twists which generate a free subgroup of $\pi_0(\Symp_{ct}(W))$. In fact, Theorem \ref{mainthm} is a stronger statement that holds not only for transverse plumbings, but also more generally for \emph{clean} plumbings along sub-projective spaces (see Definition \ref{defidentify}).

For the proof, we use the Hopf correspondence to reduce the statement of Theorem \ref{mainthm} into a statement about Dehn twists, and apply \emph{Keating's free generation result} (Theorem \ref{keating}) for Dehn twists (\cite{keatingfree}).

As a corollary, we show that there are infinitely many Lagrangian isotopy classes of embeddings $\CP \hookrightarrow W$ which are smoothly isotopic, but pairwise not Lagrangian isotopic.

\subsection{Clean Lagrangian plumbing}\label{plumbing}
We first recall a construction from \cite[Appendix A]{abouzaidplumbing} of clean \emph{Lagrangian plumbing} of two Riemannian manifolds $Q_1,Q_2$ along a submanifold $B \subset Q_i$, $i=1,2$.
Fix three closed smooth manifolds $B, Q_1, Q_2$, for each $i=1,2$ an embedding $B \hookrightarrow Q_i$ and an isomorphism $\varrho: \nu_{B/Q_1} \to \nu_{B/Q_2}^*$ from the normal bundle $\nu_{B/Q_1}$ to the conormal bundle $\nu_{B/Q_2}^*$.

Pick a Riemannian metric on $B$, an inner product and a connection on $\nu_{B/Q_1}\cong \nu_{B/Q_2}^*$ (which induces an inner product and connection on $\nu_{B/Q_2}\cong \nu_{B/Q_1}^*$). This data induces a metric on the total spaces $\nu_{B/Q_i}$, and a neighbourhood $U_i$ of $B \subset Q_i$ can be identified with a disc subbundle $D_{\eps}\nu_{B/Q_i}$ of radius $\eps>0$. With this identification we write $x \in U_i$ as $x=(a,b)$ for $b \in B$, $a \in D_{\eps}(\nu_{B/Q_i})_b$ (the fibre over $b$). 

For each $x =(a,b) \in U_i$, the connection gives a decomposition of the fibres $T_x^*Q_i \cong T_b^*B \oplus
(\nu_{B /Q_i}^*)_b$. We get an identification of a neighbourhood of $B \subset T^*Q_i$ as \begin{align}\label{nhdb}
D_{\eps}\nu_{B/Q_i} \oplus D_{\eps}T^*B\oplus D_{\eps}\nu_{B/Q_i}^*.
\end{align}

Let $V_i$ be a neighbourhood of $Q_i \subset T^*Q_i$ which in \eqref{nhdb} coincides with $D_{\eps}T^*B \oplus D_{\eps}\nu_{B/Q_i}^*$ over $U_i \cong D_{\eps}\nu_{B/Q_i}$.

\begin{definition}\label{defidentify}
	\begin{enumerate}
		\item 	As a smooth manifold, the clean plumbing of $Q_1, Q_2$ along $B$, denoted by $M:=D_{\eps}(T^*Q_1 \#_{B}T^*Q_2)$ is defined by gluing $V_1$ to $V_2$ along $D_{\eps}\nu_{B/Q_1} \oplus D_{\eps}T^*B\oplus D_{\eps}\nu_{B/Q_1}^* \subset V_1$ identified with $D_{\eps}\nu_{B/Q_2}^* \oplus D_{\eps}T^*B\oplus D_{\eps}\nu_{B/Q_2}$ via $(\varrho, id_{T^*B},-\varrho^*)$. Its Liouville completion will be denoted by $T^*Q_1\#_{B}T^*Q_2$.
		\item The plumbing construction inherits an exact symplectic structure, since the identification maps of $1.$ preserve the canonical structures on $D^*Q_i$. Let $Z_i$ be the standard radial Liouville vector field on $V_i$. We define a Liouville vector field $Z$ on the plumbing by letting $Z=\rho_1Z_1+\rho_2Z_2$, for smooth functions $\rho_i: M \to [0,1]$ supported on $V_i$ such that $\rho_1+\rho_2=1$. This endows $M$ with the structure of an exact symplectic manifold.
	\end{enumerate}
\end{definition}

In the next sections, we will apply this plumbing construction to cotangent bundles of projective spaces and spheres. We will work with (ungraded) Floer cohomology groups
$\HF(Q_1,Q_2,k)$ where $k$ is a coefficient field of characteristic two. Note that by exactness of the Lagrangians and the manifold $W$, the Floer differentials in $\CF(Q_i, Q_i)$ vanish, and as $Q_1$ and $Q_2$ intersect cleanly along $B$, there is an isomorphism $\HF(Q_1,Q_2)\cong H^*(B)$ (\cite{pozniak}).

\subsection{Theorem \ref{mainthm}}\label{sectionproof}

We now prove the main theorem of this section.

\thmmainthm*

\begin{rmk}\label{rmklowdimension}
	The case $W:=T^*\C\PP^1_1\#_{pt}T^*\C\PP^1_2$ can be deduced from the existing literature, by considering $X$ as an $A_2$-configuration and the isotopies $\tau_{\C\PP^1_i}\simeq \tau_{S^2_i}^2$ of Remark \ref{sympsmooth}. There is a homomorphism (\cite[Proposition 8.4]{seidelknotted}) $\rho: Br_3 \to \pi_0(\Symp_{ct}(W))$ sending the generators of the braid group $\sigma_i$ to $\rho(\sigma_i)=\tau_{S^2_i}$, $i=1,2$. 
	The associated homomorphism $\hat{\rho}: Br_3 \to \Auteq(\Fuk(W))$ fits in the diagram
	\begin{align}
	\xymatrixcolsep{5pc} \xymatrix{
		Br_3  \ar[r]^{\rho}\ar[rd]^{\hat{\rho}}  &
		\pi_0(\Symp_{ct}(W)) \ar[d] &\\
		& Auteq(\Fuk(W))}.
	\end{align}
	and its injectivity (\cite{stbraid}) implies the injectivity of $\rho$. Then, as $\langle \sigma_1^2, \sigma_2^2 \rangle \cong Free_2$, it follows $\langle \tau_{K_1}, \tau_{K_2} \rangle \cong Free_2$. \\ 
	Also note that $\rho$ is in fact an isomorphism (\cite{weiwei}), so $\pi_0(\Symp_{ct}(T^*S^2\#_{pt} T^*S^2)) = Br_3$.
\end{rmk}

Theorem \ref{mainthm} takes inspiration from Keating's free generation result for Dehn twists in Liouville manifolds.

\begin{thm}\cite[Theorem 1.1 and 1.2]{keatingfree}\label{keating}
	Let $(Y, \Omega)$ be a Liouville manifold of dimension greater than $2$, and $L,L' \subset Y$ be two Lagrangian spheres satisfying $\rank \HF(L,L') \geq 2$, and such that $L,L'$ are not quasi-isomorphic in $\Fuk(Y)$. The Dehn twists $\tau_L, \tau_{L'}$ generate a free subgroup of $\pi_0(\Symp_{ct}(Y))$, and the associated functors $T_L, T_{L'} \in \Auteq(\Fuk(Y))$ generate a free subgroup of $\Auteq(\Fuk(Y))$.
\end{thm}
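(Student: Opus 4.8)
The plan is to prove the statement at the level of the autoequivalences $T_L,T_{L'}\in\Auteq(\Fuk(Y))$ and to read off the statement about $\tau_L,\tau_{L'}\in\pi_0(\Symp_{ct}(Y))$ from it: the comparison map \eqref{mapinducedfunctor} is a group homomorphism sending $\tau_L\mapsto T_L$ and $\tau_{L'}\mapsto T_{L'}$, so a non-trivial reduced word in the twists maps to the same reduced word in the functors, and once the latter is known to be non-trivial the former cannot be the identity. I would work in the split-closed triangulated category $D^\flat\Fuk(Y)$, over the coefficient field $k$ of characteristic two. The only structural input is Seidel's exact triangle for a Dehn twist functor: for every object $X$ there is an exact triangle
\[
\HF^*(L,X)\otimes L\ \longrightarrow\ X\ \longrightarrow\ T_L(X)\ \longrightarrow\ \HF^*(L,X)\otimes L[1],
\]
together with the analogous triangle for $L'$ and the dual triangles for the inverse twists. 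Since $\dim Y>2$, the sphere $L$ has dimension $n\geq 2$ and $\HF^*(L,L)\cong H^*(S^n;k)$ has total rank $2$; the triangle with $X=L$ then gives $T_L(L)\cong L[1-n]$, so every power of $T_L$ fixes $L$ up to shift, and in particular $\rank\HF^*(L,T_L^{\,a}X)=\rank\HF^*(L,X)$ for all $a\in\Z$, symmetrically for $L'$.

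Next I would follow the \emph{Floer-rank vector} $\mathbf r(X):=\big(\rank\HF^*(L,X),\rank\HF^*(L',X)\big)$ along the orbit of a test object. Applying $\HF^*(L,-)$ to the $L'$-triangle realises $\HF^*(L,T_{L'}X)$ as the mapping cone of a composition map $\HF^*(L',X)\otimes\HF^*(L,L')\to\HF^*(L,X)$ (and dually for $T_{L'}^{-1}$); writing $r:=\rank\HF^*(L,L')\geq 2$ and combining this with the invariance $\rank\HF^*(L',T_{L'}^{\,a}X)=\rank\HF^*(L',X)$, one finds that applying a power $T_N^{\,a}$ (for $N\in\{L,L'\}$, $\bar N$ the other) leaves $\rank\HF^*(N,-)$ unchanged and alters $\rank\HF^*(\bar N,-)$ by $r|a|\rank\HF^*(N,-)$ up to an error controlled by the ranks of those composition maps. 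Granting that this error is negligible, so that $\rank\HF^*(\bar N,T_N^{\,a}X)=\rank\HF^*(\bar N,X)+r|a|\rank\HF^*(N,X)$, the group statement follows by an induction along reduced words that is in essence a ping-pong argument. Given a non-trivial reduced word $w=T_{N_k}^{\,a_k}\cdots T_{N_1}^{\,a_1}$ (the $N_i$ necessarily alternate, $a_i\neq 0$), apply it to the test object $X_0:=N_1$, for which $\rank\HF^*(\bar N_1,X_0)=r$ and $\rank\HF^*(N_1,X_0)=2$; writing $X_j:=T_{N_j}^{\,a_j}\cdots T_{N_1}^{\,a_1}(X_0)$, one checks inductively that the active rank $\rank\HF^*(\bar N_j,X_j)$ grows (roughly like $r^{\,j}\prod_i|a_i|$ — the factor $r\geq 2$ being exactly what forces growth rather than stagnation), while the complementary rank equals the previous active rank. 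Hence $\mathbf r(wX_0)\neq\mathbf r(X_0)$, so $T_w\not\cong\Id$; as $w$ was an arbitrary non-trivial reduced word, $T_L$ and $T_{L'}$ generate a free group of rank two, and therefore so do $\tau_L,\tau_{L'}$. The hypothesis that $L$ and $L'$ are not quasi-isomorphic is used here to make the base step of the induction honest and to ensure that the two generators are distinct autoequivalences in the first place.

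The step I expect to be the main obstacle is precisely the claim that the composition maps $\HF^*(L',X)\otimes\HF^*(L,L')\to\HF^*(L,X)$ appearing as connecting maps contribute only a negligible error to the rank count — equivalently, that they vanish in the degrees that matter. A priori such a map can have large rank, and the cancellation it induces destroys the lower bounds; in the borderline case $r=2$ the crude estimate collapses entirely, so a precise analysis is unavoidable. I would carry it out by a grading/index bookkeeping along the induction: one shows that the generators of $\HF^*(\bar N_j,X_{j-1})$ accumulate in a band of internal degrees which, after composing with $\HF^*(N_j,\bar N_j)$, still avoids the degrees occupied by $\HF^*(N_j,X_{j-1})$, forcing the relevant compositions to zero. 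The hypothesis $\dim Y>2$ — i.e. $n\geq 2$, so that $\HF^*(L,L)$ is supported in degrees $0$ and $n$ with a genuine gap — and the hypothesis $r\geq 2$ — so that the degree band is wide enough to sustain unbounded growth — are precisely what is consumed by this bookkeeping, the case $r=2$ being the tightest. Finally, one should note that there is no shortcut through $K$-theory: on $K_0(D^\flat\Fuk(Y))$ the twist $T_L$ acts by a single Picard--Lefschetz reflection (or transvection), and two such transformations never generate a free group, so an invariant that remembers Floer ranks rather than merely Euler characteristics is genuinely needed.
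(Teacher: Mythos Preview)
The paper does not prove this theorem; it is quoted from \cite{keatingfree}. What the paper does reproduce (in Lemmas \ref{lemmadistinct} and \ref{mixedtwist}) is the \emph{iterative} part of Keating's argument, taking as black boxes her two rank inequalities (Lemma \ref{keatinglemma} and Lemma \ref{keatingclaim}). Your overall architecture --- track the pair of Floer ranks $\big(hf(L,X),hf(L',X)\big)$ along a reduced word, use Seidel's triangle, and run a ping-pong --- matches this structure, and your reduction to the categorical statement via \eqref{mapinducedfunctor} is exactly how the paper reads off the geometric consequence.

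The gap is in your proposed mechanism for controlling the connecting maps. You aim to show that the composition
\[
\HF^*(L',X)\otimes\HF^*(L,L')\longrightarrow\HF^*(L,X)
\]
is (essentially) zero by a degree-band argument, so that the rank grows additively. This is not how Keating proceeds, and in a general Liouville manifold there is no reason for the gradings to cooperate: the Floer groups need not be $\Z$-graded, and even when they are there is no a priori control on where the generators of $\HF^*(L,L')$ sit. Keating's actual inequalities (stated here as Lemma \ref{keatinglemma} and Lemma \ref{keatingclaim}) do \emph{not} assert that these maps vanish or have small rank; they establish instead a qualitative ``flip'': if $hf(\tilde L,L')>hf(L,L')$ then after applying any nonzero power of $\tau_{\tilde L}$ the inequality reverses. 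This is proved by analysing the evaluation map in the exact triangle using that $\tilde L$ is a spherical object with $hf(\tilde L,\tilde L)=2$ --- an algebraic constraint on the module structure, not a grading obstruction. Iterating the flip along an alternating word gives strict growth by one unit at each step (your $r^j\prod|a_i|$ heuristic is too optimistic; the actual bound in Lemma \ref{mixedtwist} is $2k+1$). So your outline is sound, but you should replace the speculative grading bookkeeping by Keating's flip inequality, whose proof uses the spherical-object structure rather than degree separation.
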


Keating proves the geometric part of Theorem \ref{mainthm} by making a categorical detour, first proving that the associated functors $T_L, T_{L'} \in \Auteq(\Fuk(Y))$ induced by the Dehn twists generate a free subgroup of $\Auteq(\Fuk(Y))$ so that the composition
\begin{align}
Free_2 \to \pi_0 \Symp_{ct}(Y) \to \Auteq(\Fuk(Y))
\end{align}
is injective.

By identifying a Dehn twist with its associated functor, Keating exploits the algebraic properties of the latter to arrive at the following rank inequalities (which are central in her final proof).

\begin{lemma}\cite[Lemma 8.1]{keatingfree}\label{keatinglemma}
	Let $\tilde{L}, L, L' \subset Y$ be Lagrangians such that $\tilde{L}$ is a sphere, $\tilde{L} \ncong L$ in the Fukaya category, and $hf(\tilde{L},L):=\rank(\HF(\tilde{L}, L')) \geq 2$. Then, for all $n \neq 0$:\begin{align}\label{rank1}
	hf(\tilde{L}, L') > hf(L, L') \Rightarrow hf(\tilde{L}, \tau_{\tilde{L}}^n(L')) < hf(L, \tau_{\tilde{L}}^n(L')).
	\end{align}
\end{lemma}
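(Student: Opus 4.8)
The plan is to exploit the exact triangle defining the Dehn twist functor $T_{\tilde L}$ together with the long exact sequence in Floer cohomology it induces. Recall from Seidel's work (cited in the excerpt, see \cite[(17j)]{seidelbook}) that for a Lagrangian sphere $\tilde L$ there is an exact triangle in $D^\flat\Fuk(Y)$ of the form
\begin{equation}\label{seidelexacttri}
\HF^*(\tilde L, L')\otimes \tilde L \longrightarrow L' \longrightarrow T_{\tilde L}(L') \longrightarrow
\end{equation}
and more generally, applying $T_{\tilde L}$ repeatedly and using the Floer-theoretic form of the triangle, one gets for each object $X$ a long exact sequence relating $\HF^*(X, T_{\tilde L}^n(L'))$, $\HF^*(X, T_{\tilde L}^{n-1}(L'))$ and $\HF^*(X,\tilde L)\otimes \HF^*(\tilde L, T_{\tilde L}^{n-1}(L'))$. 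First I would set up these long exact sequences with $X=\tilde L$ and with $X = L$, and extract from them the two numerical recursions governing $a_n := hf(\tilde L, T_{\tilde L}^n(L'))$ and $b_n := hf(L, T_{\tilde L}^n(L'))$. The point is that $\HF^*(\tilde L, T_{\tilde L}^n(L'))\cong \HF^*(\tilde L, L')$ up to a shift (since $T_{\tilde L}$ acts on $\HF(\tilde L,-)$ essentially by a shift/sign — a standard consequence of the triangle \eqref{seidelexacttri} applied with the first object $\tilde L$, using $\HF^*(\tilde L,\tilde L)$), so in fact $a_n = a_0 = hf(\tilde L, L')$ is \emph{constant} in $n$. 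This already uses the hypothesis $\tilde L \not\cong L$ only indirectly; its real role appears in controlling $b_n$.

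The heart of the argument is then the recursion for $b_n$. From the long exact sequence built on $X = L$, one reads off the Euler-characteristic-type identity together with a rank estimate: the rank of $\HF^*(L, T_{\tilde L}^n(L'))$ differs from that of $\HF^*(L, T_{\tilde L}^{n-1}(L'))$ by a correction term bounded in terms of $hf(\tilde L, L)\cdot hf(\tilde L, T_{\tilde L}^{n-1}(L'))$ and the rank of the connecting map. Here I would use that $hf(\tilde L, L)\geq 2$ to show the correction genuinely grows: the connecting maps cannot all be of full rank, because if they were for all $n$ one could telescope and contradict finiteness or the quasi-isomorphism hypothesis $\tilde L \not\cong L$ (an object cannot be ``absorbed'' by iterated twists unless it lies in the triangulated subcategory generated by $\tilde L$, which quasi-isomorphism to $L$ would contradict). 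More precisely, I expect to prove $b_n \geq b_{n-1} + (hf(\tilde L,L) - 2)\cdot a_0 + (\text{something nonnegative that is positive for }n\neq 0)$, or an inequality of that flavour, so that $b_n - a_n = b_n - a_0$ is strictly increasing in $|n|$ once $hf(\tilde L, L')$ is the larger of the two at $n = 0$. Combining: the hypothesis $hf(\tilde L, L') > hf(L, L')$ means $a_0 > b_0$, i.e. $b_0 - a_0 < 0$; then $b_n - a_n = b_n - a_0$ grows, but I actually need the \emph{reverse} inequality $hf(\tilde L, \tau_{\tilde L}^n(L')) < hf(L, \tau_{\tilde L}^n(L'))$, i.e. $a_n < b_n$, i.e. $b_n - a_0 > 0$. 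So the real claim is that a \emph{single} strict jump of size at least $|a_0 - b_0| + 1$ occurs between $n=0$ and $n = \pm 1$, driven by $hf(\tilde L, L)\geq 2$ and the non-degeneracy of the first connecting homomorphism.

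The main obstacle, and the step requiring the most care, is controlling the ranks of the connecting homomorphisms in the long exact sequence — purely additive (Euler characteristic) bookkeeping is not enough, since ranks can drop when maps are nonzero. The resolution is the same mechanism Keating uses: identify the relevant connecting map with a Floer-product / composition map and show it cannot be surjective (resp. injective) by a direct dimension count using $hf(\tilde L, L) \geq 2$ together with the fact that $\tilde L$ is \emph{not} quasi-isomorphic to $L$, so that $L$ is not a summand of any twisted complex built from $\tilde L$. I would also need the auxiliary input that $\HF^*(\tilde L, T_{\tilde L}^n L')$ is, up to grading shift, independent of $n$ — which, as noted, follows from the self-triangle of $T_{\tilde L}$ on $\tilde L$ and is where $\tilde L$ being a \emph{sphere} (so $\HF^*(\tilde L,\tilde L)\cong H^*(S^{\dim \tilde L})$) is essential. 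Once these two technical points are in hand, the inequality \eqref{rank1} follows by assembling the recursions, and I would record the argument symmetrically in $n$ and $-n$ so that the statement holds for all $n\neq 0$.
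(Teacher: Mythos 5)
First, a point of reference: the paper does not prove this lemma at all --- it is quoted verbatim (with a typo: the hypothesis should read $\rank\HF(\tilde L,L)\ge 2$, not $\rank\HF(\tilde L,L')$) as an external input from Keating, \cite[Lemma 8.1]{keatingfree}. So your proposal is being measured against Keating's original argument rather than anything in this paper. Your skeleton is the right one: Seidel's exact triangle for $T_{\tilde L}$, the induced long exact sequences with test objects $X=\tilde L$ and $X=L$, and the observation that $a_n:=hf(\tilde L,\tau_{\tilde L}^nL')$ is constant in $n$ because $\HF(\tilde L,\tilde L)\cong H^*(S^m)$ has rank $2$ and the evaluation map is onto. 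With the crude bound $\rank(\mu^2)\le hf(L,L')$ this already gives the case $n=\pm1$: writing $c=hf(L,\tilde L)\ge2$ and $b_n=hf(L,\tau_{\tilde L}^nL')$, the triangle yields $b_1=c\,a_0+b_0-2\rank(\mu^2)\ge c\,a_0-b_0\ge 2a_0-b_0>a_0=a_1$ whenever $a_0>b_0$. You do not quite write this computation down, but it is within reach of what you describe.

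The genuine gap is the passage from $|n|=1$ to all $n\ne0$. The same crude bound applied at the next step gives only $b_2\ge 2a_0-b_1$, which is useless precisely because you have just shown $b_1>a_0$; so the "telescoping" you gesture at does not close. Your proposed recursion $b_n\ge b_{n-1}+(hf(\tilde L,L)-2)a_0+(\text{positive term})$ is asserted, not derived, and in the boundary case $hf(\tilde L,L)=2$ (which the hypothesis allows) the entire burden falls on the unspecified positive term. Likewise, the claim that the connecting maps "cannot all be of full rank" because $L$ "cannot be absorbed by iterated twists" is a heuristic, not an argument: the actual mechanism in Keating's proof is a quantitative upper bound on the rank of the product map $\HF(L,\tilde L)\otimes\HF(\tilde L,\tau^n L')\to\HF(L,\tau^n L')$, obtained from the $\HF(\tilde L,\tilde L)$-module structure of $\HF(\tilde L,-)$ together with $\tilde L\ncong L$ and $hf(\tilde L,L)\ge2$, and that bound is what makes the induction run for every $n$. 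This is exactly the step you flag as "requiring the most care," but flagging it is not supplying it; as written the proof establishes the lemma only for $n=\pm1$.
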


\begin{lemma}\cite[Claim 8.2]{keatingfree}\label{keatingclaim}
	Let $L, L' \subset Y$ be two Lagrangian spheres in an exact symplectic manifold as in Theorem \ref{keating} satisfying $hf(L,L')=2$. Then for all $m \neq 0$ we have 
	\begin{align}\label{rank11}
	hf(L', L)=hf(L', \tau_{L'}^mL)< hf(L, \tau_{L'}^mL).
	\end{align}
	
\end{lemma}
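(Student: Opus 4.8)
The plan is to treat the asserted equality and the asserted inequality separately. \textbf{The equality} $hf(L',\tau_{L'}^{m}L)=hf(L',L)$ is formal: the Dehn twist $\tau_{L'}$ preserves its own centre $L'$ as a subset (in the local model $T^{*}S^{n}$ it restricts to the antipodal map on the zero section), so $\tau_{L'}^{-m}(L')=L'$, and since Floer cohomology is invariant under symplectomorphisms we get $\HF(L',\tau_{L'}^{m}L)\cong\HF(\tau_{L'}^{-m}L',L)=\HF(L',L)$; everything here is over a field of characteristic two and I suppress gradings throughout. It therefore remains to prove $hf(L,\tau_{L'}^{m}L)>2$ for every $m\neq 0$; since $\HF(L,\tau_{L'}^{-m}L)\cong\HF(\tau_{L'}^{m}L,L)\cong\HF(L,\tau_{L'}^{m}L)^{\vee}$ has the same rank, I may assume $m>0$.

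\textbf{The inequality via Seidel's exact triangle.} Seidel's triangle for the Dehn twist $\tau_{L'}$, applied to $L$, identifies $\tau_{L'}L$ with the mapping cone $\Cone(\HF(L',L)\otimes L'\xrightarrow{\ \ev\ }L)$ in $\mathrm{Tw}\,\Fuk(Y)$; iterating, $\tau_{L'}^{m}L$ is quasi-isomorphic to an iterated cone built from a single copy of $L$ together with $m$ copies of the two-dimensional complex $\HF(L',\tau_{L'}^{j-1}L)\otimes L'\cong\HF(L',L)\otimes L'$ (using the equality above). Applying $\HF(L,-)$ yields a filtered complex, hence a spectral sequence converging to $\HF(L,\tau_{L'}^{m}L)$, whose first page is
\[
E_{1}\ \cong\ \HF(L,L)\ \oplus\ \bigoplus_{j=1}^{m}\bigl(\HF(L,L')\otimes\HF(L',L)\bigr),
\]
of total rank $2+4m$; the component of the page-one differential landing in $\HF(L,L)$ is the Floer composition map $\mu\colon\HF(L,L')\otimes\HF(L',L)\to\HF(L,L)$.

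\textbf{The key input.} I claim the unit $e_{L}\in\HF^{0}(L,L)$ is a permanent, nonzero cycle of this spectral sequence; in particular $\mu$ is not onto. Indeed, every differential landing in the bottom filtration level is induced by a composite of the evaluation morphisms $\HF(L',L)\otimes L'\to L$, so its image consists of endomorphisms of $L$ factoring through a finite direct sum of shifts of $L'$. If $e_{L}$ were in such an image, $\mathrm{id}_{L}$ would factor up to homotopy through some $\bigoplus_{\alpha}L'[d_{\alpha}]$; hence $L$ would be a homotopy retract of $\bigoplus_{\alpha}L'[d_{\alpha}]$ and, splitting the resulting idempotent in the split-closure of $\mathrm{Tw}\,\Fuk(Y)$, quasi-isomorphic to a direct summand of it. But $\HF^{*}(L,L)\cong H^{*}(S^{n};k)$ is a local ring, so $L$ is indecomposable, and the same holds for $L'$; since $hf(L,L')=2$, this forces $L$ to be quasi-isomorphic to $L'$ up to shift, contradicting the hypothesis of Theorem \ref{keating}. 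So $[e_{L}]\neq 0$ in $E_{\infty}$.

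\textbf{Conclusion and the hard part.} For $m=1$ the triangle already gives the bound transparently: $\rank\HF(L,\tau_{L'}L)=\rank\HF(L,L)+4-2\rank(\mathrm{im}\,\mu)\geq 2+4-2=4$. For general $m>0$ one iterates and reads off the rank from the spectral sequence: the surviving class $[e_{L}]$ contributes in the bottom filtration level, while the constraint $d^{2}=0$ forces the differentials between the rank-four layers to have image inside the kernel of the next, so those layers cannot all cancel in pairs; combining these gives $\rank\HF(L,\tau_{L'}^{m}L)=\sum_{p}\rank E_{\infty}^{p}>2$. The main obstacle to a self-contained write-up is precisely this last homological bookkeeping for the iterated cone — controlling the higher spectral-sequence differentials — which is the technical core of Keating's proof; in the paper I would simply cite \cite[Claim 8.2]{keatingfree}.
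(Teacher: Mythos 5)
First, a point of comparison: the paper does not prove this statement at all — it is imported verbatim as \cite[Claim 8.2]{keatingfree} — so there is no in-house argument to measure you against, and your closing remark that you would "simply cite Claim 8.2" is in fact exactly what the paper does. Judged on its own terms, your sketch is correct up to and including $|m|=1$: the equality $hf(L',\tau_{L'}^{m}L)=hf(L',L)$ follows as you say from $\tau_{L'}(L')=L'$ and invariance of $\HF$ under symplectomorphisms; the reduction to $m>0$ by duality is fine; and for $m=1$ the long exact sequence together with the non-surjectivity of $\mu\colon\HF(L,L')\otimes\HF(L',L)\to\HF(L,L)$ (your unit/indecomposability/Krull--Schmidt argument, which correctly uses $L\not\cong L'$) gives $\rank\HF(L,\tau_{L'}L)=6-2\rank(\operatorname{im}\mu)\geq 4>2$. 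That much is a complete proof.

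The gap is in the passage to general $m$, and it is a genuine one rather than "bookkeeping". In your spectral sequence the total rank satisfies $\rank E_{\infty}=(4m+2)-2\sum_{r}\rank(d_{r})$, hence is \emph{even}; the survival of $[e_{L}]$ therefore only yields $\rank\HF(L,\tau_{L'}^{m}L)\geq 2$, which is not the strict inequality you need. The assertion that the $m$ rank-four layers "cannot all cancel in pairs" is precisely what must be proved: a priori the page-one differentials between consecutive layers could annihilate all of them while one of the two classes in the bottom layer also dies, leaving rank exactly $2$. Ruling this out requires controlling the composition maps $\HF(L,L')\otimes\HF(L',\tau_{L'}^{j}L)\to\HF(L,\tau_{L'}^{j}L)$ at every stage $j$, which is the inductive engine of Keating's argument; note also that you cannot shortcut this by invoking Lemma \ref{keatinglemma} as quoted in the paper, since its hypothesis $hf(\tilde L,L')>hf(L,L')$ fails here ($2>2$ is false) — this is presumably why Keating proves Claim 8.2 separately. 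So for $|m|\geq 2$ your write-up is a plan rather than a proof, and the citation you fall back on is carrying the real weight — which, to be fair, is also all the paper asks of it.
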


We will apply these inequalities to Lagrangian spheres obtained from the Hopf correspondence, to produce similar results for projective twists and prove Theorem \ref{mainthm}.

\subsubsection{Strategy}
The plumbing $(W, \omega)$ and its real/complex projective Lagrangian cores $K_1,K_2 \subset W$ satisfy the cohomological conditions $\eqref{assptreal}/ \eqref{assptcx}$.

In the case in which $W$ is a transverse plumbing (which retracts to the wedge sum of the two spheres) there is a ring isomorphism ($\A \in \{\R, \C \}$, $R \in \{ \Zmod, \Z\}$)
$$\tilde{H}^*(W;R)\cong \tilde{H}^*(K_1;R)\oplus \tilde{H}^*(K_2;R)\cong \tilde{H}^*(\APn;R)\oplus \tilde{H}^*(\APn;R),$$
so that it is immediate to see the existence of a class $\alpha=(\alpha_1,\alpha_2) \in H^{k}(W;R)$ restricting to $K_1$ and $K_2$ to the generator of $H^*(\APn;R)$, for $k\in \{1,2\}$. For a clean plumbing along a linearly embedded sub-projective space $\A\PP^{\ell}$, this restriction property still holds because the `difference' map of the Mayer-Vietoris sequence is always zero.

By Propositions \ref{cxlocalhopf}, \ref{realocalhopf}, the cohomological condition ensures the existence of a Liouville manifold $(Y, \Omega) \lra (W, \omega)$ and a Hopf correspondence $\Gamma \subset W^-\times Y$ that gives rise to associated Lagrangian spheres $S^m\cong L_i=K_i\circ \Gamma \subset Y$, $i=1,2$ and commuting diagrams of twist functors \eqref{categoricaltwistplumbing}. 
Then, given a product (a word in $\tau_{K_1}, \tau_{K_2}$) $\varphi \in \Symp_{ct}(W)$ of projective twists, the Hopf correspondence yields a corresponding product of Dehn twists (a word in $\tau_{L_1}, \tau_{L_2}$) $\phi \in \Symp_{ct}(Y)$.

In the real projective case, the geometric statement of Theorem \ref{mainthm} can be obtained by an isotopy-lifting argument using the geometric diagrams of Section \ref{hopfbundle}
(the strategy adopted in Section \ref{projectiveproduct}). Assuming the projective twists do satisfy a relation, this procedure lifts the isotopy to $\Symp_{ct}(Y)$, producing a relation between Dehn twists, which cannot hold by Keating's theorem. However, this geometric argument does not give a statement at the level of Fukaya categories, for which the use of the Hopf correspondence at the level of functors in $\Auteq(\Fuk(W))$ is necessary (Sections \ref{commutingfunctor}, \ref{sectiongysin}).

The spheres $L_1, L_2 \cong S^m$ intersect cleanly along a sub-sphere $S^{r}$, for the tuple $(\mathbb{A}, m, r) \in \{ (\R, n, \ell), (\C, 2n+1, 2\ell+1) \}$ ($n,\ell \in \N^*$), and as noted before, $\HF(L_1,L_2;k)\cong H^*(S^r;k)$. 
Since $L_i\subset Y$ are exact spheres $\HF(L_i,L_i;k)\cong H^*(S^m;k)$ (\cite{floer}) and therefore
\begin{equation}\label{rank2}
\rk\HF(L_1,L_2)=\rk\HF(L_i,L_i)=2, \ i=1,2.
\end{equation}

In the following sections we will study the ranks of the Floer cohomology groups $\HF(\cdot,\varphi(\cdot))$ and show that there is always a Lagrangian $\hat{K}\subset W$ such that \begin{equation}\label{notisomorphic}
\HF(\hat{K},\hat{K}) \not \cong   \HF(\hat{K},\varphi(\hat{K})).
\end{equation}
As a result, $\hat{K}$ and $\varphi(\hat{K})$ are not quasi-isomorphic objects in $\Fuk(W)$, and therefore the functor induced by $\varphi$ cannot be isomorphic to the identity in $\Auteq\Fuk(W)$.
This will also rule out the possibility of $\varphi$ being isotopic to the identity in $\Symp_{ct}(W)$.

We prove \eqref{notisomorphic} by applying the rank inequalities \eqref{rank1}, \eqref{rank11} to Lagrangian spheres in $(Y, \Omega)$ obtained via the correspondence $\Gamma$, in combination with the symplectic Gysin sequence associated to $\Gamma$ (Corollary \ref{inequalityrank}).

In the following section we reiterate a part of Keating's proof of Theorem \ref{keating} using the rank inequalities \eqref{rank1}, \eqref{rank11} that hold for the word of Dehn twists $\phi \in \langle \tau_{L_1}, \tau_{L_2} \rangle$ associated to $\varphi$ via the Hopf correspondence. This will clarify the methods used in proving the analogous statement for projective twists (namely, Theorem \ref{mainthm}) in Section \ref{unifiedproof}.

\subsubsection{Associated word of Dehn twists.}\label{singletw}

Let $\varphi \in \langle \tau_{L_1}, \tau_{L_2} \rangle \subset \Symp_{ct}(W)$ be a word of projective twists as in the statement of Theorem \ref{mainthm}. Consider the Hopf correspondence $\Gamma \subset W^-\times Y$ and the associated word of Dehn twists $\phi \in \Symp_{ct}(Y)$ as above. In this section we replicate the last steps in Keating's proof of the injectivity of the homomorphism $$Free_2\lra \Auteq(\Fuk(W)).$$

We first make the following observation about a word of twists and its conjugates.

\begin{lemma}\label{corollaryconjugate}
	Let $\phi \in \Symp_{ct}(Y)$ a symplectomorphism which has the shape of a global conjugate, i.e $\phi=\psi^{-1}\phi'\psi$, for $\psi, \phi' \in \Symp_{ct}(Y)$ not isotopic to the identity. Then there is a closed Lagrangian $\tilde{L}\subset Y$ such that $\HF(\tilde{L}, \phi(\tilde{L})) \not \cong \HF(\tilde{L}, \tilde{L})$ if and only if $\phi'$ satisfies $\HF(\hat{L}, \phi'(\hat{L})) \not \cong \HF(\hat{L}, \hat{L})$ for some closed Lagrangian $\hat{L}\subset Y$.
\end{lemma}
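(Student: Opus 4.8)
The plan is to observe that conjugation by a symplectomorphism merely reparametrises the collection of Lagrangians in a way that is transparent to Floer cohomology, so that the two existence statements are carried into one another by the change of variables $\tilde L \mapsto \psi(\tilde L)$.

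Concretely, I would first record the elementary identity $\phi(\tilde L) = \psi^{-1}\bigl(\phi'(\psi(\tilde L))\bigr)$, valid for every closed exact Lagrangian $\tilde L \subset Y$ because $\phi = \psi^{-1}\phi'\psi$. Then I would invoke the invariance of Floer cohomology under the global compactly supported symplectomorphism $\psi$: since $\psi$ preserves the class of closed exact Lagrangian branes and induces an autoequivalence of $\Fuk(Y)$ (this is the functoriality underlying the map $\Phi$ of \eqref{mapinducedfunctor}), one has $\HF(A,B)\cong\HF(\psi(A),\psi(B))$ for all closed exact Lagrangians $A,B$; over a field of characteristic two there are no grading or orientation subtleties to check. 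Applying this twice, with $(A,B)=(\tilde L,\phi(\tilde L))$ and with $(A,B)=(\tilde L,\tilde L)$, yields
\[
\HF(\tilde L,\phi(\tilde L))\ \cong\ \HF\bigl(\psi(\tilde L),\phi'(\psi(\tilde L))\bigr),\qquad \HF(\tilde L,\tilde L)\ \cong\ \HF\bigl(\psi(\tilde L),\psi(\tilde L)\bigr).
\]
Writing $\hat L:=\psi(\tilde L)$, which is again a closed exact Lagrangian since $\psi$ is a symplectomorphism, these isomorphisms show that $\HF(\tilde L,\phi(\tilde L))\not\cong\HF(\tilde L,\tilde L)$ holds precisely when $\HF(\hat L,\phi'(\hat L))\not\cong\HF(\hat L,\hat L)$. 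As $\tilde L\mapsto\psi(\tilde L)$ is a bijection of the class of closed exact Lagrangians of $Y$, the existence of a Lagrangian witnessing the first inequality is equivalent to the existence of one witnessing the second, which is exactly the claimed biconditional. (The hypothesis that $\psi$ and $\phi'$ are not isotopic to the identity is not needed for this argument; it is included because the lemma is applied to genuine positive words of Dehn twists.)

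There is essentially no hard step here. The only point demanding more than a line is the invariance $\HF(\psi(A),\psi(B))\cong\HF(A,B)$ for a global symplectomorphism, and for this I would simply cite the functoriality of the compact Fukaya category under symplectomorphisms rather than reprove it via the standard push-forward of almost complex structures and pseudoholomorphic curve moduli spaces along $\psi$.
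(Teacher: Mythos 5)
Your proof is correct and coincides with the paper's own argument: both rest on the identity $\phi(\tilde L)=\psi^{-1}(\phi'(\psi(\tilde L)))$ together with invariance of Floer cohomology under the global symplectomorphism $\psi$, with the witness Lagrangians exchanged via $\hat L=\psi(\tilde L)$ (the paper phrases it as $\tilde L=\psi^{-1}(\hat L)$). No gaps.
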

\proof
Assume there is a Lagrangian $\hat{L} \subset Y$ such that $\HF(\hat{L}, \phi'(\hat{L}))\not \cong \HF(\hat{L}, \hat{L})$. Then by invariance of Floer cohomology under symplectomorphisms $\HF(\psi^{-1} \hat{L}, \psi^{-1}\phi'\psi(\psi^{-1} \hat{L})) \cong \HF(\psi^{-1} \hat{L}, \psi^{-1}(\phi' \hat{L}) \cong \HF( \hat{L}, \phi' \hat{L}) \not \cong \HF(\hat{L}, \hat{L})$, so for $\tilde{L}:=\psi^{-1}(\hat{L})$ we have $\HF(\tilde{L}, \phi(\tilde{L})) \not \cong \HF(\tilde{L}, \tilde{L})$. The other direction is similar.
\qed 

We will apply the above lemma to a word of Dehn twists $\phi \in \Symp_{ct}(Y)$ which in its reduced has the shape of global conjugate, i.e a word $\phi=\psi^{-1}\phi'\psi$, for two reduced words $\psi, \phi' \in \langle \tau_{L_1}, \tau_{L_2} \rangle \subset \Symp_{ct}(Y)$ not isotopic to the identity. Then the lemma shows that it is always possible to switch between $\phi$ and its conjugate, as the correct choice of Lagrangian keeps track of the Floer cohomological action of the original word.

Without loss of generality, we can therefore use this conjugation argument to restrict the focus on reduced words $\phi \in \langle \tau_{L_1}, \tau_{L_2} \rangle$ which are either (for $i,j \in \{ 1,2 \}$):

\begin{enumerate}
	\item A power of a single Dehn twist, i.e $\phi=\tau_{L_i}^s$, $s\in \Z^*$ (Lemma \ref{lemmadistinct}).
	\item A word starting with a power of $\tau_{L_i}$ and ending in a power of $\tau_{L_j}$, $i\neq j$ (Lemma \ref{mixedtwist}).
\end{enumerate}

\begin{lemma}\label{lemmadistinct}
	Let $\phi=\tau_{L_i}^{s}\in \Symp_{ct}(Y)$ be a reduced word of Dehn twists which is a power of a single Dehn twist, $i,j \in \{ 1,2\}$, $s \in \Z^*$. The associated functor is not isomorphic to the identity in $\Auteq(\Fuk(Y))$, so in particular, $\phi$ cannot be isotopic to the identity in $\Symp_{ct}(Y)$.
\end{lemma}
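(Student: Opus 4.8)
\textbf{Plan.} The statement to prove is that a non-trivial power $\phi = \tau_{L_i}^s$ of a single Dehn twist along a Lagrangian sphere $L_i \subset Y$ acts non-trivially on $\Fuk(Y)$, hence is not isotopic to the identity. The natural strategy is to exhibit a test object whose Floer cohomology with its $\phi$-image has a different rank from its self-Floer cohomology. The obvious candidate is the \emph{other} sphere $L_j$, $j \neq i$, which by \eqref{rank2} satisfies $\rk \HF(L_i, L_j) = \rk \HF(L_j, L_j) = 2$. So the plan is: take $\hat{L} = L_j$ and show $\HF(L_j, \tau_{L_i}^s(L_j)) \not\cong \HF(L_j, L_j)$ by invoking Keating's rank inequalities. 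Concretely, I would apply Lemma \ref{keatingclaim} with the roles $L \leftrightarrow L_j$, $L' \leftrightarrow L_i$ (noting $hf(L_i, L_j) = 2$ and that $L_i, L_j$ are not quasi-isomorphic, since a clean-intersection computation gives $\HF(L_i,L_j) \cong H^*(S^r)$ and the two spheres have different — or, even when $r=m$, still non-isomorphic as $A_\infty$-modules — self-Floer structure; in any case $L_1 \not\cong L_2$ in $\Fuk(Y)$ is part of the hypothesis package inherited from Theorem \ref{keating}'s setup). Lemma \ref{keatingclaim} then yields, for all $s \neq 0$,
\[
hf(L_j,\, \tau_{L_i}^s(L_j)) \;<\; hf(L_i,\, \tau_{L_i}^s(L_j)),
\]
while trivially $hf(L_j, L_j) = 2$. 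To conclude I must rule out $hf(L_j, \tau_{L_i}^s(L_j)) = 2$; the strict inequality above shows $hf(L_j, \tau_{L_i}^s(L_j))$ is strictly bounded by something, which by itself isn't enough, so I would instead argue directly: if $\HF(L_j, \tau_{L_i}^s(L_j)) \cong \HF(L_j, L_j)$ for some $s \neq 0$, then in particular the identity functor and $T_{L_i}^s$ would agree on the object $L_j$ at the level of this Floer group, and feeding this back into Lemma \ref{keatinglemma} (with $\tilde L = L_i$, $L = L_j$, $L' = L_j$) produces a contradiction with $hf(L_i, L_j) > hf(L_j, L_j)$ being false — so I'd need to check which of the two rank inequalities \eqref{rank1}, \eqref{rank11} actually applies given $hf(L_i,L_j) = hf(L_j,L_j) = 2$; this bookkeeping is the delicate point.

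Since the word $\phi$ in the statement need not literally be $\tau_{L_i}^s$ but only "a power of a single Dehn twist" after the reductions of the previous paragraphs, I would first invoke Lemma \ref{corollaryconjugate} to reduce to the normal form $\phi = \tau_{L_i}^s$, then note that the functor $\Phi(\phi) = T_{L_i}^s \in \Auteq(\Fuk(Y))$ sends $L_j$ to an object not quasi-isomorphic to $L_j$ (because Floer cohomology ranks are quasi-isomorphism invariants and we just showed the ranks differ), hence $T_{L_i}^s \not\cong \Id$; the statement about $\Symp_{ct}(Y)$ follows since the map $\Phi$ of \eqref{mapinducedfunctor} factors through $\pi_0(\Symp_{ct}(Y))$ and kills $\Ham_{ct}$.

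\textbf{Main obstacle.} The only genuine difficulty is extracting a \emph{strict} non-isomorphism of Floer groups from Keating's inequalities in the boundary regime $hf(L_i,L_j) = hf(L_j,L_j) = 2$, where Lemma \ref{keatingclaim} is exactly tailored but one has to be careful about which Lagrangian plays the role of $L$ versus $L'$ and that the conclusion $hf(L_j, \tau_{L_i}^s L_j) < hf(L_i, \tau_{L_i}^s L_j)$ is combined with the a priori bound $hf(L_i, \tau_{L_i}^s L_j) = hf(L_i, L_j) = 2$ (invariance under the symplectomorphism $\tau_{L_i}$, which fixes $L_i$), forcing $hf(L_j, \tau_{L_i}^s L_j) \leq 1 \neq 2 = hf(L_j, L_j)$. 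Spelling out that last chain — in particular justifying $hf(L_i, \tau_{L_i}^s L_j) = hf(L_i, L_j)$ from $\tau_{L_i}(L_i) = L_i$ and Hamiltonian-isotopy invariance of $\HF$ — is the crux, and everything else is formal.
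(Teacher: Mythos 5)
Your strategy is exactly the paper's: take $\hat{L}=L_j$ for $j\neq i$ and use Lemma \ref{keatingclaim} to show $\HF(L_j,\tau_{L_i}^s(L_j))\not\cong\HF(L_j,L_j)$. But you have transcribed the inequality of Lemma \ref{keatingclaim} in the wrong direction, and the intermediate conclusion you draw from it is false. With your own assignment of roles ($L=L_j$, $L'=L_i$, so that the twisting sphere is $L'=L_i$), the lemma reads
\[
hf(L_i,L_j)\;=\;hf(L_i,\tau_{L_i}^{s}L_j)\;<\;hf(L_j,\tau_{L_i}^{s}L_j),
\]
so that $hf(L_j,\tau_{L_i}^{s}L_j)$ is strictly \emph{greater} than $hf(L_i,L_j)=2=hf(L_j,L_j)$, and the proof is finished in one line --- this is precisely the chain in the paper's proof. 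You instead asserted $hf(L_j,\tau_{L_i}^{s}(L_j))<hf(L_i,\tau_{L_i}^{s}(L_j))$ and deduced $hf(L_j,\tau_{L_i}^{s}L_j)\leq 1$. That bound is not merely unjustified but false: by the Dehn twist exact triangle this rank grows with $|s|$ (cf.\ Corollary \ref{rankgroups}, which is built on exactly this inequality read the right way round). So the ``crux'' you flagged --- which sphere plays the role of $L$ versus $L'$ --- is indeed where your written argument breaks; the repair is simply to read off Lemma \ref{keatingclaim} correctly, after which none of the auxiliary manoeuvres you sketch (feeding the hypothesis back into Lemma \ref{keatinglemma}, the separate invariance step $hf(L_i,\tau_{L_i}^sL_j)=hf(L_i,L_j)$, which is already part of the statement of Lemma \ref{keatingclaim}) is needed.

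The surrounding formalities in your write-up are fine and match the paper: the reduction to the normal form $\phi=\tau_{L_i}^s$ via Lemma \ref{corollaryconjugate} is carried out just before this lemma in the text, $L_1\not\cong L_2$ in $\Fuk(Y)$ and $hf(L_1,L_2)=2$ come from the clean-intersection computation \eqref{rank2}, and the passage from a rank discrepancy to $T_{L_i}^s\not\cong\Id$ in $\Auteq(\Fuk(Y))$ and thence to $\pi_0(\Symp_{ct}(Y))$ is the standard invariance argument.
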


\proof 

We show that there exists a closed Lagrangian $\hat{L} \subset Y$ such that $$\HF(\hat{L}, \phi (\hat{L})) \ncong \HF(\hat{L}, \hat{L}).$$
For $\phi=\tau_{L_i}^s$, a possible candidate is given by $\hat{L}=L_j$, $i,j \in \{1,2\} $, $i\neq j$.

Namely, the rank inequality stated by Lemma \ref{keatingclaim}, gives
$$2=hf(L_j,L_j)=hf(L_i,L_j)=hf(L_i, \tau_{L_i}^s L_j) < hf(L_j, \tau_{L_i}^s L_j).$$
\qedhere
\begin{rmk}
	The geometric result of the above lemma can also be proven independently from Keating's results, as a corollary to Theorem \ref{bgz} (see Section \ref{alternativebgz}).
\end{rmk}

\begin{lemma}\label{mixedtwist}
	
	Let $\phi \in \langle \tau_{L_1}, \tau_{L_2} \rangle \subset \Symp_{ct}(Y)$ be a reduced word of Dehn twists around the Lagragian (spherical) cores which is a product where the first and last factors are powers of distinct Dehn twists. Then the functor associated to $\phi$ is not isomorphic to the identity in $\Auteq(\Fuk(Y))$ so in particular $\phi$ cannot be isotopic to the identity in $\Symp_{ct}(Y)$.
\end{lemma}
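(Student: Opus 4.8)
The plan is to run Keating's ping-pong argument (Lemmas \ref{keatinglemma} and \ref{keatingclaim}) for the word $\phi$, tracking only the two Floer ranks $hf(L_1,\cdot)$ and $hf(L_2,\cdot)$ against one fixed spherical test object, and thereby produce a Lagrangian whose Floer rank against its $\phi$-image differs from $2$. Quasi-isomorphism invariance then forbids the functor attached to $\phi$ from being the identity in $\Auteq(\Fuk(Y))$, and in particular forbids $\phi \simeq \Id$ in $\Symp_{ct}(Y)$.

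First I would put $\phi$ in normal form. As $Y$ carries exactly the two spherical cores $L_1,L_2$ and $\phi$ is reduced, $\phi$ alternates powers of $\tau_{L_1}$ and $\tau_{L_2}$, and the hypothesis that its first and last factors lie along distinct spheres then forces $\phi$ to have even length. Adopting the convention that the rightmost factor acts first, and relabelling $L_1\leftrightarrow L_2$ if necessary, we may assume
\[
\phi=\tau_{L_1}^{a_1}\tau_{L_2}^{b_1}\tau_{L_1}^{a_2}\tau_{L_2}^{b_2}\cdots\tau_{L_1}^{a_\ell}\tau_{L_2}^{b_\ell},\qquad \ell\geq 1,\ a_i,b_i\in\Z\setminus\{0\}.
\]
Since $\tau_{L_2}$ restricts to the antipodal map on $L_2$ we have $\tau_{L_2}^{b_\ell}(L_2)=L_2$, hence $\phi(L_2)=\tau_{L_1}^{a_1}\tau_{L_2}^{b_1}\cdots\tau_{L_1}^{a_\ell}(L_2)$. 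I take the test object to be $L_2$, using from \eqref{rank2} that $hf(L_1,L_2)=hf(L_2,L_2)=2$, together with the (already established, and needed for Keating's lemmas) fact that $L_1\not\simeq L_2$ in $\Fuk(Y)$.

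The technical core is to show, by induction on $\ell$, that $hf(L_2,\phi(L_2))>hf(L_1,\phi(L_2))\geq 2$. For $\ell=1$ we have $\phi(L_2)=\tau_{L_1}^{a_1}(L_2)$, and Lemma \ref{keatingclaim} (twisting sphere $L_1$, base object $L_2$, power $a_1$) gives exactly $hf(L_1,\phi(L_2))=2<hf(L_2,\phi(L_2))$. For the inductive step, write $\phi=\tau_{L_1}^{a_1}\tau_{L_2}^{b_1}\phi'$ with $\phi'$ of the same shape and one block shorter, so $hf(L_2,\phi'(L_2))>hf(L_1,\phi'(L_2))\geq 2$ by induction. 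Applying $\tau_{L_2}^{b_1}$, Lemma \ref{keatinglemma} with $\widetilde L=L_2$, $L=L_1$, $L'=\phi'(L_2)$ flips the dominance to $L_1$, while the symplectomorphism invariance $hf(L_2,\tau_{L_2}^{b_1}\phi'(L_2))=hf(L_2,\phi'(L_2))$ (as $\tau_{L_2}^{-b_1}$ preserves $L_2$) keeps that rank $\geq 2$; thus $hf(L_1,\tau_{L_2}^{b_1}\phi'(L_2))>hf(L_2,\tau_{L_2}^{b_1}\phi'(L_2))\geq 2$. A second application of Lemma \ref{keatinglemma}, now with $\widetilde L=L_1$, flips the dominance back to $L_2$, and together with $hf(L_1,\phi(L_2))=hf(L_1,\tau_{L_2}^{b_1}\phi'(L_2))\geq 2$ this is the assertion for $\phi$.

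Granting the claim, $hf(L_2,\phi(L_2))\geq 3\neq 2=hf(L_2,L_2)$, so $\HF(L_2,\phi(L_2))\not\cong\HF(L_2,L_2)$; hence $\phi(L_2)$ is not quasi-isomorphic to $L_2$ in $\Fuk(Y)$, so the functor induced by $\phi$, which carries the brane $L_2$ to $\phi(L_2)$, cannot be isomorphic to the identity in $\Auteq(\Fuk(Y))$, and a fortiori $\phi\not\simeq\Id$ in $\Symp_{ct}(Y)$. The whole argument is essentially bookkeeping within Keating's framework; the two points to get right are the convention fixing which end-factor of $\phi$ annihilates the test sphere (this is what forces the choice $L_2$), and checking that the lower bounds $hf\geq 2$ required to invoke Lemma \ref{keatinglemma} genuinely persist at each step — which they do, because every application leaves one of the two ranks literally unchanged and pushes the other strictly above it.
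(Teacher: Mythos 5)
Your proof is correct and follows essentially the same route as the paper's: a ping-pong of the two ranks $hf(L_1,\cdot)$, $hf(L_2,\cdot)$ against a fixed spherical test object (you use $L_2$, the paper uses $L_1$ after the opposite normalisation), seeded by Lemma \ref{keatingclaim} and propagated by Lemma \ref{keatinglemma}. Casting it as an induction on the number of blocks rather than an explicit iterated chain of inequalities is only a cosmetic difference; the paper additionally records the quantitative lower bound $hf(L_1,\phi(L_1))>2k+1$ for later use, but that is not needed for this lemma.
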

\proof We show that there is a closed Lagrangian $\hat{L} \subset Y$ such that $\HF(\hat{L}, \phi\hat{L}) \not \cong \HF(\hat{L}, \hat{L})$. 

We can assume w.l.o.g that the first factor of $\phi$ to be a power of $\tau_{L_2}$ and the last is a power of $\tau_{L_1}$ (otherwise consider $\phi^{-1}$), so that we have a word of the following shape

\begin{align}\label{productshape}
\phi= \tau_{L_2}^{b_k} \tau_{L_1}^{a_k} \cdots \tau_{L_2}^{b_1}\tau_{L_1}^{a_1}, \  a_i, b_i \in \Z^* \text{ for } 1 \leq i \leq k.
\end{align}

In the case we are considering, we have $hf(L_1,L_2)=2$. Apply Lemma \ref{keatingclaim} to get \begin{align*}
2=hf(L_1,L_1)=hf(L_2,L_1)=hf(L_2, \tau_{L_2}^{b_1} \tau_{L_1}^{a_1}L_1) < hf(L_1, \tau_{L_2}^{b_1}\tau_{L_1}^{a_1}L_1).
\end{align*}

Now apply Lemma \ref{keatinglemma} (with $n=a_2$, $\tilde{L}=L_1, L=L_2$ and $L'=\tau_{L_2}^{b_1} \tau_{L_1}^{a_1}L_1$)
and get 
\begin{align*}
hf(L_1, \tau_{L_2}^{b_1}\tau_{L_1}^{a_1}L_1)=hf(L_1,\tau_{L_1}^{a_2} \tau_{L_2}^{b_1}\tau_{L_1}^{a_1}L_1) < hf(L_2,\tau_{L_1}^{a_2} \tau_{L_2}^{b_1}\tau_{L_1}^{a_1}L_1).
\end{align*}
Apply Lemma \ref{keatinglemma} again (with $n=b_2$, $\tilde{L}=L_2, L=L_1$ and $L'=\tau_{L_1}^{a_2}\tau_{L_2}^{b_1} \tau_{L_1}^{a_1}L_1$)
\begin{align*}
hf(L_2,\tau_{L_1}^{a_2} \tau_{L_2}^{b_1}\tau_{L_1}^{a_1}L_1)= hf(L_2, \tau_{L_2}^{b_2}\tau_{L_1}^{a_2} \tau_{L_2}^{b_1}\tau_{L_1}^{a_1}L_1) < hf(L_1, \tau_{L_2}^{b_2}\tau_{L_1}^{a_2} \tau_{L_2}^{b_1}\tau_{L_1}^{a_1}L_1).
\end{align*}

Continue to apply Lemma \ref{keatinglemma} iteratively until the final step
\begin{align*}
hf(L_2, \tau_{L_2}^{b_k} \tau_{L_1}^{a_k} \cdots \tau_{L_2}^{b_1} \tau_{L_1}^{a_1}L_1)< hf(L_1, \tau_{L_2}^{b_k} \tau_{L_1}^{a_k} \cdots \tau_{L_2}^{b_1} \tau_{L_1}^{a_1}L_1).
\end{align*}

Then \begin{align*}
hf(L_1, \tau_{L_2}^{b_k} \tau_{L_1}^{a_k} \cdots \tau_{L_2}^{b_1} \tau_{L_1}^{a_1}L_1)> 2+ 2k-1=2k+1.
\end{align*}
So setting $\hat{L}=L_1$ we have $\HF(\hat{L}, \phi(\hat{L})) \not \cong \HF(\hat{L}, \hat{L})$. \qed

\begin{cor}\label{rankgroups}
	Let $\phi \in  \langle \tau_{L_1}, \tau_{L_2} \rangle \subset \Symp_{ct}(Y)$ be a word of Dehn twists that is a product of the shape \eqref{productshape}.
	Then there is a Lagrangian $\hat{L} \subset Y$ such that $$\lim_{s \to \infty } \rank HF^*(\hat{L}, \phi^s (\hat{L})) = \infty.$$
\end{cor}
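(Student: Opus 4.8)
The plan is to take $\hat L := L_1$ and to show that $\rank\HF^*(L_1,\phi^s(L_1))$ grows at least linearly in $s$, by feeding each power $\phi^s$ into the very chain of rank inequalities already carried out in the proof of Lemma \ref{mixedtwist}.

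First I would observe that the alternating syllable form of $\phi$ is inherited by all of its powers. Writing $\phi=\tau_{L_2}^{b_k}\tau_{L_1}^{a_k}\cdots\tau_{L_2}^{b_1}\tau_{L_1}^{a_1}$ as in \eqref{productshape}, with all $a_i,b_i\in\Z^*$, the word $\phi^s$ is the concatenation of $s$ copies of this expression; at each seam the rightmost syllable $\tau_{L_1}^{a_1}$ of one copy abuts the leftmost syllable $\tau_{L_2}^{b_k}$ of the next, and since $L_1\neq L_2$ these syllables are not merged. Hence $\phi^s$ is again a word of the shape \eqref{productshape}, now with $ks$ syllables that are nonzero powers of $\tau_{L_1}$ alternating with $ks$ that are nonzero powers of $\tau_{L_2}$, still beginning with a power of $\tau_{L_2}$ and ending with a power of $\tau_{L_1}$. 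I emphasise that this step only uses that, as written, no two adjacent syllables are powers of the same twist; it does not presuppose that $\tau_{L_1},\tau_{L_2}$ generate a free group.

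Next I would run the argument of Lemma \ref{mixedtwist} verbatim, with $\phi$ replaced by $\phi^s$ and with $\hat L=L_1$. Using $hf(L_1,L_2)=hf(L_i,L_i)=2$ from \eqref{rank2}, one begins (Lemma \ref{keatingclaim}) with $2=hf(L_1,L_1)=hf(L_2,L_1)=hf(L_2,\tau_{L_2}^{b_1}\tau_{L_1}^{a_1}L_1)<hf(L_1,\tau_{L_2}^{b_1}\tau_{L_1}^{a_1}L_1)$, and then applies Lemma \ref{keatinglemma} once for each of the remaining syllables of $\phi^s$, each step strictly increasing the relevant rank by at least one while alternating which of $L_1,L_2$ plays the role of $\tilde L$. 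Since $\phi^s$ has $2ks$ syllables, exactly as in the final display of the proof of Lemma \ref{mixedtwist} this yields
\begin{align*}
\rank\HF^*(L_1,\phi^s(L_1))=hf(L_1,\phi^s(L_1))>2+(2ks-1)=2ks+1.
\end{align*}
Letting $s\to\infty$ gives $\lim_{s\to\infty}\rank\HF^*(L_1,\phi^s(L_1))=\infty$, which is the assertion with $\hat L=L_1$.

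The only delicate point — and it is the mild obstacle here — is the bookkeeping in the first step: one must be sure that forming $\phi^s$ genuinely multiplies the syllable count by $s$, rather than producing cancellations at the seams that would shorten the inductive chain of Lemma \ref{keatinglemma}. This is immediate from the hypothesis in \eqref{productshape} that $\phi$ begins and ends with powers of \emph{distinct} twists; everything downstream is a verbatim reapplication of Lemmas \ref{keatingclaim} and \ref{keatinglemma} already performed in the proof of Lemma \ref{mixedtwist}.
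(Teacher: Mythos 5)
Your proof is correct and follows essentially the same route as the paper's: both take $\hat{L}=L_1$, note that $\phi^s$ retains the alternating shape \eqref{productshape} with factor length $ks$, and rerun the inequality chain of Lemma \ref{mixedtwist} to get $hf(L_1,\phi^s(L_1))>2ks+1$. Your extra remark about the absence of cancellation at the seams is a harmless elaboration of what the paper states without comment.
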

\proof
Let $\phi$ be of the shape \eqref{productshape}. Then $$\phi^s= (\tau_{L_2}^{b_k} \tau_{L_1}^{a_k} \cdots \tau_{L_2}^{b_1}\tau_{L_1}^{a_1})(\cdots )(\cdots)(\tau_{L_2}^{b_k} \tau_{L_1}^{a_k} \cdots \tau_{L_2}^{b_1}\tau_{L_1}^{a_1})$$
has `factor length' $k\cdot s$ (in the sense of \eqref{productshape}). By the proof of Lemma \ref{mixedtwist}, the rank of $ \HF(L_1, \phi(L_1))$ depends on the number $k \in \N$ appearing in the factor decomposition of $\phi$. 
Therefore $$hf(L_1, \phi^s(L_1))>2ks+1$$
so we can set $\hat{L}:=L_1$.
\qed

\subsubsection{Proof}\label{unifiedproof}

We now go back to the original word $\varphi \in \langle \tau_{K_1}, \tau_{K_2} \rangle \subset \Symp_{ct}(W)$ of projective twists in the statement of Theorem \ref{mainthm}, and we show that it cannot induce the identity functor in $\Auteq(\Fuk(W))$. Lemma \ref{corollaryconjugate} holds for any symplectomorphism, so by the same conjugation argument explained before, we can focus the attention on words that are either (for $i,j \in \{ 1,2 \}$)
\begin{enumerate}
	\item A power of a single twist $\varphi=\tau_{K_i}^s$, $s\in \Z^*$.
	\item A mixed product of the shape $\varphi:= \tau_{K_i}^{b_k} \tau_{K_j}^{a_k} \cdots \tau_{K_i}^{b_1}\tau_{K_j}^{a_1} \in \Symp_{ct}(W)$, $i\neq j$, $a_m, b_m \in \Z^*$, $1\leq m \leq k$.
\end{enumerate}

\begin{prop}\label{propsingle}
	Let $\varphi=\tau_{K_i}^s \in \langle \tau_{K_1}, \tau_{K_2} \rangle \subset \Symp_{ct}(W)$ be a reduced word of projective twists which is a power of a single twist, $i \in \{1,2 \}$, $s\in \Z^*$. Then the functor induced by $\varphi$ is not isomorphic to the identity in $\Auteq(\Fuk(W))$, and in particular $\varphi$ cannot be isotopic to the identity in $\Symp_{ct}(W)$.
\end{prop}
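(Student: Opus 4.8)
The plan is to run exactly the argument of Lemma \ref{lemmadistinct} through the Hopf correspondence and the Gysin sequence of Corollary \ref{inequalityrank}, together with the quasi-isomorphism-invariance of mapping cones established in Lemma \ref{lagisotopy}. Let $\varphi=\tau_{K_i}^s$ with $i\in\{1,2\}$, $s\in\Z^*$, and let $j\neq i$. The candidate test object is $\hat K := K_j$. Via the Hopf correspondence $\Gamma\subset W^-\times Y$ we obtain the associated Lagrangian spheres $L_1,L_2\subset Y$, and by the commuting diagram \eqref{categoricaltwistplumbing} (iterated as in \eqref{commutativeprod}) the functor $T_{K_i}^s$ corresponds under $\Theta_\Gamma$ to $T_{L_i}^s$, so $\Theta_\Gamma(\tau_{K_i}^s(K_j))$ is quasi-isomorphic to $\tau_{L_i}^s(L_j)$ in $\Fuk(Y)$.

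First I would record the rank computation on the $Y$-side: by \eqref{rank2}, $hf(L_j,L_j)=hf(L_i,L_j)=2$, and Lemma \ref{keatingclaim} (Keating's Claim 8.2, applicable since $L_i,L_j$ are exact Lagrangian spheres with $hf(L_i,L_j)=2$, and are not quasi-isomorphic) gives
\begin{align*}
2 \;=\; hf(L_j,L_j)\;=\;hf(L_i,L_j)\;=\;hf(L_i,\tau_{L_i}^s L_j)\;<\;hf(L_j,\tau_{L_i}^s L_j).
\end{align*}
Next I would transport this inequality back to $W$ using the Gysin triangle \eqref{exactriangle} applied with $K=K'=K_j$ on one side and with $K=K_j$, $K'=\tau_{K_i}^s(K_j)$ on the other. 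Concretely: if $\tau_{K_i}^s(K_j)$ were quasi-isomorphic to $K_j$ in $\Fuk(W)$, then by Lemma \ref{lagisotopy} the mapping cones of the cup-product maps $\alpha\cup\cdot$ on $\CF^*(K_j,K_j)$ and on $\CF^*(K_j,\tau_{K_i}^s K_j)$ would be quasi-isomorphic. But by Lemma \ref{gysinper} (and Lemma \ref{lemmaaxiom}) the cohomology of the first cone computes $\HF(L_j,L_j)$, of rank $2$, while the cohomology of the second computes $\HF(L_j,\tau_{L_i}^s L_j)$, of rank $>2$ by the displayed inequality. This is a contradiction, so $\tau_{K_i}^s(K_j)$ and $K_j$ are \emph{not} quasi-isomorphic objects of $\Fuk(W)$. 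Consequently the functor induced by $\varphi=\tau_{K_i}^s$ cannot be isomorphic to the identity in $\Auteq(\Fuk(W))$; since the map $\Symp_{ct}(W)\to\Auteq(\Fuk(W))$ of \eqref{mapinducedfunctor} factors through $\pi_0(\Symp_{ct}(W))$, $\varphi$ is also not isotopic to the identity in $\Symp_{ct}(W)$.

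The routine points — exactness forcing $\HF(K_j,K_j)\cong H^*(K_j)$ so that the relevant differentials and quantum corrections vanish, and the identification $QH^*(W)\cong H^*(W)$ making $e(V)=\alpha$ — are already handled in Lemmas \ref{gysinper} and \ref{lagisotopy}. The only genuine subtlety, which I would flag explicitly, is the direction of the implication: I do not need that the functors of $\Fuk(W)$ and $\Fuk(Y)$ are "the same", only that $\Theta_\Gamma$ sends quasi-isomorphic objects to quasi-isomorphic objects (true for any $A_\infty$-functor) and that the Gysin cones faithfully detect the rank of $\HF$ on $Y$; so the implication "$\tau_{K_i}^s(K_j)\simeq K_j$ in $\Fuk(W)$" $\Rightarrow$ "$\tau_{L_i}^s(L_j)$ has the same Floer rank against $L_j$ as $L_j$ does" runs in the needed direction. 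That comparison of ranks via the two Gysin triangles is the main step; everything else is bookkeeping.
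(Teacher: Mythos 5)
Your proposal is correct and follows essentially the same route as the paper: assume the functor is the identity, test against $\hat K=K_j$, use Lemma \ref{lagisotopy} to get quasi-isomorphic Gysin cones, identify their cohomologies via Lemma \ref{gysinper} with $\HF(L_j,L_j)$ and $\HF(L_j,\tau_{L_i}^sL_j)$, and contradict the rank inequality of Lemma \ref{lemmadistinct} (i.e.\ Keating's Claim 8.2). The only cosmetic difference is that you invoke Lemma \ref{keatingclaim} directly where the paper cites the proof of Lemma \ref{lemmadistinct}, which is the same computation.
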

\proof  

Let $\varphi =\tau_{K_i}^{s} \in \Symp_{ct}(W)$, $s \in \Z^*$. Assume by contradiction that the functor induced by $\varphi$ (still denoted by $\varphi$) is isomorphic to the identity, so that any Lagrangian $\hat{K} \subset W$ is quasi-isomorphic, as an obejct of $\Fuk(W)$, to $\varphi(\hat{K})$. 

By Lemma \ref{lagisotopy}, there is a quasi-isomorphism of the mapping cones of the cup product maps
$f_1\colon \CF^{*}(\hat{K},\hat{K})\lra \CF^{*+k+1}(\hat{K},\hat{K})$ and $f_2\colon \CF^{*}(\hat{K}, \varphi(\hat{K}))\lra \CF^{*+k+1}(\hat{K}, \varphi(\hat{K}))$ (we are considering ungraded Floer cohomology groups so technically the degrees are irrelevant here).
Therefore, by the exact triangle of Lemma \ref{gysinper}, if $\hat{L} \subset Y$ is the Lagrangian lift of $\hat{K}$ through the correspondence $\Gamma$, and $\phi \in \Symp_{ct}(Y)$ the symplectomorphism associated to $\varphi$, then 
$\HF(\hat{L}, \hat{L})\cong \HF(\hat{L}, \phi (\hat{L}))$. 

So if we set $\hat{K}:=K_j$, $j\neq i$, by assumption we have $\HF(K_j, \varphi(K_j)) \cong \HF(K_j,K_j)$ and the above argument yields $\HF(L_j, \phi(L_j))=\HF(L_j, \tau_i^s(L_j)) \cong \HF(L_j, L_j)$ which is clearly in contradiction to (the proof of) Lemma \ref{lemmadistinct} (according to which these two groups have distinct ranks). Hence, $\varphi$ cannot be isomorphic to the identity functor in $\Auteq(\Fuk(W))$. \qed

\begin{prop}\label{propmixed}
	Let $\varphi \in \langle \tau_{K_1}, \tau_{K_2} \rangle \subset \Symp_{ct}(W)$ be a reduced word of projective twists around the Lagragian cores which is a product where the first and last factors are powers of distinct projective twists. Then the functor induced by $\varphi$ is not isomorphic to the identity in $\Auteq\Fuk(W)$, so in particular $\varphi$ is not isotopic to the identity in $\Symp_{ct}(W)$.
\end{prop}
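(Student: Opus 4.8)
The plan is to run the argument of Proposition \ref{propsingle} essentially verbatim, with the single twist $\tau_{K_i}^s$ replaced by the mixed word $\varphi$ and the rank computation of Lemma \ref{lemmadistinct} replaced by that of Lemma \ref{mixedtwist}. First I would normalise $\varphi$: since replacing $\varphi$ by $\varphi^{-1}$ does not affect whether it induces the identity functor, and $\varphi$ is a reduced word whose first and last factors are powers of distinct twists, we may assume
\[
\varphi = \tau_{K_2}^{b_N}\tau_{K_1}^{a_N}\cdots\tau_{K_2}^{b_1}\tau_{K_1}^{a_1}\in\Symp_{ct}(W),\qquad a_m,b_m\in\Z^*,\ 1\le m\le N,\ N\ge 1,
\]
i.e. the first factor applied to a Lagrangian is a power of $\tau_{K_1}$ and the last is a power of $\tau_{K_2}$ (this is the shape \eqref{productshape} with $K$'s in place of $L$'s). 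By the commuting diagram \eqref{categoricaltwistplumbing} and its iterate \eqref{commutativeprod}, the Hopf correspondence $\Gamma\subset W^-\times Y$ intertwines the functor of $\varphi$ with that of the word of Dehn twists $\phi:=\tau_{L_2}^{b_N}\tau_{L_1}^{a_N}\cdots\tau_{L_2}^{b_1}\tau_{L_1}^{a_1}\in\Symp_{ct}(Y)$; that is, $\Theta_\Gamma\circ\varphi=\phi\circ\Theta_\Gamma$, with $\Theta_\Gamma(K_i)=L_i$ for $i=1,2$.

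Next I would argue by contradiction. Suppose the functor induced by $\varphi$ is isomorphic to the identity in $\Auteq(\Fuk(W))$; then, setting $\hat K:=K_1$, the objects $K_1$ and $\varphi(K_1)$ are quasi-isomorphic in $\Fuk(W)$. Lemma \ref{lagisotopy} then gives a quasi-isomorphism between the mapping cones of the cup-product maps $f_1\colon\CF^*(K_1,K_1)\to\CF^{*+k+1}(K_1,K_1)$ and $f_2\colon\CF^*(K_1,\varphi(K_1))\to\CF^{*+k+1}(K_1,\varphi(K_1))$, where $k\in\{0,1\}$ is the Hopf fibre dimension and the maps are cup product with the Euler class $\alpha$. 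On the other hand, the Gysin exact triangle of Lemma \ref{gysinper} (which applies to the pair $(K_1,K_1)$, and to the pair $(K_1,\varphi(K_1))$ since $\Theta_\Gamma$ is defined on all objects of $\Fuk(W)$) identifies $H^*\big(\mathrm{Cone}(f_1)\big)\cong\HF^{*+k+1}(L_1,L_1)$ and $H^*\big(\mathrm{Cone}(f_2)\big)\cong\HF^{*+k+1}\big(L_1,\varphi(K_1)\circ\Gamma\big)$; because $\varphi(K_1)\circ\Gamma=\Theta_\Gamma(\varphi(K_1))=\phi(\Theta_\Gamma(K_1))=\phi(L_1)$, the latter group is $\HF^{*+k+1}(L_1,\phi(L_1))$. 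Comparing total ranks through the quasi-isomorphism of cones forces $hf(L_1,L_1)=hf(L_1,\phi(L_1))$, and this contradicts the proof of Lemma \ref{mixedtwist}, which (applied to $\phi$ with $\hat L=L_1$) yields $hf(L_1,\phi(L_1))>2N+1\ge 3>2=hf(L_1,L_1)$. Hence $\varphi$ cannot induce the identity functor in $\Auteq(\Fuk(W))$; and since the map $\Phi$ of \eqref{mapinducedfunctor} factors through $\pi_0(\Symp_{ct}(W))$, it follows that $\varphi$ is not isotopic to the identity in $\Symp_{ct}(W)$.

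The step I expect to require the most care — though it is present already in Proposition \ref{propsingle} — is checking that the chain-level quasi-isomorphism furnished by Lemma \ref{lagisotopy} and the Gysin identification of Lemma \ref{gysinper} are compatible with the functorial intertwining \eqref{commutativeprod}, so that $\mathrm{Cone}(f_2)$ genuinely computes $\HF(L_1,\phi(L_1))$ rather than $\HF(L_1,L')$ for some uncontrolled $L'$. This amounts precisely to the identity $\Theta_\Gamma(\varphi(K_1))=\phi(\Theta_\Gamma(K_1))$ together with the naturality of the Gysin map $\Gamma_*$ induced by the correspondence, and introduces no new input. Everything else — the normalisation of $\varphi$, the passage through Lemmas \ref{lagisotopy} and \ref{gysinper}, and the final rank comparison — is a transcription of the single-twist case, with the computation of Lemma \ref{lemmadistinct} replaced by that of Lemma \ref{mixedtwist}.
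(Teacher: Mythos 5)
Your proof is correct, and the overall architecture (normalise the word by conjugation/inversion, transport it through the Hopf correspondence to a word of Dehn twists, invoke the rank computation of Lemma \ref{mixedtwist}, and pull the contradiction back via the Gysin sequence) is the same as the paper's. The one genuine difference is in how the contradiction in $Y$ is transferred back to $W$. You run the mechanism of Proposition \ref{propsingle} verbatim: assume $K_1\cong\varphi(K_1)$, use Lemma \ref{lagisotopy} to get quasi-isomorphic mapping cones of the cup-product maps, identify those cones via Lemma \ref{gysinper} with $\HF(L_1,L_1)$ and $\HF(L_1,\phi(L_1))$, and contradict Lemma \ref{mixedtwist}. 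The paper instead goes the other direction: it takes the lower bound $hf(L_1,\phi^s(L_1))>2ks+1$ from Corollary \ref{rankgroups} and feeds it into the one-sided rank inequality $hf(L_1,\phi(L_1))\leq 2\,hf(K_1,\varphi(K_1))$ of Corollary \ref{inequalityrank}, concluding that $hf(K_1,\varphi^s(K_1))$ grows linearly in $s$ and hence cannot be the constant $hf(K_1,K_1)$. Both routes are valid and rest on the same Gysin triangle; yours has the virtue of treating the single-twist and mixed cases uniformly, while the paper's choice buys the quantitative growth statement recorded in the corollary immediately following Proposition \ref{propmixed}, which is what drives the knottedness result of Corollary \ref{corknotted} — your argument alone would not yield that. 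The point you flag as delicate, namely that $\mathrm{Cone}(f_2)$ computes $\HF(L_1,\phi(L_1))$ and not $\HF(L_1,L')$ for some uncontrolled $L'$, is indeed exactly the identity $\Theta_\Gamma(\varphi(K_1))=\phi(L_1)$ supplied by \eqref{commutativeprod}, and is implicitly used in the paper's Proposition \ref{propsingle} as well.
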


\proof 
By the analogous discussion as in the proof of Lemma \ref{mixedtwist}, it is enough to prove the statement for a word whose reduced form is of the shape 
\begin{align}\label{productprojshape}
\varphi:= \tau_{K_2}^{b_k} \tau_{K_1}^{a_k} \cdots \tau_{K_2}^{b_1}\tau_{K_1}^{a_1} \in \Symp_{ct}(W), \ a_m, b_m \in \Z^*, \ 1\leq m \leq k.
\end{align}

Denote the product of twist functors induced by \eqref{productprojshape} also by $\varphi \in \Auteq(\Fuk(W))$. By iteratively using commutativity of the functors in diagram \eqref{categoricaltwistplumbing}, one can define the corresponding composition of (Dehn) twist functors $\phi  \in \Auteq(\Fuk(Y))$, which by Theorem \ref{keating} cannot be isomorphic to the identity functor.

Moreover, Corollary \ref{rankgroups} shows that not only is $HF(L_1, \phi(L_1))$ non-isomorphic to $HF(L_1, L_1)$, but also that $\lim_{s \to \infty} hf(L_1, \phi^{s}(L_1))\to \infty$. 

The Lagrangian $L_1=\Gamma \circ K_1\subset Y$ is the Lagrangian associated to $K_1$ via the Hopf correspondence, and the symplectic Gysin exact sequence (Corollary \ref{inequalityrank}) applied to the Hopf correspondence gives the inequality \begin{align}\label{inequality}
hf(L_1, \phi(L_1)) \leq 2 hf(K_1, \varphi(K_1)),
\end{align}

which implies that the rank $hf(K_1, \varphi^{s}(K_1))$ also grows at least linearly with $s$.
\qed

\begin{cor}
	Let $\varphi \in \langle \tau_{K_1}, \tau_{K_2} \rangle \subset \Symp_{ct}(W)$ be a word of projective twists of the shape \eqref{productprojshape}. Then there is a Lagrangian $\hat{K} \subset W$ such that $$\lim_{s \to \infty } \rank HF^*(\hat{K}, \varphi^s (\hat{K})) = \infty.$$
\end{cor}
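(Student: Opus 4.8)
The plan is to extract the result directly from the machinery already assembled for Proposition \ref{propmixed}, so that essentially no new work is needed. First I would invoke the conjugation reduction (Lemma \ref{corollaryconjugate}) to assume without loss of generality that $\varphi$ is in the reduced form \eqref{productprojshape}, with $k$ alternating blocks. Next I would pass to the auxiliary Liouville manifold $(Y,\Omega)$ through the Hopf correspondence $\Gamma\subset W^-\times Y$: iterating the commuting square of twist functors \eqref{categoricaltwistplumbing} (equivalently, using \eqref{commutativeprod}), the word $\varphi$ of projective twists corresponds to a word $\phi\in\langle\tau_{L_1},\tau_{L_2}\rangle\subset\Symp_{ct}(Y)$ of Dehn twists of the shape \eqref{productshape} with the same block count $k$; and since $\Theta_\Gamma$ is a functor intertwining the two twist actions, for every $s\geq 1$ it carries $\varphi^s(K_1)$ to $\phi^s(L_1)$, where $L_1=K_1\circ\Gamma$ is the spherical lift of $K_1$.

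Then I would apply Corollary \ref{rankgroups} to $\phi$, which (via the iterated rank inequalities of Lemmas \ref{keatinglemma} and \ref{keatingclaim}, exactly as in the proof of Lemma \ref{mixedtwist}) yields the explicit lower bound $hf(L_1,\phi^s(L_1))>2ks+1$, so in particular $\rank\HF^*(L_1,\phi^s(L_1))\to\infty$. Finally I would transport this growth back down to $W$ using the symplectic Gysin rank inequality, Corollary \ref{inequalityrank}, applied to the pair $K_1,\varphi^s(K_1)\subset W$ together with their lifts $L_1,\phi^s(L_1)\subset Y$:
\begin{align}
hf\bigl(L_1,\phi^s(L_1)\bigr)\;\leq\;2\,hf\bigl(K_1,\varphi^s(K_1)\bigr),
\end{align}
which gives $hf(K_1,\varphi^s(K_1))\geq\tfrac12\,hf(L_1,\phi^s(L_1))>ks+\tfrac12$, diverging as $s\to\infty$. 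Setting $\hat K:=K_1$ then completes the proof.

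The only delicate point — and the step I expect to require the most care — is checking that the Gysin inequality is legitimately applicable with the \emph{second} Lagrangian taken to be $\varphi^s(K_1)$ rather than one of the original projective cores. This amounts to observing that $\varphi^s(K_1)$ is again an embedded Lagrangian whose image $\phi^s(L_1)$ under $\Theta_\Gamma$ is an embedded sphere, so that the hypotheses of Lemma \ref{gysinper} hold after transporting $\Gamma$ along $\varphi^s$; this is precisely the situation already exploited inside the proof of Proposition \ref{propmixed}, so no genuinely new input is needed, but it is worth spelling out. Everything else is a straightforward assembly of results established earlier in the section.
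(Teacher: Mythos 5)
Your proposal is correct and follows exactly the route the paper takes (the paper's proof of this corollary is absorbed into the end of the proof of Proposition \ref{propmixed}): pass to the Dehn-twist word $\phi$ via the Hopf correspondence, invoke Corollary \ref{rankgroups} for the lower bound $hf(L_1,\phi^s(L_1))>2ks+1$, and transfer it back with the Gysin rank inequality of Corollary \ref{inequalityrank} to get linear growth of $hf(K_1,\varphi^s(K_1))$, taking $\hat K=K_1$. The only superfluous step is the initial conjugation reduction, since the corollary already assumes $\varphi$ has the shape \eqref{productprojshape}.
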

\qed

Finally, we can summarise the proof of Theorem \ref{mainthm}.

\proof[Proof of Theorem \ref{mainthm}]
Let $\varphi \in \langle \tau_{K_1}, \tau_{K_2} \rangle \subset \Symp_{ct}(W)$ a word in the projective twists along the Lagrangian cores of $W$. \begin{enumerate}
	
	\item If the word has the shape $\varphi =\tau_{K_i}^{s} \in \Symp_{ct}(W)$, $i\in \{ 1,2 \}$, $s \in \Z^*$, then its induced functor is not isomorphic to the identity in $\Auteq(\Fuk(W))$ by Proposition \ref{propsingle}.

	\item If the word has the shape $\varphi:= \tau_{K_i}^{b_k} \tau_{K_j}^{a_k} \cdots \tau_{K_i}^{b_1}\tau_{K_j}^{a_1} \in \Symp_{ct}(W)$, $i, j \in \{1,2\}$ $i\neq j$, $a_m, b_m \in \Z^*$, $1\leq m \leq k$, then its induced functor is not isomorphic to the identity in $\Auteq(\Fuk(W))$ by Proposition \ref{propmixed}.
	
	\item If $\varphi$ has any other form, then it must be a conjugate of a word of shape $1.$ or $2.$ and hence the induced functor is not isomorphic to the identity by Corollary \ref{corollaryconjugate}.
\end{enumerate}
\endproof

\subsection{Knotted Lagrangian projective spaces}

The phenomenon that a single (smooth) isotopy class of submanifolds contains infinitely many Lagrangian isotopy classes is called Lagrangian ``knottedness'' (\cite{seidelknotted, evanslags,
	hindknot,liwu, weiwei}). Often, the quest for knottedness is intimately related to the study of isotopy classes of Dehn twists.

In the plumbing of spheres $L_i\cong S^2$, $Y:=T^*L_1\#_{pt}T^*L_2$, we know that for any $r \in \Z$, $\tau_{L_2}^{2r}(L_1)$ is smoothly isotopic to the identity, but not symplectically; as first shown by Seidel (\cite[Theorem 1.1]{seidelknotted}), none of the powers $\tau_{L_2}^{2r}(L_1)$ are Hamiltonian isotopic. 
Our results yield the analogue for plumbing of complex projective spaces (of any dimension).
\begin{cor}\label{corknotted}
	Let $W:=T^*\CP \#_{\C\PP^{\ell}} T^*\CP$ be a clean plumbing along a projective subspace $\C\PP^{\ell}\subset \CP$.
	Each Lagrangian core $K_i \cong \CP$ of $W$ defines a smooth isotopy class which contains infinitely many symplectic isotopy classes of Lagrangian projective spaces.
\end{cor}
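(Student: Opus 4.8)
The plan is to run the $\CP$-analogue of Seidel's knottedness argument, taking as candidate family the orbit $\{\tau_{K_2}^r(K_1)\}_{r\in\Z}$ of Lagrangian submanifolds of $W$ (and symmetrically $\{\tau_{K_1}^r(K_2)\}_{r\in\Z}$ for the other core). Each member is the image of $K_1\cong\CP$ under a symplectomorphism, hence an embedded Lagrangian diffeomorphic to $\CP$, so the corollary reduces to two assertions: that these all lie in a single smooth isotopy class, and that no two of them are symplectically isotopic.

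For the first assertion I would invoke Theorem \ref{cpntrivial}: the model twist $\tau^{loc}_{\CP}$ is isotopic to the identity in $\Diff_{ct}(T^*\CP)$, so the twist $\tau_{K_2}\in\Symp_{ct}(W)$ implanted via Weinstein's theorem is isotopic to the identity in $\Diff_{ct}(W)$; hence every $\tau_{K_2}^r(K_1)$ is smoothly isotopic to $K_1$. It is worth flagging that this is exactly where the projective (rather than spherical) nature of the cores is essential: since $\langle\tau_{K_1},\tau_{K_2}\rangle$ lies in the kernel of the comparison map \eqref{comparisonmap}, \emph{all} powers stay in one smooth isotopy class, not merely the even ones as in the classical $S^2$ case.

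For the second assertion, observe that $W$ is simply connected (apply van Kampen to the plumbing of the simply connected pieces $T^*\CP$ glued along a neighbourhood of $\C\PP^\ell$), so $H^1(W;\R)=0$ and a compactly supported symplectic isotopy of these exact Lagrangians is automatically Hamiltonian; it therefore suffices to prove the $\tau_{K_2}^r(K_1)$ are pairwise non-Hamiltonian-isotopic. Conjugating by $\tau_{K_2}^{-s}$ — and using that $\tau_{K_2}^{-s}h\,\tau_{K_2}^{s}$ is Hamiltonian whenever $h$ is — this reduces to the single claim that $\tau_{K_2}^t(K_1)$ is not Hamiltonian isotopic to $K_1$ for any $t\neq 0$. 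That claim is precisely (the proof of) Proposition \ref{propsingle}: the cohomological hypothesis for clean plumbings along a linearly embedded $\C\PP^\ell$ supplies the Hopf correspondence $\Gamma\subset W^-\times Y$ with its Lagrangian spheres $L_1,L_2\subset Y$, and the Gysin triangle of Lemma \ref{gysinper} together with Lemma \ref{lagisotopy} transports Lemma \ref{keatingclaim} into the statement $\HF(K_1,\tau_{K_2}^t(K_1))\ncong\HF(K_1,K_1)$. Since Hamiltonian-isotopic Lagrangians have isomorphic Floer cohomology against any fixed test object, this excludes a Hamiltonian isotopy between $\tau_{K_2}^t(K_1)$ and $K_1$, whence $r=s$ whenever $\tau_{K_2}^r(K_1)$ and $\tau_{K_2}^s(K_1)$ are Hamiltonian isotopic. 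Thus $r\mapsto\tau_{K_2}^r(K_1)$ has infinite image in the set of Hamiltonian — equivalently, by $H^1(W;\R)=0$, symplectic — isotopy classes inside the single smooth isotopy class of $K_1$; the argument for $K_2$ is identical with the roles of the cores exchanged, and if desired one normalises via Corollary \ref{corollaryconjugate}.

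Given Proposition \ref{propsingle} and Theorem \ref{cpntrivial}, this corollary is essentially formal, so there is no substantial new obstacle; the only points requiring care are (a) checking that all the $\tau_{K_2}^r(K_1)$ genuinely sit in one smooth isotopy class — which is exactly what Theorem \ref{cpntrivial} delivers and what would fail for, say, an even-dimensional Lagrangian sphere in place of a projective space — and (b) the passage from symplectic to Hamiltonian isotopy via $H^1(W;\R)=0$. The conceptually hard step, namely extracting a Floer-cohomological obstruction despite the fact that a Hamiltonian isotopy on $W$ need not lift to the total space $Y$ of the $\C^*$-bundle, has already been carried out inside the proof of Proposition \ref{propsingle} and is simply reused here.
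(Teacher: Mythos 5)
Your proof is correct and runs on the same machinery as the paper's: Theorem \ref{cpntrivial} for the smooth triviality of the family, and the Hopf correspondence together with the rank results of Section \ref{freegenplumb} for the symplectic distinction. The only real difference is the choice of family: you take $\{\tau_{K_2}^r(K_1)\}_{r\in\Z}$ and invoke (the proof of) Proposition \ref{propsingle}, obtaining pairwise distinctness by the conjugation trick $\tau_{K_2}^{-s}(\cdot)$; the paper takes $\{\varphi^s(K_j)\}_{s}$ for the mixed word $\varphi=\tau_{K_i}\tau_{K_j}$ and invokes Proposition \ref{propmixed}, getting infinitely many classes directly from $\rank\HF(K_j,\varphi^s(K_j))\to\infty$. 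Both routes are equally valid, and yours has the small merit of making explicit two points the paper leaves implicit, namely the passage from symplectic to Hamiltonian isotopy (via $H^1=0$) and the reduction of pairwise non-isotopy to the single statement $\tau_{K_2}^t(K_1)\not\simeq K_1$ for $t\neq 0$.
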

\proof Let $K_1, K_2 \cong \CP \subset W$ be the two Lagrangian cores of the plumbing. For $i, j \in \{ 1,2 \}$, $i\neq j$, define the element $\varphi:=\tau_{K_i}\tau_{K_j}$.
Then by Proposition \ref{propmixed} we have

\begin{align}
\lim\limits_{s \to \infty } \rank \HF(K_j, \varphi^s (K_j))= \infty 
\end{align} 
which in particular means that $\varphi^{s_a}(K_j)$ is not Lagrangian isotopic to $\varphi^{s_b}(K_j)$ for any $s_a\neq s_b$, despite being smoothly isotopic (by Theorem \ref{cpntrivial}).
\endproof

\begin{rmk} 
The low dimensional case $n=1$ corresponds to a transverse plumbing of spheres $W:=T^*L_1\#_{pt}T^*L_2$, $L_i\cong S^2$. In that case, the symplectic mapping class group $\pi_0(\Symp_{ct}(W))$ is generated by the Dehn twists $\tau_{L_1}$ and $\tau_{L_2}$ (see Remark \ref{rmklowdimension}). Moreover, Hind (\cite{hindknot}) proved that for any Lagrangian sphere $L \subset W$, there is a word $\tau \in \langle \tau_{L_1}, \tau_{L_2}\rangle$ such that $\tau(L)$ is isotopic to one of the cores $L_1$ or $L_2$. 
		
\end{rmk}

\section{Positive products of twists in Liouville manifolds}\label{dehngeneral}\label{generalsdt}\label{alternativeproof}

The present section covers our results about products of positive powers of Dehn and projective twists. 

In the first part, Section \ref{generalsdt}, we analyse products of (positive powers of) Dehn twists. 
We re--prove a theorem by Barth--Geiges--Zehmisch (Theorem \ref{bgz}) asserting that in a Liouville manifold $(M, \omega)$, no product $\phi\in \Symp_{ct}(M)$ of positive powers of Dehn twists can be symplectically isotopic to the identity.
We provide an alternative proof that was suggested by Paul Seidel. Based on symplectic Picard--Lefschetz theory, the argument for the proof relies on a count of pseudoholomorphic sections of a Lefschetz fibration constructed from the data given by $\phi$ and the Lagrangian spheres associated to the Dehn twists.

Using similar tools, we then prove Theorem \ref{relativeversion1} (Section \ref{relativegeo}), which can be interpreted as a relative version of Theorem \ref{bgz}.
This states that a Liouville manifold $(M, \omega)$ containing Lagrangian spheres and a conical Lagrangian disc $T$ (Definition \ref{cylag}) intersecting one of the spheres transversely at a point cannot admit a positive product of Dehn twists preserving $T$ up to compactly supported symplectic isotopy. 
%Note that this relative statement implies the original of Theorem \ref{bgz}, so it provides a strictly stronger result. 

In Section \ref{generalisationprojective}, we explore the analogous questions for projective twists, by means of the tools developed in Section \ref{hopfcorr}. After setting the necessary conditions to ensure the existence of the Hopf correspondence, we use Theorem \ref{bgz} to prove a comparable result for real projective twists.

\subsection{Alternative proof of Theorem \ref{bgz}}\label{alternativebgz}

In this section, we start studying products of Dehn twists in Liouville manifolds, and we reprove the following theorem.
\thmbgz*

\begin{example}\label{rmkclosed}
	The exactness condition of Theorem \ref{bgz} is necessary, as the following examples show. 
	\begin{enumerate}[label=(\alph*)]
		
		\item Consider the $2$-torus $M:=T^2$, and let $a,b \subset M$ represent the longitude and meridian of $M$. Then the associated Dehn twists satisfy $(\tau_a\tau_b)^6=Id$ in $\pi_0(\Symp_{ct}(M))$.
		This is a classical result, see for example \cite{bmprimer} (see \cite[3.1]{aurouxmon} for the same example in a symplectic setting).
		
		\item Let $(M:=S^2\times S^2, \omega_{S^2}\oplus \omega_{S^2})$, and consider the antidiagonal $\overline{\Delta}:=\{ (x,y) \in S^2 \times S^2 | x+y=0  \}  \subset M$. Then the Dehn twist $\tau_{\overline{\Delta}}$ is symplectically isotopic to an involution $(x,y)\mapsto (y,x)$, which implies $\tau_{\Delta}^2=Id$ in $\pi_0(\Symp_{ct}(M))$ (see \cite[Example 2.9]{seidelectures}).
	\end{enumerate}
\end{example}

\begin{rmk}\label{puncturetorus1}
	
	\begin{enumerate}
		\item 
		The two dimensional case of Theorem \ref{bgz} (for a product of Dehn twists in a Riemann surface) is a consequence of \cite[Theorem 1.3]{smithmonodromy}.
		
		\item The outcome of Theorem \ref{bgz} is strictly geometric, and may not hold for the compact Fukaya category; we are not able to obtain information about the functors associated to the Dehn twists.
		Consider a punctured torus $M:=T^2\setminus \{ *\}$ (the same applies to a punctured genus $g$ surface), and the two (Lagrangian) circles $a$ and $b$, representatives of the homological generators. In the closed case, the composition $(\tau_{a}\tau_b)^6$ is isotopic to the identity by the example above. In the punctured torus, there is an isotopy $(\tau_a\tau_b)^6\simeq \tau_{d}$ , where $\tau_d$ is the Dehn twist along the boundary curve $d$ encircling the puncture (this is a consequence of the \emph{chain relation}, see \cite[Proposition 4.12]{bmprimer}).
		But since the support of $\tau_d$ is disjoint from any exact compact circle in $M$, the product $(\tau_a\tau_b)^6$ still acts as the identity on objects of the \emph{compact} Fukaya category $\Fuk(M)$.

	\end{enumerate}
\end{rmk}

The original proof of \cite{bgzfilling} relies on the theory of open book decompositions, whereas the proof below uses Picard--Lefschetz theory. To simplify notation we prove the version of the theorem where $(M, \omega= d\lambda_M)$ is a Liouville domain.

Let $\phi= \prod_{i=1}^{k} \tau_{L_{j_i}} \in \Symp_{ct}(M)$, $j_i \in \{ 1, \dots , m \}$ be the word in positive powers of Dehn twists in the given collection, and assume by contradiction that this product is compactly supported Hamiltonianly isotopic to the identity (recall $\phi$ is an exact symplectomorphism).

Build an exact Lefschetz fibration $\pi\colon(E, \Omega_E,\lambda_E)\lra (\C, \lambda_{\C})$ with smooth fibre the Liouville domain $(M, \lambda_M)$ identified over base point $z_*\in \C$, vanishing cycles the given Lagrangian spheres $(L_{j_1}, \dots , L_{j_k})$, and monodromy given by the product $\phi \in \Symp_{ct}(M)$ (see Section \ref{dtandlf}).
Let $j_{\C}$ be the standard complex structure on $\C$.

By assumption, the fibration built from the above data has monodromy isotopic to the identity via a compactly supported Hamiltonian isotopy with $\phi_0=\phi$ and $\phi_1=Id$. Then $\pi$ can be extended to a fibration $\hat{\pi}\colon \hat{ E} \lra \CPone$ as follows. 
Let $D_R\subset \C$ be a large circle of radius $R>0$ passing through $z_*$ and containing all the critical values. Define a fibration $E' \to D_{R+1}$ by extending $E|_{D_R}$ to a larger disc $D_{R+1}$ such that for $t\in [0,1]$, the monodromy around $D_{R+t}$ is $\phi_t \in \Symp_{ct}(M)$. Then $\hat{ E}$ is obtained after gluing $E'$ to a trivial fibration with fibre $(M,\omega)$ over a disc neighbourhood of the point at ``infinity'', $\hat{z} \in \C\cup \{ \infty \} \simeq \CPone$.

Moreover, as the symplectic connection around the fibre $\hat{\pi}^{-1}(\hat{z})$ is trivial, $\hat{\pi}\colon \hat{ E}\lra \CPone$ has the following properties.
\begin{enumerate}
	\item There is a closed (possibly degenerate) two form $\hat{\Omega}_{\hat{ E}}$ on $\hat{ E}$ satisfying $\hat{\Omega}_{\hat{ E}}|_{\hat{\pi}^{-1}(z)}=\Omega_E|_{\pi^{-1}(z)}$ for all $z\in \CPone \setminus \hat{z}$, 
	
	\item \label{nhooditem}
	A neighbourhood of the horizontal boundary $V\supset \partial^h \hat{E}$ can be trivialised as $V\cong \CPone \times M^{out}$, where $M^{out}\subset M$ is an open neighbourhood of the boundary of the smooth fibre.
\end{enumerate}

\begin{definition}
The set of almost complex structures compatible with $\hat{\pi}$, denoted by $\J(\hat{E},\hat{\pi},j)$, is defined as follows.
An element $\hat{J}\in \J(\hat{E},\hat{\pi},j)$ satisfies \begin{itemize}
	\item $D\hat{\pi}\circ \hat{J}=j\circ D\hat{\pi}$ where $j$ is the standard complex structure on $\CPone$,
	\item There is an integrable almost complex structure $J_0$ such that $\hat{J}=J_0$ in a neighbourhood of $\Crit(\hat{\pi})$,
	\item For all $z\in \CPone$, the restriction $J^{vv}:=\hat{J}|_{\hat{\pi}^{-1}(z)}$ is an almost complex structure of contact type compatible with the Liouville form $\lambda_M$, and its restriction to $V$ is isomorphic to a product $j\times J^{vv}$,

	\item $\hat{\Omega}_{\hat{ E}}(\cdot, \hat{J} \cdot)$ is symmetric and positive definite. 
\end{itemize}
\end{definition}

The form $\hat{\Omega}_{\hat{E}}$ can be modified to a symplectic form $\hat{\Omega}:=\hat{\Omega}_E+\hat{\pi}^*(\beta)$ that tames $\hat{J}$, for $\beta \in \Omega^2(\CPone)$ (similar to \cite[Lemma 2.1]{seideles}, \cite[Theorem 6.1.4]{mcduffsal}).

From now onwards, we fix a generic element $\hat{J}\in \J(\hat{E},\hat{\pi},j)$, so that, by the same arguments of \cite[Lemma 2.4]{seideles}, all the moduli spaces we encounter satisfy the necessary regularity conditions.

Consider the moduli space of closed $(\hat{J},j)$-holomorphic sections
\begin{align}\label{modulisection}
\mathcal{M}_{\hat{J}}= \left \{ u\colon\C\PP^1  \rightarrow \hat{E}, \ \hat{\pi} \circ u=id_{\C\PP^1}, \ \hat{J} \circ D u= D u \circ j  \right  \}.
\end{align}

The moduli space has a non-empty boundary, but as we explain below this does not cause compactness issues, as the only sections reaching the boundary must be trivial.

\begin{lemma}\label{lemmaconstant}
	The space $\mathcal{M}_{\hat{J}}$ is not empty. Moreover, there is a compact subset $K \subset \hat{E} \setminus \partial^h \hat{E}$ such that for all $u \in \mathcal{M}_{\hat{J}}$, either $\im(u)\subset K$, or $u$ is a trivial (constant) section.
\end{lemma}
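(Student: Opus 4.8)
The plan is to split the statement into the two claims: first, nonemptiness of $\M_{\hat{J}}$; second, the compactness/rigidity dichotomy. For nonemptiness, I would exploit the trivialization of $\hat{\pi}$ over the disc neighbourhood of $\hat{z}$: the horizontal distribution is flat there, so any point $x \in \hat{\pi}^{-1}(\hat{z})$ extends to a local flat section over that disc, and this can be taken to be $\hat{J}$-holomorphic after choosing $\hat J$ appropriately (the product structure on the neighbourhood $V$ and near $\hat z$ makes this automatic). More robustly, one obtains sections by a standard Gromov--Witten / Floer-theoretic existence argument: the relative invariant counting sections in the class of a ``constant'' section is nonzero because the fibration is trivial near $\hat z$ and the Lefschetz thimbles provide the boundary data — equivalently, the identity element of $\HF$ of the fibre pairs nontrivially with itself. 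Since the monodromy is trivial, there is at least the genuinely constant section coming from a point in the trivial region near $\hat z$; I would take that as the base case, so $\M_{\hat{J}} \neq \emptyset$.

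For the dichotomy, the key input is the neighbourhood trivialization \ref{nhooditem}: $V \cong \C\PP^1 \times M^{out}$ with $\hat{J}$ a product $j \times J^{vv}$ there, and $J^{vv}$ of contact type compatible with $\lambda_M$. Suppose $u \in \M_{\hat{J}}$ meets $V$, i.e. its image comes close to the horizontal boundary $\partial^h\hat E$. Composing $u$ with the projection $V \to M^{out}$ gives a $J^{vv}$-holomorphic map $\C\PP^1 \to M^{out}$; since $J^{vv}$ is of contact type near the boundary, one applies the maximum principle to the plurisubharmonic function $e^t$ (the radial coordinate on the cylindrical end of $M$) pulled back along this map. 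A holomorphic sphere (no boundary, compact domain) cannot have an interior maximum of a subharmonic function unless it is constant; hence the $M^{out}$-component of $u$ is constant, so $u$ is a flat section of the trivial bundle $\C\PP^1 \times M^{out}$, which by the product structure is literally a constant section. For sections that stay away from $\partial^h \hat E$, their images lie in $\hat E \setminus V$, and it remains to produce a \emph{single} compact set $K$ working for all of them: here I would invoke the $\hat{\Omega}$-tameness of $\hat{J}$ together with the a priori energy bound $E(u) = \int u^*\hat\Omega = \langle [\hat\Omega], [\text{section}]\rangle$, which is topological and hence uniform over the connected component of $\M_{\hat J}$ containing the constant section (sections of a fibration over $\C\PP^1$ all lie in the same affine family of homology classes up to fibre classes, and sphere bubbles in the fibre are excluded by exactness of $(M,\omega)$). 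A uniform energy bound plus the standard monotonicity lemma near $\partial^h\hat E$ (in the style of \cite[Lemma 13]{seidelsmithstretch}) forces the images of all non-constant sections into a fixed compact $K \subset \hat E \setminus \partial^h \hat E$.

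I would carry out the steps in this order: (1) fix the generic $\hat J \in \J(\hat E, \hat\pi, j)$ and record the product structure near $\hat z$ and on $V$; (2) produce the constant section and conclude $\M_{\hat J} \neq \emptyset$; (3) run the maximum-principle argument on the $M^{out}$-component to show any section touching $V$ is constant; (4) use the topological energy bound and exactness to rule out fibre bubbling and get uniform energy; (5) apply monotonicity near $\partial^h\hat E$ to extract the uniform compact $K$. The main obstacle I anticipate is step (4)–(5): making sure the energy bound is genuinely uniform across \emph{all} sections in $\M_{\hat J}$ (not just one component) and that no sequence of sections can escape to the horizontal boundary through a neck-stretching-type degeneration — this is where the exactness of $(M,\omega)$ and the precise product form of $\hat\Omega$ and $\hat J$ near $\partial^h\hat E$ must be used carefully, exactly as in \cite[Section 2]{seideles}.
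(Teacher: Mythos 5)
Your overall strategy matches the paper's: constant sections supply nonemptiness, and a maximum principle in the style of \cite[Lemma 2.2]{seideles} near the horizontal boundary gives the dichotomy. But your justification of nonemptiness has a gap as stated. You propose to take the constant section ``coming from a point in the trivial region near $\hat{z}$'' and to justify its existence by triviality of the monodromy. Triviality of the monodromy around the loop at infinity does not produce a globally flat section through an arbitrary point of the fibre $\hat{\pi}^{-1}(\hat{z})$: the local trivialisation over the disc neighbourhood of $\hat{z}$ only gives a section over that disc, and parallel transport of an interior point of $M$ around other loops in the base need not close up. The correct source of constant sections is the point $q$ taken in the neighbourhood $V \supset \partial^h\hat{E}$, where the fibration is trivialised \emph{globally} over $\CPone$ as $\CPone\times M^{out}$ with $\hat{J}$ a product; there the section $z\mapsto (z,q)$ is literally constant and $(\hat{J},j)$-holomorphic. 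You do mention $V$, so the fix is immediate, but the deduction from ``monodromy is trivial'' should be dropped.

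Two smaller remarks on the dichotomy. First, for a section $u$ that merely meets $V$, the composition with the projection $V\to M^{out}$ is only defined on the open set $u^{-1}(V)\subset\CPone$, not on all of $\CPone$; the maximum principle must therefore be applied to the subharmonic function $e^t\circ\mathrm{pr}_M\circ u$ on $u^{-1}(\{t>t_0\})$, whose supremum would be attained at an interior point if $u$ escaped past the level $t_0$, forcing constancy on that component and then, by an open–closed argument, globally. Second, your steps (4)–(5) (uniform energy bounds, exclusion of fibre bubbles, monotonicity) are not needed for this lemma: the maximum principle alone already confines every non-constant section to the fixed compact set $K$ given by the complement of the region $\{t>t_0\}$ in $V$. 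Those tools belong to the subsequent compactness discussion of $\mathcal{M}_{\hat{J}}$, not here.
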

\proof Let $q \in V$ a point in a neighbourhood $\partial^h\hat{E}\subset V$ of the horizontal boundary as in $\eqref{nhooditem}$ above. Via the trivialisation of this neighbourhood, one obtains a trivial section $s: \C\PP^1 \rightarrow \hat{E}$ with $s(z)=q$ for all $z \in \C\PP^1$, which is a regular $(\hat{J}, j)$-holomorphic section, so $\mathcal{M}_{\hat{J}}$ is not empty. The rest of the proof follows from a maximum principle as in \cite[Lemma 2.2]{seideles}.
\endproof

We can adapt the argument of \cite[Lemma 2.3]{seideles} to the case of closed curves to show that for our choice of almost complex structure $\hat{J}$, the moduli space $\mathcal{M}_{\hat{J}}$ is a compact smooth manifold with boundary. The only issue that could possibly occur is a loss of compactness for the component containing sections outside the compact part $K$, which, by Lemma \ref{lemmaconstant}, can only be trivial sections. These elements have bounded energy, as they are all in the same homology class. By the Gromov compactness theorem, it follows that the only non-compact phenomenon that can occur in this case is sphere bubbling. The next lemma shows how to discard bubbles.

\begin{lemma}\label{bubblemma}
	Let $u_{\infty}$ be the limit of a (sub)sequence of pseudoholomorphic sections $(u_n)_{n \in \N}$ of the Lefschetz fibration $\hat{\pi}$. A component of $u_{\infty}$ is either an element in the class $\lbrack u_i \rbrack$ (for $i \in \N$) or is contained in a single fibre. In the latter case, the component is a bubble.
\end{lemma}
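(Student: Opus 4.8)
The plan is to run the usual Gromov compactness analysis for pseudoholomorphic sections, and then exploit the fact that the base $\CPone$ carries no non-constant holomorphic maps of degree $0$ in order to split the limit into a single section component together with fibre bubbles. First I would record the energy bound: since the sections $u_n$ all represent the same homology class, $\int_{\CPone} u_n^*\hat\Omega$ is a fixed constant, so by Gromov's compactness theorem I may pass to a subsequence along which $(u_n)$ converges to a stable $\hat J$-holomorphic map $u_\infty$ whose domain $\hat\Sigma$ is a nodal curve of arithmetic genus $0$, i.e. a tree of copies of $\CPone$ with irreducible components $C_0,\dots,C_r$. No compactness is lost at the horizontal boundary $\partial^h\hat E$: by Lemma \ref{lemmaconstant} every section meeting a neighbourhood of $\partial^h\hat E$ is constant and remains in the trivialised region $\CPone\times M^{out}$, so all degenerations take place in the interior.

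Next I would compose with the fibration map. Because $\hat\pi$ is $(\hat J,j)$-holomorphic and $j$ is the standard integrable structure on $\CPone$, the map $f:=\hat\pi\circ u_\infty\colon\hat\Sigma\to\CPone$ is holomorphic, and it is the Gromov limit of the maps $\hat\pi\circ u_n=\mathrm{id}_{\CPone}$. Since the homology class (hence the topological degree) of a map into $\CPone$ is preserved under Gromov convergence, $f$ has total degree $1$. On each component $C_a\cong\CPone$ the restriction $f|_{C_a}$ is a holomorphic self-map of $\CPone$, hence has a degree $d_a\geq 0$, with $d_a=0$ precisely when $f|_{C_a}$ is constant, and $\sum_a d_a=1$. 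Therefore exactly one component, say $C_0$, satisfies $d_0=1$, while $d_a=0$ for every $a\neq 0$.

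I would then identify the two alternatives. For $a\neq 0$ the map $f|_{C_a}$ is constant, so $u_\infty(C_a)$ is contained in a single fibre $\hat\pi^{-1}(z_a)$; if $u_\infty|_{C_a}$ is non-constant it is a sphere bubble, as claimed (a constant component is a ghost component, trivially contained in a fibre, and contributes nothing). For $C_0$, the holomorphic self-map $f|_{C_0}$ of $\CPone$ has degree $1$, hence is a biholomorphism; reparametrising $C_0$ by its inverse presents $u_\infty|_{C_0}$ as a genuine holomorphic section of $\hat\pi$. Its homology class differs from $[u_i]$ only by the classes of the remaining components, each contained in a fibre; in particular $u_\infty|_{C_0}$ represents the section class $[u_i]$ up to fibre classes (these residual fibre classes are then excluded in the subsequent argument by exactness of the fibre $(M,d\lambda_M)$, which rules out non-constant holomorphic spheres in the fibres).

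Most of this is bookkeeping once Gromov compactness is invoked, so I do not expect a serious obstacle. The two points that genuinely need care are: ensuring no compactness is lost along the horizontal boundary — which is exactly the role played by Lemma \ref{lemmaconstant}, so that part is already in place — and the elementary but essential observation that a degree-$1$ holomorphic endomorphism of $\CPone$ is an isomorphism, which is what forbids the principal component from being a multiple cover and thus forces it to be an honest section. The degree decomposition $\sum_a d_a=1$ with $d_a\geq 0$ is then what pins down the whole structure.
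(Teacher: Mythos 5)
Your proof is correct and follows essentially the same route as the paper: project the limit to $\CPone$, note that the component degrees are non-negative and sum to one, so exactly one component is a (reparametrised) section and all others are constant under $\hat\pi$, hence contained in single fibres. Your write-up is more detailed on the Gromov-compactness setup and on why the degree-one component is an honest section, but the core argument is identical.
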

\proof 
Let $v_1, v_2, \dots v_k$ be the components of $u_{\infty}$. The limiting curve $u_{\infty}$ is assumed to be $(\hat{J},j)$-holomorphic and nonconstant, so it has to have degree one, as $
\sum_{j=1}^k \lbrack \pi \circ v_i \rbrack = \lbrack \pi \circ u_{\infty}\rbrack = \lbrack \C\PP^1 \rbrack.
$ It follows that the degrees of its components sum up to one. All degrees are non-negative, so there is only one component with degree one. If in addition there was a bubble, it would be represented in a degree zero component and therefore would have to be entirely contained in a fibre (note that by positivity of intersections, the bubble cannot intersect other fibres).
\qed

Since the fibres are exact, there can be no bubbling of the type of Lemma \ref{bubblemma}, so the moduli space $\M_{\hat{J}}$ is compact.

\begin{lemma}\label{section}
	Through each point of the smooth fibre $M$ there is at least one holomorphic section $s \in \mathcal{M}_{\hat{J}}$.
\end{lemma}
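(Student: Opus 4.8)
The plan is to run the standard evaluation‑map degree argument of symplectic Picard--Lefschetz theory (as in \cite{seideles}), using that we have just arranged for $\mathcal{M}_{\hat{J}}$ to be a compact, regular moduli space. First I would single out the connected component $\mathcal{M}^0 \subset \mathcal{M}_{\hat{J}}$ containing the trivial (constant) sections $s(z)\equiv q$, $q\in\partial^h\hat{E}$, whose existence is Lemma \ref{lemmaconstant}. For a section $u\in\mathcal{M}_{\hat{J}}$ the linearised operator $D_u$ is a Cauchy--Riemann operator on $u^*T^v\hat{E}\to\C\PP^1$, and since a section has no reparametrisation freedom (the condition $\hat{\pi}\circ u=\mathrm{id}_{\C\PP^1}$ rigidifies it), the virtual dimension is $2\bigl(n+\langle c_1(T^v\hat{E}),[u]\rangle\bigr)$. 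On a trivial section $u^*T^v\hat{E}$ is the trivial rank‑$n$ bundle, so $\langle c_1,[u]\rangle=0$, the operator is automatically surjective (a trivial bundle over $\C\PP^1$ has vanishing $H^1$), and hence $\dim\mathcal{M}^0 = 2n = \dim M$; genericity of $\hat{J}$ takes care of regularity on the rest of $\mathcal{M}^0$, exactly as in \cite[Lemma 2.4]{seideles}.

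Next I would analyse the boundary of $\mathcal{M}^0$. A section lies in $\partial\mathcal{M}^0$ precisely when its image meets the horizontal boundary $\partial^h\hat{E}$ (the base $\C\PP^1$ being closed, there is no other source of boundary), and by Lemma \ref{lemmaconstant} any such section is one of the trivial ones. Using the product structure $V\cong\C\PP^1\times M^{out}$ near $\partial^h\hat{E}$, the trivial sections through $M^{out}$ sweep out a collar of $\partial\mathcal{M}^0$ in $\mathcal{M}^0$, and the evaluation map $\ev\colon\mathcal{M}^0\to M$, $\ev(u)=u(z_*)\in\pi^{-1}(z_*)\cong M$, restricts on this collar to a diffeomorphism onto the collar $M^{out}$ of $\partial M$. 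Thus $\ev$ is a smooth map between compact $2n$‑manifolds with boundary which is a diffeomorphism near the boundary. Since $\mathcal{M}^0$ is compact, $\ev$ is proper; since it is a diffeomorphism near $\partial M$, its $\Z/2$‑degree (which suffices for us, and fits the characteristic‑two coefficient field used throughout) is $1$. A proper map of nonzero degree onto a connected manifold is surjective — a point not in the image would be a regular value with empty preimage, forcing degree $0$ — and $M$ is connected. Hence every point of $M$ lies on some section in $\mathcal{M}^0\subset\mathcal{M}_{\hat{J}}$, which is the claim.

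The substantive inputs are all already established: compactness of $\mathcal{M}_{\hat{J}}$ via Lemmas \ref{lemmaconstant} and \ref{bubblemma} together with exactness of the fibres (which rules out the sphere bubbling of Lemma \ref{bubblemma}), the maximum‑principle description of sections near $\partial^h\hat{E}$ (Lemma \ref{lemmaconstant}), and regularity for generic $\hat{J}$. The only point demanding care is the boundary identification of $\mathcal{M}^0$ — verifying that a collar of $\partial\mathcal{M}^0$ inside $\mathcal{M}^0$ is parametrised exactly by the trivial sections through $M^{out}$, so that $\ev$ genuinely becomes a degree‑one map of manifolds‑with‑boundary — but this is immediate from Lemma \ref{lemmaconstant} and the product trivialisation near the horizontal boundary. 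I expect no real obstacle beyond this bookkeeping.
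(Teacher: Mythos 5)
Your argument is correct and is essentially the paper's: the explicit one-parameter family $\mathcal{M}_{cob}$ of point-constrained sections that the paper builds along a path $\alpha$ from $p$ to $q$ is precisely the standard proof that the mod-2 degree of your evaluation map is well defined and equals its value (namely $1$) at a point $q$ near the horizontal boundary, where Lemma \ref{lemmaconstant} guarantees the trivial section is the unique preimage. The one loose point in your write-up — you must compute the degree at a point of $M^{out}$ lying \emph{outside} the compact set $K$ of Lemma \ref{lemmaconstant}, since a non-trivial section contained in $K$ could still pass through an interior point of $M^{out}$ — is exactly how the paper uses that lemma, so this is the same argument in degree-theoretic rather than cobordism packaging.
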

\proof 
As in the proof of Lemma \ref{lemmaconstant}, we consider a neighbourhood of the horizontal boundary $V\supset\partial^h\hat{E}$, $q\in V$ such that $\hat{\pi}(q)=:z_{gen}\in \CPone \setminus \Crit v(\hat{\pi})$ and the trivial section through $q$, $s\colon \CPone\to \hat{E}$.
Consider

\begin{align*}
\mathcal{M}(\hat{J}, q):= \{ u\in  \mathcal{M}_{\hat{J}}, \  q\in \Im(u)  \} \subset \mathcal{M}_{\hat{J}}.
\end{align*}
It is a smooth compact manifold (by the same arguments as for $\M_{\hat{J}}$).
Moreover, by Lemma \ref{lemmaconstant}, the only element in $\mathcal{M}(\hat{J},q)$ is the trivial $\hat{J}$-holomorphic section $s\colon \CPone\to \hat{E}$ through $q$.

Let $p\in \hat{\pi}^{-1}(z_{gen})$ be any other point in the fibre of $q$, and consider a path $\alpha : \lbrack 0,1 \rbrack \rightarrow M$ with $\alpha(0)=p$, $\alpha(1)=q$.
For every point $\alpha(t)$, $t\in [0,1]$, define $\mathcal{M}(\hat{J},\alpha(t), [s]):=\{  u \in \mathcal{M}_{\hat{J}}, \ \alpha(t)\in \Im(u),  \text{ and } [u]=[s] \}$. Clearly, $\mathcal{M}(\hat{J},\alpha(1), [s])=\mathcal{M}(\hat{J},q)$.

Consider
\begin{align}\label{cobordism}
\mathcal{M}_{cob}:= \bigcup_{t \in \lbrack 0, 1 \rbrack}\mathcal{M}(\hat{J},\alpha(t), [s])  \subset \mathcal{M}_{\hat{J}}.
%\bigcup_{t \in \lbrack 0, 1 \rbrack} \left \{  u \in \mathcal{M}_{\hat{J}}, \ \alpha(t)\in \Im(u),  \text{ and } [u]=[s]  \right \}
\end{align}

The boundary components of \eqref{cobordism} are given by $\partial \mathcal{M}_{cob}=\mathcal{M}(\hat{J},p, [s])\sqcup \mathcal{M}(\hat{J},q)$.
We want to show that the space $\mathcal{M}_{cob}$ is compact, so that it defines a one-dimensional cobordism between $\mathcal{M}(\hat{J},p, [s])$ and $\mathcal{M}(\hat{J},q)$.
As before, since $\hat{J} \in \J(\hat{E}, \hat{\pi}, j)$ is chosen to be generic, $\mathcal{M}_{cob}$ is a smooth manifold.

To show that $\mathcal{M}_{cob}$ is compact, the same strategy applies as in the case of $\mathcal{M}_{\hat{J}}$. In particular, consider a sequence $t_i \subset [0,1]$ and for each $i$ a section $u_i \in \M(\hat{J}, \alpha(t_i), [s])\subset \mathcal{M}_{cob}$. 

All the sections of the sequence we are considering belong to the same homology class, by definition. In particular they have the same area, so that Gromov's theorem applies.
Consequently, as $t_i$ tends to a limit value $t_{\infty}$, the sequence $u_i$ converges to a stable map $u_{\infty}$. As before (in the proof of Lemma \ref{bubblemma}), if sphere bubbling occured, bubbles would have to be ``vertical'' (meaning entirely contained in the fibres), which is impossible by the exactness of the fibres.

It follows that $\mathcal{M}_{cob}$ is compact and hence $\mathcal{M}(\hat{J},p, [s])$ and $\mathcal{M}(\hat{J},q)$ are indeed cobordant. 
Since the signed count of the boundary components of a $1$-dimensional compact manifold is zero, it follows that the $0$-dimensional components of the two spaces have the same cardinality. 
In particular, for any $p\in \hat{\pi}^{-1}(z_{gen})$, $\mathcal{M}(\hat{J},p, [s])$ is not empty which means there is at least one element in $\mathcal{M}_{\hat{J}}$ that passes through $p$.

\qed

\begin{cor}\label{evaluationsurj}
	The map induced by the evaluation map \begin{equation}\label{ev1}
	\begin{aligned}
	ev\colon \mathcal{M}_{\hat{J}} \times \CPone & \lra \hat{E} \\
	(u,z)& \longmapsto ev_z(u)=u(z). \end{aligned}
	\end{equation}
	is surjective.

\end{cor}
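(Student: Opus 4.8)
The plan is to deduce surjectivity of $ev$ directly from the two structural results just established: Lemma \ref{section} (through every point of the distinguished smooth fibre $M = \hat\pi^{-1}(z_*)$ there is at least one section in $\M_{\hat J}$) and Lemma \ref{lemmaconstant} (the moduli space is nonempty and every section either lies in a fixed compact set $K$ or is trivial). The target $\hat E$ decomposes into the ``vertical'' part lying over $\CPone$ together with the horizontal boundary neighbourhood $V \cong \CPone \times M^{out}$. First I would handle a point $p \in \hat E$ lying over the distinguished base point, i.e. $\hat\pi(p) = z_*$: then $p \in M$, so Lemma \ref{section} gives a section $s$ with $p \in \im(s)$, and $ev(s, z_*) = s(z_*) = p$.

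Next I would handle an arbitrary point $p \in \hat E$ with $\hat\pi(p) = z \in \CPone$. The key observation is that parallel transport along a path $\gamma$ in $\CPone$ from $z_*$ to $z$ (chosen to avoid $\Crit v(\hat\pi)$, which is possible since $\CPone$ minus finitely many points is path-connected) induces a symplectomorphism $h_\gamma \colon \hat\pi^{-1}(z_*) \to \hat\pi^{-1}(z)$ between the smooth fibres. Since the fibration $\hat\pi$ was built so that the canonical symplectic connection is defined away from the critical values, and since the moduli space $\M_{\hat J}$ and the evaluation map are invariant in the appropriate sense under the ambient structure, one reduces to the case $z = z_*$ already treated: given $p \in \hat\pi^{-1}(z)$, set $p_0 := h_\gamma^{-1}(p) \in M$, apply Lemma \ref{section} to get a section $s$ through $p_0$, and argue that the section through $p_0$ can be continued (again via a cobordism argument identical in spirit to the one in the proof of Lemma \ref{section}, moving the constraint point along $\gamma$) to produce a section $u \in \M_{\hat J}$ with $p \in \im(u)$. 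Concretely, one runs the same cobordism $\M_{cob} = \bigcup_{t}\M(\hat J, \beta(t), [s])$ along a path $\beta$ in $\hat E$ from $p_0$ to $p$ projecting to $\gamma$, using exactness of the fibres to rule out vertical bubbling and hence guarantee compactness of the cobordism; the signed count of boundary points being zero forces $\M(\hat J, p, [s])$ to be nonempty. Then $ev(u, z) = u(z) = p$.

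Finally I would deal with points $p$ in the horizontal boundary $\partial^h \hat E \subset V \cong \CPone \times M^{out}$: for such $p = (z, q)$ the trivial (constant-in-$M^{out}$) section $s_q \colon \CPone \to \hat E$, $s_q(w) = (w, q)$, is $(\hat J, j)$-holomorphic by the product structure of $\hat J$ near $V$, lies in $\M_{\hat J}$ by Lemma \ref{lemmaconstant}, and satisfies $ev(s_q, z) = p$. Combining the three cases, every point of $\hat E$ is in the image of $ev$, which is the claim.

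The main obstacle I anticipate is making rigorous the ``reduction along a path $\gamma$'' step: one needs to know that moving the point-constraint of the moduli space $\M(\hat J, \cdot, [s])$ around in $\hat E$ (not merely within a single fibre, as literally done in Lemma \ref{section}) still yields a compact one-dimensional cobordism. This requires checking that no new non-compactness appears — in particular that the homology class $[s]$ is preserved, that energy stays bounded (automatic, since all sections in a fixed class have the same symplectic area with respect to $\hat\Omega = \hat\Omega_E + \hat\pi^*\beta$), and that bubbling can only occur in fibres (hence is excluded by exactness, exactly as in Lemmas \ref{bubblemma} and \ref{section}). Once this compactness is in hand, the cobordism argument is routine and the corollary follows. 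An alternative, possibly cleaner, route would be to observe that $\M(\hat J, p, [s])$ depends on $p$ only through a locally constant invariant (its mod-$2$ count of $0$-dimensional components), so surjectivity at one point of each fibre together with connectedness of the fibres and of $\CPone \setminus \Crit v(\hat\pi)$ immediately propagates surjectivity everywhere; I would present whichever version turns out to require less setup.
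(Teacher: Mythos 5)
There is a genuine gap: your argument never reaches the points of the \emph{singular} fibres $\hat{\pi}^{-1}(\Crit v(\hat{\pi}))$ --- in particular the critical points of $\hat{\pi}$ themselves, which are exactly the points at which surjectivity of $ev$ is invoked in the proof of Theorem \ref{bgz}. Your reduction ``along a path $\gamma$'' requires parallel transport to give a symplectomorphism $h_\gamma\colon \hat{\pi}^{-1}(z_*)\to\hat{\pi}^{-1}(z)$, which only exists when $z\notin \Crit v(\hat{\pi})$; likewise your alternative route propagates the locally constant count only over the connected set $\CPone\setminus \Crit v(\hat{\pi})$. So what you establish is surjectivity onto the union of the smooth fibres (together with the horizontal boundary), i.e.\ onto an open dense subset of $\hat{E}$, and the case that actually matters for the application is left untreated. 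The missing step is a limiting/compactness argument: take $p_n\to p$ with $p_n$ in smooth fibres, sections $u_n$ through $p_n$, and use Gromov compactness plus exclusion of vertical bubbles (exactness of the fibres, Lemma \ref{bubblemma}) to extract a section through $p$.

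The paper packages this much more economically: by Lemma \ref{section} (whose proof already works for an arbitrary generic fibre $\hat{\pi}^{-1}(z_{gen})$, so no path-moving cobordism in the base is needed) the image of $ev$ contains every smooth fibre and is therefore dense; since $\M_{\hat{J}}\times\CPone$ is compact and $ev$ is continuous, the image is also closed, hence equal to $\hat{E}$. Your smooth-fibre analysis is essentially a re-derivation of what the proof of Lemma \ref{section} already provides, and the closedness-of-the-image observation is precisely the one ingredient you are missing; with it added, the corollary follows.
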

\proof By Lemma \ref{section}, the image of the map \eqref{ev1} is dense, since each point on a smooth fibre has a preimage. As $\mathcal{M}_{\hat{J}}$ is compact and the mapping is continuous, the result extends to all points of $\hat{E}$ and hence \eqref{ev1} is surjective. \qed

\proof[Proof of Theorem \ref{bgz}]

Assume by contradiction that the product $\phi=\tau_{L_{j_1}} \dots \tau_{L_{j_k}}$ is isotopic to the identity, and build the fibration $\hat{\pi}\colon \hat{ E}\lra \CPone$ and the moduli space $\M_{\hat{J}}$ as above.
By Corollary \ref{evaluationsurj}, the evaluation map $ev\colon \M_{\hat{J}} \times \CPone \lra \hat{E}$ is surjective. Consider the commuting diagram \begin{align*}
\xymatrix{
	\M_{\hat{J}}\times \CPone \ar[r]^{  \  \ \  \ ev} \ar[rd]^{pr_2} &
	\hat{E} \ar[d]^{\hat{\pi}} &\\
	&\CPone}
\end{align*}where $pr_2\colon \M_{\hat{J}}\times \CPone \rightarrow \CPone$ is the projection to the second factor. 

Let $x \in \Crit(\hat{\pi}) \subset \hat{E}$ be any point in the critical set. By the surjectivity of $ev$, there is a pair $(u,w) \in \M_{\hat{J}}\times \CPone$ such that $u(w)=x$, so that $w \in \CPone$ is the critical value associated to $x$. From the diagram we obtain \begin{align*}
D_{(u,w)}(pr_2)=D_{(u,w)}(\hat{\pi} \circ ev) \\
D_{(u,w)}(pr_2)=D_{x} \hat{\pi}  D_{(u,w)}(ev).
\end{align*}
As $x$ is a critical point, $D_x \hat{\pi} =0$, which forces $D_{(u,w)}(pr_2)$ to be the zero map. But this is in contradiction with $D_{(u,w)}(pr_2)$ being surjective. \qed

\begin{cor}
	There is no exact Lefschetz fibration with global monodromy symplectically isotopic to the identity, except for the trivial fibration.
\end{cor}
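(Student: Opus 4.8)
The plan is to read this off from Theorem \ref{bgz} together with the symplectic Picard--Lefschetz theorem. Let $\pi\colon E\to\C$ be an exact Lefschetz fibration with smooth fibre the Liouville domain $(M,\omega)$, and suppose its global monodromy $\phi\in\Symp_{ct}(M)$ is compactly supported isotopic to the identity. I want to conclude that $\pi$ has no critical points, i.e.\ is the trivial fibration; conversely the product fibration $\C\times M\to\C$ manifestly has identity monodromy, so this will also identify it as the unique such fibration.

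Suppose for contradiction that $\Crit(\pi)\neq\emptyset$. Fix a distinguished basis of vanishing paths $(\gamma_0,\dots,\gamma_m)$ and let $(V_0,\dots,V_m)\subset M$ be the associated basis of vanishing cycles, which are exact Lagrangian spheres in $M$. By the symplectic Picard--Lefschetz theorem the global monodromy satisfies $\phi\simeq\tau_{V_0}\cdots\tau_{V_m}$ in $\Symp_{ct}(M)$, with Hamiltonian isotopy class independent of the chosen basis. Since $m+1\geq 1$, the right-hand side is a positive word of Dehn twists along the Lagrangian spheres $V_0,\dots,V_m$, so Theorem \ref{bgz} forbids $\phi$ from being compactly supported isotopic to the identity --- a contradiction. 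Hence $\Crit(\pi)=\emptyset$.

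It remains to observe that a Lefschetz fibration over $\C$ with empty critical locus is trivial: the base $\C$ is contractible, so the symplectic connection of Definition \ref{deflf} (which now has no singular fibres) has no monodromy, and parallel transport along rays emanating from the base point $z_*$ produces a fibre-preserving symplectomorphism $E\cong\C\times M$ intertwining $\pi$ with the projection and compatible with the trivialisation of the horizontal boundary; its global monodromy is the identity. I do not expect a genuine obstacle here --- the corollary is essentially a repackaging of Theorem \ref{bgz} --- the only point deserving care is this last step, namely that the vanishing of the monodromy of the symplectic connection over the simply connected base $\C$ upgrades to an honest symplectic trivialisation compatible with the Liouville and horizontal-boundary data of Definition \ref{deflf}, which is standard.
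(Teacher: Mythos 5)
Your proposal is correct and is exactly the argument the paper intends: the corollary is stated as an immediate consequence of Theorem \ref{bgz}, since by the symplectic Picard--Lefschetz theorem the global monodromy is a positive (nonempty, if there are critical points) word of Dehn twists along the vanishing cycles. Your added remark that a critical-point-free exact Lefschetz fibration over the contractible base $\C$ is symplectically trivial is the standard final step and raises no issues.
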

\endproof

\subsection{Relative version}\label{relativegeo}

Let \begin{align}\label{specialcaseplumbing}
M:=T^*S^m\#_{pt} T^*S^m \#_{pt} T^*S^m \cdots \#_{pt} T^*S^m
\end{align}
be a ``multi-plumbing'' of $m$ spheres (an iterated construction of transverse plumbing of spheres, see Section \ref{plumbing} for the definition of plumbing). By Theorem \ref{bgz}, we know that no product $\phi \in \Symp_{ct}(M)$ of Dehn twist along the core spheres can be compactly supported symplectically isotopic to the identity. 
However, the theorem, a priori, doesn't prevent such a product to act trivially on some Lagrangians submanifolds of $M$. Is it possible to tell whether there are Lagrangians that detect the non-triviality of $\phi$?
Let $T$ be a cotangent fibre of the $j$-th $T^*S^n$-summand, for $j\in \{ 1, \dots , m \}$. The theorem we prove in this section shows that any product of positive Dehn twists along Lagrangian cores and involving the $j$-th sphere does not preserve $T$ up to compactly supported symplectic isotopy.

\thmrelativeversion*

We prove the statement of Theorem \ref{relativeversion1} in the equivalent version where $(M,\omega=d\lambda_M)$ is a Liouville domain and $T \subset M$ is a Lagrangian disc preserved by the Liouville flow near the boundary $\partial M$ (so that $\partial T \subset \partial M$). This is only a choice so that the Lefschetz fibrations involved have exact compact fibres.

As in the statement, write $\phi=\prod_{i=1}^{k} \tau_{L_{j_i}}$, $j_i \in \{ 1, \dots , m \}$. By assumption, there is at least one index $\ell \in \{1, \dots , k\}$ such that $j_{\ell}=j$.
Assuming $\phi(T)\simeq T$ via a compactly supported isotopy, we arrive at the contradictory statement $j \notin \{ i_1, \dots , i_k \}$. 

From the data $(M, (L_{j_1}, \dots, L_{j_{\ell}}, \dots  ,L_{j_k}))$, build an exact Lefschetz fibration $\pi'\colon (E', \Omega_{E'}, \lambda_{E'})\to (\C,\lambda_{\C})$ with smooth fibre the Liouville domain $(M,d\lambda)$, base point $z_*\in \R$, $z_*\gg 0$ such that the $k$ critical values $\Crit v(\pi')=\{ w_{j_1}, \dots, w_{j_{\ell}} , \dots, w_{j_k} \}$ are ordered vertically on the imaginary line, $\Crit v(\pi)\subset i \R$ with a basis of vanishing paths $(\gamma_{j_1}, \dots \gamma_{j_k})$ (\cite[16e]{seidelbook}). 

Let $(\Delta_{\gamma_{j_1}}, \dots , \Delta_{j_{k}})$ be the corresponding basis of Lefschetz thimbles and for every $i=1, \dots , k$, $V_{j_{i}}:=\pi^{-1}(z_*)\cap \Delta_{\gamma_{j_{i}}}$ the associated vanishing cycles, which, under the identification $\pi^{-1}(z_*)=M$ correspond to $L_{j_i}$. Let $\sigma \colon S^1 \to \C$ be a loop encircling all critical values. 

Build a new exact fibration $\pi \colon (E, \Omega_E,\lambda_E)\to (\C,\lambda_{\C})$ associated to the data $(M, (V_{j_1}, \dots ,V_{j_{\ell}}, \dots , V_{j_k}, V_{j_{\ell}}))$, with base point $z_*\in \C$, an extra critical value $w_{j+1}\in \Crit v (\pi) \subset i \R$ and an extra vanishing path $\gamma_{j_{k+1}}$ such that $\Im(\gamma_{j_{k+1}})\cap \Im(\sigma)=\emptyset $
(all the other choices are the same as for $\pi'$).

%\begin{figure}[htb]
%	\centering
%\subfloat{\def\svgwidth{460pt}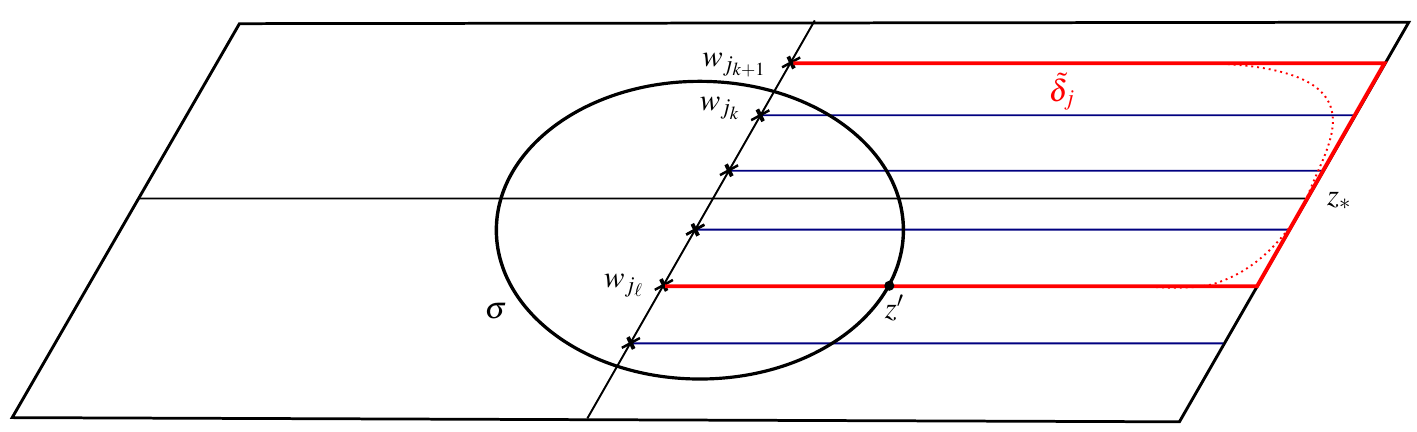}
%\end{figure}

\begin{figure}[h]
	\centering
\includegraphics[width=13cm]{}
	\caption{The new fibration $\pi$ has an extra critical value $w_{j_{k+1}}$ and a matching sphere that fibres over the smoothing $\delta_j$ of the red arc $\tilde{\delta}_j$.}\label{newfibration}
\end{figure}

Compared to $\pi'$, there are now two critical points $w_{j_{\ell}}, w_{j_{k+1}}$ associated to the same vanishing cycle $V_{j_{\ell}}$. Therefore, there is a matching path $\delta_j\colon [0,1]\to \C$ with $\delta_j(0)=w_{j_{\ell}}, \ \delta(\frac{1}{2})=z_*, \ \delta_j(1)=w_{j_{k+1}}$, whose parallel transport is a Lagrangian matching sphere $S_j \cong S^{n+1} \subset E$ (Section \ref{dtandlf}, \cite[(16g)]{seidelbook}) fibred by Lagrangians isomorphic to $L_j$ (see Figure \ref{newfibration}). Let $z' \in \Im(\delta_j)\cap \Im(\sigma)$, and via parallel transport identify $T\subset \pi^{-1}(z_*)$ with a copy of the Lagrangian in $\pi^{-1}(z')$.

By construction, the monodromy around $\sigma$ is given by the product $\phi$. By assumption there is an isotopy $\phi(T)\simeq T$, so parallel transport of $T$ along $\sigma$ yields a well-defined Lagrangian $P_{\sigma} \subset E$. For $z\in \Im(\sigma)$, let $T_z\subset \pi^{-1}(z)$ be the exact fibres of $P_{\sigma}$. Then $\Omega_E|_{P_{\sigma}}=df_{\sigma}+\pi^*(\kappa_{\sigma})$ for a function $f_{\sigma}\in C^{\infty}(P_{\sigma}, \R)$ such that for every $z\in \Im(\sigma)$, $f_{\sigma}|_{\pi^{-1}(z)}$ makes $T_z$ exact and $\kappa_{\sigma} \in \Omega^1(\Im(\sigma))$ (\cite[Lemma 1.3]{seideles}).

\begin{lemma}\label{lemmanontrivial}
	The Lagrangian $P_{\sigma}$ defines a nontrivial class in $H_{n+1}(E, \partial E;\Z)$.
\end{lemma}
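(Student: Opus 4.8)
The plan is to detect $[P_\sigma]$ by intersecting it, mod $2$, with a suitable closed cycle, and then invoke Poincaré--Lefschetz duality. The natural partner cycle is the matching sphere $S_j\cong S^{n+1}$ of Figure \ref{newfibration}: it is a closed $(n+1)$-cycle in the interior of $E$, while $P_\sigma$ is an $(n+1)$-cycle relative to the horizontal boundary $\partial^h E$, so their dimensions add up to $\dim E=2n+2$ and the intersection number is defined. Since $S_j$ is disjoint from $\partial^h E$, this number depends only on the classes $[P_\sigma]\in H_{n+1}(E,\partial E;\Zmod)$ and $[S_j]\in H_{n+1}(E;\Zmod)$ (work, if one wants a compact model, with the restriction of $E$ to a large disc $D_R$ containing all critical values, $\sigma$, $\delta_j$ and $S_j$; this does not change homology). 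If $[P_\sigma]=0$ in $H_{n+1}(E,\partial E;\Z)$ then its mod $2$ reduction vanishes and this pairing with $[S_j]$ is $0$, so it suffices to show the intersection number equals $1$.

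First I would arrange that $\sigma$ meets $\delta_j$ transversely in a single point: by construction $\delta_j$ runs from $w_{j_\ell}$, which lies in the region enclosed by $\sigma$, to $w_{j_{k+1}}$, which lies outside it because $\Im(\gamma_{j_{k+1}})\cap\Im(\sigma)=\emptyset$, so after an isotopy of $\sigma$ (through loops encircling the same $k$ critical values, hence not changing $[P_\sigma]$) the crossing is the single transverse point $z'$, and we may take $z'$ on the segment of $\delta_j$ used to identify $T\subset\pi^{-1}(z_*)$ with its copy in $\pi^{-1}(z')$. Over $z'$ the fibre of $S_j$ is the parallel transport of the vanishing cycle $V_{j_\ell}\cong L_j$ and the fibre of $P_\sigma$ is the parallel transport of $T$, both along this same segment; as parallel transport is a symplectomorphism and $T$ meets $L_j$ transversely in one point in $M$ by hypothesis, the two fibres meet transversely in one point of $\pi^{-1}(z')$. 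Since $S_j$ fibres over $\delta_j$ and $P_\sigma$ over $\sigma$, every point of $P_\sigma\cap S_j$ projects into $\sigma\cap\delta_j=\{z'\}$, so $P_\sigma\cap S_j$ is exactly this one point; a tangent-space count (the two fibre tangencies span the fibre since $T$ and $L_j$ are transverse, and the horizontal directions along $\sigma$ and $\delta_j$ span the base since the curves are transverse) shows the intersection is transverse in $E$. Hence the mod $2$ intersection number is $1$, and $[P_\sigma]\neq0$.

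Two points require care. The first is that $P_\sigma$ really is an embedded compact submanifold with $\partial P_\sigma=\partial T\times\sigma\subset\partial^h E$: this is exactly where the hypothesis $\phi(T)\simeq T$ enters, since the monodromy around $\sigma$ is $\phi$, and a compactly supported symplectic isotopy $\phi(T)\simeq T$ lets the parallel-transport cylinder of $T$ over $\sigma$ be glued up through embeddings, a standard construction in Picard--Lefschetz theory; because $T$ is conical, transport fixes $\partial M$ near the horizontal boundary and $\partial P_\sigma$ lands in $\partial^h E$. The second, and the main obstacle I expect, is making the local computation over $z'$ airtight: one must check that the parallel transports of $T$ and of the vanishing cycle can be chosen compatibly enough that the fibrewise intersection is \emph{precisely} the single point $T\cap L_j\subset M$, with no spurious intersections introduced by the behaviour of the symplectic connection near $z'$ (and near the endpoints of $\delta_j$, where $(S_j)$ degenerates). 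This is the step I would write out in full detail, leaning on the transversality of $\sigma$ and $\delta_j$ and on a careful choice of the identifying paths.
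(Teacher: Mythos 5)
Your argument is correct and is essentially the paper's own proof: the paper likewise detects $[P_{\sigma}]$ by pairing it with the matching sphere $S_j$ under the non-degenerate intersection pairing $H_{n+1}(E;\Z)\times H_{n+1}(E,\partial E;\Z)\to\Z$, observing that the two Lagrangians meet transversely in the single point of $L_j\cap T$ lying over $\sigma\cap\delta_j$. The extra care you take with the transversality of $\sigma$ and $\delta_j$ and the fibrewise identification of $T$ and $L_j$ via parallel transport is exactly the detail the paper leaves implicit.
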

\proof The matching sphere $S_j$ and the disc $P_{\sigma}$ are properly embedded Lagrangian submanifolds meeting transversely at the point $y \in L_j$ lying over the intersection between $\sigma$ and the matching path associated to $S_j$. Their homological intersection, which is the image of a non-degenerate pairing $$H_{n+1}(E;\Z)\times H_{n+1}(E, \partial E;\Z) \rightarrow \Z$$ is one, so in particular $P_{\sigma}$ represents a non trivial homology class in $H_{n+1}(E, \partial E;\Z)$. 
\endproof

\subsubsection{Proof of Theorem \ref{relativeversion1}} 
Let $D \subseteq \C$ be the disc bounded by the loop $\sigma$ in the base of $\pi$.
The idea for the proof of Theorem \ref{relativeversion1} is based on a section-count which follows the same principles of Section \ref{alternativeproof}. In this context however, we consider pseudoholomorphic sections definining boundary conditions for $E|_{D}$ on $P_{\sigma}$.

Let $\J(\pi, E, j_{\C})$ be the set of almost complex structures compatible with $\pi$ (see Definition \ref{compatiblej}), where $j_{\C}$ is the standard complex structure on $\C$. For a generic element $J \in \J(\pi,E, j_{\C})$, \begin{align}\label{modulij}
\M(J, P_{\sigma}):=\{ u\colon (D, \partial D)\lra (E, P_{\sigma}), \ \pi\circ u=id_D, \  J\circ Du=Du \circ j_{\C}|_D \}
\end{align} be the moduli space of pseudoholomorphic sections with boundary condition on $P_{\sigma}$.

The Lagrangian $P_{\sigma}$ is fibred by copies of the exact Lagrangian $T\subset M$, and therefore $P_{\sigma} \cap \partial^hE\neq \emptyset$, it is not disjoint from the horizontal boundary. As a result, the moduli space $\M(J, P_{\sigma})$ is not compact, but fortunately its non-compact ends are very well-behaved.

Below, we show that for a generic almost complex structure in $\J(\pi,E, j_{\C})$, the ``non-compact'' elements (those sections reaching the horizontal boundary) of the moduli space \eqref{modulij} are regular. We do this by showing that such sections must be trivial---and the trivial section can be made regular, as the almost complex structure is product-like near $\partial^hE$.
For all the other holomorphic sections, which are entirely contained in the compact region, the same regularity arguments as in \cite{seideles} apply.

\begin{lemma}\label{regularity1}
	There is $\hat{J} \in \J(\pi,E, j_{\C})$ with the following property. 
	If $v\colon (D, \partial D) \to (E, P_{\sigma})$ is a $\hat{J}$-holomorphic section with boundary condition on $P_{\sigma}$, such that $u(\sigma)\cap \del^hE\neq \emptyset$ but not fully contained in $P_{\sigma} \cap \partial ^hE$, then $v$ is a constant section.
\end{lemma}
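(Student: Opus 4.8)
The plan is to run a fibrewise maximum-principle argument, exploiting the two features of the situation near the horizontal boundary: over the neighbourhood $V\cong \C\times M^{out}$ of $\partial^hE$ the fibration is a product (item \ref{trivialitycondition} of Definition \ref{deflf}, cf. \eqref{horizontalboundary}), and, because $T$ is conical in the sense of Definition \ref{cylag}, the Lagrangian $P_{\sigma}$ restricted to that region is the product of $\sigma$ with the cylindrical part of $T$. So first I would fix $\hat J\in\J(\pi,E,j_{\C})$ (Definition \ref{compatiblej}) which is the standard integrable structure near $\Crit(\pi)$, \emph{strictly} product-like on $V$, and generic in the sense of \cite[Section 2.2]{seideles} elsewhere, so that every section whose image meets the non-product part of $E$ is automatically regular. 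The remaining content of the lemma is then that the sections reaching $\partial^hE$ are exactly the model constant sections $z\mapsto(z,q)$ with $q$ in the cylindrical part of $T$, and that these are cut out transversally because on $V$ the linearised operator splits along the product.

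Next, let $v$ be a $\hat J$-holomorphic section with $v(\partial D)\subset P_{\sigma}$ meeting $\partial^hE$. Over $v^{-1}(V)$ write $v=(\mathrm{id},w)$ with $w$ a $J^{vv}$-holomorphic map into $M^{out}\cong(r_0,1]\times\partial M$, $r=e^t$, $\partial M=\{r=1\}$; conicality of $T$ together with the triviality of parallel transport on the product region gives $P_{\sigma}\cap V=\sigma\times\bigl((r_0,1]\times\Lambda\bigr)$ with $\Lambda=\partial T\subset\partial M$ Legendrian. The function $g:=r\circ w$ is subharmonic since $J^{vv}$ is of contact type, and along $\partial D$ the contact-type identity together with $\lambda_M|_{\Lambda}=0$ forces $\partial g/\partial\nu\equiv 0$. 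Since $g\le 1$ and equals $r_0<1$ on the inner boundary of $v^{-1}(V)$, the maximum $1$ cannot be attained there; by the strong maximum principle it is not attained at an interior point (a section is never locally constant) and by the Hopf lemma not at a non-constant boundary maximum; hence $g\equiv 1$ and $v^{-1}(V)=D$. Then $w$ maps into the slice $\{r=1\}\times\partial M$ with boundary on $\Lambda$, and the Stokes computation $\tfrac12\int_D|dw|^2=\int_D w^*d\lambda_M=\int_{\partial D}\alpha(\dot w)=0$ shows $w$, hence $v$, is constant. A constant section is either contained in $\partial^hE$ or disjoint from it, which contradicts the stated hypothesis; equivalently, the only $\hat J$-holomorphic sections meeting $\partial^hE$ are the model constant ones. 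Finally I would record that each such $v_q$ is regular: on $V$ the section operator decomposes as $\bar\partial_{\C}\oplus D_w$, with $\bar\partial_{\C}$ surjective and $D_w$ the operator of a constant disc with boundary on the convex cylinder $(r_0,1]\times\Lambda$, whose cokernel vanishes.

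The step I expect to be the main obstacle is the boundary behaviour of the maximum principle — precisely, verifying that the conical hypothesis on $T$ genuinely yields $\partial g/\partial\nu\equiv 0$ along $\partial D$ (that $P_{\sigma}$ is "radial'' near $\partial^hE$), since this is exactly where Seidel's original argument for closed vanishing cycles, which never touch $\partial^hE$, must be replaced. Related to this, one has to check that the product trivialisation of $\hat J$ on $V$ is compatible with the parallel-transport description of $P_{\sigma}$ used to write $P_{\sigma}\cap V$ as a product, and that the splitting $\bar\partial_{\C}\oplus D_w$ of the linearised operator at the constant sections is honest (i.e. that the almost complex structure, the symplectic connection, and $\hat\Omega$ are all strictly product-like on $V$, not merely asymptotically so).
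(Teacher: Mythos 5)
Your proof is correct, but it takes a genuinely different route from the paper's. The paper never runs a maximum principle at the Lagrangian boundary condition: it passes to the Liouville completion $\overline{E}$ with the conical extension $\overline{P}_{\sigma}$ of $P_{\sigma}$, invokes bounded geometry (\cite[Lemma 2.43]{gps}) to obtain the reverse isoperimetric inequality $\ell(u|_{\partial D})\leq C\cdot a(u)$ of \cite[Theorem 1.4]{ysreverse}, bounds the area a priori by $A=\int_{\partial D}\kappa_{\sigma}$ using exactness of $\Omega_E$ and fibrewise exactness of $P_{\sigma}$, and then \emph{defines} $\hat{J}$ as the pushforward of a contact-type structure on a symplectisation neck of length $R+1$, $R=AC$, under the compression $\psi\colon E_{R+1}\to E$; a section whose boundary meets both $P_{\sigma}\setminus(P_{\sigma}\cap\partial^hE)$ and $P_{\sigma}\cap\partial^hE$ would then have boundary length greater than $AC$ while its area is at most $A$, a contradiction. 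Your route --- product structure on $V$, subharmonicity of $g=r\circ w$, the Neumann condition, strong maximum principle plus Hopf lemma, then the energy identity on the level set $\{r=1\}$ --- is the standard confinement argument for conical boundary conditions, and the step you flagged does go through for exactly the reason you suspected: conicality of $T$ gives $\lambda_M|_{T\cap M^{out}}=0$, and holomorphicity together with $de^t\circ J^{vv}=-\lambda_M$ turns $\partial_{\nu}g$ into $\lambda_M(dw(\tau))$ with $dw(\tau)$ tangent to the cone, hence zero; likewise the product description of $P_{\sigma}\cap V$ is legitimate because the symplectic connection is trivial over $V$ by condition \eqref{productlike} and the isotopy $\phi(T)\simeq T$ is compactly supported. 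Your argument is in one respect stronger than the paper's lemma: since Definition \ref{compatiblej} already forces every compatible $J$ to be a product $j_{\C}\times J^{vv}$ with $J^{vv}$ of contact type over the trivialised neighbourhood of $\partial^hE$, the conclusion holds for \emph{every} $J\in\J(\pi,E,j_{\C})$, not only for one specially stretched $\hat{J}$. What the paper's approach buys in exchange is robustness --- the isoperimetric argument requires only bounded geometry rather than an exact contact-type/product normal form for the almost complex structure and the boundary condition near $\partial^hE$ --- at the price of the heavier input from \cite{ysreverse} and a less canonical choice of $\hat{J}$.
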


\proof
We show that any generic element $J\in \J(\pi,E, j_{\C})$ can be deformed to an almost complex structure $\hat{J} \in \J(\pi,E, j_{\C})$ as in the statement. To do that we use a \emph{reverse isoperimetric inequality} from \cite{ysreverse} that applies to the Liouville completion of $E$.

Identify a collar neighbourhood of $\partial^hE$ with $C(\partial^hE):=\C \times ((-\eps, 0] \times \partial M)$), and consider the Liouville completion of $E$, $(\overline{E}, \omega_{\overline{E}})$, obtained by gluing a cylindrical end $U^h:=\C \times ([0, \infty) \times \partial M )$ along a collar neighbourhood of the horizontal boundary $\partial^hE$, such that $\omega_{\overline{E}}|_{U^h}=d(\lambda_{\C}+e^t\lambda_M|_{\partial M})$, for the coordinate $t$ on $[0, \infty)$.

Let $(\overline{M}, \overline{\omega})$ be the generic smooth fibre of $\overline{E}$, and $\overline{T} \subset \overline{M}$ the Lagrangian obtained from $T$ by gluing a conical end at the boundary. Accordingly, let $\overline{P}_{\sigma}\subset \overline{E}$ be the ``completion'' of $P_{\sigma}\subset E$ in $\overline{E}$. This Lagrangian can be trivialised outside of a compact set as $\partial D \times U^{\infty} \subset \C \times U^{\infty} \subset \overline{E}$, where $U^{\infty}\subset \overline{M}$ is a neighbourhood of the cylindrical end of $\overline{T}$. Extend $J$ to a cylindrical almost complex structure $\overline{J}$ on $\overline{E}$ (see Definition \ref{defacs}).

By \cite[Lemma 2.43]{gps}, $(\overline{E}, \omega_{\overline{E}})$ has \emph{bounded geometry} in the sense of \cite[Definition 2.42]{gps}, which is equivalent to the notion of bounded geometry of \cite[1.4]{ysreverse}, see \cite[p.104]{gps}. The same holds for the Lagrangian $\overline{P}_{\sigma} \subset \overline{E}$, as it is compact in the base direction, and conical in the fibre direction (see the proof of \cite[Lemma 2.43]{gps}). Bounded geometry implies that for any $\overline{J}$--holomorphic section $u\colon (D, \partial D)\to (\overline{E}, \overline{P}_{\sigma})$ there is a reverse isoperimetric inequality (\cite[Theorem 1.4]{ysreverse}) \begin{align}\label{reverseisop}
\ell(u|_{\partial D}) \leq a(u)\cdot C,
\end{align} where $\ell$ is the length function associated to a $\overline{J}$-compatible metric $g_{\overline{J}}$, $C>0$ is a constant depending on $\overline{E}$, and $a(u)$ is the area of the curve. 

Let $A:=\int_{\partial D}\kappa_{\sigma}$, and set $R:=A \cdot C$. 
For $R>0$, consider a piece of symplectisation $(E_{R+1}:=E\cup \C \times ([0, R+1] \times \partial M), \omega_{R+1})$ with $\omega_{R+1}|_E=\omega_E$ and $\omega_{R+1}|_{\C \times ([0, R+1] \times \partial M )}=d(\lambda_{\C}+e^t\lambda_M)$, and a compatible almost complex structure $J_{R+1}=\overline{J}|_{E_{R+1}}$ of contact type. Clearly $E\subset E_{R+1}\subset \overline{E}$, and there is a diffeomorphism $\psi \colon E_{R+1} \to E$, that is the identity on $E\setminus C(\partial^hE)$ and compresses $\C \times ((-\eps, R+1] \times \partial M)$ to $\C \times ((-\eps, 0] \times \partial M)$ via the negative Liouville flow.

Every $\overline{J}$-holomorphic curve $u\colon (D,\partial D)\to (\overline{E}, \overline{P}_{\sigma})$ such that there are $z_1, z_2 \in \partial D$ with $u(z_1)\in Int(E)$ and $u(z_2)\in \overline{E}\setminus E_{R+1}$ satisfies $d(u(z_1), u(z_2))>A\cdot C$ and the inequality \eqref{reverseisop}

Now set $\hat{J}:=\psi_*(J_{R+1})$. This satisfies the requirements of the Lemma. Namely, let $v\colon (D, \partial D)\to (E,P_{\sigma})$ be a $\hat{J}$-holomorphic section as in the statement, i.e such that there are $z_1, z_2 \in \partial D$ with $v(z_1) \in P_{\sigma} \setminus (P_{\sigma} \cap \partial ^hE)$ and $v(z_2) \in P_{\sigma} \cap \partial ^hE$. 

Then we certainly have $d(v(z_1), v(z_2))< \ell(v|_{\partial D})$ for the distance function $d$ and the length $\ell$ associated to a compatible metric $g_{\hat{J}}$. On the other hand, the area of $v$ is bounded by a fixed upper bound since $a(v)=\int_Dv^*\Omega_E=\int_D d(v^*\lambda_E)=\int_{\partial D}v^*(\lambda_E)=\int_{\partial D}\kappa_{\sigma}=A$ by exactness of $\Omega_E$ and fibrewise exactness of $P_{\sigma}$. 

By stretching the neck in a neighbourhood of the boundary of $E$ to $E_{R+1}$, the pullback $\psi^*(v)$ produces a contradiction, since $d(\psi^*(v(z_1)), \psi^*(v(z_2))<\ell(\psi^*(v|_{\partial D}))< a(\psi^*(v))\cdot C=A\cdot C$, but also $d(\psi^*(v(z_1)), \psi^*(v(z_2))>A\cdot C$ by construction of $E_{R+1}$. Therefore, $v$ has to be a constant section.

\qed

From now onwards, fix an almost complex structure $\hat{J}\in \J(\pi,E, j_{\C})$ as in Lemma \ref{regularity1}. The above results imply that the only possible scenario left to consider in the case of a non-constant section with boundary condition on $P_{\sigma}$ intersecting $\partial^hE$, is to be entirely contained in the horizontal boundary of the fibration. 
\begin{lemma}\label{regularity2}
	Let $u\colon D \to E$ be a $\hat{J}$-holomorphic section such that $\im(u) \subset \partial^h E$. Then $u$ is a constant section.
\end{lemma}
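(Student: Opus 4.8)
The plan is to localise the whole section inside the product neighbourhood of the horizontal boundary. Since $\im(u)\subseteq\partial^hE$, the section lies entirely in the region $U^{\partial}\cong\C\times M^{out}$ of the trivialisation \eqref{horizontalboundary}, on which $\hat J$ is the split almost complex structure $j_{\C}\times J^{vv}$ and $\partial^hE$ corresponds to $\C\times\partial M$. Writing $u(z)=(z,\gamma(z))$ with $\gamma\colon D\to\partial M$, the holomorphicity equation $\hat J\circ Du=Du\circ j_{\C}$ becomes precisely the condition that $\gamma$ be $(j_{\C},J^{vv})$-holomorphic. So the statement reduces to the purely fibrewise claim that a $J^{vv}$-holomorphic map from the disc whose image is contained in $\partial M$ must be constant.

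For that fibrewise claim I would run the standard ``zero area'' argument coming from the contact-type condition. Let $t$ be the collar coordinate with $\lambda_M=e^t\alpha$ near $\partial M$, where $\alpha:=\lambda_M|_{\partial M}$ is the contact form and $\xi:=\ker\alpha$. Because $t\circ\gamma\equiv0$, differentiating and using $J^{vv}$-holomorphicity together with the contact-type identity $de^t\circ J^{vv}=-\lambda_M$ of Definition~\ref{defacs} gives $\alpha(d\gamma(v))=-d(t\circ\gamma)(j_{\C}v)=0$ at every point; hence $\gamma^*\alpha\equiv0$ and $d\gamma$ is $\xi$-valued. Then $\gamma^*d\alpha=d(\gamma^*\alpha)=0$, while pointwise $\gamma^*d\alpha(v,j_{\C}v)=d\alpha\bigl(d\gamma(v),J^{vv}d\gamma(v)\bigr)\geq0$, with equality if and only if $d\gamma(v)=0$, since $J^{vv}|_{\xi}$ is compatible with the symplectic form $d\alpha|_{\xi}$. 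Therefore $d\gamma\equiv0$, so $\gamma$ is constant and $u$ is the constant (horizontal) section $z\mapsto(z,\gamma(z))$, in the sense of Lemma~\ref{regularity1}.

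Combined with Lemma~\ref{regularity1}, this disposes of the last remaining non-compactness scenario: a non-constant $\hat J$-holomorphic section with boundary on $P_{\sigma}$ cannot reach the horizontal boundary at all, so the relevant moduli spaces are cut out away from $\partial^hE$. I do not expect a genuine obstacle here — the only points needing care are checking that the split form of $\hat J$ on $U^{\partial}$ really reduces everything to a fibrewise equation, and applying the contact-type normalisation in the correct sign convention so that $\gamma^*\alpha\equiv0$ and hence $\int_D\gamma^*d\alpha=0$ forces $d\gamma\equiv0$; both are bookkeeping rather than substantive difficulties.
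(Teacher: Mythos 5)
Your proof is correct, and its first half (localising in the trivialised neighbourhood $U^{\partial}\cong\C\times M^{out}$, using the split form of $\hat J$ to reduce to a purely fibrewise holomorphic map) is exactly what the paper does. The two arguments diverge in how they kill the fibrewise map $\gamma$. The paper projects to the fibre and observes that a non-constant $\hat J|_M$-holomorphic disc $(D,\partial D)\to(M,T)$ is excluded by Stokes: exactness of $\omega_M$ and of the Lagrangian $T$ forces the energy $\int_D\gamma^*\omega_M=\int_{\partial D}\gamma^*\lambda_M$ to vanish. You instead use only the fact that $\im(\gamma)$ lies in the contact-type level set $\partial M=\{t=0\}$: the identity $de^t\circ J^{vv}=-\lambda_M$ gives $\gamma^*\alpha\equiv0$, hence $d\gamma$ is $\xi$-valued and $\int_D\gamma^*d\alpha=0$, which by compatibility of $J^{vv}|_{\xi}$ with $d\alpha|_{\xi}$ forces $d\gamma\equiv0$. (Your one sign slip --- writing $d(t\circ\gamma)$ where the identity literally produces $d(e^{t}\circ\gamma)$ --- is immaterial since both vanish identically.) Your route is marginally stronger: it never invokes the boundary condition on $P_{\sigma}$ or exactness of $T$, so it shows that \emph{any} $\hat J$-holomorphic section contained in $\partial^hE$ is constant, which is the standard pseudoconvexity of contact-type hypersurfaces; the paper's route is shorter but needs the Lagrangian boundary condition. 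Either suffices for the compactness argument that follows.
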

\proof Assume there is a non-constant section $u\colon D \to E$ such that $\im(u)\subset \partial^h E$. Identify (via a trivialisation as in \eqref{productlike}) a neighbourhood of $\partial^hE$ as $U^{\partial}\cong \C \times M^{out}\subset \C \times M$ for an open neighbourhood $M^{out}\subset M$ of $\partial M$. Then the projection of $\Im(u)$ to $M$ defines a non--constant $\hat{J}|_{M}$--holomorphic disc $u\colon (D, \partial D) \to (M,T)$, which, by the exactness assumptions on $M$, cannot exist. Therefore, $u$ must be a constant section.
\qed

We now prove that there are no compactness issues. The moduli space $\M(\hat{J}, P_{\sigma})$ has non-compact end, but by the regularity discussion above, the only sections reaching it are the constant ones, and all elements of $\M(\hat{J}, P_{\sigma})$ have bounded energy so that the Gromov compactness theorem applies. The bubbles components in the Gromov limit of a sequence of $(\hat{J},j_{\C})$-holomorphic sections in $\M(\hat{J}, P_{\sigma})$ are either spheres in the fibres over $D$, or discs in the fibres $\pi^{-1}(z)$, for $z\in \Im(\sigma)$, with boundary condition on $T_z$.
Both options can be discarded by exactness of $E$ and fibrewise exactness of $P_{\sigma}$.

\begin{lemma}\label{lemmasurj}
	The evaluation map \begin{align}\label{evaluation}
	ev: \M(\hat{J}, P_{\sigma}) \times D& \lra E \\
	(u,z) &\longmapsto u(z) \nonumber
	\end{align}
	\begin{enumerate}[label=(\roman*)]
		\item is proper;
		\item restricts to a surjective map $\M(\hat{J},P_{\sigma}) \times \partial D \lra P_{\sigma}$ of degree one.
	\end{enumerate}
\end{lemma}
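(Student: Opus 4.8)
The plan is to deduce (i) from the Gromov--compactness package assembled in Section~\ref{alternativebgz}, and (ii) from a one-dimensional cobordism argument in a single fibre, parallel to the proof of Lemma~\ref{section} and Corollary~\ref{evaluationsurj} in the closed case. For (i), let $K\subset E$ be compact and $(u_n,z_n)\in ev^{-1}(K)$ a sequence; after passing to a subsequence $z_n\to z_\infty\in D$ by compactness of $D$. By Stokes' theorem and the fibrewise exactness of $P_{\sigma}$, every $u\in\M(\hat{J},P_{\sigma})$ has the same $\Omega_E$-area $\int_D u^*\Omega_E=\int_{\partial D}\kappa_{\sigma}=:A$, so the sequence has uniformly bounded energy and Gromov compactness applies. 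As in Lemma~\ref{bubblemma}, the only bubble components that could appear are vertical spheres inside a fibre over $D$ or holomorphic discs inside a fibre $\pi^{-1}(z)$, $z\in\Im(\sigma)$, with boundary on the exact disc $T_z$; both are excluded by exactness of $E$ and of the $T_z$'s, so the Gromov limit $u_\infty$ is again a section and $u_n\to u_\infty$ in $\M(\hat{J},P_{\sigma})$. Finally, by Lemmas~\ref{regularity1} and~\ref{regularity2} the only non-compact end of $\M(\hat{J},P_{\sigma})$ is the family of trivial sections contained in a collar of $\partial^hE$, on which $ev$ is evidently proper; combined with the Gromov argument for the remaining sections this gives compactness of $ev^{-1}(K)$, hence properness of $ev$.

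For (ii), I would first record that $\M(\hat{J},P_{\sigma})$ has dimension $n$ --- the index computation for sections of an exact Lefschetz fibration with a fibrewise-exact Lagrangian boundary condition, as in \cite[\S 2.2]{seideles}, read off from the trivial section through a point of the $n$-disc $T$ --- so that $\dim\big(\M(\hat{J},P_{\sigma})\times\partial D\big)=n+1=\dim P_{\sigma}$. Fix $z_0\in\partial D$; since $\sigma$ encircles $\Crit v(\pi)$ without meeting it, $z_0$ is a regular value of $\pi$, and $P_{\sigma}\cap\pi^{-1}(z_0)$ is a copy $T_{z_0}\cong T$ of the disc. For $p\in T_{z_0}$ set $\M(\hat{J},P_{\sigma},p):=\{u\in\M(\hat{J},P_{\sigma}):u(z_0)=p\}$. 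Taking $p$ close to $\partial M$, Lemmas~\ref{regularity1} and~\ref{regularity2} together with the product form of $\hat{J}$ near $\partial^hE$ force the only section through $p$ to be the (regular) trivial section, so $\#\M(\hat{J},P_{\sigma},p)=1$. For an arbitrary $p'\in T_{z_0}$, joining $p'$ to $p$ by a path $\alpha$ inside the disc $T_{z_0}$ and forming $\bigcup_t\M(\hat{J},P_{\sigma},\alpha(t))$ produces, by the compactness from (i) and the absence of bubbling, a compact $1$-manifold cobounding $\M(\hat{J},P_{\sigma},p')$ and $\M(\hat{J},P_{\sigma},p)$, whence $\#\M(\hat{J},P_{\sigma},p')\equiv 1\pmod{2}$. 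Since any $u$ with $u(z)=p'$ must have $z=\pi(p')=z_0$, we get $(ev|_{\partial D})^{-1}(p')=\M(\hat{J},P_{\sigma},p')\times\{z_0\}$, a single point; letting $p'$ range over the fibres $T_z$, $z\in\partial D$, this shows that $ev|_{\partial D}\colon\M(\hat{J},P_{\sigma})\times\partial D\to P_{\sigma}$ is surjective of degree one.

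The genuinely technical content is the transversality package: establishing $\dim\M(\hat{J},P_{\sigma})=n$ and the regularity of $\M(\hat{J},P_{\sigma})$ and of the cobordism spaces $\M(\hat{J},P_{\sigma},\alpha(t))$ for a generic $\hat{J}\in\J(\pi,E,j_{\C})$, with the trivial sections near $\partial^hE$ handled separately via the product structure --- the disc-with-boundary analogue of \cite[Lemma~2.4]{seideles}. The main obstacle I anticipate is making the degree of $ev|_{\partial D}$ well defined despite the non-compactness of $\M(\hat{J},P_{\sigma})$: one must check that the cobordism $\bigcup_t\M(\hat{J},P_{\sigma},\alpha(t))$ genuinely stays compact, that is, that along $\alpha$ no section escapes toward $\partial^hE$ except the trivial one sitting over the endpoint. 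Once Lemmas~\ref{regularity1} and~\ref{regularity2} are invoked to confine the escaping locus to a collar of $\partial^hE$ this is forced, and the rest of the argument is routine.
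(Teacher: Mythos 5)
Your proposal is correct and follows essentially the same route as the paper: properness via uniform energy bounds from fibrewise exactness, Gromov compactness, and exclusion of sphere/disc bubbles by exactness, with Lemmas \ref{regularity1} and \ref{regularity2} controlling the non-compact end; and degree one via a parametrized cobordism joining an arbitrary point of $P_{\sigma}$ to a point near $\partial^h E$ through which only the regular trivial section passes. The only cosmetic difference is that you run the path inside a fixed fibre $T_{z_0}$ and make the index count $\dim\M(\hat{J},P_{\sigma})=n$ explicit, whereas the paper takes the path in $M$ and leaves the dimension count implicit.
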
 
\proof 
(i) To prove this property is enough to show that every sequence of sections $\{ u_k \}_{k \in \N}$ in $\M(\hat{J},P_{\sigma})$ whose image under $ev$ lies in a relatively compact set of $E$ has a convergent subsequence. Consider such a sequence. If its image under \eqref{evaluation} lie in a compact set then by exactness there is an upper bound to the energy of all elements in the sequence (which is bounded by a finite value determined by the maximum among all areas of the curves). Then, by the Gromov compactness theorem, $\{ u_k \}_{k}$ admits a subsequence converging to a stable map, which, in the absence of bubbles, can only be another section. 

(ii) To prove the second point, we show that the algebraic count of sections through every point of $P_{\sigma}$ is one. Let $U^{\partial}\supset \partial^hE$ be a neighbourhood of the horizontal boundary as in the proof of the previous lemma and $q \in U^{\partial}\cap\pi^{-1}(\sigma)$. Since $\phi$ is compactly supported in a neighbourhood of the vanishing cycles, the monodromy around $\sigma$ preserves $q$. 
By lemmata \ref{regularity1}, \ref{regularity2}, the moduli space $\M(\hat{J}, q):=\{ u\in \M(\hat{J},P_{\sigma}), q\in \Im(u) \} \subset \M(\hat{J}, P_{\sigma})$ is compact and only contains the constant section $s\colon D \rightarrow E$ through $q$.

Given another point $p \in P_{\sigma}$, consider the path $\alpha \colon [0,1] \to M$ with $\alpha(0)=p, \alpha(1)=q$ and define
\begin{align*}
\M(\hat{J},P_{\sigma},\alpha(t), [s]):= \{ u\in \mathcal{M}(\hat{J},P_{\sigma}),  \ \alpha(t) \in \Im(u) \text{ and } [u]=[s] \}.
\end{align*}
Clearly, $\M(\hat{J},P_{\sigma}, \alpha(1), [s])=\M(\hat{J}, q)$.

Consider
\begin{align}\label{cobordism2}
\mathcal{M}_{cob}:= \bigcup_{t \in \lbrack 0, 1 \rbrack}\mathcal{M}(\hat{J},P_{\sigma}, \alpha(t), [s]) \subset \M(\hat{J}, P_{\sigma}).
\end{align}

All elements in $\M_{cob}$ are in the same homology class so that the same compactness arguments apply as above. Compactness implies that for every $p \in P_{\sigma}$, the moduli space $\M(\hat{J}, P_{\sigma}, p, [s])$ is cobordant to the moduli space $\M(\hat{J},q)$. Therefore, by the same reasoning as in the proof of Lemma \ref{section}, through each point of $P_{\sigma}$ there is algebraically a unique section in $\M(\hat{J},P_{\sigma})$, so that the restriction $\M(\hat{J},P_{\sigma}) \times \partial D \lra P_{\sigma}$ is surjective and of degree one.
\endproof 

\proof[Proof of Theorem \ref{relativeversion1}]
Under the assumption that $\phi(T)\simeq T$, we have proved that $P_{\sigma}$ represents a non-trivial class in $H_{n+1}(E, \partial E)$ (Lemma \ref{lemmanontrivial}). The same assumption however also yields Lemma \ref{lemmasurj}, which in particular implies that $ev_*(\mathcal{M}_{\hat{J}}(E,P_{\sigma})\times \partial D)=\lbrack P_{\sigma} \rbrack \in H_{n+1}(E, \partial E)$ is realised as the boundary of the chain $ev_*(\mathcal{M}_{\hat{J}}(E,P_{\sigma}) \times D) \in C_{n+2}(E, \partial E)$. This is a contradiction, which concludes the proof of the theorem.\endproof

\subsection{Product of projective twists}\label{generalisationprojective}\label{projectiveproduct}

We continue the investigation on positive products of twists in Liouville manifolds, this time focussing on projective twists. 
Ideally, one would try to generalise as many results from the previous sections to this situation. 

The previous section heavily relied on the link between Dehn twists and Lefschetz fibrations, and many constructions we used depended on section-count invariants of Lefschetz fibrations.

Perutz showed in \cite{perutzmbl} that any \emph{fibred twist} admits a representation as the local monodromy of a Morse--Bott--Lefschetz (MBL) fibration. Projective twists can be thought of as an example of $S^1$-fibred twists, so we could envisage extending the mechanisms behind the proof for the spherical case to the setting of MBL fibrations (following \cite{perutzmbl} and \cite{wwfibred}) to show the analogous statement for projective twists. 

\begin{quest}\label{conjecture}
	Let $\varphi \in \Symp_{ct}(W)$ be a non-empty composition of positive powers of projective twists on a Liouville manifold $(W, \omega)$ of dimension at least four. Can $\varphi$ be isotopic to the identity in $\Symp_{ct}(W)$?
\end{quest}

Unfortunately, the section-count strategy presents a route filled with obstacles; the central problem being the lack of compactness of moduli spaces of sections of MBL fibrations. The critical locus $\Crit(\pi)$ of such a fibration is a compact symplectic submanifold of the total space, and in general contains rational curves. The total space of a MBL fibration $\pi\colon E\lra \C$ associated to a projective twist cannot be made into an exact symplectic manifold, so bubbling phenomena can become an issue when considering moduli spaces of pseudoholomorphic sections. 

Instead, the idea remains, as in Section \ref{freegenplumb}, to use the Hopf correspondence to translate a situation involving projective twists into one involving Dehn twists.

\thmrealgeneral*

\proof As in Section \ref{hopfrpn}, let $q\colon (\tilde{W}, \tilde{\omega}) \rightarrow (W, \omega)$ be the symplectic double cover given by the class $\alpha$ and $L_1, \dots , L_m \subset \tilde{W}$ Lagrangian spheres obtained as double cover of $K_1, \dots ,K_m \subset W$. The composition of projective twists $\varphi \in \Symp_{ct}(W)$ lifts to a composition of spherical Dehn twists $\phi \in \Symp_{ct}(\tilde{W})$.
Assume there is an isotopy $(\varphi_t)_{0 \leq t \leq 1}$ connecting the composition of projective twists $\varphi_0=\varphi$ to the identity $\varphi_1=Id$. The isotopy lifts to a family of compactly supported maps $(\phi_t)_{0 \leq t \leq 1}$ in the double cover $\tilde{W}$, where $\phi_0=\phi$ is the lift of $\varphi$. Then, $\phi_1$ covers the identity and can therefore only be either the identity or a deck transformation. The latter type would define a non-compactly supported symplectomorphism, hence $\phi_1$ must be the identity. It follows that $\phi \in \Symp_{ct}(\tilde{W})$ is a composition of Dehn twists in a Liouville domain which is isotopic to the identity, contradicting Theorem \ref{bgz}.
\endproof

\begin{rmk}A similar argument fails when applied to complex projective twists.
	Let $(W^{4n},\omega)$ be a symplectic manifold with complex projective Lagrangians $K_1, \dots, K_m$ satisfying Assumption \eqref{assptcx}. The fibration $(Y,\Omega)\to (W, \omega)$ constructed from the cohomological condition is not proper, so an isotopy in $\Symp_{ct}(W)$ cannot be lifted to an isotopy in $\Symp_{ct}(Y)$.
\end{rmk}

\section{Epilogue: framings of projectie twists, homotopy projective Lagrangians}\label{epilogue}

As a last application of the Hopf correspondence, we examine homotopy projective Lagrangians. We prove two non-embedding results for Lagrangian projective spaces in non-standard homeomorphism/diffeomorphism classes (Theorems \ref{cor1} and \ref{cor2}), and for $n\geq 19$, the existence of projective twists obtained from a non-standard choice of framing, that are not Hamiltonian isotopic to the standard $\tau_{\CP} \in \Symp_{ct}(T^*\CP)$:

\thmframing*
Embedding theorems are obtained in Section \ref{nonemb} using homotopy theory results combined with the existing state of the art of the nearby Lagrangian conjecture, and the use of the Hopf correspondence.

We subsequently investigate the question of framings for projective twists in Section \ref{aptwist}. For that purpose, we utilise the current literature on framing of Dehn twists, a pairing constructed by Bredon, and the Hopf correspondence.
This enables us to obtain instances in which the (Hamiltonian isotopy class of the) local projective twist does depend on a choice of framing of the associated Lagrangian projective space. With the additional use of topological modular forms, we explain why there should be infinitely many such examples.

\subsection{Lagrangian non-embeddings of projective spaces}\label{nonemb}\label{stateart}

The \emph{nearby Lagrangian conjecture} states that given a closed smooth manifold $Q$, any closed exact Lagrangian submanifold of $(T^*Q,d\lambda_{Q})$ is Hamiltonian isotopic to the zero section. If this conjecture was true, the existence of another closed exact Lagrangian embedding $L \hookrightarrow T^*Q$ would yield a diffeomorphism, $L \cong Q$. By Weinstein's neighbourhood theorem, the latter version of the statement can also be read as: if $(T^*L,d\lambda_{T^*L})$ is symplectomorphic to $(T^*Q,d\lambda_{T^*Q})$, then $L$ is diffeomorphic to $Q$.

The conjecture has been verified for some specific examples ($T^*S^2, T^*\R\PP^2$ by Hind \cite{hindknot} and Li--Wu \cite{liwu}, $T^*T^2$ by Dimitroglou Rizell--Goodman--Ivrii \cite{rgitori}), and weaker versions of it have been proved. Currently, the most general feature one can deduce from an exact Lagrangian embedding in $(T^*Q, d\lambda_{T^*Q})$ is (simple) homotopy equivalence.

\begin{thm}[\cite{abouzaidnearby}, \cite{kraghnearby}, \cite{aknearby}]\label{heq}
	If $L \subset T^*Q$ is a closed exact Lagrangian embedding, then the projection $L \subset T^*Q \xrightarrow{p} Q$ is a (simple) homotopy equivalence. 
\end{thm}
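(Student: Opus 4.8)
The statement is the ``nearby Lagrangian theorem, up to (simple) homotopy equivalence'' established in \cite{abouzaidnearby,kraghnearby,aknearby}, so the plan is to reconstruct their strategy: probe $L$ with the cotangent fibres of $T^*Q$ and compare the resulting Floer invariants with loop-space homology. Fix a field $k$, a point $q\in Q$, and let $F_q:=T_q^*Q$, a conical exact Lagrangian. Since $L$ is closed and exact, the wrapped Floer cohomology $\HW^*(F_q,L;k)$ is well defined (no disc bubbling off $L$, no escape to infinity along $F_q$ by the maximum principle) and, since all $F_q$ are exact-isotopic, independent of $q$; as $L$ is compact, the underlying complex has finite total rank. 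The first step is non-vanishing: by Abouzaid's generation theorem $F_q$ split-generates the wrapped category $\W(T^*Q)$, while $L$ is a non-zero object of $\W(T^*Q)$ because $\HF^*(L,L;k)\cong H^*(L;k)\neq 0$; hence $\HW^*(F_q,L;k)\neq 0$, and in particular $p|_L\colon L\to Q$ is surjective.

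The heart of the matter is to convert $\HW^*(F_q,L;k)$ into topology. By the Abbondandolo--Schwarz / Abouzaid identification one has a ring isomorphism $\HW^*(F_q,F_q;k)\cong H_{-*}(\Omega_q Q;k)$ with the Pontryagin homology of the based loop space, and since $F_q$ generates $\W(T^*Q)$ the $A_\infty$-module $\CW^*(F_q,L;k)$ over $C_{-*}(\Omega_q Q;k)$ determines $L$ up to quasi-isomorphism. A moduli space of half-discs (a $\mathrm{PSS}$-type construction) identifies this module, under the equivalence between $C_{-*}(\Omega_q Q)$-modules and local systems of chain complexes on $Q$, with $q'\mapsto C_*(\mathrm{hofib}_{q'}(p|_L);k)$; thus $\CW^*(F_q,L;k)$ is quasi-isomorphic to the trivial (augmentation) module $k$ precisely when $\mathrm{hofib}(p|_L)$ is $k$-acyclic. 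One then shows $\CW^*(F_q,L;k)\simeq k$ — the technical core of \cite{abouzaidnearby}, with the auxiliary Maslov-class hypothesis removed in \cite{kraghnearby} — using the generation property and an action/energy comparison of $L$ with the zero section. Repeating the comparison over the covers of $T^*Q$ associated to subgroups of $\pi_1(Q)$ shows moreover that $p_*\colon\pi_1(L)\to\pi_1(Q)$ is an isomorphism. With $\mathrm{hofib}(p|_L)$ acyclic over every field and $p|_L$ a $\pi_1$-isomorphism, the Serre spectral sequence with local coefficients makes $p|_L$ a homology isomorphism on all covers, and Whitehead's theorem yields that $p|_L$ is a homotopy equivalence.

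The remaining, genuinely hard, point is the word \emph{simple}: one must show that the Whitehead torsion $\tau(p|_L)\in\mathrm{Wh}(\pi_1 Q)$ vanishes, and this is what forces the passage to stable homotopy theory in \cite{kraghnearby,aknearby}. The plan is to refine the preceding comparison to the level of Floer \emph{spectra}: endow the wrapped invariants of $T^*Q$ with a Floer homotopy type, so that $\CW^*(F_q,L)$ becomes a module over the ring spectrum $\Sigma^\infty_+\Omega_q Q$, i.e.\ a spectral local system on $Q$; show that this local system is equivalent to the constant one with fibre the sphere spectrum; and deduce vanishing of $\tau(p|_L)$ from the resulting stable parametrised equivalence, via the description of $\mathrm{Wh}(\pi_1 Q)$ as the obstruction to such an equivalence being ``simple''. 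I expect this to be the principal obstacle: it demands constructing the Floer homotopy type with enough multiplicative and module structure and checking that generation, the module/local-system dictionary, and the triviality computation all lift coherently to spectra — the conceptual skeleton is the chain-level argument, but the technical overhead is considerable. Granting these inputs, both halves of the statement follow.
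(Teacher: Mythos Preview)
The paper does not prove this theorem: it is stated as a background result from the literature (with citations to \cite{abouzaidnearby}, \cite{kraghnearby}, \cite{aknearby}) and used as a black box in Section~\ref{stateart}. There is therefore no ``paper's own proof'' to compare against.

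Your sketch is a reasonable high-level outline of the strategy in those cited works --- probing $L$ with cotangent fibres, identifying wrapped Floer invariants with based loop space homology, using Abouzaid's generation criterion, and then passing to Floer homotopy types/spectra for the \emph{simple} part --- and you are honest that the spectral refinement is where the real technical weight lies. That said, as a proof it remains a sketch: several of the steps you phrase as single sentences (e.g.\ ``one then shows $\CW^*(F_q,L;k)\simeq k$'', or the $\pi_1$-isomorphism via covers) are themselves substantial arguments in \cite{abouzaidnearby,kraghnearby}, and the account of how Whitehead torsion is extracted from the parametrised spectrum comparison is more of a plan than an argument. For the purposes of this paper, however, none of that detail is needed --- the statement is simply quoted and applied.
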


\begin{rmk}
	Note that if $L \subset T^*Q \xrightarrow{p} Q$ is a Lagrangian as in the above statement, then $TL \otimes \C \cong p^*(TQ \otimes \C)$. The Pontryagin classes $p_i\in H^{4k}(\cdot)$ satisfy $2p_i(L)=2p_i(Q)$. Moreover, the (rational) Pontryagin classes $p_i$ are homeomorphism invariants (\cite{novikov}).
\end{rmk}

Equipped with the connected sum operation, the set of h-cobordism classes of homotopy $m$-spheres $\Theta_m$ has an abelian group structure (where the standard sphere plays the role of neutral element). We will always assume $m>5$, in which case the elements of $\Theta_m$ correspond to diffeomorphism classes of $m$-spheres.

The group $\Theta_m$ fits in an exact sequence (\cite{kmspheres})
\begin{align}\label{kmexact}
0\longrightarrow	bP_{m+1}	\xlongrightarrow{ } \Theta_m \xlongrightarrow{\psi} \coker(J_m)\longrightarrow bP_{m}.
\end{align}
Here $bP_{m+1}=ker(\psi) \subset \Theta_m$ denotes the subgroup of homotopy $m$-spheres bounding an $(m+1)$-dimensional parallelisable manifold, and $J_m \colon \pi_m(O)\rightarrow \pi_m(S)$ is a map from the $m$-th stable homotopy group $\pi_m(O)=\lim\limits_{\ell \to \infty}\pi_m(SO(\ell))$ to the $m$-th stable homotopy group of spheres $\pi_m(S):=\lim\limits_{\ell \to \infty}\pi_{m+\ell}(S^{\ell})$ (see e.g \cite[Section 3]{levine}). This group is also called the \emph{$m$-th stable stem}.

Throughout the section, we will repeatedly use the following fact about the sequence \eqref{kmexact}.

\begin{thm}[{{\cite[Theorem 5.1]{kmspheres}}}]
	If $m$ is an odd integer, $bP_m=0$. Consequently, for any odd $m$, $\psi\colon \Theta_m \to \coker(J_m)$ is surjective.
\end{thm}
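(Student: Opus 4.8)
The plan is to prove the first assertion, $bP_m=0$ for odd $m$, by the surgery arguments of Kervaire--Milnor; the second assertion is then immediate from the exactness of \eqref{kmexact}, since $bP_m=0$ identifies $\mathrm{im}(\psi)$ with $\ker(\coker(J_m)\to bP_m)=\coker(J_m)$. So fix $\Sigma\in bP_m$, i.e. a homotopy $(m-1)$-sphere bounding a compact parallelisable manifold $W^m$; as $\Sigma$ is connected we may assume $W$ is connected, and since $m>5$ we may write $m=2k+1$ with $k\geq 3$ and use the $h$-cobordism theorem freely. The goal is to modify $W$ rel $\partial W$ until it becomes contractible, for then $\Sigma$ bounds a disc and hence $[\Sigma]=0$ in $\Theta_{m-1}$.

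The first step is a sequence of framed surgeries in $\mathrm{int}\,W$ making $W$ $k$-connected while preserving parallelisability. We first kill $\pi_1(W)$ (possible since $\dim W\geq 6$), and then, assuming $W$ is $(i-1)$-connected for some $2\leq i\leq k$, we represent a finite generating set of $\pi_i(W)\cong H_i(W)$ by disjoint embedded spheres $S^i\hookrightarrow\mathrm{int}\,W$ — a generic map is an embedding since $2i\leq 2k<\dim W$. Because $TW$ is trivial, each such $S^i$ has stably trivial normal bundle, and the stabilisation $\pi_{i-1}(\mathrm{SO}(2k+1-i))\to\pi_{i-1}(\mathrm{SO})$ is injective in the relevant range $i\leq k$, so the normal bundle is genuinely trivial and framed surgery is available. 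Such surgery kills the chosen classes, introduces no homotopy in degrees below $i$, and, with the framing chosen appropriately, preserves parallelisability; iterating over $i=2,\dots,k$ yields a $k$-connected parallelisable $W'$ with $\partial W'=\Sigma$. These are the standard surgery lemmas, which I would cite rather than reprove.

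The second step is purely homological. With $W'$ now $k$-connected, $(2k+1)$-dimensional, simply connected, and with boundary the homotopy $(2k)$-sphere $\Sigma$, Poincar\'e--Lefschetz duality gives $H_i(W',\partial W')\cong H^{2k+1-i}(W')$, which vanishes for $k+1\leq i\leq 2k$ and equals $\Z$ for $i=2k+1$. Feeding this into the long exact sequence of $(W',\partial W')$, and using that $\widetilde{H}_*(\Sigma)$ is concentrated in degree $2k$ and that the connecting map $H_{2k+1}(W',\partial W')\to H_{2k}(\partial W')$ is an isomorphism (it carries the relative fundamental class to the fundamental class of $\Sigma$), one reads off $\widetilde{H}_*(W')=0$. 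Hence $W'$ is a simply connected acyclic space, so contractible by Whitehead's theorem. Deleting an open disc from $W'$ then exhibits an $h$-cobordism from $S^{2k}$ to $\Sigma$, and the $h$-cobordism theorem gives $\Sigma\cong S^{2k}$, i.e. $[\Sigma]=0$ in $\Theta_{2k}=\Theta_{m-1}$. This proves $bP_m=0$, and the surjectivity of $\psi$ follows as noted.

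The only genuinely delicate part is the surgery step of the second paragraph: one must verify that the normal bundles of the embedded spheres are trivial — this is exactly where the hypothesis that $m$ is odd enters, since it makes $i\leq k$ the full relevant range for the stable-range injectivity above — and that framed surgery preserves parallelisability without recreating lower-dimensional homotopy. The contrast with even $m$ is instructive: there the same process stalls at the middle dimension, leaving a nondegenerate symmetric (if $k$ even) or quadratic (if $k$ odd) form on $H_k(W')$ whose signature, respectively Arf--Kervaire invariant, obstructs further surgery, which is precisely why $bP_m$ can be nontrivial in even dimensions while it vanishes in odd ones.
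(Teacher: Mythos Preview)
The paper does not supply its own proof of this statement: it is simply quoted from Kervaire--Milnor as an input, with no argument given. So there is nothing in the paper to compare your proposal against. Your sketch is essentially the original Kervaire--Milnor surgery argument, and the overall architecture --- surgery to $k$-connectivity followed by the Poincar\'e--Lefschetz duality computation --- is correct and is exactly the cited proof.

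One point in your outline deserves sharpening. You write that surgery on the generators of $\pi_i(W)$ for $i\leq k$ ``kills the chosen classes'' and that iterating makes $W$ $k$-connected, with the only delicate issue being triviality of the normal bundles. This is accurate for $i<k$, but at the borderline $i=k$ in a $(2k{+}1)$-manifold the situation is more involved: the belt sphere of the surgery is again $k$-dimensional, so a single surgery on a class $\lambda\in H_k(W)$ can create a new class in $H_k$ rather than simply lowering the rank. Kervaire--Milnor handle this with a separate inductive argument (their Lemmas 5.6--5.8), distinguishing the cases where $\lambda$ has infinite order, is torsion of a given order, etc., and showing that a finite sequence of surgeries nonetheless reduces $H_k$ to zero. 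Your remark that the odd-dimensional hypothesis enters only through the stable-range normal-bundle computation therefore slightly undersells where the real work lies; the normal bundle is indeed trivial for $i\leq k$, but the nontrivial content of ``$bP_{2k+1}=0$'' is precisely this middle-dimensional analysis. Since you explicitly defer the surgery step to ``the standard surgery lemmas'', the proposal is still correct as an outline --- just be aware that those lemmas are doing more than bookkeeping at $i=k$.
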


In the symplectic setting, homotopy spheres are good candidates to test the nearby Lagrangian conjecture.

\begin{thm}[\cite{abouzaidframed}, extended by \cite{ekspheres}]\label{nearby}
	Let $m>4$ odd. If $\Sigma, \Sigma' \in \Theta_m$ and $T^*\Sigma$ is symplectomorphic to $T^*\Sigma'$, then $\lbrack \Sigma \rbrack = \pm \lbrack \Sigma'\rbrack \in \Theta_m / bP_{m+1}$.
\end{thm}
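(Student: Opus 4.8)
The plan is to realise the hypothesis as an exact Lagrangian embedding of a homotopy sphere and to extract from it a framed bordism invariant that Floer theory can compute. Write $\Psi\colon (T^*\Sigma,d\lambda)\to(T^*\Sigma',d\lambda')$ for the given symplectomorphism. Since $m\geq 2$ and $\Sigma$ is a homotopy sphere, $H^1(T^*\Sigma;\R)=0$, so $\Psi^*\lambda'-\lambda$ is exact and $\Psi$ preserves exactness of Lagrangians; hence $L:=\Psi(\Sigma)\subset T^*\Sigma'$ is a closed exact Lagrangian, diffeomorphic to $\Sigma$. By Theorem~\ref{heq} the composite $L\hookrightarrow T^*\Sigma'\to\Sigma'$ (embedding followed by the bundle projection) is a simple homotopy equivalence, so $\Sigma$ is (simply) homotopy equivalent to $\Sigma'$; symmetrically, $\Sigma'$ embeds as an exact Lagrangian in $T^*\Sigma$. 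Because $\ker\psi=bP_{m+1}$ in the sequence~\eqref{kmexact}, the assertion $[\Sigma]=\pm[\Sigma']\in\Theta_m/bP_{m+1}$ is equivalent to the equality $\psi([\Sigma])=\pm\psi([\Sigma'])$ in $\coker(J_m)$, and it is this equality that I would establish.

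Following \cite{abouzaidframed} and its refinement \cite{ekspheres}, one attaches to the exact Lagrangian $L$ a stable framing. The Lagrangian condition gives an isomorphism $TL\otimes\C\cong p^*(T\Sigma'\otimes\C)$ over $L$ (the restriction of $T(T^*\Sigma')$ to $L$, computed in two ways), and since the homotopy sphere $\Sigma'$ is stably parallelisable, $TL$ is stably trivial; a choice of parallelisation of $\Sigma'$ thus produces a stable framing $\mathrm{fr}_{\mathrm{Lag}}(L)$, and likewise a framing $\mathrm{fr}_{\mathrm{Lag}}(\Sigma')$ of the zero section. Under the Pontryagin--Thom construction these framed manifolds define classes in the stable stem $\pi_m^s$ whose images in $\coker(J_m)$ are $\psi([\Sigma])$ and $\psi([\Sigma'])$ respectively, independently of the chosen framings (two framings of a sphere differ by an element of $\pi_m(SO)$, whose Pontryagin--Thom effect is precisely $\mathrm{im}\,J_m$). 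The content of the proof is therefore to show that $[L,\mathrm{fr}_{\mathrm{Lag}}(L)]$ and $[\Sigma',\mathrm{fr}_{\mathrm{Lag}}(\Sigma')]$ agree in $\pi_m^s$ modulo $\mathrm{im}\,J_m$.

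This is where Floer theory enters. Fix a cotangent fibre $F=T^*_q\Sigma'$. The Abbondandolo--Schwarz isomorphism, in the form used by Abouzaid~\cite{abouzaidframed}, identifies the wrapped Floer homology $\HW^*(F,F)$ with the Pontryagin ring $C_{-*}(\Omega\Sigma')$, and under this identification $\HF^*(L,F)$ becomes a module equivalent, via the homotopy equivalence $L\simeq\Sigma'$, to the rank-one ``fundamental class'' module, exactly as for the zero section. The key step is to upgrade this algebraic fact to a geometric one: the relevant moduli space of $J$-holomorphic discs in $T^*\Sigma'$ with boundary on $L\cup F$ is compact (by exactness there is no disc or sphere bubbling) and, after generic perturbation, cut out transversally; its evaluation map to $L$ together with the linearised Cauchy--Riemann data equip it with a stable framing for which the Pontryagin--Thom class of the evaluation image is a generator. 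Comparing this with the corresponding (trivial) moduli problem for the zero section of $T^*\Sigma'$ forces $[L,\mathrm{fr}_{\mathrm{Lag}}(L)]\equiv[\Sigma',\mathrm{fr}_{\mathrm{Lag}}(\Sigma')]$ modulo $\mathrm{im}\,J_m$. Reducing in $\coker(J_m)$ yields $\psi([\Sigma])=\psi([\Sigma'])$, the sign $\pm$ accounting for the possibility that the homotopy equivalence $L\to\Sigma'$ reverses orientation (which replaces $\psi([\Sigma'])$ by $-\psi([\Sigma'])$). By the exactness of \eqref{kmexact}, since $\ker\psi=bP_{m+1}$, this reads $[\Sigma]=\pm[\Sigma']\in\Theta_m/bP_{m+1}$.

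The main obstacle is the geometric input of the third paragraph: endowing the moduli spaces of holomorphic discs with a coherent stable framing and proving the resulting bordism class is a unit. This demands transversality (or virtual perturbation) control of the non-regular configurations, exclusion of all bubbling, and a delicate match between the Floer-theoretic orientation/framing data and the topological framing $\mathrm{fr}_{\mathrm{Lag}}$. Abouzaid's original argument circumvented part of this under dimension restrictions; the extension in \cite{ekspheres} removes them by working systematically with the wrapped Fukaya category and the module structure over $C_{-*}(\Omega\Sigma')$, so that the framed-bordism identification, and hence the theorem, holds for all odd $m>4$.
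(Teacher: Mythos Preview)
The paper does not supply its own proof of this statement: Theorem~\ref{nearby} is quoted verbatim as a result of Abouzaid~\cite{abouzaidframed}, extended by Ekholm--Kragh--Smith~\cite{ekspheres}, and is used only as a black box (via Corollary~\ref{lagrembedding}). There is therefore no in-paper argument to compare against.

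That said, your sketch is a faithful outline of the strategy in the cited references. You correctly reduce the symplectomorphism hypothesis to an exact Lagrangian embedding $\Sigma\hookrightarrow T^*\Sigma'$, correctly identify the stable framing on $L$ coming from the Lagrangian condition $TL\otimes\C\cong T(T^*\Sigma')|_L$ together with stable parallelisability of $\Sigma'$, and correctly isolate the genuine content as a Floer-theoretic computation showing that the framed bordism classes of $L$ and of the zero section agree in $\coker(J_m)$. Your final paragraph is honest about where the real work lies: the construction of coherent stable framings on the moduli spaces and the identification of the resulting invariant as a unit is precisely what \cite{abouzaidframed} and \cite{ekspheres} carry out, and you do not pretend to reproduce it. As a summary of the method this is accurate; as a self-contained proof it is, as you implicitly acknowledge, only a roadmap pointing to the cited literature.
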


It will be practical to paraphrase the above theorem as follows.

\begin{cor}\label{lagrembedding}
	If $m>4$ is odd and $\Sigma \in \Theta_m \setminus bP_{m+1}$, then $\Sigma$ does not admit a Lagrangian embedding into $T^*S^m$.
\end{cor}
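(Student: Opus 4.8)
The plan is to deduce Corollary~\ref{lagrembedding} from Theorem~\ref{nearby}, the only work being the passage from a Lagrangian embedding to a symplectomorphism of cotangent bundles. Suppose, for contradiction, that $m>4$ is odd, that $\Sigma\in\Theta_m\setminus bP_{m+1}$, and that there is a Lagrangian embedding $\iota\colon\Sigma\hookrightarrow T^*S^m$. First I would note that exactness is automatic: since $\Sigma$ is a homotopy $m$-sphere with $m\geq 2$ we have $H^1(\Sigma;\R)=0$, so $[\iota^*\lambda_{S^m}]=0$ in $H^1_{\mathrm{dR}}(\Sigma)$ and $\iota$ is an exact Lagrangian embedding. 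Hence Theorem~\ref{heq} applies and the composite $\Sigma\xrightarrow{\iota}T^*S^m\xrightarrow{p}S^m$ is a (simple) homotopy equivalence.

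Next I would upgrade this to a symplectomorphism $T^*\Sigma\cong T^*S^m$. By the Weinstein neighbourhood theorem $\iota$ extends to a symplectic embedding of $D^*_\delta\Sigma$ onto a neighbourhood $U$ of $\iota(\Sigma)$, and for $R\gg 0$ the region $W:=D^*_RS^m\setminus\mathrm{int}\,U$ is a compact cobordism with $\partial_-W\cong S^*\Sigma$ and $\partial_+W\cong S^*S^m$; using the homotopy equivalence $\Sigma\simeq S^m$ (and $\pi_1(W)=\pi_1(D^*S^m)=1$, valid since $m\geq 3$) one checks that $W$ is an $h$-cobordism, indeed an $s$-cobordism by the simple homotopy statement. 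Since $\dim W=2m\geq 6$, the $s$-cobordism theorem gives $W\cong S^*\Sigma\times[0,1]$, whence $D^*S^m\cong D^*_RS^m=U\cup W\cong D^*_\delta\Sigma\cup\bigl(S^*\Sigma\times[0,1]\bigr)\cong D^*\Sigma$ as Liouville domains, compatibly with their contact boundaries; passing to completions yields $T^*\Sigma\cong T^*S^m$ symplectically. Applying Theorem~\ref{nearby} with $\Sigma'=S^m$ then forces $[\Sigma]=\pm[S^m]=0$ in $\Theta_m/bP_{m+1}$, i.e.\ $\Sigma\in bP_{m+1}$, contradicting the choice of $\Sigma$; therefore no such Lagrangian embedding exists.

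The hard part will be the middle step: producing an honest symplectomorphism of the two cotangent bundles from the Lagrangian embedding. The smooth part is the $s$-cobordism argument and depends crucially on Theorem~\ref{heq}; the two delicate points are verifying that $W$ is an $h$-cobordism (a Mayer--Vietoris comparison on $D^*_RS^m=U\cup W$ using $\Sigma\simeq S^m$) and upgrading the diffeomorphism of disc bundles to a Liouville isomorphism of completions, which is exactly where the nearby-Lagrangian technology is used. Everything after that is formal bookkeeping with the Kervaire--Milnor sequence~\eqref{kmexact}, trivial here because the relevant class is zero.
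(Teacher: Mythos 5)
Your reduction to Theorem~\ref{nearby} founders at the step you yourself flag as "the hard part": the $s$-cobordism theorem is a purely smooth statement, so even granting that $W=D^*_RS^m\setminus\mathrm{int}\,U$ is an $s$-cobordism, it only yields a \emph{diffeomorphism} $W\cong S^*\Sigma\times[0,1]$, and hence at best a diffeomorphism $D^*\Sigma\cong D^*S^m$. There is no mechanism in your argument that makes this product structure compatible with the Liouville forms, so the claimed isomorphism "as Liouville domains, compatibly with their contact boundaries" and the ensuing symplectomorphism $T^*\Sigma\cong T^*S^m$ are unjustified. Indeed, producing a symplectomorphism of cotangent bundles from a closed exact Lagrangian embedding is essentially the nearby Lagrangian conjecture and is open; if your argument worked it would prove far more than is currently known. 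Writing "which is exactly where the nearby-Lagrangian technology is used" does not supply the missing argument, and Theorem~\ref{heq} (homotopy equivalence of the projection) does not provide it either.

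The logical dependence actually runs in the opposite direction. The cited results of Abouzaid \cite{abouzaidframed} and Ekholm--Kragh--Smith \cite{ekspheres} are stated directly for Lagrangian embeddings: for $m>4$ odd, a Lagrangian embedding $\Sigma\hookrightarrow T^*S^m$ forces $[\Sigma]=\pm[S^m]=0$ in $\Theta_m/bP_{m+1}$, i.e.\ $\Sigma\in bP_{m+1}$ (since $bP_{m+1}$ is a subgroup, the sign is harmless). Corollary~\ref{lagrembedding} is therefore just the contrapositive restatement of those theorems, which is why the paper introduces it as a "paraphrase" with no further proof; Theorem~\ref{nearby} is the \emph{derived} statement, obtained by applying the embedding version to the image of the zero section under a hypothetical symplectomorphism $T^*\Sigma\cong T^*\Sigma'$. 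Your first paragraph (exactness is automatic since $H^1(\Sigma;\R)=0$, so Theorem~\ref{heq} applies) is fine but not needed once you quote the embedding-level results; the cobordism construction should be dropped.
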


\begin{definition}\label{abolishexotic}
	By personal choice of the author, we depart from the classic terminology of \emph{exotic} manifolds. Instead, we will call a smooth manifold that is homeomorphic, but not diffeomorphic, to the standard sphere an \emph{AD} sphere (AD stands for Alternative Differentiable structure).
	Correspondingly, a smooth manifold that is homeomorphic, but not diffeomorphic, to the standard $\CP$ will be called an \emph{AD} projective space. Finally, a smooth manifold that is homotopy equivalent, but not homeomorphic, to the standard projective space will be called an \emph{AT} projective space (where AT stands for Alternative Topological structure).
\end{definition}

\subsubsection{Results}
The results of this section hinge on the existence of homotopy projective spaces that are obtained as the reduced space of a circle action on an AD sphere. It is not always possible to relate
an $n$-dimensional AD/AT projective space to a $(2n+1)$-dimensional AD sphere in this way. Below, we start by exploring a few facts about AD/AT projective spaces, after which we can discuss three interesting examples where the desired phenomenon is observed (the spaces of Theorems \ref{cor1}, \ref{cor2}).

\begin{definition}[{{\cite{kawakuboinertia}}}]
	The inertia group $I(M)$ of an oriented closed smooth manifold $M$ is the subgroup of $\Theta_m$ consisting of homotopy spheres $S \in \Theta_m$ such that the connected sum $M\# S$ is in the same diffeomorphism class as $M$.
\end{definition}

If $I(\CP)=0$ and $\Theta_{2n}$ is non-trivial, one can build an AD projective space as follows. Given an AD sphere $\Sigma \in \Theta_{2n}$ the connected sum $\CP \# \Sigma$ (a $0$-dimensional surgery) is another manifold homeomorphic to $\CP$ but not diffeomorphic to it. For $n \geq 8$, there are examples for which the inertia group $I(\CP)$ is non-trivial (see \cite{kawakuboinertia}); in those cases the smooth structure of the resulting manifold is not automatically distinct from the standard smooth structure on $\CP$. In dimension four, we know:
\begin{thm}[{{\cite{kasilingamprojective}}}]\label{cp4thm}
	There are two possible distinct smooth structures on a manifold homeomorphic to $\C\PP^4$: the standard $\C\PP^4$-structure, and the one on $\C\PP^4\#\Sigma^8$, where $\Sigma^8 \in \Theta_8$ is the unique AD $8$-sphere.
\end{thm}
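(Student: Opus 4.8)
The plan is to combine Kirby--Siebenmann smoothing theory with a short obstruction-theoretic computation. First recall that $\Theta_8\cong\Zmod$ by Kervaire--Milnor (\cite{kmspheres}): since $9$ is odd, $bP_9=0$, so $\psi$ embeds $\Theta_8$ into $\coker(J_8)\cong\Zmod$, and one checks this is onto; thus there is a unique nontrivial homotopy $8$-sphere $\Sigma^8$. Now let $X$ be the closed topological manifold underlying $\C\PP^4$. Since $\dim X=8\geq5$ and $X$ carries a smooth structure, Kirby--Siebenmann smoothing theory identifies the set of concordance (equivalently, isotopy) classes of smooth structures on $X$ with the group $[X,\mathrm{TOP}/\mathrm{O}]$, naturally in $X$.

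The computation of $[\C\PP^4,\mathrm{TOP}/\mathrm{O}]$ is the core step. One uses $\pi_i(\mathrm{TOP}/\mathrm{O})\cong\Theta_i$ for $i\neq3$ (so $\pi_i=0$ for $i\leq6$, $\pi_7\cong\Z/28$, $\pi_8\cong\Zmod$) together with $\pi_3(\mathrm{TOP}/\mathrm{O})\cong\Zmod$. Because $\widetilde H^*(\C\PP^4;A)$ is concentrated in the even degrees $2,4,6,8$, the only potentially nonzero group $H^i(\C\PP^4;\pi_i(\mathrm{TOP}/\mathrm{O}))$ is $H^8(\C\PP^4;\Theta_8)\cong\Zmod$ — in particular the $\pi_3$-term pairs with $H^3(\C\PP^4;\Zmod)=0$. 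Running the Postnikov tower of $\mathrm{TOP}/\mathrm{O}$ against the skeletal filtration of $\C\PP^4$ then shows $|[\C\PP^4,\mathrm{TOP}/\mathrm{O}]|\leq2$ and that the leading obstruction $[\C\PP^4,\mathrm{TOP}/\mathrm{O}]\to H^8(\C\PP^4;\Theta_8)$ is injective. It is also onto, since connected sum with $\Sigma^8$ changes the concordance class of a smoothing by the image of $[\Sigma^8]\in\Theta_8=[S^8,\mathrm{TOP}/\mathrm{O}]$ under the degree-one collapse $\C\PP^4\onto\C\PP^4/\C\PP^3\cong S^8$, which is the generator of $H^8(\C\PP^4;\Theta_8)$. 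Hence $[\C\PP^4,\mathrm{TOP}/\mathrm{O}]\cong\Zmod$, with the two concordance classes represented by the standard $\C\PP^4$ (the zero class) and by $\C\PP^4\#\Sigma^8$.

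Finally one must pass from concordance classes to diffeomorphism classes, i.e.\ check the two structures stay distinct. Diffeomorphism classes of smooth manifolds homeomorphic to $\C\PP^4$ are precisely the orbits of $\pi_0\mathrm{Homeo}(\C\PP^4)$ acting on the two-element set $[\C\PP^4,\mathrm{TOP}/\mathrm{O}]$ by pullback of smooth structures. Under the natural bijection $[\C\PP^4,\mathrm{TOP}/\mathrm{O}]\cong H^8(\C\PP^4;\Zmod)$, a self-homeomorphism acts on top cohomology by multiplication by its degree $\pm1\equiv1\pmod2$, so the action is trivial; the two concordance classes therefore give two distinct diffeomorphism classes, and in particular $I(\C\PP^4)=0$. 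I expect this last step to be the real obstacle: what makes it work for $\C\PP^4$ is exactly the cohomological vanishing that keeps $[\C\PP^4,\mathrm{TOP}/\mathrm{O}]$ as small as $\Zmod$, together with the degree argument for the $\mathrm{Homeo}$-action. For higher-dimensional $\C\PP^n$, where $H^{4j}\neq0$ in several degrees and inertia groups genuinely appear (cf.\ \cite{kawakuboinertia}), the corresponding step requires the surgery-theoretic analysis of \cite{kasilingamprojective}.
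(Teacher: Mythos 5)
The paper gives no proof of this statement---it is quoted directly from \cite{kasilingamprojective}---so your proposal is measured against that reference. Your first two paragraphs are correct and follow essentially the same route as Kasilingam: $\Theta_8\cong\Zmod$ via the Kervaire--Milnor sequence, the identification of concordance classes of smoothings with $[\C\PP^4,\mathrm{TOP}/\mathrm{O}]$, the vanishing of every obstruction group except $H^8(\C\PP^4;\Theta_8)\cong\Zmod$ (the $\pi_3$- and $\pi_7$-terms pair with odd-degree cohomology, which vanishes), and the identification of the nonzero class with the smoothing $\C\PP^4\#\Sigma^8$ via the degree-one collapse onto $S^8$. This correctly yields that every smooth manifold homeomorphic to $\C\PP^4$ is concordant, hence diffeomorphic, to one of $\C\PP^4$ or $\C\PP^4\#\Sigma^8$, and that these two are distinct \emph{up to concordance}.

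The gap is in the final step, and it is not a technicality. The action of a self-homeomorphism $f$ on concordance classes is not the linear action $\theta\mapsto f^*\theta$ on $[\C\PP^4,\mathrm{TOP}/\mathrm{O}]\cong H^8(\C\PP^4;\Zmod)$. Once you fix the standard structure $\Sigma_0$ as base point, the action is affine: $\theta_{f^*\Sigma}=f^*\theta_{\Sigma}+\theta_{f^*\Sigma_0}$, since the difference class is natural only when both structures are pulled back. Your degree argument kills the linear part, but says nothing about the translation term $\theta_{f^*\Sigma_0}$. That term is nonzero precisely when $f$ pulls the standard structure back to one concordant to the smoothing of $\C\PP^4\#\Sigma^8$, i.e.\ precisely when $\C\PP^4\#\Sigma^8$ is diffeomorphic to $\C\PP^4$. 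So the assertion that the $\mathrm{Homeo}$-action is trivial is equivalent to the statement $\Sigma^8\notin I(\C\PP^4)$ that you are trying to prove; the argument is circular at exactly the point you flagged as ``the real obstacle.'' This is the content that \cite{kasilingamprojective} supplies by other means, namely a genuine computation of the inertia group of $\C\PP^4$ using surgery-theoretic input beyond the smoothing-theory count. To close the gap you would need either that computation, or to show that every self-homeomorphism of $\C\PP^4$ is topologically pseudo-isotopic to a diffeomorphism of the standard structure.
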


In contrast, it is known that there is an abundance of AT projective spaces; for even integers $n\geq 4$, there are infinitely many AT projective spaces, distinguished by the first Pontryagin class $p_1 \in H^4(\CP; \Z)$ (\cite{hsiang}).

Is there a way to associate an AD sphere to an AD/AT projective space? Given an AD/AT projective space $K$, the unit bundle of the line bundle $\mathcal{L} \to K$ satisfying $c_1(\mathcal{L})=\alpha_K$ (where $\alpha_K \in H^2(K;\Z)$ is the cohomology generator) could still be diffeomorphic to a standard sphere. Note that in the special case where the projective space is a surgery of the form $K=\CP\#\Sigma$, for an AD sphere $\Sigma \in \Theta_{2n}$, the $(2n+1)$-sphere obtained as the unit bundle of $\mathcal{L}\to K$ is given by $stab(\Sigma) \in \Theta_{2n+1}$ (where $stab$ is the map constructed in Section \ref{aptwist}, see Remark \ref{rmkstab}).

On the other hand, one could examine $S^1$-quotients of AD spheres $\widetilde{S} \in \Theta_{2n+1}$. A priori this is not always a successful strategy, as not all homotopy spheres admit a smooth free circle action. But if such an action exists, then the quotient $P:=\widetilde{S}/S^1$ resulting from it is an AD or AT projective space. Namely, this reduced space is necessarily homotopy equivalent to a projective space (\cite{hsiang}), but it is at least not diffeomorphic to the standard $\CP$ (since circle bundles over $P$ are classified by elements of $H^2(P;\Z)$, and if $P$ was the standard projective space, then the total space of the line bundle would have to be a standard sphere).

\begin{thm}[{{\cite[Sections 2-3]{jamesfree}}}]\label{9sphere}
	There is a homotopy $9$-sphere $\widetilde{S}$ such that \begin{enumerate}[label=(\roman*)]
		\item$\widetilde{S} \notin bP_{10} \cong \Zmod$.
		\item $\widetilde{S}$ admits a free action of $S^1$.
		\item The quotient $P:=\widetilde{S} /S^1$ is not homeomorphic to $\C\PP^4$.
		\item $P$ and the standard $\C\PP^4$ have the same tangent bundles.
	\end{enumerate}
\end{thm}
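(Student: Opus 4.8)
The statement to prove is Theorem \ref{9sphere}, which asserts the existence of a homotopy $9$-sphere $\widetilde{S}$ with four properties: (i) it lies outside $bP_{10} \cong \Zmod$; (ii) it admits a free $S^1$-action; (iii) the quotient $P := \widetilde{S}/S^1$ is not homeomorphic to $\C\PP^4$; and (iv) $P$ has the same tangent bundle as the standard $\C\PP^4$. Since the statement is attributed to \cite[Sections 2-3]{jamesfree}, the ``proof'' here should really be an outline of how one constructs such an example following James's (and related) work on free circle actions on homotopy spheres; the plan below describes how I would reconstruct it.

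\textbf{Approach.} The plan is to build $\widetilde{S}$ as the total space of a circle bundle over a suitable AT projective space $P$ homotopy equivalent to $\C\PP^4$, rather than the other way around, and then check that the total space is a homotopy $9$-sphere outside $bP_{10}$. First I would recall the Hsiang--Szczarba--Kawakubo type classification of smooth manifolds homotopy equivalent to $\C\PP^4$: such a $P$ is determined up to finite ambiguity by its first Pontryagin class $p_1(P) \in H^4(P;\Z) \cong \Z$, and as noted in the excerpt (citing \cite{hsiang}), for even $n \geq 4$ there are infinitely many such $P$ distinguished by $p_1$. I would select $P$ so that $p_1(P) = p_1(\C\PP^4)$ — this is possible because the tangent bundle constraint $TP \cong$ (pullback of $T\C\PP^4$) can be arranged while $P$ is still not homeomorphic to the standard model, by choosing $P$ to differ from $\C\PP^4$ in a higher-order invariant (a surgery obstruction / normal invariant in $[\C\PP^4, G/O]$ that is detected beyond $p_1$). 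This immediately gives (iii) and (iv).

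\textbf{Key steps, in order.} (1) Fix the cohomology generator $\alpha_P \in H^2(P;\Z)$ and form the complex line bundle $\mathcal{L} \to P$ with $c_1(\mathcal{L}) = \alpha_P$; let $\widetilde{S}$ be its unit circle bundle, with the free fibrewise $S^1$-action, giving (ii) by construction. (2) Use the Gysin sequence of $S^1 \to \widetilde{S} \to P$ together with $H^*(P) \cong \Z[\alpha_P]/\alpha_P^5$ and cup product with $\alpha_P$ being an isomorphism in the relevant degrees to show $\widetilde{S}$ is a homotopy $9$-sphere: the Gysin sequence forces $H^*(\widetilde{S})$ to agree with $H^*(S^9)$, and simple-connectedness of $\widetilde{S}$ follows since $\pi_1(P)=0$ and the bundle is orientable, so by Whitehead $\widetilde{S}$ is a homotopy sphere. (3) Compute the class $[\widetilde{S}] \in \Theta_9$ and show it is not in $bP_{10}$. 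This is the crux: I would relate the diffeomorphism type of $\widetilde{S}$ to the smooth invariants of $P$ (normal invariants, Pontryagin classes), using the fact that $bP_{10} \cong \Zmod$ is detected by the Kervaire/Arf invariant, and exhibit that the chosen $P$ forces $[\widetilde{S}]$ to have nontrivial image in $\coker(J_9)$ under $\psi$ in the Kervaire--Milnor sequence \eqref{kmexact}; equivalently, $\widetilde{S}$ does not bound a parallelisable $10$-manifold. Here one uses that $\Theta_9$ is large enough (it has order $8$, with $bP_{10}$ of order $2$) and that the construction above can realise a generator of $\Theta_9/bP_{10}$ by an appropriate choice of $P$ among the infinitely many AT models.

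\textbf{Main obstacle.} The hard part will be step (3): controlling the smooth structure on the total space $\widetilde{S}$ precisely enough to certify $[\widetilde{S}] \notin bP_{10}$, while simultaneously keeping $p_1(P)$ equal to $p_1(\C\PP^4)$ so that (iv) holds. One must track how the normal invariant of $P$ in $[\C\PP^4, G/O]$ propagates to the normal invariant (and hence the exotic-sphere class) of $\widetilde{S}$, and show that the piece of the normal invariant responsible for $p_1$ can be set to zero while a higher piece survives and lands outside $bP_{10}$ after passing to the circle bundle. This is where I would invoke the detailed surgery-theoretic computations of \cite{jamesfree}; the rest (the Gysin-sequence homotopy-sphere check, the existence of the free action, the non-homeomorphism of $P$ to $\C\PP^4$, and the tangent bundle identity) is comparatively routine once $P$ has been chosen correctly.
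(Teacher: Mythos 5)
First, a point of order: the paper does not prove Theorem \ref{9sphere} at all --- it is imported verbatim from \cite[Sections 2--3]{jamesfree} and used as a black box in the proof of Theorem \ref{cor1} --- so there is no internal argument to measure your proposal against. Judged on its own terms, the soft parts of your outline are fine: taking $\widetilde{S}$ to be the unit circle bundle of the line bundle $\mathcal{L}\to P$ with $c_1(\mathcal{L})$ a generator of $H^2(P;\Z)$ gives the free $S^1$-action by construction, and the Gysin-sequence/Whitehead argument that the total space is a simply connected homotopy $9$-sphere is correct and standard.

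The genuine gap is that everything that makes the theorem nontrivial is concentrated in your step (3), and there you simply ``invoke the detailed surgery-theoretic computations of \cite{jamesfree}'' --- i.e., you defer the decisive step to the very result you are reconstructing. Concretely, you must exhibit a single smooth $P\simeq \C\PP^4$ satisfying three constraints at once: (a) $P$ is not homeomorphic to $\C\PP^4$, which requires it to differ in a $\Zmod$-valued splitting invariant of the topological normal invariant set, not merely in $p_1$; (b) $TP$ is isomorphic to the pullback of $T\C\PP^4$ --- this is a statement about the full bundle (an element of $\widetilde{KO}(P)$ plus unstable data), so arranging $p_1(P)=p_1(\C\PP^4)$, which is all your proposal does, is not yet condition (iv); and (c) the associated circle bundle represents a class in $\Theta_9$ with nontrivial image in $\coker(J_9)$. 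None of (a)--(c) is independently verified in your outline, and there is no a priori reason they can be achieved simultaneously: a $P$ satisfying (a) and (b) could perfectly well have standard, or $bP_{10}$-valued, total space. Their simultaneous realizability is exactly the content of James's theorem. Note also that the citation (and the paper's Remark \ref{rmkaction}) indicates the construction runs in the opposite direction --- one produces free $S^1$-actions on specified elements of $\Theta_9\setminus bP_{10}$ and then identifies the quotients --- which is precisely what supplies the quantitative control over $[\widetilde{S}]\in\Theta_9$ that your top-down scheme lacks. As written, the proposal is a plausible organizational outline but not a proof.
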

\begin{rmk}\label{rmkaction}
	In \cite[Section 3]{jamesfree}, James notes that there is another $S^1$-action on $\widetilde{S}$ with quotient space $P\#\Sigma^8$. The latter is an AT projective space that is not diffeomorphic to $P$. 
\end{rmk}

We now have enough material to state and prove the results of this section.

\begin{thm}\label{cor1}
	There is a manifold $P$ homotopy equivalent to $\C \PP^4$ and with the same first Pontryagin class such that neither $P$ nor $P \# \Sigma^8$ admit an exact Lagrangian embedding into $T^*\C\PP^4$.
\end{thm}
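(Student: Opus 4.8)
The plan is to assemble the statement from two distinct non-embedding obstructions, one ruling out $P$ and one ruling out $P\#\Sigma^8$, each obtained by pushing a Lagrangian embedding up through the Hopf correspondence to the level of homotopy spheres and then invoking Corollary \ref{lagrembedding}. Recall from Theorem \ref{9sphere} that there is a homotopy $9$-sphere $\widetilde{S}\notin bP_{10}$ carrying a free $S^1$-action whose quotient $P=\widetilde{S}/S^1$ is homotopy equivalent but not homeomorphic to $\C\PP^4$, with the same tangent bundle (hence the same first Pontryagin class) as the standard $\C\PP^4$; by Remark \ref{rmkaction} there is a second free $S^1$-action on $\widetilde{S}$ with quotient $P\#\Sigma^8$. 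The key point is that both $P$ and $P\#\Sigma^8$ are the symplectic reductions of a Hamiltonian $S^1$-action on $T^*\widetilde{S}$, and this is exactly the geometric situation encoded by the Hopf correspondence of Section \ref{hopfcpn}.

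First I would set up the cohomological bookkeeping: the generator $\alpha\in H^2(P;\Z)$ (resp. $H^2(P\#\Sigma^8;\Z)$) is precisely the Euler class of the circle bundle $\widetilde{S}\to P$, so Assumption \ref{assptcx} is satisfied tautologically for a single Lagrangian $K=P$ (resp. $K=P\#\Sigma^8$) sitting as the zero section inside $T^*P$ (resp. $T^*(P\#\Sigma^8)$). By Proposition \ref{cxlocalhopf} (more precisely the construction preceding it, Lemma \ref{convexlineb}), the associated $\C^*$-bundle over $T^*P$ is a Liouville manifold $(Y,\Omega)$ containing the Lagrangian sphere obtained as the unit-circle-bundle lift of the zero section; but the line bundle $\mathcal L\to P$ restricting to $\mathcal O_{\C\PP^4}(-1)$-type data is modelled fibrewise on the Hopf map, so that lifted sphere is diffeomorphic to $\widetilde{S}$, and $(Y,\Omega)$ is (Liouville deformation equivalent to) $T^*\widetilde{S}$. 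Thus an exact Lagrangian embedding $P\hookrightarrow T^*P\cong T^*\C\PP^4$ (using Weinstein, $T^*P$ is symplectomorphic to a neighbourhood, but here I want the ambient to literally be $T^*\C\PP^4$, so I would instead argue: if $P$ embeds exactly as a Lagrangian in $T^*\C\PP^4$, then by Theorem \ref{heq} the projection $P\to\C\PP^4$ is a homotopy equivalence — consistent — and one transports the $S^1$-bundle and the Hopf correspondence across) produces, via the functor $\Theta_\Gamma$ and the induced embedding of the spherical lift, an exact Lagrangian embedding $\widetilde{S}\hookrightarrow T^*S^9$. The same argument applied to the second $S^1$-action gives an exact Lagrangian embedding $\widetilde{S}\hookrightarrow T^*S^9$ starting from an embedding $P\#\Sigma^8\hookrightarrow T^*\C\PP^4$.

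In both cases the conclusion is the same: $\widetilde{S}$ would admit an exact Lagrangian embedding into $T^*S^9$, and since $9>4$ is odd and $\widetilde{S}\notin bP_{10}$, this contradicts Corollary \ref{lagrembedding}. The statement about the first Pontryagin class is immediate from Theorem \ref{9sphere}(iv) together with the Remark after Theorem \ref{heq} (rational Pontryagin classes are homeomorphism invariants, and $P$ has the standard tangent bundle). The main obstacle — and the step that requires the most care — is making the transport of the embedding across the Hopf correspondence rigorous at the level of \emph{Lagrangian submanifolds} rather than just Fukaya-categorical objects: one needs that an exact Lagrangian $\Lambda\subset T^*P$ pulls back, under the circle bundle $V\to W$ restricted over a Weinstein neighbourhood of $\Lambda$, to an exact Lagrangian in the total space $Y$, and that when $\Lambda=P$ is the zero section this pullback is the sphere $\widetilde{S}$ as a \emph{smooth} submanifold; the smooth identification is where James's explicit description of the circle actions in Theorem \ref{9sphere} is essential, since it pins down which homotopy $9$-sphere appears. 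A secondary subtlety is that Theorem \ref{heq} is needed up front to know the embedded Lagrangian is homotopy equivalent to the zero section, so that the circle bundle data (the class $\alpha$) can even be defined on it; here one uses that $H^2$ of a homotopy $\C\PP^4$ is $\Z$ and the generator restricts correctly, which is automatic from the homotopy equivalence.
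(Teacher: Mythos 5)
Your proposal is correct and follows essentially the same route as the paper: take James's homotopy $9$-sphere $\widetilde{S}\in\Theta_9\setminus bP_{10}$ with its free $S^1$-action(s), use the Hopf correspondence to lift a hypothetical exact Lagrangian embedding of $P$ (resp.\ $P\#\Sigma^8$) in $T^*\C\PP^4$ to an exact Lagrangian embedding $\widetilde{S}\hookrightarrow T^*S^9$, and contradict Corollary \ref{lagrembedding}, with the Pontryagin class statement coming from Theorem \ref{9sphere}(iv). The brief detour about building a $\C^*$-bundle over $T^*P$ is unnecessary (the correct ambient picture is the coisotropic $\mu^{-1}(0)\subset T^*S^9$ over $T^*\C\PP^4$), but you self-correct and land on the paper's argument.
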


\proof
	Consider the homotopy $9$-sphere $\widetilde{S}$ admitting a free $S^1$-action of Theorem \ref{9sphere}. The quotient $P=\widetilde{S} /S^1$ is homotopy equivalent to $\C\PP^4$, but by Theorem \ref{9sphere} (iii), it is not homeomorphic to it. The first Pontryagin classes of $P$ and $\C\PP^4$ coincide by Theorem \ref{9sphere} (iv).
	Assume there is a Lagrangian embedding $P \hookrightarrow T^*\C\PP^4$. The Hopf correspondence (see Section \ref{localmodelcpn}) lifts $P$ to $\widetilde{S}$, giving an exact Lagrangian embedding $\widetilde{S} \hookrightarrow T^*S^9$. However, by Theorem \ref{9sphere}, $\widetilde{S} \in \Theta_9\setminus bP_{10}$, so the existence of the Lagrangian embedding contradicts Corollary \ref{lagrembedding}. \\
	The same argument applies to prove that $P \#\Sigma^8$ does not embed as Lagrangian into $T^*\C\PP^4$. Namely, the Hopf correspondence would, in that case too, lift (via the $S^1$-action of Remark \ref{rmkaction}) $P\#\Sigma^8$ to $\widetilde{S}$ (\cite[Section 3]{jamesfree}).
\qed

\begin{rmk}
	Our techniques do not allow to prove whether the AD projective space $\C\PP^4 \#\Sigma^8$ of Theorem \ref{cp4thm} does admit a Lagrangian embedding into $T^*\C\PP^4$ or not.
\end{rmk}

\begin{thm}\label{cor2}
	Let $\Sigma^{14}\in \Theta_{14}$ be the unique AD $14$-sphere. Then $\C\PP^7\#\Sigma^{14}$ does not admit an exact Lagrangian embedding into $T^*\C\PP^{7}$.
\end{thm}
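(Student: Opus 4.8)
The plan is to mimic the proof of Theorem \ref{cor1}, replacing the homotopy $9$-sphere of \cite{jamesfree} with an appropriate homotopy $15$-sphere obtained from the unique AD $14$-sphere $\Sigma^{14}$. First I would recall that $bP_{16}$ is a (large, but in any case) subgroup of $\Theta_{15}$, and more importantly that $bP_{15}=0$ since $15$ is odd, so by \cite[Theorem 5.1]{kmspheres} the map $\psi\colon \Theta_{15}\to \coker(J_{15})$ is surjective. The goal is to produce a homotopy $15$-sphere $\widetilde{S}\in\Theta_{15}$ which (a) lies outside $bP_{16}$, (b) carries a free smooth $S^1$-action, and (c) has quotient $\widetilde{S}/S^1$ diffeomorphic to $\C\PP^7\#\Sigma^{14}$; then, exactly as in Theorem \ref{cor1}, a hypothetical exact Lagrangian embedding $\C\PP^7\#\Sigma^{14}\hookrightarrow T^*\C\PP^7$ would lift through the Hopf correspondence (Lemma \ref{localmodelcpn}, Proposition \ref{cxlocalhopf}) to an exact Lagrangian embedding $\widetilde{S}\hookrightarrow T^*S^{15}$, contradicting Corollary \ref{lagrembedding}.

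The construction of $\widetilde{S}$ is where the work lies. The natural candidate is the total space of the unit circle bundle of the line bundle $\mathcal{L}\to \C\PP^7\#\Sigma^{14}$ with $c_1(\mathcal{L})=\alpha$ the cohomology generator — this is a smooth $S^1$-principal bundle with base an AD projective space, hence a homotopy $15$-sphere equipped with a free $S^1$-action whose quotient is $\C\PP^7\#\Sigma^{14}$ by construction, so (b) and (c) come for free. Concretely, by Remark \ref{rmkstab} this homotopy sphere is $\mathrm{stab}(\Sigma^{14})\in\Theta_{15}$, the image of $\Sigma^{14}$ under the suspension/stabilisation map. So the remaining point — and this is the main obstacle — is to verify (a): that $\mathrm{stab}(\Sigma^{14})\notin bP_{16}$. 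I would approach this via the exact sequence \eqref{kmexact} and the compatibility of $\mathrm{stab}$ with the maps $\psi$: the class of $\mathrm{stab}(\Sigma^{14})$ in $\coker(J_{15})$ is the image of $[\Sigma^{14}]\in\coker(J_{14})$ under the action of the stable stem, and $\Sigma^{14}$ being the nontrivial element of $\Theta_{14}\cong\Zmod$ maps to the nontrivial element $\psi(\Sigma^{14})\in\coker(J_{14})\cong\Zmod$ (since $bP_{15}=0$, $\psi$ is injective here too). I would then invoke the computation — attributed in the acknowledgements to Krannich and Randall-Williams, and underlying Lemma \ref{nontrivialcriterion} — that multiplication by the relevant element of $\pi_1(S)\cong\Zmod$ sends this class to a nonzero element of $\coker(J_{15})$; this forces $\psi(\mathrm{stab}(\Sigma^{14}))\neq 0$, hence $\mathrm{stab}(\Sigma^{14})\notin \ker\psi=bP_{16}$.

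To organise the write-up I would: (1) state and prove the lemma that $\mathrm{stab}\colon\Theta_{14}\to\Theta_{15}$ sends $\Sigma^{14}$ out of $bP_{16}$, using \eqref{kmexact}, the odd-dimension vanishing $bP_{15}=0$, and the stable-stem action computation of Lemma \ref{nontrivialcriterion}; (2) observe that $\C\PP^7\#\Sigma^{14}$ is genuinely an AD projective space (not diffeomorphic to $\C\PP^7$) because the associated circle bundle is $\mathrm{stab}(\Sigma^{14})$, which is not the standard sphere by (1), whereas the unit circle bundle over the standard $\C\PP^7$ is $S^{15}$; (3) carry out the lifting argument verbatim as in Theorem \ref{cor1} — assume a Lagrangian embedding, apply the Hopf correspondence to get $\widetilde{S}=\mathrm{stab}(\Sigma^{14})\hookrightarrow T^*S^{15}$ exact Lagrangian, and cite Corollary \ref{lagrembedding} together with $\mathrm{stab}(\Sigma^{14})\in\Theta_{15}\setminus bP_{16}$ to reach a contradiction. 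The one genuinely delicate input is step (1): the possibility that $\mathrm{stab}(\Sigma^{14})$ could a priori be standard or land in $bP_{16}$ is exactly what the topological-modular-forms/stable-stem machinery of Lemma \ref{nontrivialcriterion} is designed to rule out, and I would lean on that lemma rather than attempt the homotopy-theoretic computation from scratch.
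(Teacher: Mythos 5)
Your overall strategy is the same as the paper's: produce a homotopy $15$-sphere $\widetilde S\in\Theta_{15}\setminus bP_{16}$ with a free $S^1$-action and quotient $\C\PP^7\#\Sigma^{14}$, lift a hypothetical Lagrangian embedding through the Hopf correspondence to an exact Lagrangian $\widetilde S\hookrightarrow T^*S^{15}$, and contradict Corollary \ref{lagrembedding}. The difference is in how the existence of $\widetilde S$ is sourced: the paper simply cites Bredon's Theorem 4.6, which already asserts the existence of such a $\Sigma^{15}\in\Theta_{15}\setminus bP_{16}$ with free $S^1$-action and quotient $\C\PP^7\#\Sigma^{14}$, whereas you reconstruct $\widetilde S$ as the unit circle bundle over $\C\PP^7\#\Sigma^{14}$ (i.e.\ $\mathrm{stab}(\Sigma^{14})$, consistently with Remark \ref{rmkstab}) and then try to prove $\mathrm{stab}(\Sigma^{14})\notin bP_{16}$ yourself. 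Your identification of the circle bundle and the lifting argument are fine.

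The gap is in your step (1). You propose to ``invoke the computation underlying Lemma \ref{nontrivialcriterion}'', but that lemma concerns the map $\mathrm{stab}\colon\Theta_{4k+3}\to\Theta_{4k+4}$ in dimensions $2n+1\in\{39,47,51,59\}$; it says nothing about $\mathrm{stab}\colon\Theta_{14}\to\Theta_{15}$, which goes from an even to an odd dimension and involves a different stem. What you actually need is the separate fact that $\psi(\Sigma^{14})\cdot\eta\neq 0$ in $\coker(J_{15})$. This is true, but requires its own justification: since $bP_{15}=0$ and the Kervaire invariant is nontrivial in dimension $14$, the image of $\Theta_{14}\cong\Zmod$ in $\coker(J_{14})\cong\Zmod\oplus\Zmod$ is generated by $\kappa$ (not $\sigma^2$), and one must then use the classical computation that $\eta\kappa$ generates $\coker(J_{15})\cong\Zmod$. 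Without supplying this input (or, as the paper does, citing Bredon's Theorem 4.6 which packages it), your argument does not close: a priori $\mathrm{stab}(\Sigma^{14})$ could land in $bP_{16}$, in which case Corollary \ref{lagrembedding} gives no contradiction.
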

\proof
	First note that $\Theta_{14} \cong \Zmod$ and $bP_{15}=0$ (\cite{kmspheres}), so there is a unique AD $14$-sphere, denoted by $\Sigma^{14}$ as in the statement.
	By \cite[Theorem 4.6]{bredonpairing} there is an AD sphere $\Sigma^{15} \in \Theta_{15} \setminus bP_{16}$ admitting a free $S^1$-action, with quotient $P:=\C\PP^7 \# \Sigma^{14}$.
	If $P$ admitted a Lagrangian embedding in $T^* \C \PP^7$, the Hopf correspondence would yield a Lagrangian embedding $\Sigma^{15}\hookrightarrow T^*S^{15}$. But $\Sigma^{15} \notin bP_{16}$, which would contradict Corollary \ref{lagrembedding} (for the same reasons as in the proof of Theorem \ref{cor1}).
\qed

\subsection{Framing of projective twists}\label{aptwist}
The background material that we use to examine the question of framing of projective twists is based on \cite{dretwist}, in which Dimitroglou Rizell and Evans proved that the Hamiltonian isotopy class of a Dehn twist does in general depend on a choice of framing. \\
Let $(M, \omega)$ be a symplectic manifold. Given a framing of a Lagrangian sphere $L\subset M$, i.e a diffeomorphism $S^n \lra L$ (see Section \ref{deftwist}), the precomposition with an element $F\in \Diff(S^m)$ yields another framing.

Consider the symplectomorphism $F^*\colon T^*S^m \to T^*S^m$ induced by the lift of $F$ to the cotangent bundle $T^*S^m$. The standard model twist $\tau_{S^m}^{loc} \in \Symp_{ct}(T^*S^m)$ can be replaced by $F^* \circ  \tau_{S^m}^{loc} \circ (F^{-1})^*$, and the latter can be implanted in a Weinstein neighbourhood as in Definition \ref{weinsteinextend} to produce a new element in $\Symp_{ct}(M)$. To study framings of twists, we can then restrict to these parametrisations of the standard model twist $\tau_{S^m}:=\tau_{S^m}^{loc} \in \Symp_{ct}(T^*S^m)$.

A core fact for the study of parametrisations of twists is the isomorphism $\pi_0(\Diff^+(S^m)) \cong \Theta_{m+1}$ (\cite{kmspheres}, \cite{cerf}). In particular, given a non-trivial diffeomorphism $F\in \Diff^+(S^m)$, there is a $(m+1)$-dimensional AD sphere constructed as follows.

\begin{definition}
	Let $F \in \Diff(S^m)$ not isotopic to the identity. Then $\Sigma_F:=D^{m+1} \cup_{F}D^{m+1} \in \Sigma_{m+1}$ is an $(m+1)$-dimensional homotopy sphere obtained by gluing two $(m+1)$-discs along their boundary $S^{m}$ twisted by $F$. In the notation of \cite[Definition 1.4]{dretwist} (which is more apt to visualise the Lagrangian suspension we utilise in Section \ref{resultsframing}), this is equivalent to
	$$\Sigma_F:=(D^{m+1}\times S^0) \cup_{\Phi} S^m\times [0,1]$$ glued along $S^m\times S^0$ via $\Phi\colon S^m \times S^0 \to S^m \times S^0$, $\Phi(x,y)=(F(x),y)$.
\end{definition}

Also recall that there is an isomorphism $\pi_0(\Diff^+(S^{m})) \cong \pi_0(\Diff^+_{ct}(D^{m}))$ induced by a map $\Diff_{ct}^+(D^{m})\to \Diff^+(S^{m})$ which extends all elements of $\Diff^+_{ct}(D^n)$ over a capping disc.

Dimitroglou Rizell and Evans proved the existence of Dehn twists, whose Hamiltonian isotopy class depends on the choice of framing.

\begin{definition}[{{\cite[Definition 1.1]{dretwist}}}]
	Fix a cotangent fibre $\Lambda \subset T^*S^{m}$ and let $\L_{m} \subset \Theta_{m}$ be the subset of homotopy spheres which admit a Lagrangian embedding into $T^*S^{m}$ with the additional requirement that the embedding intersects $\Lambda$ transversely in exactly one point.
\end{definition}

\begin{thm}[{{\cite[Theorem A]{dretwist}}}]\label{dre}
	Let $F \in \Diff^+(S^{m})$ be such that $\Sigma_F\notin \L_{m+1}$. Then $\tau_{S^{m}}^{-1}\circ (F^* \circ \tau_{S^{m}}\circ (F^{-1})^*)$
	is not trivial in $\pi_{0}(\Symp_{ct}(T^*S^{m}))$.
\end{thm}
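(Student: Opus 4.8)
I would follow the strategy of Dimitroglou Rizell--Evans: convert a hypothetical symplectic triviality of the ``framing defect'' into a Lagrangian embedding of $\Sigma_F$ of the kind excluded by the hypothesis. First the reductions. If $m=1$ then $\pi_0(\Diff^+(S^1))$ is trivial, so $F$ is isotopic to the identity, $\Sigma_F\cong S^2$, and the statement is vacuous; hence assume $m\geq 2$. Then $H^1(T^*S^m;\R)=0$, so $\pi_0(\Symp_{ct}(T^*S^m))=\Symp_{ct}(T^*S^m)/\Ham_{ct}(T^*S^m)$, and moreover $F^*\circ\tau_{S^m}\circ(F^{-1})^*$ is genuinely compactly supported, being the identity outside the compact set $F^*(\operatorname{supp}\tau_{S^m})$. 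Arguing by contradiction, suppose $\tau_{S^m}^{-1}\circ(F^*\circ\tau_{S^m}\circ(F^{-1})^*)$ is trivial in $\pi_0(\Symp_{ct}(T^*S^m))$; then there is a compactly supported Hamiltonian isotopy $(\rho_t)_{t\in[0,1]}$ with $\rho_0=\tau_{S^m}$ and $\rho_1=F^*\circ\tau_{S^m}\circ(F^{-1})^*$. The plan is to manufacture from $(\rho_t)$ a closed exact Lagrangian $\Sigma\subset T^*S^{m+1}$, diffeomorphic to $\Sigma_F$, that meets a cotangent fibre of $T^*S^{m+1}$ transversely in exactly one point; this exhibits $\Sigma_F\in\L_{m+1}$ and contradicts the hypothesis $\Sigma_F\notin\L_{m+1}$.

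The Lagrangian $\Sigma$ is produced by a suspension-plus-thimble-capping construction realising exactly the model $\Sigma_F=(D^{m+1}\times S^0)\cup_\Phi S^m\times[0,1]$ of the Definition preceding the statement. Realise $\tau_{S^m}$ and $\tau':=F^*\circ\tau_{S^m}\circ(F^{-1})^*$ as monodromies of two exact Lefschetz fibrations with common regular fibre $T^*S^m$ and vanishing cycle the zero section $S^m_0$: the first is the standard $A_1$ fibration, whose total space is $T^*S^{m+1}$ and whose Lefschetz thimble is a Lagrangian disc $\Delta^+$ with the canonical parametrisation $D^{m+1}\xrightarrow{\sim}\Delta^+$ restricting on $\partial\Delta^+=S^m_0$ to the standard identification; the second is the conjugate of the first by the cotangent lift $F^*$, whose thimble $\Delta'$ is parametrised by $D^{m+1}$ restricting on $\partial\Delta'=F^*(S^m_0)=S^m_0$ to the $F$-twisted identification. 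The isotopy $(\rho_t)$ is used to glue these fibrations along a common fibre region; the trace of $(\rho_t)$ on the zero section sweeps out a Lagrangian cylinder $S^m_0\times[0,1]$ joining the two boundary copies of $S^m_0$, and capping this cylinder by $\Delta^+$ at one end and $\Delta'$ at the other yields a closed Lagrangian $\Sigma$. Since the net discrepancy between the two thimble parametrisations along the shared boundary sphere is the class of $F$ in $\pi_0(\Diff^+(S^m))$, while the cylinder contributes the identity at both ends, we get $\Sigma\cong D^{m+1}\cup_F D^{m+1}=\Sigma_F$ as smooth manifolds. On the other hand, a Lefschetz fibration is determined up to Weinstein deformation by its fibre and its vanishing cycles as Lagrangian spheres, independently of how the thimbles are parametrised on the boundary; hence the ambient manifold containing $\Sigma$ is the standard $T^*S^{m+1}$.

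It then remains to arrange the single transverse intersection. Choose a regular value $z_0$ over the arc onto which the matching cycle $\Sigma$ projects, and let $\Lambda=T^*_qS^{m+1}$ be a cotangent fibre whose slice in $p^{-1}(z_0)\cong T^*S^m$ is a cotangent fibre of $T^*S^m$ meeting the vanishing cycle $S^m_0$ transversely in one point. Since $\Sigma\cap p^{-1}(z_0)$ is a Hamiltonian pushoff of $S^m_0$ in that fibre, a $C^\infty$-small perturbation makes $\Sigma$ transverse to $\Lambda$, and by isotoping $\Sigma$ within its matching-cycle model one arranges the geometric (not merely algebraic) intersection number with $\Lambda$ to equal $1$. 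Thus $\Sigma_F$ embeds as a closed exact Lagrangian in $T^*S^{m+1}$ meeting a cotangent fibre transversely in exactly one point, i.e. $\Sigma_F\in\L_{m+1}$, contradicting the hypothesis and completing the proof.

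The main obstacle is the framing bookkeeping in the second paragraph: one must prove rigorously that the sphere assembled from the two thimbles is $\Sigma_F$ and not the standard $S^{m+1}$. This requires a careful analysis --- via Seidel's explicit normal form near a nondegenerate critical point and the canonical $D^{m+1}$-parametrisation of a Lefschetz thimble --- of how conjugation by the cotangent lift $F^*$ alters that parametrisation on the boundary, together with a verification that the identification of the gluing region supplied by $(\rho_t)$ introduces no further twisting up to isotopy, so that the total framing defect is exactly $[F]\in\pi_0(\Diff^+(S^m))\cong\Theta_{m+1}$. A secondary technical point, handled using the compact support of $(\rho_t)$ and a Moser argument, is to ensure that the glued-up object is a genuine exact Lefschetz fibration with total space the standard $T^*S^{m+1}$ and controlled horizontal boundary, so that the resulting membership $\Sigma_F\in\L_{m+1}$ is stated with respect to the correct ambient cotangent bundle.
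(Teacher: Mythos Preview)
The paper does not prove this theorem; it is quoted from \cite{dretwist} with no argument given, and the text explicitly says it will not be used directly. What the paper does import from \cite{dretwist} is the supporting machinery---the inclusion $\L_{m+1}\subset bP_{m+2}$ (Proposition~\ref{propinclusion}) and the open symplectic embedding $e\colon T^*S^{m}\times T^*[0,1]\hookrightarrow T^*S^{m+1}$ of Lemma~\ref{lemmaembedding}---and it runs the DRE argument essentially verbatim in the projective analogue, Lemma~\ref{twistnonstd}.

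Your overall strategy (turn the hypothetical isotopy into a Lagrangian $\Sigma_F\hookrightarrow T^*S^{m+1}$ meeting a cotangent fibre once, hence $\Sigma_F\in\L_{m+1}$) is exactly the DRE strategy and matches what the paper does in the projective case. The implementation, however, differs in two places. First, DRE do not glue two separate fibrations: they work in the single $A_1$ fibration $T^*S^{m+1}\to\C$ with its matching sphere $S^{m+1}$, take the isotopy $(\phi_t)$ from $\tau_{S^m}^{-1}\circ\tau_F$ to $\Id$ (rather than from $\tau_{S^m}$ to $\tau_F$), form the Lagrangian suspension of $(\phi_t)$ applied to the zero section inside $T^*S^m\times T^*[0,1]$, push it through $e$, and replace the middle of the matching sphere with it. The two caps are then the \emph{existing} thimbles of the one fibration, so the ambient manifold is manifestly the standard $T^*S^{m+1}$ and your ``secondary technical point'' evaporates. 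The framing discrepancy is carried entirely by the suspension cylinder (since $\phi_0|_{S^m_0}$ differs from $\phi_1|_{S^m_0}$ by $F$ up to $O(m+1)$), which is also where the bookkeeping you flag as the main obstacle is cleanest.

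Second, your intersection argument has a genuine gap. In Lemma~\ref{lemmaembedding} the cotangent fibre $\Lambda$ is a Lefschetz \emph{thimble}, and its intersection with a regular fibre of $T^*S^{m+1}\to\C$ is the whole vanishing cycle $S^m_0$, not a cotangent fibre of $T^*S^m$; so your picture of $\Lambda\cap p^{-1}(z_0)$ is not the right one. More importantly, arranging the geometric intersection to be $1$ after the fact by isotoping $\Sigma$ is not something you can simply assert. The DRE setup sidesteps this entirely: the image of $e$ is arranged to be \emph{disjoint} from $\Lambda$, so the unique transverse intersection of the original matching sphere $S^{m+1}$ with $\Lambda$ (which occurs near a critical point, outside $\mathrm{im}(e)$) is untouched by the replacement and survives as the required single intersection of $\Sigma$ with $\Lambda$.
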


In the rest of the section, we analyse the analogous problem for reparametrisations $f\in \Diff(\CP)$ of projective twists. We prove that there exist $n \in \N$ such that the twist $\tau_f:=f^* \circ \tau_{\CP}\circ (f^{-1})^*$
is not isotopic to the standard projective twist in $\pi_{0}(\Symp_{ct}(T^*\CP))$, where $f^*\colon T^*\CP \to T^*\CP$ is the symplectomorphism induced by the lift of $f$ to the cotangent bundle. We will not directly use Theorem \ref{dre} but an intermediary result (Proposition \ref{propinclusion} below) that Dimitroglou Rizell and Evans proved (using \cite{abouzaidframed, aknearby, ekholmsmith}) to support their arguments.

\begin{prop}[{{\cite[Proposition 1.2]{dretwist}}}] \label{propinclusion}
	There is an inclusion $\L_{m} \subset bP_{m+1}$.
\end{prop}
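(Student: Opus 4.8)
The plan is to deduce the inclusion $\L_m\subset bP_{m+1}$ from the rigidity of exact Lagrangian immersions with a single double point, using the geometry of a cotangent fibre to manufacture such an immersion. Concretely, from a homotopy sphere $\Sigma\in\L_m$ I would produce an exact Lagrangian immersion $\Sigma\looparrowright(\mathbb{C}^m,\omega_{std})$ with exactly one transverse double point, and then invoke the Ekholm--Smith theorem (\cite{ekholmsmith}, see also \cite{ekspheres,aknearby}) according to which a homotopy $m$--sphere admitting such an immersion must bound a parallelizable manifold (it is diffeomorphic to the standard sphere, or, when $m\equiv 1\bmod 4$, possibly to the Kervaire sphere, and both lie in $bP_{m+1}$).

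Before the general argument it is worth noting that for odd $m>4$ the statement is already contained in Corollary \ref{lagrembedding}: there, \emph{any} homotopy sphere admitting a Lagrangian embedding into $T^*S^m$ lies in $bP_{m+1}$, so the transversality hypothesis is not even needed. Since $bP_{m+1}=bP_{\mathrm{odd}}=0$ for $m$ even by \cite[Theorem 5.1]{kmspheres}, the genuine content of the proposition is the even-dimensional case, where it asserts that no AD sphere admits a Lagrangian embedding into $T^*S^m$ transverse to a cotangent fibre at exactly one point.

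Now fix $\Sigma\hookrightarrow T^*S^m$ Lagrangian with $\Sigma\cap\Lambda=\{p\}$ transverse, $\Lambda=T^*_qS^m$. Because $S^m\setminus\{q\}$ is diffeomorphic to $\mathbb{R}^m$, the complement $T^*S^m\setminus\Lambda=T^*(S^m\setminus\{q\})$ is symplectomorphic to $(\mathbb{C}^m,\omega_{std})$, and restriction turns $\Sigma\setminus\{p\}$ into an exact Lagrangian submanifold of $\mathbb{C}^m$ with a single end. Transversality forces $\Sigma$ near $p$ to be a graph over the base, so after a compactly supported Hamiltonian perturbation near $p$ this end is modelled on the flat Lagrangian plane $\mathbb{R}^m\subset\mathbb{C}^m$, with Legendrian link at infinity the standard unknotted $S^{m-1}$. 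Capping the end with the Lagrangian disc that the unknot bounds and putting the two pieces in general position yields a closed \emph{immersed} exact Lagrangian sphere in $\mathbb{C}^m$; such an immersion necessarily has at least one double point (closed exact Lagrangians cannot be embedded in $\mathbb{C}^m$), and for $m$ even its algebraic double-point count is $\pm\tfrac12\chi(S^m)=\pm1$. One then appeals to the $h$--principle for Lagrangian immersions together with Whitney-trick cancellation to arrange exactly one transverse double point; the hypothesis that $\Sigma$ meets the fibre in a \emph{single} transverse point is precisely what should keep this geometric count equal to one. With the one-double-point immersion in hand, Ekholm--Smith gives $[\Sigma]\in bP_{m+1}$, completing the proof.

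The hard part is this middle step: attaching the Lagrangian cap compatibly with the Liouville structure at infinity, and, crucially, verifying that the resulting immersion can be taken to have exactly one transverse double point rather than a larger odd number with a surviving cancellation obstruction. This is the technical heart of \cite{dretwist,ekholmsmith}, and I would import it rather than reprove it. An alternative — probably the cleanest route in practice — bypasses the immersion: following Abouzaid's framed-bordism formalism \cite{abouzaidframed} (refined in \cite{ekspheres,aknearby}), one extracts directly from the embedding $\Sigma\hookrightarrow T^*S^m$ a class in $\coker(J_m)$ equal to the image of $[\Sigma]$ under the map $\psi$ in the Kervaire--Milnor sequence \eqref{kmexact}, and the single transverse intersection with the cotangent fibre makes the relevant moduli space carry one generator, forcing that class to vanish, whence $[\Sigma]\in\ker\psi=bP_{m+1}$. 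Either way, the substance of the argument lies entirely in showing that the ``one transverse point'' hypothesis trivialises the obstruction; all the remaining work is the bookkeeping of double points and framings.
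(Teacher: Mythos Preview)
The paper does not prove this proposition; it is imported from \cite{dretwist}, with only the parenthetical remark that Dimitroglou Rizell--Evans deduce it from \cite{abouzaidframed, aknearby, ekholmsmith}. So there is no in-paper argument to compare against.

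Your overall route is the one actually taken in \cite{dretwist} for even $m$, and your reduction of odd $m$ to Corollary~\ref{lagrembedding} is correct: from $\Sigma\in\L_m$ one manufactures an exact Lagrangian immersion $\Sigma\looparrowright\C^m$ with a single transverse double point and invokes \cite{ekholmsmith}. But be careful with the line ``one then appeals to the $h$-principle for Lagrangian immersions together with Whitney-trick cancellation to arrange exactly one transverse double point''. Taken at face value this proves too much: every homotopy $2k$-sphere admits \emph{some} Lagrangian immersion in $\C^{2k}$ with algebraic self-intersection $\pm1$, so if Lagrangian Whitney cancellation were unobstructed you would conclude $\Theta_{2k}\subset bP_{2k+1}$ for all $k$, contradicting for instance $\Theta_{14}\not\subset bP_{15}=0$. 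Cancellation of Lagrangian double points is genuinely obstructed, and that obstruction is exactly what the Ekholm--Smith hypothesis detects. The substantive step in \cite{dretwist} is therefore not a generic cancellation argument but a \emph{direct} construction of the one-double-point immersion from the given embedding $\Sigma\subset T^*S^m$, in which the single transverse fibre intersection forces both the uniqueness and the index of the double point. You rightly flag this as the hard part you would import; the point to internalise is that the $h$-principle fallback you sketch does not work, and the hypothesis $\Sigma\in\L_m$ is doing essential geometric work rather than merely normalising a count. The same caveat applies to your framed-bordism alternative: the single transverse intersection with the fibre is what pins down the framed cobordism class of the relevant moduli space, and that is the content, not bookkeeping.
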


\begin{rmk}
	There is a slight abuse of terminology in the entirety of the section. A \emph{framing} will be employed (as in the rest of the paper) in the non-standard sense à la Seidel to signify a smooth parametrisation of a sphere. The classical topological notion of framing (as a trivialisation of the normal bundle) is also needed in this section, and in order to avoid a conflict of nomenclature, we call the latter a \emph{normal framing}.
\end{rmk}

\subsubsection{Bredon's pairing}\label{bredonpairing}

We begin by introducing an essential component of the arguments of this section; a map
\begin{align}\label{stabmap}
stab\colon \Theta_{m}\longrightarrow \Theta_{m+1}
\end{align} obtained as a special case of a homomorphism $\Theta_m \otimes \pi_{\ell}(S) \lra \Theta_{m+\ell}$ studied in \cite{bredonpairing}.

Consider the linear action of $\Soo\simeq S^1$ on $S^{m+1}\subset \R^{m+2}$ via the representation $$\Soo \lra \mathrm{SO}(m+2), \ A\longmapsto \varphi(A)=
\begin{bmatrix}
A & & \\
&A & \\
&  & \ddots \\
\end{bmatrix}$$
with $1$ in the right hand bottom corner if $m$ is odd. This is the linear $S^1$-action on $S^{m+1}$ which is free if $m$ even (in which case it is the standard Hopf action), and whose fixed point set is $S^0$ if $m$ odd.

For $\Sigma \in \Theta_m$, Bredon's construction (\cite[Sections 1, 4]{bredonpairing}) yields a homotopy $(m+1)$-sphere as follows. Let $V\subset \Sigma$ be an open neighbourhood of a point $p\in \Sigma$, and $g\colon (V,p)\to (\R^m, 0)$ an orientation reversing diffeomorphism. Let $B:=g^{-1}(D^m)\subset V\subset \Sigma$, where $D^m\subset \R^m$ is the unit disc. 

Let $\CC\cong S^1\subset S^{m+1}$ be a principal orbit of the $\Soo$-action on $S^{m+1}$, equipped with a normal framing $\F\colon \CC \times \R^m \to S^{m+1}$. 

Define
\begin{align}\label{defstab}
stab(\Sigma):=S^{m+1}\setminus (\F(\CC\times D^m)) \ \cup \ \CC \times (\Sigma \setminus B)
\end{align}where the two pieces are glued along their boundaries, which can be identified via a diffeomorphism $\F(\CC \times (\R^m \setminus \{0 \})) \cong \CC \times (V \setminus \{ p\})$ as in \cite[p. 435]{bredonpairing}.

The normally framed orbit $(\CC, \F)$ represents an element $\gamma \in \pi_{m+1}(S^{m})\cong \pi_4(S^3) \cong \Zmod$ via the Thom--Pontryagin construction (see \cite[87]{milnortop}). With this identification in mind, the map $stab$ is derived from a pairing $\Theta_m \times \pi_{m+1}(S^m)\to \Theta_{m+1}, \ (\Sigma, \gamma) \mapsto stab(\Sigma)=\langle \Sigma, \gamma \rangle$ (see \cite[(1)]{bredonpairing}). The latter induces a homomorphism (\cite[(2)]{bredonpairing}) 
\begin{align}\label{homostab}
\Theta_m \otimes \pi_1(S) \lra \Theta_{m+1}.
\end{align}

To determine the class $\gamma$, we follow \cite[(4.1)]{bredonpairing} and find that $\gamma=\eta^{j}$, where $\eta \in \pi_1(S):=\pi_4(S^3)\cong \Zmod$ is the non-trivial element in the stable stem $\pi_1(S)$, and\begin{align*}
j =\left\{
\begin{array}{ll}
\frac{m}{2}	&   \text{ If } m \textbf{ even } \text{(i.e the action on } S^{m+1} \text{ is free)} \\
\frac{m-1}{2} & \text{ If } m \textbf{ odd } \text{ (i.e the action } S^{m+1} \text{ is not free)}.
\end{array}
\right.
\end{align*}	

Intuitively, if a normally framed Hopf circle in $S^3$ represents the class $\eta\in \pi_4(S^3)$, then $\gamma$ is determined by the number of times (mod 2) that this normal framing fits in the normal bundle to $\CC \subset S^{m+1}$.

For $m+1=2n+1$, and $m+1=2n+2$, we have $j=n$ and
\begin{align}\label{powereta}
\gamma = \eta^{n}=\left\{
\begin{array}{ll}
\eta&   \text{ If } n \textbf{ odd} \\
0 & \text{ If } n \textbf{ even}.
\end{array}
\right.
\end{align}

\begin{rmk}\label{rmkstab}
	For an even dimensional homotopy sphere $\Sigma \in \Theta_{2n}$, the image $stab(\Sigma)$ can also be described as follows (this remark is relevant for Section \ref{nonemb}). Consider the surgery $\CP \# \Sigma$ and the complex line bundle $\mathcal{L}\to \CP \# \Sigma$ associated to the generator of $H^2(\CP\#\Sigma; \Z)$. Then, $stab(\Sigma)$ is the homotopy sphere obtained as the unit circle bundle of $\mathcal{L}$. 
\end{rmk}

We now focus on the case $m+1=2n+2$.
\begin{lemma}\label{nontrivialcriterion}
	The map $\Theta_{2n+1} \lra \Theta_{2n+2}/bP_{2n+3}$ is non-trivial for $n=19, 23, 25, 29$.    
\end{lemma}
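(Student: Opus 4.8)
The claim is that the homomorphism $stab\colon \Theta_{2n+1}\to \Theta_{2n+2}/bP_{2n+3}$ is non-trivial for $n=19,23,25,29$. Since the pairing $\langle\,\cdot\,,\eta^n\rangle$ from \cite{bredonpairing} recovers $stab$, and since for odd $n$ we have $\eta^n=\eta\in\pi_1(S)=\Zmod$ (see \eqref{powereta}), the map $stab$ is, up to the obvious identifications, multiplication by $\eta$ in the stable stems composed with the projection $\Theta_{2n+1}\to\coker(J_{2n+1})$ afforded by the Kervaire--Milnor sequence \eqref{kmexact}, followed by the map $\coker(J_{2n+2})\to \Theta_{2n+2}/bP_{2n+3}$. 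Because $2n+2$ is even and $bP_{2n+2}=0$ for this to be useful we must be careful: the relevant statement is that $\psi\colon \Theta_{2n+1}\to\coker(J_{2n+1})$ is \emph{surjective} (as $2n+1$ is odd, by \cite[Theorem 5.1]{kmspheres}), so it suffices to exhibit an element of $\coker(J_{2n+1})$ whose image under $\eta$-multiplication $\coker(J_{2n+1})\to\coker(J_{2n+2})$ is non-zero, together with the fact that $\Theta_{2n+2}/bP_{2n+3}\hookleftarrow\coker(J_{2n+2})$ detects it; equivalently, it is enough that the composite $\coker(J_{2n+1})\xrightarrow{\cdot\eta}\coker(J_{2n+2})$ is non-zero for these $n$.

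So the plan reduces to a computation in the stable homotopy groups of spheres: for $k=2n+1\in\{39,47,51,59\}$, show that multiplication by $\eta\in\pi_1^s$ sends some class in $\coker(J_k)\subseteq\pi_k^s$ to a non-zero class in $\coker(J_{k+1})\subseteq\pi_{k+1}^s$. First I would identify, for each of these four values of $k$, an explicit generator (or family of generators) of $\pi_k^s$ not in the image of $J$ — candidates are elements in the image of various periodicity families (e.g.\ the $\beta$-family, or Kervaire classes $\theta_j$, or products involving $\eta,\nu,\sigma$) — and then consult the known $\eta$-multiplication structure. The cleanest route is to invoke the Adams spectral sequence charts / Toda's and Kochman's tables (and the more recent machine-assisted computations through the $60$--$90$ stem range): one looks for a class $x\in\pi_k^s$ with $\eta x\neq 0$ and $\eta x\notin\mathrm{im}(J_{k+1})$. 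A convenient observation is that $\mathrm{im}(J_{k+1})$ for $k+1$ even and $\equiv 0\pmod 4$ is the cyclic group generated by the Kervaire/Bernoulli-number class and is quite small, so most non-zero $\eta$-products automatically land outside it; the work is to verify $\eta x\neq0$ in the first place.

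The technical heart, and the step I expect to be the main obstacle, is therefore pinning down $\eta$-divisibility/indivisibility in stems in the high-$30$s through high-$50$s — this is exactly the range where the stable stems are complicated and where one must rely on published computations rather than elementary arguments. I would proceed value-by-value: (i) for $n=19$ ($k=39$), (ii) $n=23$ ($k=47$), (iii) $n=25$ ($k=51$), (iv) $n=29$ ($k=59$), in each case citing a specific class in $\pi_k^s$ (outside $\mathrm{im}J$) whose product with $\eta$ is recorded as non-zero and outside $\mathrm{im}J$ in $\pi_{k+1}^s$. Since the author's acknowledgements thank Krannich and Randall-Williams for precisely "insights on the action of the stable stem on homotopy spheres," the intended proof is surely of this citation-driven flavour, and I would present it as: reduce to the stated $\eta$-multiplication assertions via \eqref{kmexact} and surjectivity of $\psi$ in odd degrees, then discharge those four assertions by appeal to the tables in \cite{kmspheres} together with modern stable-stem computations. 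The remaining details — checking that the $bP$-subgroups and $J$-images do not accidentally absorb the classes — are bookkeeping with Bernoulli numbers and the known orders of $\mathrm{im}(J_{k+1})$.
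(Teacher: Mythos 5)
Your proposal is correct and follows essentially the same route as the paper: reduce, via Bredon's commuting square and the surjectivity of $\psi$ in the relevant degrees of the Kervaire--Milnor sequence, to the non-triviality of $(-)\cdot\eta\colon\coker(J_{2n+1})\to\coker(J_{2n+2})$, and then discharge that for $2n+1\in\{39,47,51,59\}$ by appeal to published computations of the two-primary stable stems (the paper cites the chart in Hatcher, p.~385, rather than Toda/Kochman, but this is the same citation-driven step).
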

\begin{proof}
	There is a commuting diagram (see \cite[Corollary 2.2]{bredonpairing}) obtained from the exact sequence \eqref{kmexact}, \begin{equation}\label{thetacoversj}
	\begin{split}
	\xymatrixcolsep{5pc} \xymatrix{
		\Theta_{2n+1}   \ar[r]^{stab} \ar[d]^{\psi} &
		\Theta_{2n+2} \ar[d]^{\psi} &\\
		\coker(J_{2n+1}) \ar[r]^{(-)\cdot \eta^{n}}	& \coker(J_{2n+2})}.	
	\end{split}
	\end{equation}
	where $(-)\cdot \eta^{n} \colon \coker(J_{2n+1}) \to \coker(J_{2n+2})$, is a map descending from the multiplication $ \pi_{2n+1}(S) \times \pi_1(S)\rightarrow \pi_{2n+2}(S)$ with the class $\eta \in \pi_1(S) \cong \Zmod$, which is well defined since for $\ell+1 <m$, $\im(J_m) \cdot \im(J_{\ell}) \subseteq \im(J_{m+\ell})$, the image of the $J$-homomorphism is preserved under multiplication with elements of the stable stems.

	By \eqref{powereta}, we know that a necessary requirement for the map $stab$ to be non-trivial is to have $n=2k+1$ for some $k \in \N$ so that $\eta^n=\eta$ is non-trivial. In that case, we get
	
	\begin{equation}
	\begin{split}
	\xymatrixcolsep{5pc} \xymatrix{
		\Theta_{4k+3}   \ar[r]^{stab} \ar[d]^{\psi} &
		\Theta_{4(k+1)} \ar[d]^{\psi} &\\
		\coker(J_{4k+3}) \ar[r]^{(-)\cdot \eta}	& \coker(J_{4(k+1)})}.	
	\end{split}
	\end{equation}
	
	The vertical maps are both surjective since $\psi$ is always surjective in odd dimensions, and when $m\equiv 0\mod 4$ (see \cite[Theorem 5.4]{levine}).
	
	The exact sequence \eqref{kmexact} implies that $\coker(J_{4k+4})\cong \Theta_{4k+4}/\ker(\psi)\cong \Theta_{4k+4}$, and the non-triviality of the composition $\psi \circ stab \colon \Theta_{4k+3} \to  \Theta_{4k+4}$ is equivalent to the non-triviality of the multiplication $ (-) \cdot \eta \colon \coker(J_{4k+3}) \to \coker(J_{4k+4})$.
	This amounts to looking for elements in the stable stems whose $\eta$-multiples are not in the image of $J$. As $\eta$ is of order two, this information can be found in the ``two-primary part'' of the stable stems, the subgroups obtained after quotienting all elements of odd order. These are tabulated in a diagram in \cite[p.385]{hatcherat}, where the elements of interest appear to be in degrees $2n+1\in \{ 39, 47, 51, 59\}$, which means that $n\in \{19, 23,25, 29 \}$.
\end{proof}

The rest of the section is dedicated to explain how to relate a parametrisation $f\in \Diff^+(\CP)$ of the standard projective twist to a parametrisation $F\in \Diff^+(S^{2n+1})$ of the standard Dehn twist.

\begin{lemma}\label{liftlemma0}
	Let $n$ be an odd integer and $f \in \Diff^+(\CP)$ an orientation preserving diffeomorphism. There exists a diffeomorphism $F \in \Diff^+(S^{2n+1})$ satisfying $h \circ F=f \circ h$, i.e $F$ is the lift of $f$ by the Hopf bundle map.
\end{lemma}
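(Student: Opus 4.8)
The goal is to show that for $n$ odd, any orientation-preserving diffeomorphism $f\in\Diff^+(\CP)$ lifts along the Hopf bundle map $h\colon S^{2n+1}\to\CP$ to a diffeomorphism $F\in\Diff^+(S^{2n+1})$ with $h\circ F = f\circ h$. The plan is to exploit the fact that the Hopf fibration is the sphere bundle of the tautological line bundle $\mathcal{O}_{\CP}(-1)$, and that for $n$ odd this line bundle is, up to diffeomorphism, pulled back to itself under any orientation-preserving self-map of $\CP$. First I would recall that isomorphism classes of complex line bundles on $\CP$ are classified by $c_1\in H^2(\CP;\Z)\cong\Z$, and that $f^*$ acts on $H^2(\CP;\Z)$ by multiplication by $\pm 1$ (it is an automorphism of $\Z$). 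The orientation of $\CP$ is governed by the orientation class in $H^{2n}(\CP;\Z)$, which is $c_1(\mathcal{O}(1))^n$ up to sign; since $f^*$ acts on $H^2$ by $\epsilon=\pm 1$, it acts on $H^{2n}$ by $\epsilon^n$. For $n$ odd this equals $\epsilon$, so $f$ orientation-preserving forces $\epsilon=+1$, i.e.\ $f^*$ is the identity on $H^2(\CP;\Z)$. Hence $f^*\mathcal{O}_{\CP}(-1)\cong\mathcal{O}_{\CP}(-1)$ as complex line bundles.

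Next I would translate this into a statement about the sphere bundle. Fix a Hermitian metric on $L:=\mathcal{O}_{\CP}(-1)$ so that its unit circle bundle is identified with $h\colon S^{2n+1}\to\CP$. The bundle isomorphism $\Psi\colon f^*L\xrightarrow{\sim} L$ covering $\id_{\CP}$, composed with the tautological map $f^*L\to L$ covering $f$, gives a bundle map $\widetilde{f}\colon L\to L$ covering $f$ which is a fiberwise complex-linear isomorphism. After rescaling fiberwise (using that $\widetilde f$ is fiberwise an isomorphism, hence nonzero, so one can normalize by the norm) one obtains a map preserving the unit circle bundles; restricting to the unit sphere bundles yields a diffeomorphism $F\colon S^{2n+1}\to S^{2n+1}$ with $h\circ F = f\circ h$. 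This $F$ is orientation-preserving: $S^{2n+1}$ is connected and $F$ is a diffeomorphism, so it is enough to note that $F$ is homotopic to a bundle map covering a map homotopic to the identity (indeed $f\simeq\id$ since $f$ acts trivially on cohomology and $\CP$ is simply connected — actually one should use that $\Diff^+$ of a simply connected space with the homotopy type of $\CP$... more carefully: $F$ covers $f$, and $f_*$ on $H_{2n+1}$ is irrelevant, one checks the induced map on $H_{2n+1}(S^{2n+1})$ is $+1$ by a local degree computation near a fiber, or simply by the long exact/Gysin argument relating the degree of $F$ to the degree of $f$, which is $+1$ by hypothesis).

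The main obstacle I anticipate is the last point — verifying that the constructed $F$ is \emph{orientation-preserving} and genuinely a \emph{diffeomorphism} rather than merely a smooth bundle map. The bundle-isomorphism $\Psi$ is only defined up to a nowhere-zero smooth function $\CP\to\C^*$, and different choices change $F$ by a fiberwise rotation; one must check the orientation is unaffected by such choices (it is, since fiberwise rotations are isotopic to the identity and act trivially on orientation). For the degree: the Hopf map $h$ has oriented circle fibers, and $F$ permutes them covering $f$, so $\deg F = \deg f \cdot (\text{fiberwise degree})$; the fiberwise degree is $+1$ because $F$ restricted to a fiber is a rotation (a unitary linear map of $S^1$), and $\deg f=+1$ by hypothesis. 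Hence $\deg F=+1$ and $F\in\Diff^+(S^{2n+1})$. I would also note that the construction requires $n$ odd only in the step forcing $f^*=\id$ on $H^2$; for $n$ even an orientation-preserving $f$ could act by $-1$ on $H^2$, pulling $\mathcal{O}(-1)$ back to $\mathcal{O}(1)$, and the lift would not exist in general — which is consistent with the hypothesis of the lemma. Finally, I would record that any two such lifts $F, F'$ differ by composition with a deck transformation of the $S^1$-action, and in particular one may normalize the choice if needed for later arguments.
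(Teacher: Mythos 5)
Your proof is correct and follows essentially the same route as the paper: both reduce the problem to the fact that $f^*$ fixes $c_1(\mathcal{O}_{\CP}(-1))$ in $H^2(\CP;\Z)$ (which is exactly where $n$ odd enters), lift $f$ to a fibrewise-linear automorphism of the tautological line bundle, and then pass to the unit circle bundle. Your write-up is in fact slightly more complete than the paper's, which leaves implicit the normalization to the unit sphere bundle and the check that $F$ has degree $+1$.
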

\proof Let $h\colon S^{2n+1}\to \CP$ the Hopf bundle map. A diffeomorphism $f\colon \CP \to \CP$ always admits a lift to a continuous function on the total space $F\colon S^{2n+1}\to S^{2n+1}$. Namely, the only possible obstructions would have to lie in the groups $H^{i+1}(S^{2n+1},\pi_i(S^1))$, which are trivial for any choice of $i$.

We now show $f$ lifts to a diffeomorphism. To do that, first note that the map induced on the second cohomology $\overline{f} \colon H^2(\CP)\rightarrow H^2(\CP)$ is the identity, since $f$ is orientation-preserving. 
The fact that $\overline{f} \colon H^2(\CP)\rightarrow H^2(\CP)$ is the identity implies that $f$ lifts to a fibrewise linear map $F: \mathcal{L}\rightarrow \mathcal{L}$ to the total space of the line bundle $\mathcal{L}$ satisfying $c_1(\mathcal{L})=\alpha \in H^2(\CP;\Z)$, that fits in a commuting diagram \begin{align}\label{commuting}
h \circ F=f \circ h.
\end{align}
If $F$ had non-trivial kernel, the commutativity \eqref{commuting} would require it to be contained in the fibres of $h$, but every such map would be forced to be the zero map, which is impossible. So $F$ is a diffeomorphism. \endproof

\begin{lemma}\label{stablift}
	Let $f\in \Diff^+(\CP)$ be a diffeomorphism supported in an open chart, i.e $f$ is induced by an element of $\Diff_{ct}^+(D^{2n})$, and let $\Sigma_f \in \Theta_{2n+1}$ be the homotopy $(2n+1)$-sphere associated to $f$.
	
	Let $F \in \Diff^+(S^{2n+1})$ be the $S^1$-equivariant lift of $f$ of Lemma \ref{liftlemma0}, and $\Sigma_F \in \Theta_{2n+2}$ the corresponding homotopy $(2n+2)$-sphere. Then $stab(\Sigma_f)=\Sigma_F$. 
\end{lemma}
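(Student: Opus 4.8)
The plan is to realise both $\Sigma_F$ and $stab(\Sigma_f)$ as one and the same cut‑and‑paste modification of the standard sphere $S^{2n+2}$, the modification being completely determined by the compactly supported diffeomorphism $f_0\in\Diff^+_{ct}(D^{2n})$ underlying $f$.

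\textbf{Putting $F$ into a normal form.} First I would normalise the equivariant lift $F$. Since $f$ is supported in a chart $D^{2n}\subset\CP$ and the Hopf bundle is trivial over a contractible disc, there is an $S^1$‑equivariant identification $h^{-1}(D^{2n})\cong D^{2n}\times S^1$, the action being on the second factor. Equivariance of $F$ forces $F(x,\phi)=(f_0(x),\phi\,g(x))$ there for some $g\colon D^{2n}\to S^1$ with $g\equiv 1$ near $\partial D^{2n}$; as $\pi_{2n}(S^1)=0$ this $g$ is null‑homotopic rel boundary, so $F$ is isotopic through equivariant diffeomorphisms to $f_0\times\id_{S^1}$ on $h^{-1}(D^{2n})$ and to the identity elsewhere. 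In particular $\Sigma_F$ depends only on the class of $f_0$ in $\pi_0\Diff^+_{ct}(D^{2n})\cong\Theta_{2n+1}$, and we may assume outright that $F=f_0\times\id_{S^1}$.

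\textbf{An explicit model for Bredon's construction.} Next I would realise $S^{2n+2}$ as the unit sphere of $\C^{n+1}\oplus\R$ with the linear $S^1$‑action of Section~\ref{bredonpairing}; its waist $S^{2n+1}:=S^{2n+2}\cap(\C^{n+1}\oplus 0)$ then carries the Hopf action, separates $S^{2n+2}$ into two closed hemispheres, and the principal orbit $\CC:=h^{-1}([v_0])$ over the centre $[v_0]$ of the chart $D^{2n}$ is a Hopf circle in the waist. Choosing the normal framing $\F$ of $\CC$ coming from that chart together with the waist‑normal direction — this is exactly the framing representing $\eta^n$ used in Section~\ref{bredonpairing} — one computes $\nu(\CC)=\F(\CC\times D^{2n+1})\cong S^1\times D^{2n}\times(-\eps,\eps)$ and $S^{2n+2}\setminus\mathrm{int}\,\nu(\CC)\cong D^2\times S^{2n}$. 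Writing $\Sigma_f=D^{2n+1}_+\cup_{\tilde f_0}D^{2n+1}_-$ and taking the disc $B$ of \eqref{defstab} to be $D^{2n+1}_-$, so that $\Sigma_f\setminus\mathrm{int}\,B$ is a $(2n+1)$‑disc whose boundary is clutched onto $S^{2n}$ by $\tilde f_0$, the definition \eqref{defstab} exhibits $stab(\Sigma_f)$ as the standard genus‑one splitting $S^{2n+2}=(D^2\times S^{2n})\cup_{G_0}(S^1\times D^{2n+1})$ with the solid torus reglued along $S^1\times S^{2n}$ by $\id_{S^1}\times\tilde f_0$. On the other side, by the first step $\Sigma_F$ is obtained from $S^{2n+2}$ by cutting along the waist $S^{2n+1}$ and regluing by $f_0\times\id_{S^1}$, a modification supported in the product neighbourhood $h^{-1}(D^{2n})\times(-\eps,\eps)\cong D^{2n}\times S^1\times(-\eps,\eps)$ of $h^{-1}(D^{2n})$ in $S^{2n+2}$.

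\textbf{Matching the two modifications, and the expected obstacle.} In both descriptions the modification of $S^{2n+2}$ is concentrated in a neighbourhood of the form $D^{2n}\times S^1\times(-\eps,\eps)$, embedded along the slice $D^{2n}\times S^1\times\{0\}$, and is performed by regluing via $f_0$ on the $D^{2n}$‑factor and the identity on the $S^1$‑ and normal factors. The remaining, and principal, task is to produce an ambient diffeomorphism of $S^{2n+2}$ carrying one of these embedded framed model neighbourhoods to the other — equivalently, to check that the slice $\F(\CC\times(D^{2n}\times\{0\}))$ sitting near $\partial\nu(\CC)$ inside the solid torus $S^1\times D^{2n+1}$ is, after applying $G_0$ and the Hopf‑equivariant suspension identification $S^{2n+2}\cong S^{2n+1}\ast S^0$, ambient isotopic with its framing to $h^{-1}(D^{2n})\subset S^{2n+1}\subset S^{2n+2}$. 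I expect this bookkeeping — verifying that Bredon's normal framing $\F$ of the orbit is compatible with the chosen chart of $\CP$ and with the waist, and that the assembling map $G_0$ is exactly the one produced by the suspension description of $S^{2n+2}$ — to be the main obstacle. Once the framed model neighbourhoods are matched, $stab(\Sigma_f)$ and $\Sigma_F$ are literally the same cut‑and‑paste of $S^{2n+2}$ determined by $f_0$, which gives the claimed equality in $\Theta_{2n+2}$.
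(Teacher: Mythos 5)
Your route is the same as the paper's: realise $S^{2n+1}$ as the equator (waist) of $S^{2n+2}$, observe that $F$ is supported in a tubular neighbourhood $h^{-1}(D^{2n})\cong S^1\times D^{2n}$ of a Hopf orbit, and identify the twisted double-disc description of $\Sigma_F$ with Bredon's cut-and-paste \eqref{defstab} along the normally framed circle $\CC$. Your normalisation of $F$ to $f_0\times\id_{S^1}$ over the chart (using $\pi_{2n}(S^1)=0$) is a worthwhile point that the paper leaves implicit, and your genus-one splitting $S^{2n+2}=(D^2\times S^{2n})\cup(S^1\times D^{2n+1})$ is the correct picture on both sides.

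The one step you defer -- matching Bredon's normal framing $\F$ of the orbit with the framing coming from the chart together with the waist-normal direction -- is not an additional ambient-isotopy problem to be solved after the fact; it is exactly the computation already carried out at the start of Section \ref{bredonpairing}, following \cite[(4.1)]{bredonpairing}. The pairing $\langle\Sigma,\gamma\rangle$ depends on the framed orbit only through its class $\gamma\in\pi_{2n+2}(S^{2n+1})\cong\Zmod$ under the Thom--Pontryagin construction, and the framing of a Hopf circle induced by the chart trivialisation of the bundle plus the equatorial normal direction is precisely the one computed there to represent $\eta^{n}$. So once you have set up the two cut-and-paste descriptions in the common model neighbourhood $S^1\times D^{2n}\times(-\eps,\eps)$, the equality $\Sigma_F=\langle\Sigma_f,\eta^n\rangle=stab(\Sigma_f)$ follows from that identification of the framing class; no further bookkeeping is needed. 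This is exactly how the paper's (equally terse) proof closes the argument, so your proposal is completable as written provided you cite that computation rather than treating the framing comparison as an open obstacle.
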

\proof The lift $F\in \Diff^+(S^{2n+1})$ is supported in a tubular neighbourhood of a Hopf circle in $S^{2n+1}$. To build $\Sigma_F$, identify $S^{2n+1}$ with an equator in $S^{2n+2}$, and consider the above circle as a normally framed circle $\CC \subset S^{2n+2}$. That requires a choice of trivialisation of the normal bundle to a Hopf circle, a normal framing $\F\colon \CC \times \R^{2n+1} \to S^{2n+2}$ that defines the support of the gluing map for the construction of $\Sigma_F$: on $\F(\CC \times D^{2n+1})$, $F$ acts as $id \times f$.
By the same arguments as in the beginning of this section, the normal framing of this Hopf circle corresponds to the class $\eta^n \in \pi_1(S)$, so that $\Sigma_F=stab(\Sigma_f)$ by the construction \eqref{defstab}.
\qed

\subsubsection{Results}\label{resultsframing}
We first mention an auxiliary result from \cite{dretwist} we will need in the proof of Theorem \ref{framingthm}.

\begin{lemma}[{{\cite[Proposition 2.5]{dretwist}}}]\label{lemmaembedding}
	Consider $(T^*S^{2n+1}, d\lambda_{T^*S^{2n+1}})$ with the well-known structure of Lefschetz fibration $T^*S^{2n+1}\to \C$ with smooth fibre $(T^*S^{2n}, d\lambda_{T^*S^{2n}})$ and two singular fibres.
	Let $L\subset T^*S^{2n+1}$ be the standard Lagrangian embedding of the zero section.
	There is an open symplectic embedding \begin{align}
	e\colon T^*S^{2n+1}\times T^*[0,1] \lra T^*S^{2n+2}
	\end{align} such that \begin{itemize}
		\item $L\times [0,1]$ is sent to a subset of the zero section $S^{2n+2}\subset T^*S^{2n+2}$ (the matching sphere);
		\item The image of the embedding is disjoint from a particular cotangent fibre $\Lambda \subset T^*S^{2n+2}$ (a Lefschetz thimble).
	\end{itemize}
\end{lemma}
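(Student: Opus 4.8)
The plan is to realise the embedding through the standard $A_1$-Lefschetz fibration on $T^*S^{2n+2}$ and parallel transport over a suitable region of the base. Model $T^*S^{2n+2}$ as (a completion of) the affine quadric $Q=\{z\in\C^{2n+3}:\sum_{j=1}^{2n+3}z_j^2=1\}$ with the symplectic form restricted from $\C^{2n+3}$, and let $\pi\colon Q\to\C$, $\pi(z)=z_{2n+3}$, be projection to the last coordinate. This is a Lefschetz fibration with critical values $\{-1,1\}$, generic fibre $\pi^{-1}(c)\cong\{w\in\C^{2n+2}:\sum w_j^2=1-c^2\}\cong T^*S^{2n+1}$, and vanishing cycle the zero section $L\subset T^*S^{2n+1}$; the zero section $S^{2n+2}=Q\cap\R^{2n+3}$ is exactly the matching sphere over the real segment $[-1,1]$, and under the (rescaling) trivialisation of $\pi$ over $(-1,1)$ it is $L\times(-1,1)$, because real parallel transport preserves the real locus of each fibre. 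Fix a vanishing path $\rho\colon[1,\infty)\to\C$, $\rho(t)=t$, issuing from the critical value $+1$ along the positive real axis, and let $\Lambda:=\Delta_\rho$ be the corresponding Lefschetz thimble (this is the cotangent fibre of the statement; it is a Lagrangian plane with $\pi(\Lambda)=[1,\infty)$).

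Next I would build $e$ in two steps: first an area-preserving embedding of the base factor, then lift it by parallel transport. Choose an embedding $\psi\colon T^*[0,1]\cong[0,1]\times\R\hookrightarrow\C$ which is area-preserving, whose image $S:=\psi([0,1]\times\R)$ is contained in $\C\setminus\bigl(\{-1,1\}\cup[1,\infty)\bigr)$, and which carries the zero section $[0,1]\times\{0\}$ diffeomorphically onto a compact sub-arc of the open matching path $(-1,1)$. Such a $\psi$ exists by an elementary two-dimensional construction: keep the strip along the real axis near its centre and bend the two ends into the half-plane $\{\Im z>0\}$, away from $\pm1$ and from the slit $[1,\infty)$; an open connected planar region of infinite area always admits such a symplectically embedded strip with prescribed zero section. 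The region $S$ is contractible and disjoint from the critical values, so parallel transport over $S$ gives a symplectomorphism $\Psi\colon\pi^{-1}(S)\xrightarrow{\ \sim\ }T^*S^{2n+1}\times(S,\omega|_S)$ which, over the sub-arc $\psi([0,1]\times\{0\})\subset(-1,1)$, identifies $S^{2n+2}$ with the zero section $L$. Put
\[
e:=\Psi^{-1}\circ\bigl(\operatorname{id}_{T^*S^{2n+1}}\times\psi\bigr)\colon\ T^*S^{2n+1}\times T^*[0,1]\ \longrightarrow\ \pi^{-1}(S)\subset T^*S^{2n+2}.
\]

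With $e$ in hand the two properties are immediate. First, $e\bigl(L\times[0,1]\bigr)=\Psi^{-1}\bigl(L\times\psi([0,1]\times\{0\})\bigr)$ is precisely the part of the matching sphere $S^{2n+2}$ lying over the sub-arc $\psi([0,1]\times\{0\})$, hence a subset of the zero section. Second, $\operatorname{im}(e)\subseteq\pi^{-1}(S)$ and $S\cap[1,\infty)=\varnothing$ while $\pi(\Lambda)=[1,\infty)$, so $\operatorname{im}(e)\cap\Lambda=\varnothing$. Since $\operatorname{id}\times\psi$ is a symplectic embedding and $\Psi$ a symplectomorphism, $e$ is an open symplectic embedding, as required.

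The point that needs genuine care is the trivialisation $\Psi$ over the \emph{non-compact} contractible region $S$: one must verify that symplectic parallel transport for $\pi$ is complete over $S$, so that $\Psi$ is defined on all of $\pi^{-1}(S)$ and $e$ on all of $T^*S^{2n+1}\times T^*[0,1]$, and that the \emph{a priori} only fibrewise-symplectic trivialisation can be promoted to an honest symplectomorphism by absorbing the curvature of the symplectic connection into a pullback two-form $\pi^*\gamma$ and running Moser's argument. For the quadric model both are standard: over $S$ parallel transport is an explicit fibrewise rescaling of $w$ composed with a base-dependent normalisation, manifestly complete away from $z_{2n+3}=\pm1$, and the exactness and finite-type structure of the Lefschetz fibration guarantee the Moser step. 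The complementary two-dimensional construction of $\psi$ is routine but needs a little attention near the endpoints of the matching sub-arc, where the infinite fibre directions of the strip must be steered clear of $\pm1$ and of the ray $[1,\infty)$.
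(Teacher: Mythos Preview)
The paper does not give its own proof of this lemma; it is quoted verbatim from \cite[Proposition 2.5]{dretwist} and used as a black box. Your argument --- the quadric model of $T^*S^{2n+2}$, projection to the last coordinate as the $A_1$ Lefschetz fibration, an area-preserving strip embedding $\psi$ of $T^*[0,1]$ into the base avoiding the critical values and the vanishing ray $[1,\infty)$, followed by parallel-transport trivialisation over the contractible image --- is precisely the construction in the original reference, so there is nothing to compare.

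The technical points you isolate are the correct ones. One small caution: for the raw K\"ahler quadric the symplectic parallel transport is \emph{not} complete (horizontal lifts can escape to infinity in finite time), so ``manifestly complete'' is too strong as stated. The standard fix, which also streamlines your Moser step, is to first deform the symplectic form so that the fibration is a genuine product outside a vertically compact neighbourhood of the matching sphere (this is the ``triviality near the horizontal boundary'' condition built into Definition~\ref{deflf}); parallel transport is then trivially complete, the curvature term $\pi^*\gamma$ is compactly supported in the fibre direction, and the Moser isotopy is both complete and fixes $L\times[0,1]$ since $L$ is Lagrangian in each fibre. With that adjustment your proof is sound.
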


\begin{prop}\label{criterion}
	If the map $stab\colon \Theta_{2n+1} \rightarrow \Theta_{2n+2}$ is non-trivial and $n$ is odd, then the $\CP$-twist depends on a choice of framing.
\end{prop}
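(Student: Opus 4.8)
The plan is to reduce Proposition~\ref{criterion} to Dimitroglou Rizell--Evans's theorem on framings of Dehn twists (Theorem~\ref{dre}) by transporting a suitable non-standard reparametrisation across the local Hopf diagram of Lemma~\ref{localmodelcpn}. For an orientation-preserving $f\in\Diff^+(\CP)$ supported in a coordinate chart, consider the reparametrised twist $\tau_f:=f^*\circ\tau_{\CP}\circ(f^{-1})^*$; since $n$ is odd, Lemma~\ref{liftlemma0} provides an $S^1$-equivariant lift $F\in\Diff^+(S^{2n+1})$ with $h\circ F=f\circ h$, and the induced symplectomorphism $F^*$ of $T^*S^{2n+1}$ descends through the symplectic reduction $p\colon V=\mu^{-1}(0)\to T^*\CP$ to $f^*$. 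Applying Lemma~\ref{localmodelcpn} to both the standard and the $F$-reparametrised Dehn twists yields the commuting square
\[
p\circ\bigl(F^*\circ\tau_{S^{2n+1}}\circ(F^{-1})^*\bigr)\big|_V=\tau_f\circ p\big|_V,
\]
so $\tau_f$ is the $S^1$-reduction (along $V$) of the $F$-reparametrised Dehn twist. The aim is to choose $f$ so that this Dehn twist is non-standard in the sense of \cite{dretwist}, and to conclude that $\tau_f\not\simeq\tau_{\CP}$.

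I would carry out the steps as follows. \emph{Step 1 (choice of $f$).} Since $2n+3$ is odd, $bP_{2n+3}=0$ by \cite[Theorem 5.1]{kmspheres}; as $stab\colon\Theta_{2n+1}\to\Theta_{2n+2}$ is non-trivial, pick $\Sigma\in\Theta_{2n+1}$ with $stab(\Sigma)\neq0$. Using $\Theta_{2n+1}\cong\pi_0(\Diff^+(S^{2n}))\cong\pi_0(\Diff_{ct}^+(D^{2n}))$, write $\Sigma=\Sigma_{\bar f}$ for some $\bar f\in\Diff_{ct}^+(D^{2n})$ and extend $\bar f$ by the identity over a chart $D^{2n}\hookrightarrow\CP$ to obtain $f\in\Diff^+(\CP)$ with $\Sigma_f=\Sigma$. \emph{Step 2 (lift).} By Lemma~\ref{liftlemma0} lift $f$ to the $S^1$-equivariant $F\in\Diff^+(S^{2n+1})$, and by Lemma~\ref{stablift} conclude $\Sigma_F=stab(\Sigma_f)=stab(\Sigma)\neq0$. \emph{Step 3 (DRE).} By Proposition~\ref{propinclusion}, $\L_{2n+2}\subseteq bP_{2n+3}=\{0\}$, so $\Sigma_F\notin\L_{2n+2}$; hence by Theorem~\ref{dre} (equivalently, by the Lagrangian-suspension embedding of Lemma~\ref{lemmaembedding} together with Proposition~\ref{propinclusion}) the symplectomorphism $\tau_{S^{2n+1}}^{-1}\circ(F^*\circ\tau_{S^{2n+1}}\circ(F^{-1})^*)$ is non-trivial in $\pi_0(\Symp_{ct}(T^*S^{2n+1}))$. \emph{Step 4 (descent).} Assume for contradiction that $\tau_f\simeq\tau_{\CP}$ through compactly supported symplectomorphisms of $T^*\CP$. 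Lifting this isotopy through $p$ to $T^*S^{2n+1}$ produces a compactly supported isotopy $F^*\tau_{S^{2n+1}}(F^{-1})^*\simeq\tau_{S^{2n+1}}$, contradicting Step 3. Hence $\tau_f\not\simeq\tau_{\CP}$, i.e.\ the $\CP$-twist depends on its framing.

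The main obstacle is Step 4: producing, from a compactly supported Hamiltonian isotopy on $T^*\CP$, a compactly supported isotopy on $T^*S^{2n+1}$ that genuinely witnesses $F^*\tau_{S^{2n+1}}(F^{-1})^*\simeq\tau_{S^{2n+1}}$. Viewing $T^*S^{2n+1}$ as a $\C^*$-bundle over $T^*\CP$ the projection is not proper, so a naive pullback of the generating Hamiltonian is not compactly supported --- this is precisely the failure flagged in the complex case of Theorem~\ref{realgeneral}. The point is that in the local model the relevant map is instead the \emph{proper} principal $S^1$-bundle $p\colon V=\mu^{-1}(0)\to T^*\CP$, and that both twists are $S^1$-equivariant and preserve the level sets of $\mu$ (the geodesic Hamiltonian Poisson-commutes with the moment map). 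I would therefore pull the compactly supported generating Hamiltonian $H_t$ back to $V$, where it stays compactly supported by properness of $p$, extend it $S^1$-invariantly across a neighbourhood $\mu^{-1}((-\delta,\delta))$ of $V$, and cut off in the $\mu$-direction to obtain a compactly supported $S^1$-invariant Hamiltonian $\widetilde H_t$ on $T^*S^{2n+1}$ whose flow preserves $V$ and restricts on $V$ to the lift of the given isotopy. Its time-one map then agrees on $V$ with $F^*\tau_{S^{2n+1}}(F^{-1})^*\circ\tau_{S^{2n+1}}^{-1}$; the delicate remaining point --- which I expect to be the technical heart of the proof --- is to upgrade this to a genuine isotopy on all of $T^*S^{2n+1}$, i.e.\ to show that the difference, a compactly supported symplectomorphism equal to the identity along the separating hypersurface $V$, is isotopic to the identity in $\Symp_{ct}(T^*S^{2n+1})$. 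I anticipate doing this either by running DRE's Lagrangian-suspension construction (Lemma~\ref{lemmaembedding}) directly on the lifted isotopy --- which would exhibit $\Sigma_F$ as a Lagrangian in $T^*S^{2n+1}\times T^*[0,1]\hookrightarrow T^*S^{2n+2}$ meeting a cotangent fibre transversely, hence $\Sigma_F\in\L_{2n+2}$, the desired contradiction via Proposition~\ref{propinclusion} --- or by a direct isotopy argument treating the two halves $\mu^{-1}(\R_{\geq0})$ and $\mu^{-1}(\R_{\leq0})$ separately.
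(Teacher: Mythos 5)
Your Steps 1--3 coincide with the paper's setup: the same choice of $f$ supported in a chart with $stab(\Sigma_f)\neq 0$, the same equivariant lift $F$ via Lemma \ref{liftlemma0}, the identification $\Sigma_F=stab(\Sigma_f)$ via Lemma \ref{stablift}, and the observation that $\L_{2n+2}\subseteq bP_{2n+3}=0$. The gap is in Step 4. Your primary plan is to lift the hypothetical compactly supported isotopy $\tau_f\simeq\tau_{\CP}$ to a compactly supported isotopy on $T^*S^{2n+1}$ and then invoke Theorem \ref{dre} upstairs. As you yourself note, the $\C^*$-bundle projection is not proper, so this lift is exactly the failure mode the paper flags in the remark following Theorem \ref{realgeneral}. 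Your proposed patch --- pull $H_t$ back to $V$, extend $S^1$-invariantly over $\mu^{-1}((-\delta,\delta))$, and cut off --- only guarantees that the resulting time-one map agrees with $F^*\tau_{S^{2n+1}}(F^{-1})^*\circ\tau_{S^{2n+1}}^{-1}$ \emph{on the hypersurface $V$}; away from $V$ the cut-off destroys any relation to the twists, and there is no reason a compactly supported symplectomorphism agreeing with the identity along a separating coisotropic hypersurface should be isotopic to the identity. So the contradiction with Theorem \ref{dre} is never actually reached, and this step as written does not close.

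The paper's proof (Lemma \ref{twistnonstd}) sidesteps the lifting of the isotopy entirely: it forms the Lagrangian suspension of $(\phi_t)$ \emph{downstairs}, as an embedding $K\times[0,1]\to T^*\CP\times T^*[0,1]$, and then applies the Hopf correspondence fibrewise to lift only the resulting \emph{Lagrangian submanifold} to $L\times[0,1]\subset T^*S^{2n+1}\times T^*[0,1]$; this is legitimate because the correspondence lifts Lagrangians (each $\phi_t(K)$ is a Lagrangian projective space lifting to a Lagrangian sphere), whereas it does not lift compactly supported symplectomorphisms. The lifted suspension is then fed into Lemma \ref{lemmaembedding}, capped to the sphere $\Sigma_F$ meeting a cotangent fibre transversely once, giving $\Sigma_F\in\L_{2n+2}\subseteq bP_{2n+3}=0$, contradicting $\Sigma_F\neq0$. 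Your second fallback option gestures at exactly this, but you phrase it as running the suspension ``on the lifted isotopy,'' which still presupposes the lift you cannot construct. The repair is to reverse the order of operations: suspend first in $T^*\CP\times T^*[0,1]$, lift the Lagrangian second.
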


\proof 
Choose a framing $f \in \Diff^+(\CP)$, coming from an element of $\Diff^+_{ct}(D^{2n})$ extended by the identity on the projective space.
Let $F\in \Diff^+(S^{2n+1})$ be the $S^1$-equivariant lift of $f$ as in Lemma \ref{liftlemma0}, supported in a tubular neighbourhood of a Hopf circle $\F \colon \cong S^1 \times D^{2n} \subset S^{2n+1}$. 	Let $\Sigma_f \in\Theta_{2n+1}$ be the sphere associated to $f$, and $\Sigma_F\in \Theta_{2n+2}$ that associated to $F$.
By Lemma \ref{stablift}, $\Sigma_F=stab(\Sigma_f)=\langle  \Sigma_f, \eta^n \rangle \in \Theta_{2n+2}$. Since $n$ is odd, $\eta^n=\eta$ and $\Sigma_F=stab(\Sigma_f)\in \Theta_{2n+2}$ is non-trivial.

The map $f^*$ induced by $f$ on the cotangent bundle is not compactly supported, but can be used to define the compactly supported conjugation
\begin{equation}
\begin{split}
\tau_f:=f^* \circ \tau_{\CP}\circ (f^{-1})^* \colon T^*\CP \lra T^*\CP
\end{split}
\end{equation}
of the projective twist $\tau_{\CP}\in \Symp_{ct}(T^*\CP)$.

We show in Lemma \ref{twistnonstd} below that $\tau_f$ belongs to a Hamiltonian class distinct from that of the standard projective twist.
\qed

\begin{lemma}\label{twistnonstd}
	The twist $\tau_{f}$ is not isotopic to the standard twist $\tau_{\CP} \in \Symp_{ct}(T^*\CP)$.
\end{lemma}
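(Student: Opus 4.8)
The strategy is to transfer the non-triviality from $\Symp_{ct}(T^*S^{2n+1})$ to $\Symp_{ct}(T^*\CP)$ using the Hopf correspondence in reverse, combined with the obstruction result of Dimitroglou Rizell--Evans. First I would pass to the lift: let $F\in\Diff^+(S^{2n+1})$ be the $S^1$-equivariant lift of $f$ from Lemma \ref{liftlemma0}, and let $F^*\colon T^*S^{2n+1}\to T^*S^{2n+1}$ be the induced symplectomorphism, which is $S^1$-equivariant for the Hopf $S^1$-action on $T^*S^{2n+1}$ restricting to the coisotropic level set $V=\mu^{-1}(0)$. The key geometric input is the commuting diagram \eqref{localcpn} of Lemma \ref{localmodelcpn}: since $\tau_{S^{2n+1}}|_V$ descends to $\tau_{\CP}$ under $p\colon V\to T^*\CP$, and $F^*|_V$ descends to $f^*$ by equivariance, the conjugate $\tau_{F}:=F^*\circ\tau_{S^{2n+1}}\circ (F^{-1})^*$ restricted to $V$ descends under $p$ to exactly $\tau_f=f^*\circ\tau_{\CP}\circ(f^{-1})^*$. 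This gives a commuting square relating $\tau_{S^{2n+1}}^{-1}\tau_F$ to $\tau_{\CP}^{-1}\tau_f$ via the correspondence.

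\textbf{Key steps.} Second, I would argue by contradiction: suppose $\tau_f$ is Hamiltonian isotopic to $\tau_{\CP}$ in $\Symp_{ct}(T^*\CP)$, i.e.\ $\tau_{\CP}^{-1}\tau_f\simeq \Id$. The isotopy can be taken compactly supported; since the Hopf bundle $q\colon Y\to W$ is in this local case the cotangent bundle $T^*S^{2n+1}$ fibred over $T^*\CP$ via the $S^1$-reduction (more precisely $V\to T^*\CP$ is an $S^1$-bundle and $T^*S^{2n+1}$ retracts onto a neighbourhood of $V$), a compactly supported Hamiltonian isotopy downstairs lifts to an $S^1$-equivariant compactly supported Hamiltonian isotopy of $\tau_{S^{2n+1}}^{-1}\tau_F$ near $V$, hence — after extending by the (trivial) behaviour away from $V$, which is legitimate because both $\tau_{S^{2n+1}}$ and $\tau_F$ are supported in a disc cotangent bundle that $S^1$-retracts onto $V$ — an isotopy $\tau_{S^{2n+1}}^{-1}\tau_F\simeq\Id$ in $\Symp_{ct}(T^*S^{2n+1})$. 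Thus $\tau_F=\tau_{S^{2n+1}}^{-1}\circ(F^*\circ\tau_{S^{2n+1}}\circ(F^{-1})^*)$ is symplectically trivial. Third, invoke Theorem \ref{dre}: since $F$ is supported in a tubular neighbourhood of a Hopf circle with the normal framing corresponding to $\eta^n=\eta$ (as $n$ is odd), Lemma \ref{stablift} gives $\Sigma_F=stab(\Sigma_f)=\langle\Sigma_f,\eta\rangle\in\Theta_{2n+2}$. By Lemma \ref{nontrivialcriterion} the composite $\Theta_{2n+1}\to\Theta_{2n+2}/bP_{2n+3}$ is non-trivial for $n=19,23,25,29$, so we may choose $f$ (equivalently $\Sigma_f$) with $\Sigma_F=stab(\Sigma_f)\notin bP_{2n+3}$. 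By Proposition \ref{propinclusion} we have $\L_{2n+2}\subset bP_{2n+3}$, hence $\Sigma_F\notin\L_{2n+2}$. Theorem \ref{dre} then asserts $\tau_F$ is not trivial in $\pi_0(\Symp_{ct}(T^*S^{2n+1}))$, contradicting the conclusion of the previous step. Therefore $\tau_f\not\simeq\tau_{\CP}$.

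\textbf{Main obstacle.} The delicate point is the lifting of a compactly supported Hamiltonian isotopy from $T^*\CP$ to $T^*S^{2n+1}$: a priori the $\C^*$-bundle / $S^1$-reduction picture only identifies $T^*\CP$ with the symplectic quotient $V/S^1$, and an isotopy downstairs lifts to an $S^1$-invariant isotopy of $V$, not obviously of all of $T^*S^{2n+1}$. I would handle this by working inside the disc bundle $D_sT^*S^{2n+1}$ where the model twist $\tau_{S^{2n+1}}$ and its conjugate are supported: this region $S^1$-equivariantly deformation retracts onto $V\cap D_sT^*S^{2n+1}$ along the Liouville/radial flow, so an $S^1$-invariant Hamiltonian on a neighbourhood of $V$ pulled back from downstairs extends (cut off near $\partial D_sT^*S^{2n+1}$) to a compactly supported $S^1$-invariant Hamiltonian on $T^*S^{2n+1}$ whose time-one map agrees with the lift of the downstairs isotopy on the support of $\tau_F$. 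One must check that the cutoff does not disturb the equality $\tau_F=\Id$ on the nose away from a small collar, which follows since $\tau_F$ is already the identity there; this is the one place where care with supports is genuinely needed, but it is a routine (if slightly technical) neighbourhood argument rather than a conceptual difficulty. An alternative, cleaner route avoiding the lift entirely would be to use the induced functor $\Theta_\Gamma$ and the commuting diagram of Corollary \ref{diagramfunctors} to detect $\tau_f$ categorically, but since Theorem \ref{dre} is a purely geometric statement in $\pi_0(\Symp_{ct})$, the equivariant-lifting argument is the most direct.
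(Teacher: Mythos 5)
There is a genuine gap at the central step of your argument: the claim that a compactly supported Hamiltonian isotopy $\tau_{\CP}^{-1}\tau_f\simeq\Id$ on $T^*\CP$ lifts to a compactly supported isotopy $\tau_{S^{2n+1}}^{-1}\tau_F\simeq\Id$ on $T^*S^{2n+1}$. The Hopf correspondence only identifies $T^*\CP$ with the symplectic quotient $\mu^{-1}(0)/S^1$; a Hamiltonian $H_t$ downstairs pulls back to the coisotropic hypersurface $V=\mu^{-1}(0)$, and any extension of $p^*H_t$ to the rest of $T^*S^{2n+1}$ generates a flow whose behaviour off $V$ is completely uncontrolled by the downstairs data. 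Knowing that $\tau_F|_V$ and $\tau_{S^{2n+1}}|_V$ descend to isotopic maps of the quotient gives no isotopy between $\tau_F$ and $\tau_{S^{2n+1}}$ on the full cotangent bundle, where these two symplectomorphisms genuinely differ. Your proposed fix does not repair this: $D_sT^*S^{2n+1}$ retracts along the Liouville flow onto the zero section, not onto $V\cap D_sT^*S^{2n+1}$ (which is a codimension-one hypersurface containing the zero section), and in any case an isotopy of $V$ is not an isotopy of the ambient manifold. This is exactly the obstruction the paper itself flags (in the remark following Theorem \ref{realgeneral}): for real projective twists the correspondence is a double cover and isotopies lift, but for the complex case the $S^1$-reduction picture does not allow lifting of compactly supported isotopies, which is why Question \ref{conjecture} remains open for $\CP$-twists. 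Your suggested categorical alternative via $\Theta_\Gamma$ also cannot work, since the paper notes that the autoequivalence induced by a twist is independent of the framing.

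The paper circumvents the problem by never lifting the isotopy. Instead it forms the Lagrangian suspension $\psi'(K\times[0,1])\subset T^*\CP\times T^*[0,1]$ of the hypothetical isotopy, and lifts this \emph{Lagrangian submanifold} fibrewise through the Hopf correspondence to a Lagrangian $\Psi(L\times[0,1])\subset T^*S^{2n+1}\times T^*[0,1]$ — lifting a Lagrangian through the correspondence is unproblematic. It then runs the Dimitroglou Rizell--Evans capping argument directly: via Lemma \ref{lemmaembedding} the suspension embeds into $T^*S^{2n+2}$ and caps off to a Lagrangian homotopy sphere diffeomorphic to $\Sigma_F$ meeting a cotangent fibre transversely once, so $\Sigma_F\in\L_{2n+2}\subset bP_{2n+3}=0$, contradicting $\Sigma_F=stab(\Sigma_f)\neq 0$. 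Note the paper explicitly uses Proposition \ref{propinclusion} rather than Theorem \ref{dre} as a black box, precisely because the hypothesis of Theorem \ref{dre} (triviality of $\tau_{S^{2n+1}}^{-1}\tau_F$ upstairs) is what cannot be established. Your topological input (Lemmas \ref{liftlemma0}, \ref{stablift}, \ref{nontrivialcriterion} and the identification $\Sigma_F=stab(\Sigma_f)$) matches the paper and is fine; it is the symplectic transfer mechanism that needs to be replaced by the suspension argument.
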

\proof

Assume by contradiction that $\tau_{\CP}^{-1}\circ \tau_f$ is (Hamiltonian) isotopic to the identity in $\Symp_{ct}(T^*\CP)$. Let $(\phi_t)_{t\in [0,1]}$ be an isotopy connecting the two symplectomorphisms in $\Symp_{ct}(T^*\CP)$ such that 
there are $s'>s \in (0,1)$ with \begin{align}
\phi_t = \left\{
\begin{array}{ll}
\tau_{\CP}^{-1}\circ \tau_f	&   \text{ If } t\leq s \\
Id & \text{ If } t\geq s'.
\end{array}
\right.
\end{align}
Let $H\colon T^*\CP \times [0,1] \to \R$ be the generating Hamiltonian function. Define the Lagrangian embedding \begin{equation}\label{suspensioncpn}
\begin{split}
\psi'\colon K \times &[0,1] \lra T^*\CP \times T^*[0,1] \\
&(x,t) \longmapsto(\phi_t(x), t, -H(\phi_t(x),t)),
\end{split}
\end{equation}

where $K\subset T^*\CP$ is the standard Lagrangian embedding of the zero section. By construction, near the ends of the interval, $K\times [0,1]$ is preserved by \eqref{suspensioncpn} in the sense that $\psi' (K \times [0,s])\subset K \times [0,s]$ and $\psi'(K \times [s',1])\subset K\times [s',1]$.

On each fibre $T^*\CP$ of $T^*\CP \times T^*[0,1]$, apply the Hopf correspondence to lift the image of \eqref{suspensioncpn} to a Lagrangian embedding \begin{align}\label{embeddingsphere}
\Psi \colon L \times [0,1] \to T^*S^{2n+1}\times T^*[0,1]
\end{align}
(where $L\subset T^*S^{2n+1}$ is the standard Lagrangian embedding of the zero section) such that $L\times I$ is preserved by $\Psi$ near the ends of the interval.

By Lemma \ref{lemmaembedding} we can replace $e(L\times [0,1]) \subset T^*S^{2n+2}$ by the Lagrangian suspension $\Psi(L\times [0,1])$, so that the ends of $\Psi(L\times [0,1])$ are ``capped'' into a $(2n+2)$-dimensional sphere diffeomorphic to $\Sigma_F\in \Theta_{2n+2}$ (see \cite[3.3]{dretwist}) which intersects a cotangent fibre once transversely and is therefore contained in $\L_{2n+2}$. By Lemma \ref{propinclusion}, $\L_{2n+2} \subset bP_{2n+3}$ and since $bP_{2n+3}=0$ (this holds for all odd integers, see \cite{kmspheres}), $\Sigma_F$ has to be the standard sphere. However, as we have seen above, $\Sigma_F=\langle \eta^n, \Sigma_f \rangle \in \Theta_{2n+2}$ is non-trivial as $n$ is odd. This is a contradiction, which proves Proposition \ref{twistnonstd}; $\tau_{f}$ cannot be isotopic to the standard projective twist in $\Symp_{ct}(T^*\CP)$.
\qed

Combining Lemma \ref{nontrivialcriterion} with Proposition \ref{criterion} yields the result we want.

\begin{cor}\label{embcor1}
	The $\CP$-twist depends on the framing when $n=19,23,25,29$.
\end{cor}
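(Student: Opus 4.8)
The plan is to obtain the corollary by combining the two immediately preceding results, Lemma \ref{nontrivialcriterion} and Proposition \ref{criterion}, with essentially no further work. First I would record the trivial numerical observation that $n=19,23,25,29$ are all odd, so the standing hypothesis ``$n$ odd'' of Proposition \ref{criterion} is met in each case; in particular $\eta^{n}=\eta\neq 0$ in $\pi_{1}(S)\cong\Zmod$ by \eqref{powereta}, so we are in the only regime in which Bredon's stabilisation pairing of Section \ref{bredonpairing} has a chance of being nonzero.

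Next I would verify the remaining hypothesis of Proposition \ref{criterion}, namely that $stab\colon\Theta_{2n+1}\to\Theta_{2n+2}$ is nontrivial for these four values of $n$. This is immediate from Lemma \ref{nontrivialcriterion}: that lemma states that the composite $\Theta_{2n+1}\xrightarrow{stab}\Theta_{2n+2}\twoheadrightarrow\Theta_{2n+2}/bP_{2n+3}$ is nontrivial for $n=19,23,25,29$, and a map whose post-composition with a surjection is nonzero is itself nonzero. (The content here is exactly the analysis already carried out in the proof of Lemma \ref{nontrivialcriterion}: via the commuting square \eqref{thetacoversj}, surjectivity of $\psi$ in degrees $\equiv 0,3\bmod 4$, and the identification $\coker(J_{4k+4})\cong\Theta_{4k+4}$, nonvanishing of $\psi\circ stab$ is reduced to nonvanishing of multiplication by $\eta$ on the two-primary parts of the stable stems in degrees $2n+1\in\{39,47,51,59\}$, which is read off from the chart in \cite[p.~385]{hatcherat}.)

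Finally I would apply Proposition \ref{criterion} directly: since $stab\colon\Theta_{2n+1}\to\Theta_{2n+2}$ is nontrivial and $n$ is odd, the $\CP$-twist depends on a choice of framing for $n=19,23,25,29$. There is no genuine obstacle at this stage — all the real work is in the two inputs. The only points to be careful about are bookkeeping ones: matching the indexing of Section \ref{bredonpairing} (the case $m+1=2n+2$ is the $\CP$ case, with lift $F\in\Diff^{+}(S^{2n+1})$ supplied by Lemma \ref{liftlemma0} and $\Sigma_{F}=stab(\Sigma_{f})$ by Lemma \ref{stablift}), and recalling that $bP_{2n+3}=0$ for all odd $2n+3$ by \cite{kmspheres}, so that via Proposition \ref{propinclusion} the capped sphere $\Sigma_{F}$ would have to be standard if $\tau_{f}\simeq\tau_{\CP}$ — the contradiction underlying Lemma \ref{twistnonstd}.
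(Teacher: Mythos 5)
Your proposal is correct and is exactly the paper's argument: the corollary is obtained by feeding Lemma \ref{nontrivialcriterion} (non-triviality of $stab$ for $n=19,23,25,29$, noting these $n$ are odd and that $bP_{2n+3}=0$ makes the quotient harmless) into Proposition \ref{criterion}. The additional bookkeeping you record is the same as what the paper leaves implicit.
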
 \qed

\begin{rmk}\label{infinitemany}
	The $\CP$-twist depends on the choice of framing for infinitely many dimensions $n$.
\end{rmk}
\proof 
	One way to obtain infinite families of nontrivial multiples of $\eta$ which are not contained in the image of $J$ is by detecting them in topological modular forms, denoted $tmf$ (we refer to \cite{tmf} for a survey on the subject). 
	There is a ``Hurewicz homomorphism'' $\pi_*(\mathbb{S})\to \pi_*(tmf)$ between the ring of stable homotopy groups of spheres and the homotopy ring of $tmf$, 
	and the two primary components of the ring of homotopy groups have a certain kind of periodicity of degree 192.
	Therefore, if we can identify an element in one of the homotopy groups $\pi_{4k+3}(tmf)$ that is also in the image of the Hurewicz homomorphism and arises as a product of $\eta$, we obtain a periodic family of elements to which the argument of Lemma \ref{nontrivialcriterion} applies. 
	
	A (partially conjectural) diagram depicting the two-primary components can be found in \cite{tmf} and it is helpful to first identify a potential candidate. Degree $39=4\cdot 9+3$ presents an element which has been confirmed to be the image of a non-trivial multiple of $\eta$ (see \cite[Corollary 11.2 ]{tmf2}, there the element in question is called $u$ and arises as image of a product of $\overline{\kappa}, \nu, \eta $ and $\kappa$; all of these are standard names of generators of stable homotopy groups stems).
	It follows that in every dimension $m\equiv 39\; (\bmod\; 192)$ there is an element for which the map $(-)\cdot \eta \colon \coker (J_m) \to \coker(J_{m+1})$ and hence $stab\colon \Theta_m \to \Theta_{m+1}$ are not trivial. Recall that $m=4k+3=2n+1$ so that by Proposition \ref{criterion}, the projective twist depends on the framing for $n\equiv 19\; (\bmod\; 96)$.	Further scrutiny of the literature would provide other such elements---e.g for $m=59$ $(n=29)$.
\qed

\begin{rmk}
	It is very likely that a version of Corollary \ref{infinitemany} holds for $\HP$-twists as well. Bredon computes (see \cite[p.446]{bredonpairing}) the class that would be associated to a framing of $S^3 \subset S^{4n+3}$, which is a power of $\nu \in \pi_3(S) = \lim\limits_{m}\pi_{m+3}(S^m)\cong \pi_8(S^5)\cong \Z_{24}$. Non-triviality results for the map $stab$ in this case would not only be depending on the parity of $n$, so a non-vanishing criterion would be harder to obtain. But such a criterion could then be combined with existence of smooth semi free actions of $S^3$ on homotopy $(4k+3)$-spheres explicitly computed in \cite[Theorem 4.4, 4.7]{bredonpairing} (also note that there are infinitely many inequivalent free $S^3$-actions on homotopy $S^{4k+3}$-spheres, by \cite[Theorem 3]{hsiang}). Then, the above strategy could be applied to obtain infinitely many dimensions in which the $\HP$-twist would depend on the framing.
	
\end{rmk}

\begin{cor}\label{notgenerated}
	In the above dimensions, $\Symp_{ct}(T^*\CP) \not \simeq\Z$.
\end{cor}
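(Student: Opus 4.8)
The plan is to deduce Corollary \ref{notgenerated} directly from Corollary \ref{embcor1} (equivalently Theorem \ref{framingthm}) together with the basic facts about the standard twist collected in Section \ref{sectiontwist}. The point is simply that $\Symp_{ct}(T^*\CP)$, if it were isomorphic to $\Z$, would be abelian and generated by a single element, and the only natural candidate for such a generator is the standard projective twist $\tau_{\CP}$ along the zero section; but Corollary \ref{embcor1} exhibits a compactly supported symplectomorphism $\tau_f = f^*\circ \tau_{\CP}\circ(f^{-1})^*$ of $T^*\CP$, for $n\in\{19,23,25,29\}$, which is \emph{not} isotopic to $\tau_{\CP}$ in $\pi_0(\Symp_{ct}(T^*\CP))$, so the group is at least ``bigger'' than what a copy of $\Z$ generated by $\tau_{\CP}$ would allow.

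More precisely, first I would recall that by Theorem \ref{cpntrivial} (and the following remark) the class of $\tau_{\CP}$ in $\pi_0(\Symp_{ct}(T^*\CP))$ lies in the kernel of the comparison map $c\colon \pi_0(\Symp_{ct}(T^*\CP))\to \pi_0(\Diff^+_{ct}(T^*\CP))$, since $\tau_{\CP}^{loc}$ is smoothly isotopic to the identity. The same holds for $\tau_f$, because $\tau_f$ is smoothly conjugate to $\tau_{\CP}$ and hence also smoothly isotopic to the identity. Thus both $[\tau_{\CP}]$ and $[\tau_f]$ live in $\ker(c)$. Now suppose for contradiction that there is an isomorphism $\pi_0(\Symp_{ct}(T^*\CP))\cong \Z$. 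Since $\tau_{\CP}$ has infinite order in $\pi_0(\Symp_{ct}(T^*\CP))$ (by Seidel's theorem, as $H^1(\CP;\R)=0$), its class is a nonzero element of $\Z$, and $[\tau_f]$ is likewise a nonzero element. In $\Z$ any two nonzero elements are commensurable: there exist nonzero integers $p,q$ with $q[\tau_{\CP}] = p[\tau_f]$, i.e. $\tau_{\CP}^{\,q}$ is isotopic to $\tau_f^{\,p}$ in $\Symp_{ct}(T^*\CP)$.

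The remaining task is to rule this out, and this is the one genuinely technical step: I must upgrade Corollary \ref{embcor1} from ``$\tau_f\not\simeq\tau_{\CP}$'' to ``no nonzero power of $\tau_f$ equals a nonzero power of $\tau_{\CP}$''. The cleanest route is to inspect the proof of Lemma \ref{twistnonstd}: the obstruction there is the nontriviality of the homotopy sphere $\Sigma_F = stab(\Sigma_f)\in\Theta_{2n+2}$, detected via the Lagrangian suspension/capping construction of \cite{dretwist} and the inclusion $\L_{2n+2}\subset bP_{2n+3}=0$. Running the same argument with $\tau_{\CP}^{-q}\circ\tau_f^{\,p}$ in place of $\tau_{\CP}^{-1}\circ\tau_f$, the relevant capped-off sphere becomes $stab$ of the homotopy sphere associated to $f^{p}$ twisted against the (trivially framed, $q$-fold) standard twist; since $\Sigma_{f^p}=p\cdot\Sigma_f$ and $stab$ is a homomorphism (the pairing \eqref{homostab}), the resulting class is $p\cdot stab(\Sigma_f) = p\cdot\Sigma_F\in\Theta_{2n+2}$, and $\Sigma_F$ has order $2$ in the relevant two-primary quotient, so $p\cdot\Sigma_F$ is nontrivial precisely when $p$ is odd. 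For $p$ even one argues instead that $\tau_{\CP}^{-q}\circ\tau_f^{\,p} = \tau_{\CP}^{-q}\circ(\tau_f^{\,p})$ is conjugate (by $f^*$, up to a compactly supported correction) to $\tau_{\CP}^{\,p-q}$, and one is reduced to the statement that $\tau_{\CP}$ has infinite order, already known. In either case we reach a contradiction with the assumed relation $\tau_{\CP}^{\,q}\simeq\tau_f^{\,p}$, hence $\pi_0(\Symp_{ct}(T^*\CP))\not\cong\Z$ for $n\in\{19,23,25,29\}$, which is exactly the assertion of Corollary \ref{notgenerated}. The main obstacle, as indicated, is checking that the homomorphism property of Bredon's pairing and the suspension construction interact correctly with taking powers of the twists so that the obstruction class scales linearly; everything else is bookkeeping with the exact sequence \eqref{kmexact} and the facts already established.
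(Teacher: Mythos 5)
Your reduction of the hypothesis $\pi_0(\Symp_{ct}(T^*\CP))\cong\Z$ to a commensurability relation $\tau_{\CP}^{\,q}\simeq\tau_f^{\,p}$ with $p,q\neq 0$ is a reasonable opening move, but the way you then rule out that relation has two genuine gaps. First, the claim that for $p$ even $\tau_{\CP}^{-q}\circ\tau_f^{\,p}$ is ``conjugate to $\tau_{\CP}^{\,p-q}$ up to a compactly supported correction'' is unjustified and appears false: since $\tau_f^{\,p}=f^*\tau_{\CP}^{\,p}(f^{-1})^*$, one has $\tau_{\CP}^{-q}\tau_f^{\,p}=\tau_{\CP}^{-q}f^*\tau_{\CP}^{\,p}(f^{-1})^*$, and moving $f^*$ past $\tau_{\CP}^{-q}$ is exactly the kind of commutation (up to compactly supported isotopy) that the whole section is devoted to disproving; conjugating the entire word by $(f^{-1})^*$ merely produces another mixed word. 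Second, the assertion that re-running the suspension/capping construction of Lemma \ref{twistnonstd} on $\tau_{\CP}^{-q}\tau_f^{\,p}$ produces the obstruction class $p\cdot stab(\Sigma_f)$ does not follow from the homomorphism property of Bredon's pairing \eqref{homostab}: that property gives $stab(\Sigma_{f^p})=p\cdot stab(\Sigma_f)$, but $\tau_f^{\,p}$ is not $\tau_{f^p}$, and the capping argument of \cite{dretwist} is set up for the specific word $\tau^{-1}\circ F^*\circ\tau\circ(F^{-1})^*$; extending it to arbitrary words in $\tau_{\CP}$ and $\tau_f$ is precisely the hard content you would need to supply, and you have only asserted it.

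The paper avoids all of this with a short grading argument, which is the idea missing from your proposal. Both $\tau_{\CP}$ and $\tau_f$ shift the grading of the zero section (viewed as a graded Lagrangian) by the same nonzero amount, since the shift is determined by the local model and is insensitive to the framing (\cite[Lemma 5.7]{seidelgraded}, \cite[Remark 1.5]{dretwist}); hence $\tau_f^{-1}\circ\tau_{\CP}$ acts trivially on the grading, whereas $\tau_{\CP}^{\,k}$ acts nontrivially for every $k\neq 0$. Therefore $\tau_f^{-1}\tau_{\CP}$ is not isotopic to any $\tau_{\CP}^{\,k}$ with $k\neq 0$, and it is not isotopic to the identity by Corollary \ref{embcor1}; this is already incompatible with the group being infinite cyclic, since a nontrivial element killed by the grading shift would force a generator, and hence $\tau_{\CP}$ itself, to act trivially on gradings. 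If you want to salvage your route you would have to prove the power-scaling of the capped homotopy sphere from scratch; the grading argument is both shorter and already available in the literature the paper cites.
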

\proof If $\tau_{\CP} \in \pi_0(\Symp_{ct}(T^*\CP))$ is the standardly framed twist along the zero section, then we claim that $\Z\langle \tau_{\CP} \rangle \subsetneq \pi_0(\Symp_{ct}(T^*\CP))$. Let $f \in \Diff_{ct}^+(\CP)$ be a framing such that the projective twist $\tau_f \in \Symp_{ct}(T^*\CP)$ defined using $f$ is not isotopic to $\tau_{\CP}$, as in Corollary \ref{embcor1}. Then, $\tau_f^{-1}\circ\tau_{\CP}$ cannot be isotopic to any power $\tau^k_{\CP}$, for any $k \in \Z$. This is because $\tau_{\CP}$, viewed as a graded symplectomorphism, acts non-trivially on the grading of the zero section, viewed as a graded Lagrangian (see \cite[Lemma 5.7]{seidelgraded}), whereas $\tau_f^{-1}\circ\tau_{\CP}$ acts trivially on the grading (see also \cite[Remark 1.5]{dretwist}). \endproof

%\bibliographystyle{alpha}
%\bibliography{bibliography.bib}

\begin{thebibliography}{CDVK16}
	
	\bibitem[Abo11]{abouzaidplumbing}
	Mohammed Abouzaid.
	\newblock A topological model for the {F}ukaya categories of plumbings.
	\newblock {\em Journal of Differential Geometry}, 87(1):1--80, 2011.
	
	\bibitem[Abo12a]{abouzaidframed}
	Mohammed Abouzaid.
	\newblock Framed bordism and {L}agrangian embeddings of exotic spheres.
	\newblock {\em Annals of Mathematics}, pages 71--185, 2012.
	
	\bibitem[Abo12b]{abouzaidnearby}
	Mohammed Abouzaid.
	\newblock Nearby {L}agrangians with vanishing {M}aslov class are homotopy
	equivalent.
	\newblock {\em Inventiones mathematicae}, 189(2):251--313, 2012.
	
	\bibitem[AK18]{aknearby}
	Mohammed Abouzaid and Thomas Kragh.
	\newblock Simple homotopy equivalence of nearby {L}agrangians.
	\newblock {\em Acta Math.}, 220(2):207--237, 06 2018.
	
	\bibitem[Arn]{arnold}
	Vladimir I. Arnol'd.
	\newblock Some remarks on symplectic monodromy of {M}ilnor fibrations.
	\newblock {\em Floer Memorial Volume, Progress in Mathematics}, 133.
	
	\bibitem[Aur03]{aurouxmon}
	Denis Auroux.
	\newblock Monodromy invariants in symplectic topology.
	\newblock {\em arXiv preprint math/0304113}, 2003.
	
	\bibitem[BC13]{bcobordism1}
	Paul Biran and Octav Cornea.
	\newblock Lagrangian cobordism {I}.
	\newblock {\em Journal of the American Mathematical Society}, 26(2):295--340,
	2013.
	
	\bibitem[BC14]{bcobordism2}
	Paul Biran and Octav Cornea.
	\newblock Lagrangian cobordism and {F}ukaya categories.
	\newblock {\em Geometric And Functional Analysis}, 24(6):1731--1830, 2014.
	
	\bibitem[BGZ19]{bgzfilling}
	Kilian Barth, Hansjörg Geiges, and Kai Zehmisch.
	\newblock The diffeomorphism type of symplectic fillings.
	\newblock {\em Journal of Symplectic Geometry}, 17(4):929--971, 2019.
	
	\bibitem[Bre67]{bredonpairing}
	Glen~E. Bredon.
	\newblock A ${\Pi_*}$-module structure for ${\Theta_*}$ and applications to
	transformation groups.
	\newblock {\em Annals of Mathematics}, pages 434--448, 1967.
	
	\bibitem[CDVK16]{chiangtwist}
	River Chiang, Fan Ding, and Otto Van~Koert.
	\newblock Non-fillable invariant contact structures on principal circle bundles
	and left-handed twists.
	\newblock {\em International Journal of Mathematics}, 27(03):1650024, 2016.
	
	\bibitem[Cer70]{cerf}
	Jean Cerf.
	\newblock La stratification naturelle des espaces de fonctions différentiables réelles et le théoreme de la pseudo-isotopie.
	\newblock {\em Publications Mathématiques de l'Institut des Hautes
		{\'E}tudes Scientifiques}, 39(1):7--170, 1970.
	
	\bibitem[DRE15]{dretwist}
	Georgios Dimitroglou~Rizell and Jonathan~D. Evans.
	\newblock Exotic spheres and the topology of symplectomorphism groups.
	\newblock {\em Journal of Topology}, 8(2):586--602, 2015.
	
	\bibitem[EKS16]{ekspheres}
	Tobias Ekholm, Thomas Kragh, and Ivan Smith.
	\newblock Lagrangian exotic spheres.
	\newblock {\em Journal of Topology and Analysis}, 8(03):375--397, 2016.
	
	\bibitem[ES14]{ekholmsmith}
	Tobias Ekholm and Ivan Smith.
	\newblock Exact lagrangian immersions with one double point revisited.
	\newblock {\em Mathematische Annalen}, 358(1):195--240, 2014.
	
	\bibitem[Eva10]{evanslags}
	Jonathan~David Evans.
	\newblock Lagrangian spheres in del {P}ezzo surfaces.
	\newblock {\em Journal of Topology}, 3(1):181--227, 2010.
	
	\bibitem[Eva11]{evans}
	Jonathan~D. Evans.
	\newblock Symplectic mapping class groups of some {S}tein and rational
	surfaces.
	\newblock {\em Journal of Symplectic Geometry}, 9(1):45--82, 2011.
	
	\bibitem[Flo88]{floer}
	Andreas Floer.
	\newblock Morse theory for {L}agrangian intersections.
	\newblock {\em Journal of differential geometry}, 28(3):513--547, 1988.
	
	\bibitem[FM11]{bmprimer}
	Benson Farb and Dan Margalit.
	\newblock {\em A primer on mapping class groups (pms-49)}.
	\newblock Princeton University Press, 2011.
	
	\bibitem[Gao17a]{gaowrapped2}
	Yuan Gao.
	\newblock Functors of wrapped {F}ukaya categories from {L}agrangian
	correspondences.
	\newblock {\em arXiv preprint arXiv:1712.00225}, 2017.
	
	\bibitem[Gao17b]{gaowrapped1}
	Yuan Gao.
	\newblock Wrapped {F}loer cohomology and {L}agrangian correspondences.
	\newblock {\em arXiv preprint arXiv:1703.04032}, 2017.
	
	\bibitem[GPS18]{gpsii}
	Sheel Ganatra, John Pardon, and Vivek Shende.
	\newblock Sectorial descent for wrapped {F}ukaya categories.
	\newblock {\em arXiv preprint arXiv:1809.03427}, 2018.
	
	\bibitem[GPS20]{gps}
	Sheel Ganatra, John Pardon, and Vivek Shende.
	\newblock Covariantly functorial wrapped {F}loer theory on {L}iouville sectors.
	\newblock {\em Publications math{\'e}matiques de l'IH{\'E}S}, 131(1):73--200,
	2020.
	
	\bibitem[GS90]{guillemin}
	Victor Guillemin and Shlomo Sternberg.
	\newblock {\em Symplectic techniques in physics}.
	\newblock Cambridge University Press, 1990.
	
	\bibitem[GS14]{ysreverse}
	Yoel Groman and Jake~P Solomon.
	\newblock A reverse isoperimetric inequality for {J}-holomorphic curves.
	\newblock {\em Geometric and Functional Analysis}, 24(5):1448--1515, 2014.
	
	\bibitem[Har11]{harristwist}
	Richard~M. Harris.
	\newblock Projective twists in $\mathcal{A}$-infinity categories.
	\newblock {\em arXiv preprint arXiv:1111.0538}, 2011.
	
	\bibitem[Hat01]{hatcherat}
	Allen Hatcher.
	\newblock {\em Algebraic {T}opology}.
	\newblock Cambridge University Press, 2001.
	
	\bibitem[Hen14]{tmf}
	André Henriques.
	\newblock The homotopy groups of tmf and of its localizations.
	\newblock {\em Topological modular forms}, 201:189--205, 2014.
	
	\bibitem[Hin12]{hindknot}
	Richard Hind.
	\newblock Lagrangian unknottedness in {S}tein surfaces.
	\newblock {\em Asian Journal of Mathematics}, 16(1):1--36, 2012.
	
	\bibitem[HM14]{tmf2}
	Michael~J Hopkins and Mark Mahowald.
	\newblock From elliptic curves to homotopy theory.
	\newblock {\em Topological modular forms}, 201:261--285, 2014.
	
	\bibitem[Hsi66]{hsiang}
	Wu-Chung Hsiang.
	\newblock A note on free differentiable actions of ${S}^1$ and ${S}^3$ on
	homotopy spheres.
	\newblock {\em Annals of Mathematics}, pages 266--272, 1966.
	
	\bibitem[HT06]{htwist}
	Daniel Huybrechts and Richard Thomas.
	\newblock $\mathbb{P}$-objects and autoequivalences of derived categories.
	\newblock {\em Mathematical Research Letters}, 13(1):87--98, 2006.
	
	\bibitem[Jam80]{jamesfree}
	David~M James.
	\newblock Free circle actions on homotopy nine spheres.
	\newblock {\em Illinois Journal of Mathematics}, 24(4):681--688, 1980.
	
	\bibitem[Kas16]{kasilingamprojective}
	Ramesh Kasilingam.
	\newblock Classification of smooth structures on a homotopy complex projective
	space.
	\newblock {\em Proceedings-Mathematical Sciences}, 126(2):277--281, 2016.
	
	\bibitem[Kaw69]{kawakuboinertia}
	Katsuo Kawakubo.
	\newblock On the inertia groups of homology tori.
	\newblock {\em Journal of the Mathematical Society of Japan}, 21(1):37--47,
	1969.
	
	\bibitem[Kea13]{keatingfree}
	Ailsa~M. Keating.
	\newblock Dehn twists and free subgroups of symplectic mapping class groups.
	\newblock {\em Journal of Topology}, 7(2):436--474, 2013.
	
	\bibitem[KM63]{kmspheres}
	Michel~A. Kervaire and John~W. Milnor.
	\newblock Groups of {H}omotopy {S}pheres: I.
	\newblock {\em Annals of Mathematics}, 77(3):504--537, 1963.
	
	\bibitem[Kra13]{kraghnearby}
	Thomas Kragh.
	\newblock Parametrized ring-spectra and the nearby {L}agrangian conjecture.
	\newblock {\em Geometry \& Topology}, 17(2):639--731, 2013.
	
	\bibitem[Lev85]{levine}
	Jerome~P. Levine.
	\newblock Lectures on groups of homotopy spheres.
	\newblock In {\em Algebraic \& Geometric Topology}, pages 62--95. Springer,
	1985.
	
	\bibitem[LW12]{liwu}
	Tian-Jun Li and Weiwei Wu.
	\newblock Lagrangian spheres, symplectic surfaces and the symplectic mapping
	class group.
	\newblock {\em Geometry \& Topology}, 16(2):1121--1169, 2012.
	
	\bibitem[Mil65]{milnortop}
	John Milnor.
	\newblock Topology from the differentiable viewpoint.
	\newblock {\em University Press of Virginia, Charlottesville}, 1990, 1965.
	
	\bibitem[Mos65]{moser}
	Jürgen~K. Moser.
	\newblock On the volume elements on manifolds.
	\newblock {\em Transactions of the American Mathematical Society},
	120:280--296, 1965.
	
	\bibitem[MS10]{maysei}
	Maksim Maydanskiy and Paul Seidel.
	\newblock Lefschetz fibrations and exotic symplectic structures on cotangent
	bundles of spheres.
	\newblock {\em Journal of Topology}, 3(1):157--180, 2010.
	
	\bibitem[MS17]{mcduffsal}
	Dusa McDuff and Dietmar Salamon.
	\newblock {\em Introduction to symplectic topology}.
	\newblock Oxford University Press, 2017.
	
	\bibitem[MW18a]{makwu1}
	Cheuk~Yu Mak and Weiwei Wu.
	\newblock Dehn twist exact sequences through {L}agrangian cobordism.
	\newblock {\em Compositio Mathematica}, 154(12):2485--2533, 2018.
	
	\bibitem[MW18b]{mwtwist2}
	Cheuk~Yu Mak and Weiwei Wu.
	\newblock Spherical twists and {L}agrangian spherical manifolds.
	\newblock {\em arXiv preprint arXiv:1810.06533}, 2018.
	
	\bibitem[MWW16]{mwwfunctor}
	Sikimeti Ma'u, Katrin Wehrheim, and Chris Woodward.
	\newblock A-infinity functors for {L}agrangian correspondences.
	\newblock {\em arXiv preprint arXiv:1601.04919}, 2016.
	
	\bibitem[Nov65]{novikov}
	Sergei~P. Novikov.
	\newblock Topological invariance of rational {P}ontryagin classes.
	\newblock In {\em Dokl. Akad. Nauk SSSR}, volume 163, pages 298--300, 1965.
	
	\bibitem[Per07]{perutzmbl}
	Timothy Perutz.
	\newblock Lagrangian matching invariants for fibred four-manifolds: {I}.
	\newblock {\em Geometry \& Topology}, 11(2):759--828, 2007.
	
	\bibitem[Per08]{perutzgysin}
	Timothy Perutz.
	\newblock A symplectic {G}ysin sequence.
	\newblock {\em arXiv preprint arXiv:0807.1863}, 2008.
	
	\bibitem[Po{\'z}94]{pozniak}
	Marcin Po{\'z}niak.
	\newblock {\em Floer homology, {N}ovikov rings and clean intersections}.
	\newblock PhD thesis, University of Warwick, 1994.
	
	\bibitem[RGI16]{rgitori}
	Georgios~Dimitroglou Rizell, Elizabeth Goodman, and Alexander Ivrii.
	\newblock Lagrangian isotopy of tori in {$S^2\times S^2$} and
	$\mathbb{C}\mathbb{P}^2$.
	\newblock {\em Geometric and Functional Analysis}, 26(5):1297--1358, 2016.
	
	\bibitem[Rit14]{ritter}
	Alexander~F. Ritter.
	\newblock Floer theory for negative line bundles via {G}romov--{W}itten
	invariants.
	\newblock {\em Advances in Mathematics}, 262:1035--1106, 2014.
	
	\bibitem[Sei98]{seidelgeneration}
	Paul Seidel.
	\newblock Symplectic automorphisms of {$T^*S^2$}.
	\newblock {\em arXiv preprint math/9803084}, 1998.
	
	\bibitem[Sei99]{seidelknotted}
	Paul Seidel.
	\newblock Lagrangian two-spheres can be symplectically knotted.
	\newblock {\em Journal of Differential Geometry}, 52(1):145--171, 1999.
	
	\bibitem[Sei00]{seidelgraded}
	Paul Seidel.
	\newblock Graded {L}agrangian submanifolds.
	\newblock {\em Bulletin de la Soci{\'e}t{\'e} Math{\'e}matique de France},
	128(1):103, 2000.
	
	\bibitem[Sei03]{seideles}
	Paul Seidel.
	\newblock A long exact sequence for symplectic {F}loer cohomology.
	\newblock {\em Topology}, 42(5):1003--1063, 2003.
	
	\bibitem[Sei08a]{seidelbook}
	Paul Seidel.
	\newblock {\em Fukaya categories and {P}icard-{L}efschetz theory}, volume~10.
	\newblock European Mathematical Society, 2008.
	
	\bibitem[Sei08b]{seidelectures}
	Paul Seidel.
	\newblock Lectures on four-dimensional {D}ehn twists.
	\newblock In {\em Symplectic 4-manifolds and algebraic surfaces}, pages
	231--267. Springer, 2008.
	
	\bibitem[Smi01]{smithmonodromy}
	Ivan Smith.
	\newblock Geometric monodromy and the hyperbolic disc.
	\newblock {\em The Quarterly Journal of Mathematics}, 52(2):217--228, 2001.
	
	\bibitem[SS05]{seidelsmithstretch}
	Paul Seidel and Ivan Smith.
	\newblock The symplectic topology of {R}amanujam's surface.
	\newblock {\em Commentarii mathematici helvetici}, 80(4):859--881, 2005.
	
	\bibitem[ST01]{stbraid}
	Paul Seidel and Richard Thomas.
	\newblock Braid group actions on derived categories of coherent sheaves.
	\newblock {\em Duke Mathematical Journal}, 108(1):37--108, 2001.
	
	\bibitem[Ton16]{tonkonog}
	Dmitry Tonkonog.
	\newblock Commuting symplectomorphisms and dehn twists in divisors.
	\newblock {\em Geometry \& Topology}, 19(6):3345--3403, 2016.
	
	\bibitem[Wu14]{weiwei}
	Weiwei Wu.
	\newblock Exact lagrangians in ${A}_n$-surface singularities.
	\newblock {\em Mathematische Annalen}, 359(1):153--168, 2014.
	
	\bibitem[WW09]{wwcorr}
	Katrin Wehrheim and Chris Woodward.
	\newblock Floer cohomology and geometric composition of {L}agrangian
	correspondences.
	\newblock {\em arXiv preprint arXiv:0905.1368}, 2009.
	
	\bibitem[WW10a]{wwfunctor}
	Katrin Wehrheim and Chris Woodward.
	\newblock Functoriality for {L}agrangian correspondences in {F}loer theory.
	\newblock {\em Quantum Topology}, 1(2):129--170, 2010.
	
	\bibitem[WW10b]{wwquilt}
	Katrin Wehrheim and Chris Woodward.
	\newblock Quilted {F}loer cohomology.
	\newblock {\em Geometry \& Topology}, 14(2):833--902, 2010.
	
	\bibitem[WW16]{wwfibred}
	Katrin Wehrheim and Chris~T Woodward.
	\newblock Exact triangle for fibered {D}ehn twists.
	\newblock {\em Research in the Mathematical Sciences}, 3(1):17, 2016.
	
\end{thebibliography}
\end{document}